\numberwithin{equation}{section}
\def\R{\mathbb R}
\def\N{\mathbb N}
\def\Z{\mathbb Z}
\def\supp{\mathrm{supp}\,}
\def\T{{\mathcal T}}
\def\P{{\mathcal P}}
\def\A{{\mathcal A}}
\def\X{{\mathcal X}}
\def\Ri{{\mathcal R}}
\def\pa{\partial}
\def\l{\lambda}
\def\M{\mathcal M}
\def\H{\mathcal H}
\def\lv{\left\vert}
\def\rv{\right\vert}
\def\k{\varepsilon}
\def\pkl{p_{m,\ell}}
\def\Ldk{L_\delta^m}
\def\P{\mathcal{P}}
\renewcommand{\epsilon}{\varepsilon}
\def\barI{\bar{I}}
\def\Zast{{\mathbb Z^{\ast}}}
\let\originalleft\left
\let\originalright\right
\renewcommand{\left}{\mathopen{}\mathclose\bgroup\originalleft}
\renewcommand{\right}{\aftergroup\egroup\originalright}
\newcommand*\diff{\mathop{}\!\mathrm{d}} 
\newtheorem{theorem}{Theorem}[section]
\newtheorem{definition}[theorem]{Definition}
\newtheorem{prop}[theorem]{Proposition}
\newtheorem{corollary}[theorem]{Corollary}
\newtheorem{lemma}[theorem]{Lemma}
\newtheorem{remark}[theorem]{Remark}
\def\bcr{\begin{color}{red}}
\def\bcb{\begin{color}{blue}}
\def\ec{\end{color}}
\def\be{\begin{equation}}
\def\ee{\end{equation}}
\def\Cr{C_{\text{res}}}
\def\supp{\mathrm{supp}\,}
\def\T{{\mathcal T}}
\def\L{{\mathcal L}}
\def\P{{\mathcal P}}
\def\A{{\mathcal A}}
\def\Ri{{\mathcal R}}
\def\pa{\partial}
\def\l{\lambda}
\def\M{\mathcal M}
\def\H{\mathcal H}
\def\Lf{\tilde{\mathcal L}}
\def\fke{f^{k,\k}}
\newcommand{\Rmin}{R_{\mathrm{min}}}
\newcommand{\Rmax}{R_{\mathrm{max}}}
\newcommand{\AEL}{\mathbb{A}}
\newcommand{\Emin}{E_{\mathrm{min}}}
\newcommand{\rL}{r_{\ast}}
\newcommand{\im}{\mathrm{im}}
\newcommand{\sigmaess}{\sigma_{\mathrm{ess}}}
\newcommand{\Tmax}{T_{\mathrm{max}}}
\newcommand{\Tmin}{T_{\mathrm{min}}}
\newcommand{\Tpmax}{T_{\mathrm{max}}'}
\newcommand{\Tpmin}{T_{\mathrm{min}}'}
\newcommand{\Tppmax}{T_{\mathrm{max}}''}
\newcommand{\Tppmin}{T_{\mathrm{min}}''}
\newcommand{\Ms}{M_{\mathrm{s}}}
\begin{document}

\title{Damping versus oscillations for a gravitational Vlasov-Poisson system}
\author{M.~Had\v zi\'c\thanks{University College London, UK. Email: m.hadzic@ucl.ac.uk}, G.~Rein\thanks{University of Bayreuth, Germany. Email: gerhard.rein@uni-bayreuth.de}, M.~Schrecker\thanks{University College London, UK. Email: m.schrecker@ucl.ac.uk}, C.~Straub\thanks{University of Bayreuth, Germany. Email: christopher.straub@uni-bayreuth.de}}

\maketitle

\begin{abstract}
We consider a family of isolated inhomogeneous steady states to the gravitational Vlasov-Poisson system with a point mass at the centre. They are parametrised by the polytropic index $k>1/2$, 
so that the phase space density of the steady state is $C^1$ at the vacuum boundary if and only if $k>1$. 
We prove the following sharp dichotomy result: if $k>1$ the linear perturbations Landau damp and if $1/2< k\le1$ they do not.

The above dichotomy is a new phenomenon and highlights the importance of steady state regularity at the vacuum boundary in the discussion of long-time behaviour of the perturbations. Our proof of (nonquantitative) gravitational relaxation around steady states with $k>1$ is the first such result for the gravitational Vlasov-Poisson system. 
The key step in the proof is to show that no embedded eigenvalues exist in the essential spectrum of the linearised system.

\end{abstract}

\tableofcontents

\section{Introduction}

The problem of relaxation of stellar systems is a central question in the study of the dynamics of galaxies. It was explored in the pioneering works of Lynden-Bell~\cite{LB1962,LB1967} in the 1960's, who was the first to point out an intimate connection between
galaxy relaxation and the validity of so-called gravitational Landau damping. Landau damping originally referred to a well-known equilibration mechanism for the linearised electrostatic Vlasov-Poisson system around spatially
homogeneous steady states discovered in 1946~\cite{Landau1946}.  In the gravitational case, the term Landau damping was used in~\cite{LB1962} (see also~\cite{BiTr} for an exhaustive list of references to the physics literature) to refer to the decay of macroscopic quantities of the linearised perturbations about a given steady state. 

To study the stability around isolated and localised self-gravitating galaxies, one is forced to consider spatially
inhomogeneous densities and this considerably complicates the stability analysis. There is a continuum of steady states of the gravitational Vlasov-Poisson (VP) system whose infinite-dimensional character is related to the invariance of the VP-system under the action of measure preserving diffeomorphisms. Moreover, the relevant steady states are compactly supported in
both the space and the velocity variable, which means that particles are trapped in a finite region of phase-space, and this can act as an obstruction to decay.

In this work we construct a family of steady states
for which we show that the question of relaxation depends strongly  
on the regularity of the equilibrium at the vacuum boundary. If the steady state is below a certain regularity threshold we prove that the linearised operator has pure oscillations in its spectrum and no damping occurs. If, by contrast, the steady state is above the threshold, there is no pure point spectrum and one can prove non-quantitative decay results using the RAGE theorem. This dichotomy is a striking feature of the gravitational dynamics, and we believe
the methods developed in this paper to have a wide range of applicability. 

To focus on the main ideas,
we consider the radial gravitational Vlasov-Poisson system including
a fixed central potential generated by a point mass of size $M>0$, and we assume that all the particles have angular momentum of fixed modulus.\footnote{We note that the situation of steady states with fixed modulus of angular momentum is discussed in~\cite[Sc.~3.1]{RiSa2020} and in the plasma case see also~\cite{PaWi2021}. 
}
This symmetry reduction removes several technical difficulties and allows us to focus on the key new ideas. The case of general radial equilibria will be addressed in~\cite{HaReScSt2}.
The system reads
\begin{equation}
  \label{eq:VlasovVfixedL}
  \partial_tf+w\,\partial_rf-\left(U'+\frac M{r^2}-\frac {L}{r^3}\right)\partial_wf=0, \\
\end{equation}
\begin{equation}
  \label{E:POISSON}
  U' = \frac{4\pi}{r^2}\int_0^rs^2\rho(t,s)\diff s,\qquad \lim\limits_{r\to\infty}U(t,r)=0, \\
\end{equation}
\begin{equation}
  \label{eq:rhoVfixedL}
  \rho(t,r)=\frac\pi{r^2}\int_{\R}f(t,r,w)\diff w.
\end{equation}
Here $f(t,r,w)\ge0$ is the phase-space number density, a function of time $t\in \R$,
radial position $r>0$, and radial velocity $w\in \R$,
$U(t,r)$ is the gravitational potential induced by the stars of the galaxy,
and $\rho(t,r)\ge0$ their macroscopic mass density. 
The system~\eqref{eq:VlasovVfixedL}--\eqref{eq:rhoVfixedL} is the radial VP-system for an ensemble of particles all of which have angular momentum
with the same squared modulus $L>0$.  

We consider a class of steady states to~\eqref{eq:VlasovVfixedL}--\eqref{eq:rhoVfixedL} of the form
\begin{equation}\label{eq:ansatzfeps}
	\fke(r,w)=\varphi(E(r,w))=\k\,\tilde\varphi(E(r,w)), \qquad  \tilde\varphi(E)=\left(E_0-E\right)_+^k,
\end{equation}
where $\k>0$ is a size-parameter and $k>\frac12$ the polytropic exponent. Here 
\begin{align}
	E(r,w)&=\frac12\,w^2+\Psi(r), \label{eq:defE} \\
	\Psi(r)&=U(r)-\frac Mr+\frac L{2r^2} \label{E:EFFPOT}
\end{align}
are the particle energy and the effective potential respectively, while 
the cut-off energy $E_0<0$ is implicitly determined through the equation satisfied by the steady state.
The gravitational potential~$U$ is induced by~$\fke$ through~\eqref{E:POISSON}--\eqref{eq:rhoVfixedL}.
For completeness of exposition, the  existence of such steady states
with finite radius and finite mass
are shown in Section~\ref{S:STEADYSTATES}; this is actually easier than in
the usual situation without a central point mass, cf.~\cite{RaRe13}. More precisely, fix a $k>\frac12$. Then for any $\k>0$  there exists
a whole 1-parameter family of steady states of the form~\eqref{eq:ansatzfeps} parametrised by the parameter 
\be\label{E:KAPPADEF}
\kappa\coloneqq E_0-U(0)<0
\ee 
which has the meaning of 
a relative gravitational potential at the origin. 
The resulting phase-space support is compact and the associated macroscopic density $\rho(r)$ is of size $\mathcal O_{\k\to0}(\k)$, supported 
on a compact spherical shell $[\Rmin,\Rmax]$ of thickness $\mathcal O_{\k\to0}(1)$ with a delta distribution of mass~$M$ centred at the origin, see Figure~\ref{F:SS}. 
The parameter~$\kappa$ determines the inner vacuum radius $\Rmin>0$ as well as the (finite) limit of the outer vacuum radius $\Rmax$ as $\epsilon\to0$.
We shall suppress the dependence on~$\kappa$ and fix it to any value satisfying the {\em single gap condition} 
 \begin{equation}\label{E:SINGLEGAP}
	-2^{-\frac23}\,\frac{M^2}{2L}<\kappa<0.
\end{equation}
As shown in Corollary~\ref{C:NOBANDS}, condition~\eqref{E:SINGLEGAP} ensures that the essential spectrum of the linearised operator is simply connected for the relevant equilibria.
The pivotal question is the dependence of the stability behaviour of the steady states $f^{k,\epsilon}$ on the parameters $k$ and $\epsilon$.

\begin{figure}
	\begin{center}
		\begin{tikzpicture}[>=stealth]
			\def\wx{3}      
			\def\wxx{1.7}  
			\def\wy{1.5}    
			\def\wz{0.75}   

			\draw[thick,fill=gray!50] (-3,0) -- (3,0) arc(0:-180:3) --cycle;

			\draw[blue, fill=gray] (0,0) ellipse (\wx cm and \wy cm);

			\draw[dashed,thick] (-\wxx,0) -- (\wxx,0) arc(0:-180:\wxx) --cycle;

                         \begin{scope}
			\draw[blue,fill=white] (0, 0) ellipse (\wxx cm and \wz cm);
			\end{scope}
			
			\draw[thick] (0,0) -- (0,2.4);
			\draw[fill=black] (0,0) circle (1pt);
			\node at (0, -0.2) {$r=0$};
			\node at (0.3,2.6) {M$\delta_0$};

			\coordinate (O) at (6,-1.5){};
			
			\draw[thick] (O)--(11,-1.5);
			
			\draw[thick] (O)--(6,1);
			
			\coordinate[label=below:$\Rmin$] (K) at (7.0,-1.5);
			
			\coordinate [label=below:$\Rmax$] (L) at (9.9,-1.5);
			
			\coordinate (M) at (8.5,-0.5);
			
			\coordinate (P) at (8.5,-1.0);

			\draw[thick] (K) .. controls +(1.0,0.1) and  +(-0.5,0)  .. (P);
			
			\draw[thick] (P) .. controls +(0.5,0) and  +(-1,0.1)  .. (L);

			\node at (M) {$\rho(r)$};
			
			\node at (11,-1.7) {$r$};
			
			\draw[fill=black] (K) circle (1pt);
			
			\draw[fill=black] (L) circle (1pt);
			
		\end{tikzpicture}
	\end{center}
	\caption{Schematic depiction of the lower hemisphere of the spherical shell (on the left) and the macroscopic density distribution $\rho(r)$ (on the right).}
	\label{F:SS}
\end{figure}
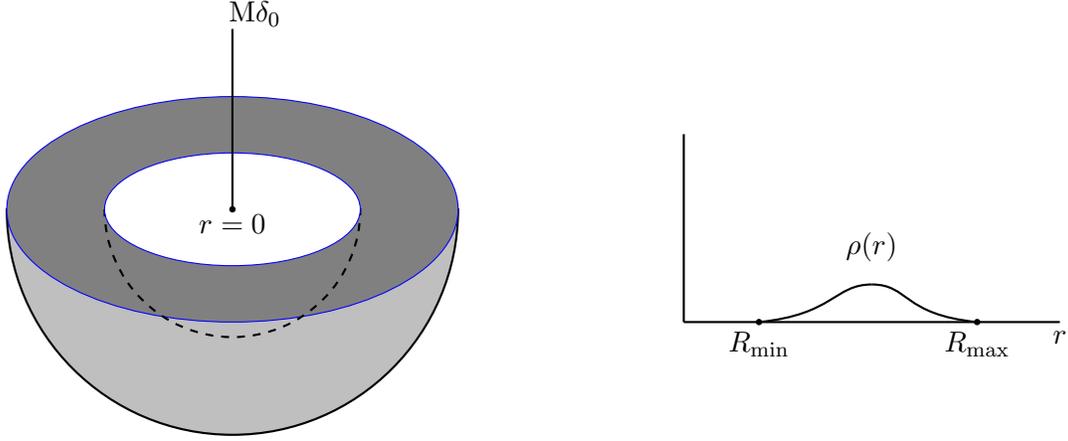


We linearise the system~\eqref{eq:VlasovVfixedL}--\eqref{eq:rhoVfixedL} around a fixed steady state~$\fke$.
If we denote the linear perturbation by $F$, a  straightforward calculation gives the linearisation
\begin{equation}\label{E:LVP}
\pa_t F + \Lf F = 0,
\end{equation}
where
\begin{equation}\label{E:FOOP}
	\Lf F\coloneqq\T \left(F + \lv\varphi'(E)\rv\,U_F\right), 
\end{equation}
the {\em transport operator $\T$} is given by
\begin{equation}\label{eq:transportdef}
	\T\coloneqq w\,\partial_r-\Psi'(r)\,\partial_w,
\end{equation}
and $U_F$ solves the radial Poisson equation
\begin{equation}\label{eq:Ufprime}
  U_F'(r)=\frac{4\pi}{r^2}\int_0^rs^2\,\rho_F(s)\diff s
  =\frac{4\pi^2}{r^2}\int_0^r\int_{\R}F(s,w)\diff w\diff s,\qquad\lim_{r\to\infty}U_F(r)=0.
\end{equation}
Alternatively, one can apply the classical Antonov trick~\cite{An1961} and split~\eqref{E:LVP} into separate equations for the even and odd in $w$ parts $f_\pm(r,w)=\frac12(F(r,w)\pm F(r,-w))$ of the perturbation~$F$. The linear evolution is then fully described by the following second order system for $f_-$: 
\begin{equation}\label{E:LVP2}
	\partial_t^2f_-+\L f_-=0.
\end{equation}
The {\em linearised operator} (also referred to as the {\em Antonov operator}) takes the form
\begin{equation}\label{eq:Ldef}
	\L\coloneqq-\T^2-\Ri,
\end{equation}
where 
the {\em gravitational response operator~$\Ri$} is given by
\begin{equation}\label{eq:responsedef}
	\Ri g\coloneqq4\pi^2\,\lv \varphi'(E)\rv \,\frac w{r^2}\,\int_{\R}\tilde w\,g(r,\tilde w)\diff\tilde w.
\end{equation}
Functional-analytic properties of the operators $\tilde{\L}$ and $\L$ are discussed in
Section~\ref{S:LINEARISATION}. We shall mostly work with the second order formulation~\eqref{E:LVP2}, although the analysis can be  carried out analogously in the first order formulation~\eqref{E:LVP}. 
The natural Hilbert space for our analysis is the weighted $L^2$-space
\begin{equation*}
	H\coloneqq\left\{ f\colon\Omega\to\R\mid f\text{ measurable and }\|f\|_{H}<\infty\right\},
\end{equation*}
where $\|\cdot\|_H$ is induced by the inner product
\begin{equation}\label{eq:normHdef}
	\langle f,g\rangle_H\coloneqq\int_{\Omega}\frac1{|\varphi'(E)|}\,f(r,w)\,g(r,w)\diff(r,w)
\end{equation}
and $\Omega=\{\fke>0\}$ is the interior of the steady state support. Note that the integrand in~\eqref{eq:normHdef} is well-defined since 
\be\label{E:ASS}
\varphi'(E(r,w))<0, \qquad (r,w)\in\Omega. 
\ee 
Since~$\L$ only covers the evolution of the odd-in-$w$ part of the linear perturbation, we further define the subspace of~$H$ consisting of odd-in-$w$ functions as
\begin{equation*}
	\H\coloneqq\{f\in H\mid f \text{ is odd in }w\text{ a.e.\ on }\Omega\}.
\end{equation*}
We shall see in Section~\ref{S:LINEARISATION} that~$\L$ is self-adjoint on~$\H$ when defined on the proper domain~$\mathrm D(\L)$.

The monotonicity condition~\eqref{E:ASS} is known as the Antonov linearised
stability criterion. For the case without a central point mass
it was shown in the physics literature~\cite{DoFeBa,KS} that it implies 
the spectral stability, which is equivalent to the non-negativity 
of the quadratic form $\langle\L h,h\rangle_H$ on $\mathrm D(\L)$. This result can also be thought of as the analogue of the Penrose stability criterion for plasmas~\cite{MoVi2011}. Moreover, by a simple modification of the arguments in~\cite{GuRe2007,LeMeRa11,LeMeRa12} one can prove that the steady states under consideration are nonlinearly orbitally stable in our symmetry class, which is essentially due to the energy subcritical nature of the problem. By contrast, nothing is known about the asymptotic-in-time behaviour of solutions close to such steady states and,
unlike the classical Landau damping for plasmas, it is a priori unclear whether any form of
damping occurs for the linearised dynamics~\eqref{E:LVP2}.


To provide a meaningful formulation of Landau damping, we must consider initial data in the complement of the kernel of the operator $\L$. Viewed as an operator on $\H$ the kernel of $\L$ is trivial, see Lemma~\ref{lem:Lproperties}.


\begin{definition}[Nonquantitative Landau damping]\label{D:LANDAUDAMPING}
  For $k>\frac12$ and $\k>0$, let $\fke$ denote the steady state
  of the Vlasov-Poisson system~\eqref{eq:VlasovVfixedL}--\eqref{eq:rhoVfixedL}
  of the form~\eqref{eq:ansatzfeps}. We say that the linearised Vlasov-Poisson
  equation~\eqref{E:LVP2} {\em Landau damps}, if for any initial data
  $f_0\in \mathrm D(\L)\subset\H$,
  \begin{align}\label{E:LANDAU}
    \lim_{T\to\infty}\frac1T\int_0^T\|\nabla U_{\T f(t,\cdot)}\|_{L^2(\mathbb R^3)}^2\diff t = 0,
  \end{align}
  where $\mathbb R_+\ni t\to f(t,\cdot)\in \H$ is the unique solution to~\eqref{E:LVP2} with initial data $f(0,\cdot)=f_0$.
\end{definition}


Definition~\ref{D:LANDAUDAMPING} connects to the first-order dynamics as follows.
If $t\mapsto F(t,\cdot)$ solves~\eqref{E:LVP}, then
$\pa_tU_F = U_{\pa_t F}= U_{\pa_t f_+} = - U_{\T f_-} = - U_{\T f}$, where we recall $f_+$ is the even part of $F$, $f_-=f$ is the odd part.
It follows that~\eqref{E:LANDAU}
is equivalent to the claim
\[
\lim_{T\to\infty}
\frac1T\int_0^T\|\nabla \pa_tU_{F(t,\cdot)}\|_{L^2(\mathbb R^3)}^2\diff t = 0.
\]


Formula~\eqref{E:LANDAU} implies a very weak form of decay of the macroscopic quantity $\|\nabla U_{\T f(t,\cdot)}\|_{L^2(\mathbb R^3)}^2$, without a rate. 
As we shall see in Section~\ref{S:RAGE}, this decay
follows from
 the RAGE theorem because no pure point spectrum is present. 
We chose to define Landau damping via~\eqref{E:LANDAU} for specificity, but one could in principle consider other macroscopic quantities.
We state our main theorem. 


\begin{theorem}[Oscillation vs.\ relaxation]\label{T:MAINTHEOREM}
  For $k>\frac12$ and $\k>0$, let $\fke$ denote the steady state of the Vlasov-Poisson
  system~\eqref{eq:VlasovVfixedL}--\eqref{eq:rhoVfixedL} of the form~\eqref{eq:ansatzfeps}.
  Then the following dichotomy holds.
  \begin{enumerate}[label=(\alph*)]
  \item\label{it:main1} For any $\frac12<k\le 1$ there exists an $\k_0=\k_0(k)>0$ such that
    for any $0<\k<\k_0$ the system~\eqref{E:LVP2} {\em does not} damp.
    More precisely, there exists at least one strictly positive eigenvalue of $\L$. 
\item\label{it:main2} For any $k>1$ there exists an $\k_0=\k_0(k)>0$ such that for any $0<\k<\k_0$ the system~\eqref{E:LVP2} {\em does} Landau damp  in the sense of Definition~\ref{D:LANDAUDAMPING}. In particular, the point spectrum of $\L$ is empty.
\end{enumerate}
\end{theorem}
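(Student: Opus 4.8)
The plan is to treat the two regimes separately, in both cases studying the operator $\L = -\T^2 - \Ri$ on $\H$ via a perturbative expansion in the size parameter $\k\to 0$. The first observation is that, since the response operator $\Ri$ carries a factor $\varphi'(E) = \k\,\tilde\varphi'(E)$, which is of size $\mathcal O(\k)$, the operator $\L$ is a small (relatively bounded, in the appropriate sense) perturbation of $-\T^2$. The essential spectrum of $-\T^2$ is governed by the periods of the radial particle motion in the effective potential $\Psi$: each energy--shell contributes the squares of integer multiples of the orbital frequency, and the single gap condition~\eqref{E:SINGLEGAP} (via Corollary~\ref{C:NOBANDS}) guarantees the resulting bands overlap into a single interval of essential spectrum. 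Thus the whole question reduces to: does the rank--one-in-velocity perturbation $\Ri$ produce an isolated (or embedded) eigenvalue outside (or inside) $\sigmaess(-\T^2)$, and this is where the regularity of $\tilde\varphi$ at the vacuum boundary $E = E_0$ enters decisively.

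For part~\ref{it:main1} ($\tfrac12 < k \le 1$), I would exhibit a strictly positive eigenvalue of $\L$ by a variational/Birman--Schwinger argument. Because $\langle \L h,h\rangle_H \ge 0$ (Antonov stability) and $\ker\L = \{0\}$ on $\H$, an eigenvalue can only appear as a \emph{negative} direction is impossible; rather one shows that the response term $\langle \Ri h,h\rangle_H$ can \emph{overcome} a portion of $\langle -\T^2 h,h\rangle_H$ at the \emph{bottom} of a spectral gap or, more precisely, produce a bound state \emph{above} the essential spectrum. The mechanism: when $k \le 1$ the derivative $\tilde\varphi'(E) = -k(E_0-E)_+^{k-1}$ is unbounded (or merely bounded but not vanishing) as $E\uparrow E_0$, so the weight $1/|\varphi'(E)|$ in the inner product degenerates mildly at the boundary, and the quadratic form $\langle \Ri h,h\rangle_H$, after diagonalising $\T$ in action-angle variables $(\theta, J)$ with measure $\meas$, has a non-integrable-enough tail to force a solution of the dispersion relation $\det\big(\mathrm{Id} - \Ri(-\T^2 - \lambda)^{-1}\big) = 0$ with $\lambda > \sup\sigmaess(-\T^2)$. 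Concretely I would write the Birman--Schwinger kernel in Fourier--in--angle modes, isolate the dominant $m=1$ mode, and show the corresponding scalar function of $\lambda$ crosses $1$ for some $\lambda$ beyond the top of the essential spectrum, uniformly for small $\k$ after rescaling.

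For part~\ref{it:main2} ($k>1$), the goal is to show $\sigma_{pp}(\L) = \emptyset$ and then invoke the RAGE theorem (Section~\ref{S:RAGE}) to conclude~\eqref{E:LANDAU}. Nonexistence of eigenvalues \emph{outside} $\sigmaess$ follows from the same dispersion-relation analysis as in part~\ref{it:main1}, now showing that when $k>1$ the factor $(E_0-E)^{k-1}$ \emph{vanishes} at the boundary, so the relevant scalar Birman--Schwinger function stays strictly below $1$ for all $\lambda \notin \sigmaess$ once $\k$ is small; combined with Antonov positivity this rules out all non-embedded eigenvalues. The genuinely hard part, and the crux of the paper, is ruling out \emph{embedded} eigenvalues inside the essential spectrum: here one must show that for $\lambda \in \sigmaess(-\T^2)$ the equation $\L h = \lambda h$ has no $\H$-solution. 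I would attack this by writing the eigenvalue equation mode-by-mode in action-angle variables, where $-\T^2$ acts as multiplication by $(m\,T(J)^{-1})^2$ with $T(J)$ the radial period; an embedded eigenvalue would require the singular integral equation for the amplitude to have an $L^2$ solution concentrated on the resonant set where $m/T(J) = \sqrt\lambda$. The $C^1$ regularity of $\tilde\varphi$ at $E_0$ (equivalently $k>1$) should translate, via the behaviour of $T(J)$ and its derivative near the boundary orbit, into a transversality/monotonicity property of the map $J \mapsto m/T(J)$ that makes the resonant set have measure zero \emph{and} forces the putative eigenfunction's amplitude to vanish — a Plemelj-type argument on the imaginary part of the resolvent. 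I expect this embedded-eigenvalue exclusion to be the main obstacle, requiring careful control of the period function $T(J)$ and its first derivative up to the vacuum boundary, which is precisely where the dichotomy in $k$ originates; the rest of the argument (band structure, Antonov positivity, RAGE) is comparatively soft.
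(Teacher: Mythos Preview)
Your overall architecture is right --- Birman--Schwinger for the gap, a separate embedded-eigenvalue argument, then RAGE --- but there is a genuine error in part~\ref{it:main1} and a substantive gap in part~\ref{it:main2}.

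\textbf{Part~\ref{it:main1}: the eigenvalue is \emph{below} the essential spectrum, not above.} By Corollary~\ref{C:NOBANDS}, $\sigmaess(\L)=[\tfrac{4\pi^2}{\Tmax^2},\infty[$ is unbounded above, so ``$\lambda>\sup\sigmaess$'' is vacuous. Since $\Ri\geq0$ and $\L=-\T^2-\Ri$, the perturbation pushes spectrum \emph{down}, and the eigenvalue produced in the paper (Theorem~\ref{T:BSYES}) lies in the principal gap $\mathcal G=\,]0,\tfrac{4\pi^2}{\Tmax^2}[$. The Birman--Schwinger/Mathur machinery (Proposition~\ref{prop:BSMprinciple}) is set up on~$\mathcal G$, and the crucial computation is that as $\lambda\nearrow\tfrac{4\pi^2}{\Tmax^2}$ the first-mode contribution to~$M_\lambda$ behaves like $\int_{E_0-\eta}^{E_0}\tfrac{|\varphi'(E)|}{E_0-E}\diff E\sim\epsilon\int(E_0-E)^{k-2}\diff E$, which diverges precisely when $k\leq1$. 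Your intuition about the non-integrable tail of $|\varphi'|$ is correct, but the geometry of where the eigenvalue appears is inverted.

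\textbf{Part~\ref{it:main2}: the embedded-eigenvalue argument you sketch is not the one that works here.} Monotonicity of $E\mapsto T(E)$ already makes the resonant set $\{E:T_m=\tfrac{m}{\ell}T(E)\}$ a single point for each~$\ell$, so ``measure zero'' is automatic and does not by itself exclude an $L^2$ eigenfunction; a Plemelj/limiting-absorption argument would need quantitative control of the resolvent near the real axis, which the paper does not attempt. Instead, the paper (Theorem~\ref{T:NOEEV}) derives the energy identity~\eqref{E:EEV4},
\[
\tfrac{1}{16\pi^3}\|\nabla U_f\|_{L^2}^2 \;=\; T_m\sum_{\ell\neq0}\int_I \frac{T(E)\,|\varphi'(E)|}{T_m-\tfrac{m}{\ell}T(E)}\,|\widehat{U_f}(\ell,E)|^2\diff E,
\]
and then, for the dangerous $\ell$'s, writes $\tfrac{1}{T_m-\tfrac{m}{\ell}T(E)}=\tfrac{1}{T'(E)}\partial_E\bigl(\pkl(T(E))\bigr)$ and integrates by parts in~$E$. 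The boundary terms at $E=E_0$ vanish \emph{only because} $\varphi'(E_0)=0$ when $k>1$; the boundary terms at resonant energies $E_\ell$ vanish because $\widehat{U_f}(\ell,E_\ell)=0$ by~\eqref{E:EEV2}; and the remaining boundary term at $E=\Emin$ has a good sign. After this, weighted Cauchy--Schwarz (with powers of $E-\Emin$ to handle the $C^{0,1/2}$ regularity of $r(\theta,E)$ at the elliptic point) closes the estimate as $\|\nabla U_f\|_{L^2}^2\leq C\epsilon\,\|\nabla U_f\|_{L^2}^2$, a contradiction for small~$\epsilon$. The role of $k>1$ is thus exactly the vanishing of the $E=E_0$ boundary term after one integration by parts, not a transversality statement about $T$. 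Your sketch does not contain this mechanism, and without it I do not see how to close the embedded case.
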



Our aim in the present paper is not to compute the rate of decay in the
damped case ($k>1$ and $0<\k\ll1$),
but instead to focus on the dichotomy stated in Theorem~\ref{T:MAINTHEOREM}. 
An important consequence of the theorem is that the gravitational Landau damping is sensitive to the regularity of the underlying steady state. 
Note that the steady states $\fke$ are always $C^\infty$ in the interior of their phase-space support and $C^{\lfloor k\rfloor, k-\lfloor k\rfloor}$  up to and including the vacuum boundary $\{E=E_0\}$.
Therefore the regularity limitation stems from the boundary behaviour.\footnote{We note that the steady states $\fke$ fall in the regularity class for which one can prove local-in-time well-posedness.} 



  The polytropes defined through~\eqref{eq:ansatzfeps} are very commonly studied  
  in the gravitational kinetic theory~\cite{BiTr}. However, a simple examination of the proof shows that it is only the regularity of $\fke$ near the phase-space vacuum boundary that discriminates between Landau damping and oscillations. We may therefore use more general ansatz functions $\tilde\varphi(E)$ whose Taylor expansion near the vacuum reads $\tilde\varphi(E)\approx (E_0-E)^k + o_{E\to E_0}\left((E_0-E)^k\right)$; here $k$ plays the same role as in Theorem~\ref{T:MAINTHEOREM}. 
  For example, linearised perturbations of the King model
  $\varphi_{\text{King}}(E) = \k (e^{E_0-E}-1)_+$ with $0<\epsilon\ll1$ do not damp.

A further direct consequence of the proof of the main theorem is that our methods can be used to give a criterion for the absence of embedded eigenvalues for general radial steady states, i.e., with and without the central point mass. At a technical level, the presence of the point mass allows us to consider \enquote{small} steady states by introducing the parameter $0<\k\ll1$. This in turn allows us to rigorously verify many of the structural properties of the steady states, most notably the monotonicity of the period function. Such properties are expected to be true for general steady states even when the small parameter is not available, but this is outside the scope of the present work, see~\cite{HaReScSt2}.


The RAGE theorem was used to show nonquantitative damping around certain steady states of the 2D Euler equations by Lin and Zeng, see, e.g.,~\cite[Thm.~11.7]{LiZe2017}.
Our set-up is manifestly based on the second order formulation~\eqref{E:LVP2}. However, once we establish the absence of point spectrum, we can equivalently work in the first order formulation~\eqref{E:LVP}. Following the strategy of~\cite{LiZe2017} we can restrict the dynamics to the invariant subspace of so-called {\em linearly dynamically accessible} perturbations $\im(\T)$ and exhibit weak decay of $\|\nabla U_F\|_{L^2(\R^3)}$ for the data in the orthogonal complement of the kernel of $\tilde\L$.   The subject of quantitative inviscid damping and the nonlinear stability around (typically) shear flow  solutions of 2D Euler has been a very active area in the past decade, following the nonlinear stability result of Bedrossian and Masmoudi~\cite{BeMa2015}. Without attempting to give an exhaustive overview, we refer the reader to the introductions of the recent articles~\cite{IoIyJi2023,MaZh2020}, the review article~\cite{BeGeMa2019}, and the lecture~\cite{IoICM2022} for an exhaustive list of references.


Theorem~\ref{T:MAINTHEOREM} is the first result which
shows that Landau damping occurs around compactly supported,
inhomogeneous equilibria of the gravitational Vlasov-Poisson system. The stated dichotomy between relaxation and oscillation, as well as the sharp transition threshold $k=1$ are, to our knowledge, new. This situation is
reminiscent of the well-known fact in the spectral theory of Schr\"odinger operators $-\Delta+V$ where the smallness of the potential $V$ (in the right sense) helps to exclude bound states in dimension $d=3$ and cannot exclude them when $d=1$. In this analogy, the polytropic index $k$, which measures the regularity of the steady state at the vacuum boundary, plays the role of the dimension~$d$. 

The possibility of oscillatory linear behaviour and the contrast to gravitational damping have been discussed in the physics literature~\cite[Ch.~5]{BiTr}, see also~\cite{Ma,Louis1992,Weinberg1994,BaYa2013,BaOlYa2015}. The observation that the smoothness of the perturbed steady state is relevant for the nonlinear damping 
is clearly stated in the numerical work of Ramming and Rein~\cite{RaRe18}, where
radial steady states without a central point mass are considered. We also point out the
influential work
of Kalnajs~\cite{Kalnajs1971,Kalnajs1977} where a formal approach is developed to study the decay of macroscopic quantities for linear perturbations using action-angle variables, see the discussion in~\cite[Sc.~5.3.2]{BiTr}.

The question of gravitational relaxation was investigated in the pioneering work
of Lynden-Bell~\cite{LB1962,LB1967}, see also~\cite{BiTr,LB1994}, who recognised
that there exists a phase-mixing mechanism which could explain damping around stationary galaxies. By definition, phase mixing refers to a process according to which macroscopic observables, like the spatial density or gravitational potential associated to the solutions of the pure transport problem 
\be\label{E:PURETRANSPORT}
\pa^2_{t}f-\T^2 f = 0,
\ee
decay in time. 
This mechanism was informally described by Lynden-Bell~\cite{LB1962} and relies on the crucial monotonicity assumption $T'(E)\neq0$ on $\bar I$, where $T(E)$ is the particle period function and $\bar I$ is the action interval of the steady state, see~\eqref{eq:Iepsdef}--\eqref{eq:Tepsdef}. Intuitively, this monotonicity condition allows the particles to explore the phase space very efficiently and therefore creates a mixing effect. In practice one can use arguments
\`a la Riemann-Lebesgue lemma~\cite{BrCZMa2022,RiSa2020} or vector field commutators~\cite{ChLu2022,MoRiVa2022}
to obtain decay.
However, equation~\eqref{E:PURETRANSPORT} is not the linearised dynamics around the steady state, and Theorem~\ref{T:MAINTHEOREM} shows that no mixing occurs when $\frac12<k\le1$ despite the fact that the pure transport part does mix irrespective of how small the gravitational response term $\mathcal R$ is. More precisely, we show that the collective response of the gravitational system as measured by the operator-valued potential~$\Ri$ can create nontrivial pure point spectrum. 
Furthermore, the fact that there is some form of mixing in the regime $k>1$ as implied by Theorem~\ref{T:MAINTHEOREM} is highly nontrivial and involves a careful analysis of the response operator~$\Ri$. 

In the plasma case, nonlinear Landau damping around spatially homogeneous steady states was rigorously shown in the celebrated work of Mouhot and Villani~\cite{MoVi2011}, see also~\cite{BeMaMo2016,GrNgRo2020}. The results of~\cite{MoVi2011} also apply to gravitational interactions (applying the Jeans swindle, see~\cite{BiTr,Ki2003}), but such steady states do not represent isolated solutions of the Vlasov-Poisson system, see also the related work~\cite{Be2021}. For plasma dynamics, linear damping around homogeneous equilibria in the whole space was recently analysed in~\cite{HKNgRo2022,BeMaMo2022}, see  also~\cite{GlSc94}. For a recent nonlinear result see~\cite{IoPaWaWi2022}, for the so-called screened case see~\cite{BeMaMo2018,HKNgRo2021} and for the case of massless electrons see~\cite{GaIa}. Far less is known about damping around spatially
inhomogeneous steady states. Guo and Lin~\cite{GuLi2017} constructed examples of stable BGK waves (that do not contain trapped particles) with a non-empty and with an empty point spectrum. The first Landau damping result for a class of BGK waves with a trapping region was shown by Despr\'es~\cite{Despres2019}. For a recent overview of known results about Landau damping, see~\cite{Be2022}.

The plan of the paper is as follows.
Basic properties of the steady states and the linearised operator are explained in Section~\ref{S:STEADYSTATES}. In Section~\ref{S:ABSENCE} we prove that there are no embedded eigenvalues when $k>1$ and $\k$ is sufficiently small, see Theorem~\ref{T:NOEEV}. In Section~\ref{S:GAP} we derive the criterion for the existence of eigenvalues outside the essential spectrum, see Proposition~\ref{prop:BSMprinciple}. We then use it to show that such eigenvalues exist when $\frac12<k\le1$ and do not when $k>1$, see Theorems~\ref{T:BSYES} and~\ref{T:NOEVINGAP} respectively. Theorem~\ref{T:MAINTHEOREM} is finally proved in Section~\ref{S:RAGE}. In Appendix~\ref{sc:limit} we provide many key results about the underlying family of steady states, most notably various uniform-in-$\k$ bounds for the period function $T(E)$ and its derivatives, as they play a crucial role in our analysis.
Before we enter into the detailed proofs, in Section~\ref{strategy} we give  a short
overview of the general strategy which we employ.

\bigskip 

{\bf Acknowledgments.}
M. Had\v zi\'c's research is supported by the EPSRC Early Career Fellowship EP/S02218X/1.
M. Schrecker's research is supported by the EPSRC Post-doctoral Research Fellowship EP/W001888/1.


\section{An overview of the proof}\label{strategy}

The starting point for our analysis is a reformulation of~\eqref{E:LVP2} in action-angle
variables~\cite{LB1994,BiTr}. 
We denote the minimal particle energy of the steady state by $\Emin$. Letting 
\be\label{eq:Iepsdef}
I\coloneqq]\Emin,E_0[
\ee 
be the \enquote{action} interval, we associate to any $E\in I$ two unique radii $r_-(E)<r_+(E)$ such that $\Psi(r_\pm(E))=E$. Particles are trapped inside the potential well defined
by the effective potential~$\Psi$, and at any fixed energy level $E\in I$,
they oscillate periodically  between their turning points
$r_-(E)$ and $r_+(E)$.
The period $T(E)$ of this motion is  given by the formula
 \begin{equation}\label{eq:Tepsdef}
	T(E)\coloneqq 2\int_{r_-(E)}^{r_+(E)}\frac{\diff r}{\sqrt{2E-2\Psi(r)}},\qquad E\in I.
\end{equation}
The angle $\theta$ parametrises this radial motion,
suitably normalised by the period function.
More precisely, for $(r,w)\in\Omega$ with $w\geq0$ and $E=E(r,w)$ given by~\eqref{eq:defE}, the angle is defined as 
\begin{equation}\label{thetadef}
  \theta(r,w) = \frac1{T(E)}\int_{r_-(E)}^r\frac{\diff s}{\sqrt{2E-2\Psi(s)}}
  \in [0,\frac12].
\end{equation}
Letting $\theta(r,w)=1-\theta(r,-w)$ for $w<0$ leads to the one-to-one change of variables $(r,w)\mapsto(\theta,E)$, where~$\Omega$, i.e., the interior of the support of the steady state in phase space, is mapped onto the cylinder 
\begin{equation*}
	\mathbb S^1\times I.
\end{equation*}
Here $\mathbb S^1$ is the 1-dimensional torus, i.e., $\mathbb S^1 \coloneqq [0,1]$, where~$0$ and~$1$ are identified.
In action-angle variables $(\theta,E)$, the transport operator~$\T$ is now given by the simple formula
\begin{equation*}
\T = \frac1{T(E)}\pa_\theta.
\end{equation*}
This allows us to explicitly determine the essential spectrum of $-\T^2$ in terms of the period function~\eqref{eq:Tepsdef}.
Moreover, the gravitational response operator~$\Ri$ does not affect the essential spectrum
and we obtain that the operator $\L$
has essential spectrum of the
form $[\frac{4\pi^2}{\Tmax^2},\infty[$ for $0<\epsilon\ll1$, where~$\Tmax<\infty$ is
the maximum of the period function~$T$ over $\bar I$,
cf.\ 
Corollary~\ref{C:NOBANDS}. 
Proving these statements mainly relies on
a frequency analysis in the angle variable~$\theta$.
For $f\in L^2(\mathbb S^1)$ we let
\begin{equation}\label{eq:Fouriercoeffdef}
  \hat f(\ell)\coloneqq\int_{\mathbb S^1}f(\theta)\,e^{-2\pi i\ell\theta}\diff\theta,\
  \ell\in\Z;
\end{equation}
Fourier transformations always refer to the variable $\theta$, also for functions
of several variables.

{\bf Absence of embedded eigenvalues (Section~\ref{S:ABSENCE}).}
The hardest part of the proof of Theorem~\ref{T:MAINTHEOREM} is to show that there are no eigenvalues of $\L$ embedded in the essential spectrum when $k>1$, see Theorem~\ref{T:NOEEV}. If we assume, by contradiction, that there exists an eigenvalue of $\L$ of the form $\frac{4\pi^2m^2}{T(E_m)^2}$ for some $(m,E_m)\in\mathbb N\times\bar I$, then $\pm\frac{2\pi i m}{T(E_m)}$ is an eigenvalue of $\tilde\L$, i.e., there exists an $f$ such that $\tilde\L f = \frac{2\pi i m}{T(E_m)} f$. We move to action-angle variables and pass to the Fourier representation 
\begin{align*}
f(\theta,E) =\sum_{\ell\in\mathbb Z} \hat f(\ell,E) e^{2\pi i \ell \theta}, \ \ U_f(\theta,E) = \sum_{\ell\in\mathbb Z} \widehat{U_f}(\ell,E) e^{2\pi i \ell \theta}
\end{align*}
of the unknowns, where 
a simple calculation then shows that for almost every $E\in I$,
\be\label{E:DISPERSIONINTRO}
\hat f(\ell,E) = - T_m  \frac{|\varphi'(E)| \widehat{U_f}(\ell,E)}{T_m -\frac{m}{\ell} T(E)}, \ \ E\in I, \ \ell\in\Zast\coloneqq \Z\setminus\{0\},
\ee
where $T_m:=T(E_m)$, see Lemma~\ref{L:STEP0}.

{\em Gravitational field via the Plancherel identity.}
The key idea is to use the Poisson equation \eqref{eq:Ufprime} to express
$\|\nabla U_f\|_{L^2(\R^3)}^2$ as 
$-16\pi^3 \int f\,U_f T(E)\diff(\theta,E)$. By the
Plancherel identity and~\eqref{E:DISPERSIONINTRO} we then conclude
\begin{equation}\label{E:EEV4INTRO}
\frac1{16\pi^3}\int_{\R^3} |\nabla U_f|^2 \diff x
=  T_m  \sum_{\ell\neq0}\int_I \frac{T(E)|\varphi'(E)|}{T_m -\frac{m}{\ell}T(E)} \lv \widehat{U_f}(\ell,E)\rv^2 \diff E.
\end{equation}
Recall that by~\eqref{eq:ansatzfeps}, $|\varphi'(E)|=\mathcal O(\k)$, so the way to reach
a contradiction is to show that the right-hand side of~\eqref{E:EEV4INTRO}
is bounded by $C\epsilon\int |\nabla U_f|^2\diff x$ and then use the smallness of~$\k$ to absorb $\|\nabla U_f\|_{L^2(\R^3)}$ into the left-hand side. The fundamental difficulty
in doing so are the small denominators appearing inside the integral on the right-hand side of~\eqref{E:EEV4INTRO}.
Clearly there can exist frequency-energy pairs $(\ell,E_\ell)$ such that $T_m-\frac m\ell T(E_\ell)=0$. 

{\em Log-singularity.} The idea is to rewrite such a  possible singularity $\frac1{T_m-\frac m\ell T(E)}$ as $-\frac{\ell}{m T'(E)}\pa_E\left(\log(T_m - \frac m{\ell}T(E))\right)$ in the region where the argument of the logarithm is positive. Note that we are using the property $T'\neq0$ on $\bar I$ in a fundamental way. Our idea is simple; for any frequency $\ell$ we
integrate by parts in $E$ to offload the
$E$-derivative onto the gravitational potential $\widehat{U_f}(\ell,E)$ so that we schematically deal with terms of the form
\be\label{E:GENEXPRESSION}
\k \sum_{\ell\in\mathbb \Zast}
\int_I  g(E)\,|\log(T_m - \frac m{\ell}T(E))|\, |\widehat{U_f}(\ell,E)|\,
|\pa_E\widehat{U_f}(\ell,E)| \diff E,
\ee
where $g$ is some \enquote{well-behaved} weight.
The $\log$-singularity is very mild and the hope is that the integration in $E$ will control it. The small factor of $\k$ is there due to $|\varphi'(E)|\lesssim \k$. 
The first big issue is that the integration-by-parts produces boundary terms, and they must either vanish or have to show up with the correct sign. This is a serious issue, and we must 
carefully analyse the frequency-energy pairs $(\ell,E)$ that produce small and vanishing denominators, see Lemma~\ref{L:RESONANTSIZE0}.
The introduction of the above $\log$-singularity is necessary only at frequencies for which the contributions from the right-hand side of~\eqref{E:DISPERSIONINTRO} are positive. It is a structural feature of the problem  that precisely in this range all the boundary terms are either of {\em good} sign or {\em vanish} due to the regularity and require no further estimates. 
For the vanishing  boundary terms,  we crucially use the regularity assumption $k>1$ which implies $\varphi'(E_0)=0$.

The second key issue is that the minimal point of the effective potential $\Psi$, corresponding to the radius $\rL$ and energy $\Emin$, is a critical point with a strictly positive second derivative. This property, as shown in Lemma~\ref{L:REG}, implies that for any $\theta\in\mathbb S^1$ the map $E\mapsto r(\theta,E)$ is merely $C^{0,\frac12}$ at $E=\Emin$ and in particular
\[
\lv \pa_E r(\theta,E)\rv \lesssim (E-\Emin)^{-\frac12}, \ \ (\theta,E)\in \mathbb S^1\times I,
\]
which creates singular powers of $E-\Emin$ when we try to compare $|\pa_E\widehat{U_f}(\ell,E)|$ to $\pa_r U_f$.
We get around this by introducing positive powers of $E-\Emin$ as weights to
\enquote{de-singularise} $\pa_E\widehat{U_f}(\ell,E)$ and compensate with negative powers of $E-\Emin$ hitting the mild log-singularity, so that we can close the estimates
via Cauchy-Schwarz, see Step~2 of the proof of Theorem~\ref{T:NOEEV}.
The proof shows that the elliptic character of the Poisson equation as manifested through the energy-like identity~\eqref{E:EEV4INTRO} gives the winning strategy, as it permits us to estimate the function $\widehat{U_f}(\ell,E)$ by the derivatives of $U_f$. 
 
{\bf Existence vs.\ absence of eigenvalues in the principal gap (Section~\ref{S:GAP})}.
Existence of positive eigenvalues of~$\L$ below the bottom of the essential spectrum parallels the classical quantum-mechanical problem of finding bound states below the absolutely continuous part of the spectrum of a Schr\"odinger operator. A classical strategy to study bound states is the Briman-Schwinger principle~\cite[Sc.~4.3.1]{LiSe10}, a version of which was pioneered by Mathur~\cite{Ma} for the Vlasov-Poisson system in a different context. In~\cite{HaReSt22,MK} the authors independently derived a criterion for the existence of eigenvalues in the {\em principal gap}
\begin{equation}\label{eq:principalgapdef}
	\mathcal G\coloneqq\left]0,\min\sigmaess(\L)\right[=]0,\frac{4\pi^2}{\Tmax^2}[.
\end{equation}
The work~\cite{HaReSt22} additionally gave examples of steady states where such a criterion can be verified.
We apply a slightly different version of the principle developed in~\cite{HaReSt22} to obtain a necessary and sufficient condition for the existence of eigenvalues in the principal gap $]0,\min\sigmaess(\L)[$, see Proposition~\ref{prop:BSMprinciple}. If $k>1$ this criterion is used in Theorem~\ref{T:NOEVINGAP} to show that there are no eigenvalues in the principal gap and if $\frac12<k\le1$, we use it to prove the
opposite, namely that there are oscillatory eigenvalues in the gap and therefore no damping occurs. 
Both of these proofs are again performed in the $0<\epsilon\ll1$ regime in order to control steady state quantities like the period function~$T$.

{\bf The RAGE theorem and the proof of the main result (Section~\ref{S:RAGE}).}
To complete the proof of Theorem~\ref{T:MAINTHEOREM} we observe that, by the above, the operator $\L$ has empty point spectrum on $\H$ when $k>1$ and $\k>0$ is sufficiently small. We rephrase the linear dynamics $\partial_{t}^2f+\L f =0$ as a first order system and then apply the RAGE theorem~\cite{CyFrKiSi} to show the
nonquantitative decay statement~\eqref{E:LANDAU}. To make this work, we only need to show that the operator
$f\mapsto\lv\varphi'(E)\rv U_{\T f}$
is compact on a suitable function space, 
which again works by virtue of the smoothing properties of the solution operator to the Poisson equation~\eqref{E:POISSON}, see Section~\ref{S:RAGE}.

{\bf Properties of the steady states and the period function $T(E)$ (Appendix~\ref{sc:limit}).}
One of the key analytical tools in our analysis are good uniform-in-$\k$ estimates for steady states $\fke$ with fixed $k>\frac12$ and $0<\k\ll1$.
Most notably, we show that, as $\epsilon\to0$, the period function~$T$ converges in~$C^2$ to the explicitly known
period function $T^0$ generated by the single point mass:
\begin{equation*}
	T^0(E) = \frac\pi{\sqrt2}\,\frac M{(-E)^{\frac32}}.
\end{equation*}
In this way we deduce that~$T=T(E)$ is strictly increasing in~$E$ for $0<\k\ll1$, which is a key ingredient in our analysis. 
In general, (monotonicity) properties of period functions are important in the analysis of the linearised Vlasov-Poisson system, cf.~\cite{HaReSt22,MK}, as well as in the general context of Hamiltonian systems, cf.~\cite{Ch85,ChWa86}.
Further uniform-in-$\epsilon$ bounds on $T$ up to its second derivative ensure that various constants appearing in the proof of Theorem~\ref{T:NOEEV}
are $\k$-independent. 


\section{Steady states and linearisation}\label{S:STEADYSTATES}


\subsection{Existence of steady states}\label{ssc:stst}


\begin{lemma}\label{lem:stst}
Fix the parameter~$\kappa>0$ so that the single-gap condition~\eqref{E:SINGLEGAP} holds. 
Then for any $k>\frac12$ and $\epsilon>0$ there exists a steady state $\fke$ of the system~\eqref{eq:VlasovVfixedL}--\eqref{eq:rhoVfixedL} defined by~\eqref{eq:ansatzfeps}.
The steady state is compactly supported in phase space, more precisely,
\begin{equation}\label{eq:fepssupport}
	\supp(\fke)\subset[\Rmin^0,\Rmax^0]\times[-\frac{\sqrt{2M}}{\sqrt{\Rmin^0}},\frac{\sqrt{2M}}{\sqrt{\Rmin^0}}],
\end{equation}
where $0<\Rmin^0<\Rmax^0<\infty$ are given by
\be\label{eq:Rmindef}
\Rmin^0\coloneqq\frac{-M+\sqrt{M^2+2\kappa L}}{2\kappa} , \qquad \Rmax^0\coloneqq\frac{-M-\sqrt{M^2+2\kappa L}}{2\kappa}.
\ee
The total mass of the steady state is positive and finite, i.e.,
\begin{equation}\label{eq:Mepsdef}
	0<\Ms\coloneqq4\pi\int_0^\infty r^2\rho(r)\diff r<\infty,
\end{equation}
where $\rho$ is the spatial density associated to $\fke$.
\end{lemma}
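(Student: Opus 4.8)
The plan is to construct the steady state by solving a fixed-point problem for the potential. Since $\fke = \k\,\tilde\varphi(E)$ with $E = \frac12 w^2 + \Psi(r)$ and $\Psi(r) = U(r) - \frac Mr + \frac L{2r^2}$ depends on $U$, while $U$ in turn is determined by $\rho$, which is determined by $\fke$, the whole construction reduces to a self-consistent equation for $U$ (equivalently for $\Psi$). First I would carry out the $w$-integration explicitly: for a fixed effective potential $\Psi$, the induced density is
\begin{equation*}
  \rho(r) = \frac{\pi}{r^2}\int_{\R}\k\,(E_0 - \tfrac12 w^2 - \Psi(r))_+^k\diff w
         = c_k\,\k\,\frac{1}{r^2}\,(E_0 - \Psi(r))_+^{k+\frac12},
\end{equation*}
with $c_k = \pi\int_{\R}(1-\tfrac12 s^2)_+^k\,\diff s = \sqrt{2}\,\pi\,B(\tfrac12, k+1) < \infty$ for $k > \frac12$ (this is where the lower bound on $k$ enters, ensuring integrability and finiteness of $\rho$). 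Substituting into \eqref{E:POISSON}--\eqref{eq:rhoVfixedL} gives a nonlinear integral equation for $U$ on $(0,\infty)$ with the boundary condition $U(r)\to 0$ as $r\to\infty$, and the parameter $\kappa = E_0 - U(0) < 0$ fixes the cut-off energy $E_0$ once $U(0)$ is known.

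Next I would set up the fixed-point argument. Because the central point mass dominates near the origin, the natural ansatz is $U = U_{\text{pt}} + \mathcal U$ where $U_{\text{pt}}$ is absent (the point-mass contribution $-M/r$ is already split off into $\Psi$) and $\mathcal U$ is the small self-consistent correction generated by $\rho$, of size $\mathcal O(\k)$. One works on the ball $\{\|\mathcal U\|_{C^1}\le C\k\}$ in an appropriate function space and shows that the map $\mathcal U\mapsto$ (solve Poisson with $\rho[\mathcal U]$) is a contraction for $\k$ small — but in fact, since the problem is monotone (larger $U$ gives larger $\rho$ gives larger $U'$), one can instead use a monotone iteration scheme à la RaRe13, which avoids smallness of $\k$ and works for all $\k>0$ as claimed. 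The support statement \eqref{eq:fepssupport} follows once we know $\Psi$: on the support $\Psi(r)\le E_0$, and since the correction $\mathcal U$ is bounded, the dominant balance is $-\frac Mr + \frac L{2r^2} \le E_0 - U(0) + \mathcal O(\k) \approx \kappa$, whose solution set is precisely $[\Rmin^0, \Rmax^0]$ up to $\mathcal O(\k)$ corrections; the single-gap condition \eqref{E:SINGLEGAP} guarantees that $M^2 + 2\kappa L > 0$ so that $\Rmin^0, \Rmax^0$ in \eqref{eq:Rmindef} are real and positive and the potential well is non-degenerate. The velocity bound $|w|\le \sqrt{2M/\Rmin^0}$ comes from $\frac12 w^2 \le E_0 - \Psi(r)\le E_0 - \Psi(\Rmin^0)$ together with the elementary estimate $E_0 - \Psi(r)\le M/\Rmin^0$ on the support. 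Positivity and finiteness of the total mass $\Ms$ in \eqref{eq:Mepsdef} are then immediate: $\rho\ge 0$ and $\rho\not\equiv 0$ give $\Ms > 0$, while compact support of $\rho$ together with $\rho\in L^\infty_{\text{loc}}$ away from $r=0$ and the $r^{-2}$ behaviour (which is integrable against $r^2\diff r$) give $\Ms < \infty$.

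The main obstacle I expect is not the fixed-point construction itself — which is standard once the point mass is split off — but rather verifying that the resulting $\Psi$ genuinely has the structure of a single potential well with the claimed turning-point radii, uniformly in the range of $\kappa$ allowed by \eqref{E:SINGLEGAP}, and that the correction $\mathcal U$ is controlled in $C^1$ (indeed $C^2$, as needed later) up to the vacuum boundary despite the limited Hölder regularity of $\rho$ at $\{E = E_0\}$. Concretely, one must check that $\Psi' $ vanishes at exactly one interior point $\rL\in(\Rmin^0,\Rmax^0)$ with $\Psi''(\rL)>0$; for the leading part $-M/r + L/(2r^2)$ this is a one-line computation giving $\rL = L/M$ (to leading order), and the smallness of $\mathcal U'$ preserves this by the implicit function theorem. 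I would organise the detailed verification of these structural facts, and the attendant uniform-in-$\k$ estimates, by referring forward to Appendix~\ref{sc:limit}, since that is where the quantitative control of $\Psi$ and $T$ is developed; here it suffices to record existence, compact support with the explicit enclosures \eqref{eq:fepssupport}, and $0 < \Ms < \infty$.
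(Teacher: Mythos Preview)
Your outline is in the right spirit but differs from the paper's construction in a way that leaves a real gap. The paper does \emph{not} run a fixed-point argument for $U$; instead it introduces $y\coloneqq E_0-U$ and solves the scalar initial value problem
\[
y'(r)=-\frac{4\pi}{r^2}\,\epsilon\int_0^r g\Big(y(s)+\frac Ms-\frac L{2s^2}\Big)\diff s,\qquad y(0)=\kappa,
\]
with $g(z)=c_k z_+^{k+1/2}$. This device is decisive: it eliminates the unknown cut-off $E_0$ from the nonlinearity (recovering it a~posteriori as $E_0=y_\infty$), and it makes the monotonicity $y'\le 0$, hence $y\le\kappa$, immediate. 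From $y\equiv\kappa$ on the vacuum region one reads off $\Rmin=\Rmin^0$ \emph{exactly}, and from $y\le\kappa$ one gets $y(r)+M/r-L/(2r^2)\le\kappa+M/r-L/(2r^2)\le 0$ for $r\ge\Rmax^0$, hence $\rho=0$ there. Your perturbative argument (``dominant balance \ldots up to $\mathcal O(\epsilon)$ corrections'') only yields approximate containment, which is strictly weaker than the exact inclusion~\eqref{eq:fepssupport} claimed in the lemma; the monotone iteration you allude to would fix this, but only if you first pass to $y$ so that the comparison $y\le\kappa$ becomes available.

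Two smaller points. First, the role of $k>\tfrac12$ is not the convergence of the $w$-integral (that integral is finite for all $k>-1$); it is that $g(z)=c_k z_+^{k+1/2}\in C^1(\R)$ precisely when $k>\tfrac12$, which is what makes the ODE for $y$ well-posed and gives $\rho\in C^1$. Second, the structural facts about $\Psi$ (unique minimum, $\Psi''(\rL)>0$) are established in Lemma~\ref{lem:effpotprop} directly from $r^3\Psi'$ being strictly increasing, with no smallness of $\epsilon$ or implicit-function perturbation needed; they are not part of the present lemma.
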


\noindent
The proof follows the strategy of~\cite{GuReSt22,RaRe13};
we give the details in Appendix~\ref{A:STST}.

An important quantity associated to the steady state is the effective potential $\Psi$ defined in~\eqref{E:EFFPOT} whose properties we analyse next. 

\begin{lemma}\phantomsection\label{lem:effpotprop}
	\begin{enumerate}[label=(\alph*)]
		\item\label{it:effpot1} There exists a unique radius~$\rL>0$ such that 
		\begin{equation}\label{eq:Emindef}
			\min_{]0,\infty[}\Psi=\Psi(\rL)\eqqcolon\Emin<0.
		\end{equation} 
		This radius is given as the unique zero of~$\Psi'$ on~$]0,\infty[$ and it holds that~$\Psi'<0$ on~$]0,\rL[$ and $\Psi'>0$ on $]\rL,\infty[$.
		\item\label{it:effpot2} Let
		\begin{equation}\label{eq:Aepsdef}
			\AEL\coloneqq]\Emin,0[
		\end{equation}
		denote the set of all {\em admissible particle energies}.
		Then, for any $E\in\AEL$ there exist two unique radii $r_\pm(E)$ satisfying
		\begin{equation*}
			0<r_-(E)<\rL<r_+(E)<\infty
		\end{equation*} 
		and
		\begin{equation}\label{eq:rpmdef}
			\Psi\left(r_\pm(E)\right)=E.
		\end{equation}
	\end{enumerate}
\end{lemma}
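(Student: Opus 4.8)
The statement concerns the effective potential $\Psi(r) = U(r) - \frac{M}{r} + \frac{L}{2r^2}$ where $U$ is the gravitational potential induced by the steady state $\fke$. The plan is to analyse $\Psi'$ directly and exploit the sign and monotonicity structure forced by the point mass and the angular momentum barrier.

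First I would compute $\Psi'(r) = U'(r) + \frac{M}{r^2} - \frac{L}{r^3}$. From the Poisson equation~\eqref{E:POISSON} we have $U'(r) = \frac{4\pi}{r^2}\int_0^r s^2\rho(s)\,\diff s \geq 0$ for all $r>0$, since $\rho\geq 0$; moreover $U'$ is bounded and in fact $U'(r) = \mathcal O(1/r^2)$ as $r\to\infty$ (total mass finite, by Lemma~\ref{lem:stst}) and $U'(r)\to 0$ as $r\to 0$ because $\rho$ is supported away from the origin (indeed $\supp\rho\subset[\Rmin^0,\Rmax^0]$, so $U'\equiv 0$ on $]0,\Rmin^0[$). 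The key observation is that $r^3\Psi'(r) = r^3 U'(r) + Mr - L$. The function $r\mapsto r^3 U'(r) + Mr - L$ is the sum of $Mr-L$, which is strictly increasing and passes from $-L<0$ to $+\infty$, and $r^3U'(r) = 4\pi r\int_0^r s^2\rho(s)\,\diff s$, which is nonnegative and nondecreasing in $r$ (both factors nonnegative and nondecreasing). Hence $r\mapsto r^3\Psi'(r)$ is strictly increasing, equals $-L<0$ near $r=0$, and tends to $+\infty$; therefore it has a unique zero $\rL>0$, with $\Psi'<0$ on $]0,\rL[$ and $\Psi'>0$ on $]\rL,\infty[$. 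This gives part~\ref{it:effpot1}, the value $\Emin = \Psi(\rL)$ being the strict global minimum; that $\Emin<0$ follows since $\Psi(r)\to 0$ as $r\to\infty$ (as $U(r)\to 0$ and $-M/r + L/(2r^2)\to 0$) while $\rL$ is an interior minimum, or more directly by evaluating $\Psi$ at a convenient radius.

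For part~\ref{it:effpot2}, fix $E\in\AEL = ]\Emin,0[$. Since $\Psi$ is strictly decreasing on $]0,\rL[$ with $\Psi(\rL) = \Emin < E$ and $\Psi(r)\to +\infty$ as $r\to 0^+$ (because the $L/(2r^2)$ term dominates near the origin, $U$ being bounded there), the intermediate value theorem and strict monotonicity give a unique $r_-(E)\in]0,\rL[$ with $\Psi(r_-(E)) = E$. Similarly $\Psi$ is strictly increasing on $]\rL,\infty[$ with $\Psi(\rL) = \Emin < E < 0 = \lim_{r\to\infty}\Psi(r)$, so there is a unique $r_+(E)\in]\rL,\infty[$ with $\Psi(r_+(E)) = E$. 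This establishes the two radii and the ordering $0 < r_-(E) < \rL < r_+(E) < \infty$.

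The main obstacle, such as it is, is purely bookkeeping about the regularity and boundary behaviour of $U$: one must confirm that $U$ is $C^1$ on $]0,\infty[$ with $U'\geq 0$, that $U$ extends continuously (indeed is constant) near $r=0$ so the $L/(2r^2)$ barrier governs the behaviour of $\Psi$ at the origin, and that $U(r)\to 0$ at infinity, so that $\Psi\to 0^-$ there. All of these are immediate consequences of~\eqref{E:POISSON}--\eqref{eq:rhoVfixedL} together with the compact-support and finite-mass statements of Lemma~\ref{lem:stst}, so no genuine difficulty arises; the proof is a short real-variable argument once the monotonicity of $r\mapsto r^3\Psi'(r)$ is isolated.
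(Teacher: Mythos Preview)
Your proposal is correct and follows precisely the approach indicated in the paper: the paper's proof simply states that the assertions follow from the asymptotic behaviour of $\Psi$ and $\Psi'$ at $r=0,\infty$ together with the fact that $r^3\Psi'$ is strictly increasing, and you have supplied exactly these details. Your justification of the strict monotonicity of $r\mapsto r^3\Psi'(r)=4\pi r\int_0^r s^2\rho(s)\diff s + Mr - L$ is the core of the argument and matches the paper's one-line hint.
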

\begin{proof}
  The assertions follow from the asymptotic behavior of $\Psi$
  and $\Psi'$ at $r=0,\infty$, and the fact that
  $r^3 \Psi'$ is strictly increasing.
\end{proof}

In particular, since~\eqref{eq:rhog} implies that $\rho(r)>0$ is equivalent to $\Psi(r)<E_0$ for $r>0$, we conclude that
\begin{equation}\label{eq:rhoepssupp}
	\supp(\rho)=[r_-(E_0),r_+(E_0)]\eqqcolon[\Rmin,\Rmax]\subset[\Rmin^0,\Rmax^0].
\end{equation}
The steady state has the following regularity properties.

\begin{lemma}\label{lem:ststregularity}
  It holds that
  $U\in C^3([0,\infty[)$ and $\rho\in C^1([0,\infty[)$.
  In addition,
  $U,\rho\in C^\infty([0,\infty[\setminus\{\Rmin,\Rmax\})$.
\end{lemma}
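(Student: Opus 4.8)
The plan is a bootstrap resting on two facts: the explicit radial profile of $\rho$, and the two-derivative gain of the radial Poisson integral; the exponent $k+\tfrac12$ (equivalently the hypothesis $k>\tfrac12$, i.e.\ $k+\tfrac12>1$) governs everything. Inserting the ansatz \eqref{eq:ansatzfeps} into \eqref{eq:rhoVfixedL} and substituting $w=\sqrt{2(E_0-\Psi(r))}\,s$ on the set $\{E_0-\Psi(r)>0\}$ gives
\[
\rho(r)=\frac{\pi\epsilon c_k}{r^2}\,\bigl(E_0-\Psi(r)\bigr)_+^{k+\frac12},\qquad r>0,\qquad c_k:=\sqrt2\int_{-1}^{1}(1-s^2)^k\diff s,
\]
with $\Psi=U-\tfrac Mr+\tfrac L{2r^2}$ and $c_k\in\,]0,\infty[$ because $k>\tfrac12>-1$. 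By Lemma~\ref{lem:effpotprop} and \eqref{eq:rhoepssupp} one has $\{E_0-\Psi>0\}=\,]\Rmin,\Rmax[$ with $0<\Rmin<\Rmax<\infty$, so $\rho$ is bounded with support compactly contained in $]0,\infty[$; hence $U'(r)=\tfrac{4\pi}{r^2}\int_0^r s^2\rho(s)\diff s$ is continuous on $]0,\infty[$ and vanishes on $]0,\Rmin]$, which already yields $U\in C^1([0,\infty[)$ (and thus $\Psi\in C^1(]0,\infty[)$) to start the iteration.

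Next I would use $k+\tfrac12>1$: the scalar profile $h(t):=(E_0-t)_+^{k+\frac12}$ lies in $C^1(\R)$, its derivative $h'(t)=-(k+\tfrac12)(E_0-t)_+^{k-\frac12}$ being continuous at $t=E_0$ since $k-\tfrac12>0$. Hence $\rho=\pi\epsilon c_k\,r^{-2}\,(h\circ\Psi)$ is $C^1$ on $]0,\infty[$ and, vanishing identically on $[0,\Rmin[$, in fact $\rho\in C^1([0,\infty[)$ --- the first assertion. Feeding this back, $g(r):=\int_0^r s^2\rho(s)\diff s$ satisfies $g'=r^2\rho\in C^1$, so $g\in C^2([0,\infty[)$, with $g\equiv0$ on $[0,\Rmin]$ and $g\equiv\Ms/(4\pi)$ on $[\Rmax,\infty[$; therefore $U'=4\pi g/r^2$ is $C^2$ on $]0,\infty[$, and since $U'\equiv0$ near $0$ this upgrades to $U'\in C^2([0,\infty[)$, i.e.\ $U\in C^3([0,\infty[)$. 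For the $C^\infty$ statement off $\{\Rmin,\Rmax\}$: on $[0,\Rmin]$ we have $U'\equiv0$ so $U$ is constant, and on $[\Rmax,\infty[$ we have $U(r)=-\Ms/r$ (using $\lim_{r\to\infty}U=0$), both $C^\infty$, as is $\rho\equiv0$ on these pieces; on the open interval $]\Rmin,\Rmax[$ the quantity $E_0-\Psi$ is strictly positive, so $t\mapsto t^{k+\frac12}$ is $C^\infty$ near its range and the iteration loses nothing: $U\in C^j(]\Rmin,\Rmax[)\Rightarrow\Psi\in C^j\Rightarrow\rho\in C^j\Rightarrow g\in C^{j+1}\Rightarrow U\in C^{j+2}(]\Rmin,\Rmax[)$. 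Starting from $j=3$ yields $U,\rho\in C^\infty(]\Rmin,\Rmax[)$, and altogether $U,\rho\in C^\infty([0,\infty[\setminus\{\Rmin,\Rmax\})$.

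The genuinely delicate point --- and the reason the global orders are exactly $U\in C^3$ and $\rho\in C^1$ and no more --- is the behaviour at the vacuum radii $\Rmin,\Rmax$, where $E_0-\Psi$ vanishes to first order (as $\Psi'(\Rmin),\Psi'(\Rmax)\neq0$ by Lemma~\ref{lem:effpotprop}), so $\rho\sim(r-\Rmin)_+^{k+\frac12}$, which for $\tfrac12<k<\tfrac32$ is $C^1$ but in general not $C^2$; this forces the first bootstrap to halt at $\rho\in C^1$, $U\in C^3$ there. That one still reaches $C^1$ rather than mere Hölder regularity is exactly what $k>\tfrac12$ buys: it puts $h$ in $C^1$ and in particular makes $\rho(\Rmin)=\rho'(\Rmin)=\rho(\Rmax)=\rho'(\Rmax)=0$, so the one-sided derivatives of $U'$ up to order two match across these two radii. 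The superficially circular dependence $\rho\leftrightarrow U$ is harmless, since each pass through the Poisson integral gains two derivatives while $\rho$ depends smoothly on $\Psi$ in the interior of the support, and the loop is primed by the $C^1$-regularity of $U$ obtained above from $\rho\in L^\infty$ with compact support in $]0,\infty[$.
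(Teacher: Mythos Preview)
Your proof is correct and follows essentially the same approach as the paper: derive the explicit formula $\rho(r)=\tfrac{\epsilon}{r^2}g(E_0-\Psi(r))$ with $g(z)=c_k z_+^{k+1/2}\in C^1(\R)$ (this is where $k>\tfrac12$ enters), conclude $\rho\in C^1$, gain two derivatives through the radial Poisson integral to get $U\in C^3$, and then bootstrap to $C^\infty$ away from the vacuum radii since $g$ is smooth off $\{0\}$ and $E_0-\Psi$ vanishes only at $\Rmin,\Rmax$. The paper's argument is terser because it invokes $y=E_0-U\in C^1$ directly from the construction (Appendix~\ref{A:STST}) to prime the loop, whereas you re-derive $U\in C^1$ from $\rho\in L^\infty$ with support away from the origin; both starting points are equivalent, and your additional remarks on why the global orders halt at $\rho\in C^1$, $U\in C^3$ are correct and informative but not strictly required for the lemma as stated.
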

\begin{proof}
	The continuous differentiability of~$\rho$ on~$[0,\infty[$ follows by~\eqref{eq:rhog} since $E_0-U=y\in C^1([0,\infty[)$ and~$g\in C^1(\R)$. Twice differentiating~\eqref{eq:Veps} then yields $U\in C^3([0,\infty[)$. Moreover, observe that $g\in C^\infty(\R\setminus\{0\})$ and that $E_0-\Psi(r)=0$ is equivalent to $r\in\{\Rmin,\Rmax\}$ by Lemma~\ref{lem:effpotprop}. Thus, we conclude that~$U$ and~$\rho$ are indeed infinitely differentiable on $[0,\infty[\setminus\{\Rmin,\Rmax\}$ by iterating the above argument.
\end{proof}

We note that 
a larger polytropic exponent~$k$ 
leads to higher regularity of~$U$ and~$\rho$.


\subsection{Particle motions and the period function}\label{ssc:particlemotions}


Let~$\fke$ be a steady state as given by Lemma~\ref{lem:stst} with associated effective potential~$\Psi$ defined in~\eqref{E:EFFPOT}. Because the particle energy is of the form $E(r,w)=\frac12w^2+\Psi(r)$, the characteristic flow of the steady state is governed by the system
\begin{equation}\label{eq:charsys}
	\dot r=w,\qquad \dot w=-\Psi'(r).
\end{equation}
Due to the structure of the effective potential established in Lemma~\ref{lem:effpotprop}, the behaviour of solutions of this system is similar to the three-dimensional case~\cite[P.~624f.]{HaReSt22}: The particle energy~$E$ is conserved along solutions of~\eqref{eq:charsys} and every solution with negative energy~$E<0$ is trapped, global in time, and either constant (with energy $E=\Emin$) or time-periodic with the period function $T(E)$ given by~\eqref{eq:Tepsdef}. 

For $E\in\AEL$ let $(R,W)(\cdot,E)\colon\R\to]0,\infty[\times\R$ denote the global solution of~\eqref{eq:charsys} satisfying the initial condition
\begin{equation*}
	R(0,E)=r_-(E),\qquad W(0,E)=0.
\end{equation*}
We further define
\begin{equation}\label{eq:repsdef}
	r(\theta,E)\coloneqq R(\theta\,T(E),E),\quad w(\theta,E)\coloneqq W(\theta\,T(E),E),\qquad E\in\AEL,\,\theta\in\mathbb S^1,
\end{equation}
and note that $(r,w)(\cdot,E)$ is periodic with period~$1$ for $E\in\AEL$.
The period function and the characteristics enjoy the following regularity properties.
\begin{lemma}\label{L:PERIODREGULARITY}
  It holds that
  $(R,W)\in C^2(\R\times\AEL)$ and $T\in C^2(\AEL)$.
\end{lemma}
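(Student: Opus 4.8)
The plan is to obtain the regularity of the characteristic flow $(R,W)$ from smooth dependence on initial conditions for the ODE system~\eqref{eq:charsys}, and then transfer this to the period function $T$ via the integral formula~\eqref{eq:Tepsdef}, the only delicate point being the square-root singularities at the turning points $r_\pm(E)$. First I would note that the right-hand side of~\eqref{eq:charsys} is $(w, -\Psi'(r))$ with $\Psi \in C^3(]0,\infty[)$ by Lemma~\ref{lem:ststregularity} (since $U \in C^3$ and the explicit terms $-M/r + L/(2r^2)$ are smooth on $]0,\infty[$), so the vector field is $C^2$. The initial data $r_-(E)$, and the trivial $W(0,E)=0$, depend $C^2$-smoothly on $E \in \AEL$: indeed $r_-(E)$ is defined by $\Psi(r_-(E)) = E$ with $\Psi'(r_-(E)) \neq 0$ on $]0,\rL[$ by Lemma~\ref{lem:effpotprop}\ref{it:effpot1}, so the implicit function theorem gives $r_- \in C^3(\AEL)$. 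Standard smooth dependence of ODE solutions on initial conditions and parameters (see e.g.\ the flow being as regular as the vector field) then yields $(R,W) \in C^2(\R \times \AEL)$, and hence $(r,w) \in C^2(\mathbb S^1 \times \AEL)$ through~\eqref{eq:repsdef}, once we also know $T \in C^2$.

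For the period function, the naive formula~\eqref{eq:Tepsdef} has integrable but non-smooth behaviour at the endpoints $r_\pm(E)$ where $2E - 2\Psi(r)$ vanishes, so differentiating under the integral sign is not directly justified. The standard remedy is a change of variables that desingularises the integral. I would substitute $r = r(\theta, E)$ (or equivalently write the period as $T(E) = \int_0^{T(E)} dt$ along the orbit and change to an angular-type variable), using near $r_-(E)$ the local structure $2E - 2\Psi(r) = \Psi'(r_-(E)) \cdot (\text{something})$; concretely, writing $r - r_-(E) = s$ and factoring $2E - 2\Psi(r) = (r - r_-(E)) h_-(r,E)$ with $h_- $ smooth and strictly positive (since $\Psi'(r_-(E)) < 0$), and similarly at $r_+(E)$, one arrives at an expression for $T(E)$ as an integral over a fixed domain of a $C^2$ integrand depending on $E$ through smooth functions with no vanishing denominators. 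Alternatively — and this is likely cleanest given the paper's setup — one uses that the solution $(R,W)(\cdot, E)$ of~\eqref{eq:charsys} is $C^2$ jointly, that $R$ attains its maximum $r_+(E)$ at time $T(E)/2$ where $\dot R = W = 0$ but $\ddot R = -\Psi'(r_+(E)) \neq 0$, so $T(E)$ is implicitly characterised by $W(T(E)/2, E) = 0$ together with a transversality condition $\partial_t W(T(E)/2, E) = -\Psi'(r_+(E)) \neq 0$; the implicit function theorem then gives $T \in C^2(\AEL)$ directly from $(R,W) \in C^2$.

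The main obstacle is the circular-looking interdependence: $(R,W) \in C^2(\R \times \AEL)$ requires smooth initial data, which is fine, but then the claim $(r,w) \in C^2(\mathbb S^1 \times \AEL)$ via~\eqref{eq:repsdef} also needs $T \in C^2$, while the cleanest proof of $T \in C^2$ uses $(R,W) \in C^2$. This is resolved by ordering the argument correctly: (i) prove $(R,W) \in C^2(\R \times \AEL)$ first, purely from ODE theory (no reference to $T$); (ii) deduce $T \in C^2(\AEL)$ from step (i) via the implicit characterisation $W(T(E)/2, E) = 0$ with nonvanishing $\partial_t W$ at that point; (iii) conclude regularity of $(r,w)$ from~\eqref{eq:repsdef} by composition. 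A secondary technical point worth a sentence is that the transversality $\Psi'(r_\pm(E)) \neq 0$ must be checked uniformly, but this is immediate from Lemma~\ref{lem:effpotprop} since $r_-(E) < \rL < r_+(E)$ and $\Psi'$ vanishes only at $\rL$. No finer information about $\Psi$ near $\rL$ (where the degeneracy $E \to \Emin$ lives) is needed here, since $\AEL$ is an open interval not containing $\Emin$; the $C^{0,1/2}$ behaviour at $\Emin$ alluded to in Section~\ref{strategy} is a separate matter handled elsewhere.
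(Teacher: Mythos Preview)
Your proposal is correct and follows essentially the same route as the paper: regularity of $\Psi$ gives $r_\pm\in C^3(\AEL)$ via the implicit function theorem, ODE dependence on initial data gives $(R,W)\in C^2$, and then $T\in C^2$ follows from the implicit function theorem applied to the zero of $W$ with the transversality $\dot W\neq 0$. The only minor differences are that the paper uses $W(T(E),E)=0$ at the full period rather than your half-period version, and it first records continuity of $T$ (via dominated convergence) before invoking the implicit function theorem; both are cosmetic.
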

\begin{proof}
	Since $\Psi\in C^3(]0,\infty[)$ by Lemma~\ref{lem:ststregularity}, the implicit function theorem implies that $r_\pm\in C^3(\AEL)$. We thus conclude the claimed regularity of $(R,W)$ by basic ODE theory.
	
	                Lebesgue's dominated convergence theorem yields that $T$ is continuous on~$\AEL$, cf.~\cite[Lemma~B.7]{HaReSt22}. Because the period function is given as the solution of $W(T(E),E)=0$ with $\dot W(T(E),E)>0$ for $E\in\AEL$, applying the implicit function theorem similarly
to~\cite[Theorem~3.6 et seq.]{MK} then implies that $T\in C^2(\AEL)$. 
\end{proof}

A fundamental ingredient in our analysis is the use of action-angle variables introduced in~\eqref{eq:Iepsdef}--\eqref{thetadef}.
For functions $f\colon\Omega\setminus\{(\rL,0)\}\to\R$ we write
\begin{equation*}
	f(\theta,E)=f((r,w)(\theta,E))
\end{equation*} 
for $(\theta,E)\in\mathbb S^1\times I$. 
Note that integrals change via
\begin{equation}\label{cov_acang}
	\diff w\diff r = T(E)\diff\theta\diff E.
\end{equation}
Action-angle coordinates are not defined at $(r,w)=(\rL,0)\in\Omega$ since the characteristic system~\eqref{eq:charsys} possesses a stationary solution associated to the minimal energy~$\Emin$ there (this corresponds to the so-called elliptic point of the Hamiltonian). The next result controls the behaviour of the action-angle coordinates at this singularity.
Before we proceed, we introduce the abbreviations 
\begin{equation}\label{eq:Tminmaxdef}
	\Tmin\coloneqq\inf_{I}T,\qquad\Tmax\coloneqq\sup_{I}T,
	\end{equation}
and also let
$	\Tpmin\coloneqq\inf_{I}T'$, $\Tpmax\coloneqq\sup_{I}T'$,
$	\Tppmin\coloneqq\inf_{I}T''$, and $\Tppmax\coloneqq\sup_{I}T''$.
We later verify that each of these values is finite, cf.\ Remark~\ref{rem:Tmaxfinite}. 

\begin{lemma}\label{L:REG}
Let $r\colon \mathbb S^1\times\AEL\to]0,\infty[$ be defined as in~\eqref{eq:repsdef}. Then $r\in C^2(\mathbb S^1\times\AEL)$ and there exists a constant $C>0$ such that 
\begin{equation}\label{eq:rthetaestimate}
	|r(\theta,E)-\rL|+|\partial_\theta r(\theta,E)|\leq C{\sqrt{E-\Emin}}
\end{equation}
as well as
\begin{equation*}
	|\partial_Er(\theta,E)|\leq\frac C{\sqrt{E-\Emin}},\qquad (\theta,E) \in \mathbb S^1\times I.
\end{equation*}
The constant~$C$ is bounded in terms of
$\Tmax$, $\Tpmax$, $|I|$,
$\|\Psi''\|_{L^\infty([\Rmin,\Rmax])}$, $\|\Psi'''\|_{L^\infty([\Rmin,\Rmax])}$,  and $\Psi''(\rL)^{-1}$.
\end{lemma}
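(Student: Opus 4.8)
The quantitative bounds all follow from understanding the behaviour of the characteristic flow near the elliptic point $(\rL,0)$, where $\Psi$ has a nondegenerate minimum. The plan is to reduce to a normal-form analysis: since $\Psi\in C^3$ with $\Psi'(\rL)=0$ and $\Psi''(\rL)>0$, we may write, for $r$ in a neighbourhood of $\rL$,
\[
2(E-\Psi(r)) = (r-\rL)^2\,\Theta(r),
\]
where $\Theta$ is continuous, strictly positive near $\rL$, and $\Theta(\rL)=\Psi''(\rL)>0$; this is the standard Morse-type factorisation. Introducing the new radial variable $u = (r-\rL)\sqrt{\Theta(r)}$ (a $C^1$ change of variables near $\rL$ with nonvanishing Jacobian), the well $\{\Psi(r)\le E\}$ becomes, to leading order, the interval $u^2 \le 2(E-\Emin)$, and in the $u$-variable the motion is a genuinely harmonic oscillation up to controllable lower-order corrections. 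Equivalently, one sets $u = \sqrt{2(E-\Emin)}\,\sin(\cdot)$; the turning points $r_\pm(E)$ then satisfy $|r_\pm(E)-\rL| \lesssim \sqrt{E-\Emin}$ by the implicit function theorem applied to $\Psi(r)=E$ near $\rL$, which already gives the first half of~\eqref{eq:rthetaestimate} once we show $r(\theta,E)$ stays in this neighbourhood — and away from $\rL$ it is simply bounded on the compact set $[\Rmin,\Rmax]$.

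\textbf{Key steps.} First I would record the $C^2$-regularity of $r$ on $\mathbb S^1\times\AEL$: this is immediate away from the elliptic point from Lemma~\ref{L:PERIODREGULARITY} and the definition~\eqref{eq:repsdef}, since $r(\theta,E)=R(\theta T(E),E)$ is a composition of $C^2$ maps and $T\in C^2(\AEL)$; near $E=\Emin$ one argues as in the cited references using the Morse factorisation to see that the singularity in the naive formulas is only apparent. Second, for the turning-point bound $|r_\pm(E)-\rL|\le C\sqrt{E-\Emin}$: expand $E-\Emin = \Psi(r_\pm(E))-\Psi(\rL) = \tfrac12\Psi''(\rL)(r_\pm-\rL)^2 + O(|r_\pm-\rL|^3)$ and invert, the constant depending on $\Psi''(\rL)^{-1}$ and $\|\Psi'''\|_{L^\infty}$. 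Third, for $|r(\theta,E)-\rL|$: since the particle at energy $E$ stays in $[r_-(E),r_+(E)]$, this bound is inherited from the turning-point bound. Fourth, for $|\pa_\theta r(\theta,E)|$: one has $\pa_\theta r = T(E)\,w(\theta,E) = T(E)\,W(\theta T(E),E)$, and $|w| = \sqrt{2(E-\Psi(r))} \le \sqrt{2(E-\Emin)}$ since $\Psi\ge\Emin$, so $|\pa_\theta r|\le \Tmax\sqrt{2(E-\Emin)}$ directly. Fifth, and this is the delicate one, for $|\pa_E r(\theta,E)|\lesssim (E-\Emin)^{-1/2}$: differentiate the identity $r(\theta,E)=R(\theta T(E),E)$ to get $\pa_E r = T'(E)\,\theta\,W(\theta T(E),E) + (\pa_E R)(\theta T(E),E)$; the first term is bounded (indeed $\lesssim \Tpmax\sqrt{E-\Emin}$) using the $w$-bound, while $\pa_E R$ solves the variational equation along the characteristic and, integrating it, one finds it blows up like $(E-\Emin)^{-1/2}$ precisely because near the turning points $\dot W = -\Psi'(R)$ degenerates. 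Concretely, using $\pa_E$ of the energy relation $\tfrac12 W^2 + \Psi(R) = E$ gives $W\,\pa_E W + \Psi'(R)\,\pa_E R = 1$, so $\pa_E R$ is comparable to $1/\Psi'(R)$ away from turning points and to $1/W$ near them; a short computation using the Morse factorisation shows the integrated contribution is $O((E-\Emin)^{-1/2})$, uniformly, with the constant controlled by $\Tmax$, $|I|$, $\|\Psi''\|_\infty$, $\|\Psi'''\|_\infty$ and $\Psi''(\rL)^{-1}$ as claimed.

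\textbf{Main obstacle.} The routine parts are the away-from-$\rL$ regularity and the $|\pa_\theta r|$ bound. The real work is the $\pa_E r$ estimate near $E=\Emin$: the naive differentiation under the integral sign in the angle formula~\eqref{thetadef} produces a non-integrable singularity, so one must either (i) work in the normal-form coordinate $u$ where $\pa_E$ of the sine-parametrisation is manifestly $O((E-\Emin)^{-1/2})$ and then transfer back, controlling the Jacobian of the $C^1$ change of variables and its $E$-dependence, or (ii) use the variational-equation/energy-relation approach above and carefully bound $\int 1/W$ along the half-period, splitting into the bulk region $|r-\rL|\sim\sqrt{E-\Emin}$ and the turning-point boundary layers. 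I expect approach (ii) is cleanest for extracting the explicit dependence of the constant on the listed quantities. One must also keep careful track that all constants are expressed through $\Tmax,\Tpmax,|I|$ and the norms of $\Psi$ on the fixed compact set $[\Rmin,\Rmax]$, since uniform-in-$\epsilon$ control of these is what makes the estimate useful later; this is where the bound $\Psi''(\rL)^{-1}$ enters, quantifying the nondegeneracy of the elliptic point.
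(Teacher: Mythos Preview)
Your treatment of the $|r-\rL|$ and $|\partial_\theta r|$ bounds is correct and matches the paper essentially verbatim: the particle is trapped between the turning points, whose distance from $\rL$ is controlled by Taylor expansion of $\Psi$ at $\rL$, and $\partial_\theta r = T(E)\,W$ with $|W|\le\sqrt{2(E-\Emin)}$.

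For the $\partial_E r$ bound, your plan is workable but more laborious than the paper's. You correctly split $\partial_E r(\theta,E)=\theta\,T'(E)\,W+\partial_E R$ and note that the hard part is $\partial_E R$. You then propose either a Morse normal-form change of coordinates or the differentiated energy relation $W\,\partial_E W+\Psi'(R)\,\partial_E R=1$. The latter by itself is one equation in two unknowns, so it does not close without also controlling $\partial_E W$; you would end up analysing the full variational system anyway, and your suggested boundary-layer splitting then has to be carried out carefully. The Morse normal-form route works but makes the explicit constant dependence harder to track.

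The paper sidesteps all of this with a single observation: $\partial_E R(s,E)$ solves the scalar variational equation $\ddot z=-\Psi''(R(\cdot,E))\,z$ with initial data $z(0)=\partial_E r_-(E)$, $\dot z(0)=0$, so $\partial_E R(s,E)=\partial_E r_-(E)\cdot z(s,E)$ where $z$ is the solution with normalised data $z(0)=1$, $\dot z(0)=0$. Gr\"onwall bounds $|z|$ on $[0,\Tmax]$ by a constant depending only on $\Tmax$ and $\|\Psi''\|_{L^\infty([\Rmin,\Rmax])}$, and the entire singularity sits in the explicit factor $\partial_E r_-(E)=1/\Psi'(r_-(E))$, which is $O((E-\Emin)^{-1/2})$ by the Taylor argument you already sketched. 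This factorisation is cleaner than either of your proposed routes and makes the claimed constant dependence transparent.
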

\begin{proof}
	The claimed regularity of $r$ follows by Lemma~\ref{L:PERIODREGULARITY}.
	For $\Emin\leq E<0$ let $z=z(\cdot,E)\colon\R\to\R$ be the unique global solution of
	\begin{equation}\label{eq:ODEz}
		\ddot z=-\Psi''(R(\cdot,E))\,z,\qquad z(0)=1,\;\dot z(0)=0,
	\end{equation} 
	where we set $R(\cdot,\Emin)\equiv\rL$.
	Gr\"onwall's inequality implies that there exists a constant $C>0$ as described in the statement of the lemma such that
		$|z(s,E)|\leq C$ for $s\in[0,\Tmax],\,E\in I$.
	Furthermore, basic ODE theory yields
	\begin{equation*}
		\partial_ER(s,E)=\partial_Er_-(E)\,z(s,E),\qquad s\in\R,\,E\in I.
	\end{equation*}
	Because 
		$\partial_Er(\theta,E)=\dot R(\theta\,T(E),E)\,T'(E)+\partial_ER(\theta\,T(E),E)$
	for $(\theta,E)\in\mathbb S^1\times I$ and
	\begin{equation}\label{eq:dotRestimate}
		|\dot R(s,E)|=\sqrt{2E-2\Psi(R(s,E))}\leq\sqrt2\,\sqrt{E-\Emin}\leq\frac{\sqrt2\,|I|}{\sqrt{E-\Emin}}
	\end{equation}
	for $(s,E)\in\R\times I$, it remains to show that 
	\begin{equation}\label{eq:rminestimate}
		|\partial_Er_-(E)|\leq\frac C{\sqrt{E-\Emin}},\qquad E\in I,
	\end{equation}
	for some constant $C>0$ as specified in the statement of the lemma. In particular,~\eqref{eq:rthetaestimate} follows by~\eqref{eq:dotRestimate}.
	In order to establish~\eqref{eq:rminestimate}, first observe that
	\begin{equation}\label{eq:rminpartialEformula}
		\partial_Er_-(E)=\frac1{\Psi'(r_-(E))}
	\end{equation}
	for $E\in I$ by the implicit function theorem. 
	Moreover, the radial Poisson equation~\eqref{eq:Veps} yields
	\begin{equation}\label{eq:effpotprime2}
		\Psi''(r)=-\frac{2\Psi'(r)}r + \frac L{r^4}+4\pi\rho(r),\qquad r>0.
	\end{equation}
	In particular,
	\begin{equation}\label{eq:effpotprime2circ}
		\alpha\coloneqq\Psi''(\rL)>0=\Psi'(\rL)
	\end{equation}
	by Lemma~\ref{lem:effpotprop}. This implies that in a small neighbourhood of $E=\Emin$ the denominator in~\eqref{eq:effpotprime2} behaves to the leading order
	like $r_-(E)-r_\ast$, which then easily yields~\eqref{eq:rminestimate} using standard continuity arguments and the mean value theorem.
\end{proof}

\subsection{Limiting behaviour of small steady states}\label{ssc:limit}

For fixed $k>\frac12$ and $\kappa$ satisfying~\eqref{E:SINGLEGAP} we study the behaviour of the steady state family $\fke=\varphi(E)=\k\,\tilde\varphi(E)$ given by Lemma~\ref{lem:stst} as $\k\to0$. In this section, we always add a superscript $\epsilon$ to steady state quantities to make the $\epsilon$-dependencies more visible.

The limiting case~$\epsilon=0$ corresponds to~$U^0\equiv0$. Hence, the associated effective potential is of the form
\begin{equation}\label{eq:effpot0}
	\Psi^0(r)\coloneqq -\frac Mr+\frac L{2r^2},\qquad r>0.
\end{equation} 
The structure of this function is similar as in the case $\epsilon>0$ described in Lemma~\ref{lem:effpotprop}, with
\begin{equation}\label{eq:Emin0def}
	\min_{]0,\infty[}\Psi^0=\Psi^0(\rL^0)=\Emin^0=-\frac{M^2}{2L},\qquad \rL^0=\frac LM,
\end{equation}
and 
\begin{equation*}
	r_\pm^0(E)=\frac{-M\mp\sqrt{M^2+2EL}}{2E}
\end{equation*}
for $E\in\AEL^0$, where
\begin{equation}\label{eq:A0def}
	\AEL^0\coloneqq]\Emin^0,0[.
\end{equation}
Accordingly, the period function takes on the form
\begin{equation}\label{eq:T0def}
	T^0(E)\coloneqq 2\int_{r_-^0(E)}^{r_+^0(E)}\frac{\diff r}{\sqrt{2E-2\Psi^0(r)}}=\frac\pi{\sqrt2}\,\frac M{(-E)^{\frac32}}
\end{equation}
for $E\in\AEL^0$; the latter identity is due to a straight-forward calculation.

\begin{lemma}\phantomsection\label{lem:limit}
	The following assertions hold.
	\begin{enumerate}[label=(\alph*)]
		\item\label{it:limit10} $\Emin^\epsilon\to\Emin^0$ and $E_0^\epsilon\to\kappa$ as $\epsilon\to0$; recall~\eqref{eq:Emindef} and~\eqref{eq:Emin0def}.
		\item\label{it:limit20} $\Tmin^\epsilon\to\Tmin^0$ and $\Tmax^\epsilon\to\Tmax^0$ as $\epsilon\to0$, where the limiting action interval (compare~\eqref{eq:Iepsdef}) is
		\begin{equation}\label{eq:I0def}
			I^0\coloneqq]\Emin^0,\kappa[
		\end{equation}
		and $\Tmin^0$, $\Tmax^0$ are defined similar to~\eqref{eq:Tminmaxdef};
		recall~\eqref{eq:Tepsdef}, and~\eqref{eq:T0def}. Moreover,
		\begin{equation}
			c\leq (T^\epsilon)^{(j)}(E)\leq C,\qquad E\in I^\epsilon.
		\end{equation}
		In particular, $T^\epsilon$ is strictly increasing on~$I^\epsilon$ for $0\leq\epsilon<\epsilon_0$.
	\end{enumerate}
\end{lemma}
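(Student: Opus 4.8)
\textbf{Proof proposal for Lemma~\ref{lem:limit}.}

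The plan is to establish the convergence of the effective potential and its derivatives first, and then to transfer this to the period function via its integral representation~\eqref{eq:Tepsdef}. The starting point is that the potential $U^\epsilon$ associated with the steady state $\fke = \epsilon\,\tilde\varphi(E)$ is, by~\eqref{E:POISSON}--\eqref{eq:rhoVfixedL} and the bound $\rho^\epsilon = \mathcal O(\epsilon)$ on the compact shell $[\Rmin^0,\Rmax^0]$ (compare~\eqref{eq:fepssupport}), of size $\mathcal O(\epsilon)$ in $C^3([0,\infty[)$ as $\epsilon\to 0$; consequently $\Psi^\epsilon \to \Psi^0$ in $C^3$ on compact subsets of $]0,\infty[$, and $U^0\equiv 0$ so $\Psi^0$ is exactly~\eqref{eq:effpot0}. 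From~\eqref{eq:effpotprime2circ} we have $\Psi''(\rL) > 0$ uniformly for small $\epsilon$ (this is the key non-degeneracy), so the unique zero $\rL^\epsilon$ of $(\Psi^\epsilon)'$ converges to $\rL^0 = L/M$ by the implicit function theorem, and hence $\Emin^\epsilon = \Psi^\epsilon(\rL^\epsilon) \to \Psi^0(\rL^0) = \Emin^0$. The cut-off energy $E_0^\epsilon$ is determined by $\kappa = E_0^\epsilon - U^\epsilon(0)$ (see~\eqref{E:KAPPADEF}), so $E_0^\epsilon = \kappa + U^\epsilon(0) \to \kappa$ since $U^\epsilon(0) = \mathcal O(\epsilon)$. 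This gives part~\ref{it:limit10}, and identifies the limiting action interval $I^\epsilon = ]\Emin^\epsilon, E_0^\epsilon[ \to ]\Emin^0,\kappa[ = I^0$.

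For part~\ref{it:limit20}, the first issue is the $C^0$-convergence $T^\epsilon \to T^0$. The integral in~\eqref{eq:Tepsdef} is improper at both endpoints $r_\pm(E)$, but one can desingularise by the substitution $r = \rL + \sqrt{E - \Emin}\,\sigma$ (or a Levi-Civita-type change of variables), turning the integrand into a smooth function of $\sigma$ with integrable behaviour governed by $\Psi''(\rL) > 0$; the $C^3$-convergence of $\Psi^\epsilon$ then gives uniform convergence of the regularised integrand and hence $T^\epsilon(E) \to T^0(E)$, with the convergence uniform on $\bar I^0$. The explicit formula $T^0(E) = \frac{\pi}{\sqrt 2}\,M(-E)^{-3/2}$ from~\eqref{eq:T0def} is strictly increasing on $]\Emin^0,0[$ with $(T^0)'(E) = \frac{3\pi}{2\sqrt 2}\,M(-E)^{-5/2} > 0$ bounded away from $0$ and $\infty$ on the compact interval $\bar I^0 = [\Emin^0,\kappa]$ (using $\kappa < 0$), and similarly $(T^0)''$ is bounded and bounded below there. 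Combined with the $C^2$-convergence $(T^\epsilon)^{(j)} \to (T^0)^{(j)}$, $j = 0,1,2$ — which follows by differentiating the desingularised integral representation under the integral sign, using $T^\epsilon \in C^2(\AEL)$ from Lemma~\ref{L:PERIODREGULARITY} together with the uniform $C^3$-bounds on $\Psi^\epsilon$ — one obtains uniform constants $0 < c \le (T^\epsilon)^{(j)}(E) \le C$ on $I^\epsilon$ for all small $\epsilon$, and in particular $T^\epsilon$ is strictly increasing. This also yields $\Tmin^\epsilon \to \Tmin^0$ and $\Tmax^\epsilon \to \Tmax^0$ and the finiteness claimed in Remark~\ref{rem:Tmaxfinite}.

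The main obstacle is the uniform-in-$\epsilon$ control of the derivatives $(T^\epsilon)^{(j)}$ near the left endpoint $E = \Emin^\epsilon$, where the naive differentiation of~\eqref{eq:Tepsdef} under the integral sign produces factors that are singular as $E \to \Emin$ because the turning points $r_\pm(E)$ collide and $\partial_E r_\pm(E) = 1/\Psi'(r_\pm(E))$ blows up like $(E-\Emin)^{-1/2}$ (exactly the phenomenon quantified in Lemma~\ref{L:REG}). The remedy is to commit to the desingularising substitution before differentiating, so that all $E$-derivatives act on a family of integrands that are smooth and uniformly bounded together with their derivatives — this is where the non-degeneracy $\Psi^\epsilon{}''(\rL^\epsilon) \ge \alpha_0 > 0$, uniform in $\epsilon$ by the $C^2$-convergence and~\eqref{eq:effpotprime2circ}, is essential, and where the fact that $\bar I^0$ stays strictly to the left of $0$ (so that the right turning point $r_+(E)$ stays bounded and $\Psi'(r_+(E))$ stays bounded away from $0$) is used at the other end. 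The more routine bookkeeping — justifying differentiation under the integral, tracking which norms of $\Psi^\epsilon$ the constants depend on, and matching the limit with the explicit $T^0$ — is then standard; full details are deferred to Appendix~\ref{sc:limit}.
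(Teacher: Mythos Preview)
Your treatment of part~\ref{it:limit10} is essentially the paper's: both argue that $U^\epsilon\to0$ in $C^3$ from $\rho^\epsilon=\mathcal O(\epsilon)$ on the fixed shell, deduce $\Psi^\epsilon\to\Psi^0$, and read off $\rL^\epsilon\to\rL^0$, $\Emin^\epsilon\to\Emin^0$, $E_0^\epsilon\to\kappa$.

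For part~\ref{it:limit20} you take a genuinely different route. The paper does \emph{not} desingularise and differentiate under the integral sign. Instead it derives an explicit Chicone--Wang type formula $(T^\epsilon)'(E)=\frac{1}{E-\Emin^\epsilon}\int\frac{G_0^\epsilon(r)}{\sqrt{2E-2\Psi^\epsilon}}\,dr$ with $G_0^\epsilon(\rL^\epsilon)=0$, integrates by parts to trade $G_0^\epsilon/(\Psi^\epsilon)'$ for a function $G_1^\epsilon$ with a removable singularity at $\rL^\epsilon$, and then applies the (extended) mean value theorem to cancel the $(E-\Emin^\epsilon)^{-1}$ prefactor; the same trick is iterated for $(T^\epsilon)''$ via auxiliary functions $G_2^\epsilon,G_3^\epsilon$. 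Away from the near-circular regime the paper uses the concavity bound $E-\Psi^\epsilon(r)\gtrsim(r_+-r)(r-r_-)$ to justify dominated convergence after an affine change of variables. This yields explicit limiting values $T^0(\Emin^0)$, $(T^0)'(\Emin^0)$, $(T^0)''(\Emin^0)$ in terms of derivatives of $\Psi^0$ at $\rL^0$. Your Morse/Levi--Civita approach is more conceptual and, if carried out carefully, also works; it has the advantage of handling all derivatives at once rather than deriving a new integral identity for each order.

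Two points to tighten. First, the substitution $r=\rL+\sqrt{E-\Emin}\,\sigma$ by itself does \emph{not} desingularise: the transformed integrand still has inverse-square-root singularities at the ($E$-dependent) endpoints $\sigma_\pm$. What you need is the Morse-lemma version---a smooth diffeomorphism $r\leftrightarrow u$ near $\rL$ with $\Psi^\epsilon(r)-\Emin^\epsilon=\tfrac12 u^2$, followed by $u=\sqrt{2(E-\Emin^\epsilon)}\sin\phi$, so that $T^\epsilon$ becomes a smooth integral over the fixed interval $[-\pi/2,\pi/2]$. Second, differentiating this representation twice and controlling the limit $\epsilon\to0$ requires more than the global $C^3$ bound on $\Psi^\epsilon$ you invoke: the Morse change of variables and its derivatives consume regularity, so you need $\Psi^\epsilon\to\Psi^0$ in $C^j$ for $j$ large enough near $\rL$. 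This is available---Lemma~\ref{lem:ststregularity} gives $\Psi^\epsilon\in C^\infty$ on $]\Rmin,\Rmax[$ and the paper establishes convergence of all derivatives there (Lemma~\ref{lem:effpotlimitnearcirc})---but you should cite the interior smoothness rather than the global $C^3$ bound.
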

\begin{proof}
  The proof of these convergences is rather technical and
  postponed to Appendix~\ref{ssc:convergence}.
	Part~\ref{it:limit10} is shown in Lemmas~\ref{lem:limitrhopot} and~\ref{lem:limitrLEmin},
	part~\ref{it:limit20} is proven in Lemmas~\ref{lem:limitTminmax}, \ref{lem:limitdETminmax} and~\ref{lem:limitdE2Tminmax}. 
\end{proof}

\subsection{Linearisation}\label{S:LINEARISATION}


In order to analyse the linearised operator~$\L$ given by~\eqref{eq:Ldef} with
methods from functional analysis, we first define the transport operator~$\T$ in a weak sense, based on~\cite[Def.~2.1]{ReSt20}:

	For a function~$f\in H$ the transport term $\T f$ {\em exists weakly} if there exists some $\mu\in H$ such that for every test function $g\in C^1_c(\Omega)$,
	\begin{equation*}
		\langle f,\T g\rangle_H=-\langle\mu,g\rangle_H,
	\end{equation*} 
	where $\T g$ is given by~\eqref{eq:transportdef}.
	In this case, $\T f\coloneqq\mu$ {\em weakly}. 
	The domain $\mathrm D(\T)$ of~$\T$ is the subspace of~$H$ where~$\T$ exists weakly, 
	while the domain of the squared transport operator is defined as
	\begin{equation*}
		\mathrm D(\T^2)\coloneqq\{f\in H\mid f\in\mathrm D(\T),\;\T f\in\mathrm D(\T)\}.
	\end{equation*}

We collect the following properties of the transport operator and its square as in~\cite{HaReSt22}, see also~\cite[Prop.~5.1]{GuReSt22} and~\cite{ReSt20}; further properties of~$\T$ can be derived as in these papers.

\begin{lemma}[Properties of~$\T$ and~$\T^2$]\phantomsection\label{lem:transportproperties}
		\begin{enumerate}[label=(\alph*)]
		\item\label{it:Tprop1} $\T\colon\mathrm D(\T)\to H$ is skew-adjoint as a densely defined operator on $H$, i.e., $\T^\ast=-\T$,
                  and $\T^2\colon\mathrm D(\T^2) \to H$ is self-adjoint.
		\item\label{it:Tprop2} The domains of $\T$ and $\T^2$ can be characterised in action-angle coordinates as follows: 
		\begin{align*}
			\mathrm{D}(\T^m) = \Big \{f \in H  \mid\,&  f(\cdot, E) \in H^m_{\theta} \text{ for a.e. } E \in I \\
			& \text{and } \sum_{j=1}^{m}\int_{I} \frac{T(E)^{1-2j}}{|\varphi'(E)|} \int_{\mathbb S^1} |\partial_\theta^j f(\theta, E)|^2 \diff\theta\diff E< \infty\Big\}
		\end{align*} 
		for $m\in\{1,2\}$, where 
		\begin{equation}\label{eq:h1theta}
			H^1_\theta\coloneqq \{ y\in H^1(]0,1[) \, | \, y(0)=y(1)\},\qquad H^2_\theta\coloneqq\{ y\in H^1_{\theta}\mid\dot y \in H^1_\theta\}.
		\end{equation}
		In addition, for $f\in\mathrm D(\T^m)$ with $m\in\{1,2\}$ and a.e.\ $(\theta,E)\in\mathbb S^1\times I$,
		\begin{equation}\label{eq:transportaaweak}
			(\T^m f)(\theta,E) = \left(\frac{1}{T(E)}\right)^m(\partial_\theta^mf)(\theta,E).
		\end{equation}
		\item\label{it:Tprop3} The kernel of $\T$ consists of functions only depending on~$E$, i.e., 
		\begin{equation}\label{eq:kernelTransport}
			\ker(\T) = \left\{f \in H \mid \exists g\colon \R \to \R \text{ s.t. } f(r,w) = g(E(r,w))\text{ a.e.\ on } \Omega\right\}.
		\end{equation}
		\item\label{it:Tprop6} $\T$ reverses $w$-parity
		and the restricted operator $\T^2\big|_{\H}\colon\mathrm D(\T^2)\cap\H\to\H$ is self-adjoint.
		\item\label{it:Tprop8} The spectrum and the essential spectrum of $-\T^2$ are of the form
		\begin{equation*}
			\sigma(-\T^2) = \sigmaess(-\T^2)=\overline{\left(\frac{2\pi\N_0}{T(I)}\right)^2},\qquad\sigma(-\T^2\big|_\H) = \sigmaess(-\T^2\big|_\H)=\overline{\left(\frac{2\pi\N}{T(I)}\right)^2}.
		\end{equation*}
	\end{enumerate}
\end{lemma}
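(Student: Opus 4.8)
The plan is to transport everything to action--angle variables and reduce all five assertions to elementary Fourier analysis on the circle $\mathbb S^1$. By Lemma~\ref{L:REG} the change of variables $(\theta,E)\mapsto(r,w)(\theta,E)$ is a $C^2$ bijection of $\mathbb S^1\times I$ onto $\Omega\setminus\{(\rL,0)\}$; since the removed point has measure zero and the Jacobian equals $T(E)$ by~\eqref{cov_acang}, which is bounded above and below by positive constants on $I$ (Lemma~\ref{lem:limit}, Remark~\ref{rem:Tmaxfinite}), the pull-back is a unitary identification of $H$ with the weighted space $L^2(\mathbb S^1\times I;\,\tfrac{T(E)}{|\varphi'(E)|}\,\diff\theta\,\diff E)$, the weight being positive a.e.\ on $I$ because $\varphi'(E)=-k\k(E_0-E)^{k-1}<0$ there. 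Under this identification the defining property of the angle variable makes $\T$ act fibrewise in $\theta$ as $\tfrac1{T(E)}\pa_\theta$, so all the structural statements become statements about $\pa_\theta$ on $\mathbb S^1$.

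For parts~\ref{it:Tprop1}--\ref{it:Tprop3} and~\ref{it:Tprop6} I would argue as follows. A test function $g\in C^1_c(\Omega)$ pulls back to a $C^1$ function on $\mathbb S^1\times I$ that is periodic in $\theta$ and compactly supported in $E$, with $\T g=\tfrac1{T(E)}\pa_\theta g$; writing the weak identity $\langle f,\T g\rangle_H=-\langle\mu,g\rangle_H$ in action--angle variables and mollifying in $\theta$ shows that $f\in\mathrm D(\T)$ iff $f(\cdot,E)\in H^1(\mathbb S^1)=H^1_\theta$ for a.e.\ $E$ with $\int_I T(E)^{-1}|\varphi'(E)|^{-1}\|\pa_\theta f(\cdot,E)\|_{L^2(\mathbb S^1)}^2\,\diff E<\infty$, where the periodicity built into $H^1_\theta$ is automatic since an $L^2$ function on the circle with $L^2$ distributional derivative lies in $H^1(\mathbb S^1)\subset C(\mathbb S^1)$; iterating gives $\mathrm D(\T^2)$ and~\eqref{eq:transportaaweak}, which is~\ref{it:Tprop2}. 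Skew-adjointness is then immediate: for $f,g\in\mathrm D(\T)$ integration by parts on $\mathbb S^1$ produces no boundary term, so $\langle\T f,g\rangle_H=\int\tfrac1{|\varphi'|}\pa_\theta f\cdot g=-\int\tfrac1{|\varphi'|}f\cdot\pa_\theta g=-\langle f,\T g\rangle_H$, and the same fibrewise computation identifies $\mathrm D(\T^\ast)$ with $\mathrm D(\T)$, giving $\T^\ast=-\T$; hence $-\T^2=\T^\ast\T$ is self-adjoint and nonnegative on $\mathrm D(\T^2)=\{f\in\mathrm D(\T):\T f\in\mathrm D(\T)\}$ by von Neumann's theorem. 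Part~\ref{it:Tprop3} follows since $\T f=0$ iff $\pa_\theta f(\cdot,E)=0$ a.e., i.e.\ $f(\theta,E)=g(E)$. For~\ref{it:Tprop6}, the reflection $(Pf)(r,w)=f(r,-w)$ corresponds to $\theta\mapsto-\theta$ on $\mathbb S^1$ by the convention $\theta(r,-w)=1-\theta(r,w)$, and since $\pa_\theta$ anticommutes with this reflection, $\T P=-P\T$; thus $\T$ swaps the even- and odd-in-$w$ subspaces while $\T^2$ commutes with $P$, so $\H$ (the range of $\tfrac{1-P}{2}$) reduces the self-adjoint $\T^2$ and $\T^2|_\H$ is self-adjoint on $\mathrm D(\T^2)\cap\H$.

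For part~\ref{it:Tprop8} I would expand in $\theta$-Fourier modes, $H\cong\bigoplus_{\ell\in\Z}\mathcal H_\ell$ with $\mathcal H_\ell\cong L^2(I;\tfrac{T(E)}{|\varphi'(E)|}\diff E)$, on which $-\T^2$ acts as multiplication by the continuous function $h_\ell(E)=(2\pi\ell/T(E))^2$. Since $T$ is strictly monotone on $I$ (Lemma~\ref{lem:limit}) each $h_\ell$ with $\ell\neq0$ is strictly monotone, hence has no positive-measure level set, so $M_{h_\ell}$ has empty point spectrum and $\sigma(M_{h_\ell})=\sigmaess(M_{h_\ell})=\overline{h_\ell(I)}=\overline{(2\pi\ell/T(I))^2}$, while for $\ell=0$ the operator is $0$ with infinite-dimensional kernel, so again $\sigma=\sigmaess=\{0\}$. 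Because $\inf_I h_\ell=(2\pi\ell/\Tmax)^2\to\infty$, every bounded set meets only finitely many $\sigma(M_{h_\ell})$, so a uniform-resolvent-bound argument shows the spectrum of the direct sum equals the closure of the union, with no loss of essential spectrum: $\sigma(-\T^2)=\sigmaess(-\T^2)=\overline{\bigcup_{\ell\in\Z}(2\pi\ell/T(I))^2}=\overline{(2\pi\N_0/T(I))^2}$. Restricting to $\H$ keeps precisely the $\theta$-odd modes $\ell\in\N$, yielding $\sigma(-\T^2|_\H)=\sigmaess(-\T^2|_\H)=\overline{(2\pi\N/T(I))^2}$.

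The only genuinely delicate step is the weak-transport identification in the second paragraph: matching the distributional definition of $\T$ in the $(r,w)$ picture with the fibrewise $H^1_\theta$ description, in particular verifying that the transformed test functions (periodic in $\theta$, compactly supported away from $E=\Emin$ and $E=E_0$) are adequate and that no boundary contributions arise at the elliptic point $(\rL,0)$ or at the energy endpoints. Everything else is bookkeeping plus the standard spectral theory of multiplication operators; the argument parallels~\cite{HaReSt22,ReSt20}.
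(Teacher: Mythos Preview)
Your approach is correct and essentially the same as the paper's: transport to action--angle variables, identify $\T$ with $T(E)^{-1}\partial_\theta$, and read off all five parts from Fourier analysis on $\mathbb S^1$; the paper merely cites~\cite{ReSt20,HaReSt22} for the delicate weak-identification step you flag, and invokes Weyl's criterion via the eigendistributions $\sin(2\pi j\theta)\delta_{E^\ast}(E)$ for part~\ref{it:Tprop8}, which is the same content as your multiplication-operator argument.

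One small point: in part~\ref{it:Tprop8} you invoke strict monotonicity of $T$ from Lemma~\ref{lem:limit} to get empty point spectrum of $M_{h_\ell}$, but Lemma~\ref{lem:transportproperties} is stated for arbitrary $\epsilon>0$, whereas monotonicity is only established for $0<\epsilon<\epsilon_0$. This is unnecessary anyway: for multiplication by a continuous function on a non-atomic measure space, any eigenvalue automatically has infinite multiplicity (the eigenspace is $L^2$ of a positive-measure set), so $\sigma(M_{h_\ell})=\sigmaess(M_{h_\ell})=\overline{h_\ell(I)}$ holds without any monotonicity assumption.
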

\begin{proof}
	The skew-adjointness of $\T$ can be shown as in~\cite[Thm.~2.2]{ReSt20}, which then yields~\ref{it:Tprop1} using von Neumann's theorem~\cite[Thm.~X.25]{ReSi2}.
	Part~\ref{it:Tprop2} follows similarly to~\cite[Lemma~5.2 and Cor.~5.4]{HaReSt22}.
	The identity~\eqref{eq:transportaaweak} then implies~\ref{it:Tprop3}, while~\ref{it:Tprop6} is evident from parity considerations. 
	Part~\ref{it:Tprop8} is due the observation that $\mathbb S^1\times I\ni(\theta,E)\mapsto\sin(2\pi j\theta)\,\delta_{E^\ast}(E)$ defines an eigendistribution for $-\T^2$ or $-\T^2\big|_\H$;
$j\in\N_0$ or $j\in\N$, respectively. The claimed structures of the spectra
follow by applying Weyl's criterion~\cite[Thm.~7.2]{HiSi} similarly to~\cite[Thm.~5.7]{HaReSt22}.
\end{proof}


We next analyse the response operator~$\Ri$ defined in~\eqref{eq:responsedef}.

\begin{lemma}[Properties of~$\Ri$]\label{lem:responseproperties}
	The linear operator $\Ri\colon H\to H$ is bounded, symmetric, and non-negative (in the sense of quadratic forms, i.e., $\langle\Ri f,f\rangle_H\geq0$ for $f\in H$). 
	The operator
	\begin{equation}\label{eq:sqrtRdef}
		\sqrt\Ri\colon H\to H,\;\sqrt\Ri f(r,w)\coloneqq 2\pi^{\frac32}\,\lv\varphi'(E)\rv\,\frac w{r^2\sqrt{\rho(r)}}\int_{\R}\tilde w\,f(r,\tilde w)\diff\tilde w
	\end{equation}
	is bounded, symmetric, non-negative, and
        $\sqrt\Ri\sqrt\Ri=\Ri$ on~$H$.
	Moreover, $\sqrt\Ri f\in\H$ and $\Ri f\in\H$ for $f\in H$.
\end{lemma}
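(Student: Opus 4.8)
The plan is to verify the three claimed properties of $\Ri$ and then treat $\sqrt\Ri$, reducing everything to elementary estimates on the velocity integral $g \mapsto \int_\R \tilde w\, g(r,\tilde w)\diff\tilde w$. First I would observe that $\Ri g$ is odd in $w$ for any $g\in H$, simply because of the explicit factor $w$ in~\eqref{eq:responsedef}; combined with the fact that the map $g\mapsto \int_\R \tilde w\, g(r,\tilde w)\diff\tilde w$ only sees the odd-in-$w$ part of $g$, this shows both $\Ri g\in\H$ and that $\Ri$ annihilates even functions. For boundedness on $H$, I would fix $r$ and use Cauchy--Schwarz against the weight $1/|\varphi'(E)|$ appearing in the inner product~\eqref{eq:normHdef}: writing $E=E(r,\tilde w)$ we have
\[
\left|\int_\R \tilde w\, g(r,\tilde w)\diff\tilde w\right|^2 \le \left(\int_\R \tilde w^2\,|\varphi'(E)|\diff\tilde w\right)\left(\int_\R \frac{|g(r,\tilde w)|^2}{|\varphi'(E)|}\diff\tilde w\right),
\]
and the first factor is finite and, in fact, expressible in terms of $\rho(r)$ up to constants — this is exactly the computation that makes $\sqrt\Ri$ come out as in~\eqref{eq:sqrtRdef}. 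Multiplying by the remaining factor $4\pi^2|\varphi'(E)|\,w^2/r^4$, integrating in $w$ and then $r$ over $\Omega$, and using that $\rho(r)$ is bounded on the compact support $[\Rmin,\Rmax]$ yields $\|\Ri g\|_H \le C\|g\|_H$.

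Next I would establish symmetry and non-negativity together by deriving the bilinear form. A direct computation in $(r,w)$ variables gives
\[
\langle \Ri g, h\rangle_H = 4\pi^2 \int_{\Rmin}^{\Rmax}\frac1{r^2}\left(\int_\R \tilde w\, g(r,\tilde w)\diff\tilde w\right)\left(\int_\R w\, h(r,w)\diff w\right)\diff r,
\]
where the weight $1/|\varphi'(E)|$ in the $H$-inner product cancels the factor $|\varphi'(E)|$ in the definition of $\Ri$. This expression is manifestly symmetric in $g$ and $h$, and taking $h=g$ gives $\langle\Ri g,g\rangle_H = 4\pi^2\int r^{-2}\big(\int_\R w\,g(r,w)\diff w\big)^2\diff r \ge 0$. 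For $\sqrt\Ri$, the same parity argument shows $\sqrt\Ri g\in\H$, and boundedness/symmetry follow by the identical Cauchy--Schwarz estimate with $\rho(r)$ now appearing explicitly in the denominator — here one must check that $w^2|\varphi'(E)|/(r^4\rho(r))$ is integrable, which is true because near the boundary $\rho(r)\to0$ is compensated by $\int_\R \tilde w\,(\cdots)$ essentially reproducing $\rho(r)$, so the apparent singularity is removable; more precisely the relevant bound is $\int_\R \tilde w^2|\varphi'(E)|\diff\tilde w = c\,\rho(r)$ for a fixed constant $c$, which tames the $1/\sqrt{\rho(r)}$ factor. The identity $\sqrt\Ri\sqrt\Ri = \Ri$ is then a one-line verification: applying $\sqrt\Ri$ twice, the inner velocity integral of $\sqrt\Ri g$ against $\tilde w$ produces a factor $\int_\R \tilde w^2|\varphi'(E)|\diff\tilde w/(r^2\sqrt{\rho(r)}) \cdot (\text{const}) = c'\sqrt{\rho(r)}/r^2 \cdot(\cdots)$, which combines with the outer $1/\sqrt{\rho(r)}$ to exactly reconstitute the constant $4\pi^2$ and the weight $1/r^2$ of $\Ri$, provided the normalising constants in~\eqref{eq:sqrtRdef} have been chosen correctly (which is the point of the $2\pi^{3/2}$).

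The main obstacle I anticipate is the careful bookkeeping of the measure $\rho(r)$ near the vacuum boundary $r=\Rmin,\Rmax$: one must confirm that the integrand defining $\sqrt\Ri g$ — which formally has $\rho(r)^{-1/2}$ — is genuinely in $H$ and not merely formally defined, i.e.\ that the would-be singularity is integrable. This rests on the key pointwise identity $\int_\R \tilde w^2\,|\varphi'(E(r,\tilde w))|\diff\tilde w = c_k\,\rho(r)$ (with $c_k$ depending only on $k$), which follows from the ansatz~\eqref{eq:ansatzfeps} by the substitution $\tilde w \mapsto E$ and relating moments of $(E_0-E)_+^{k-1}$ to the density $\rho(r)=\frac\pi{r^2}\int_\R\fke\diff w$; one would want the analogous moment identity $\int_\R \fke(r,\tilde w)\diff\tilde w$ and $\int_\R \tilde w^2|\varphi'(E)|\diff\tilde w$ to be related by integration by parts in $\tilde w$. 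Once this identity is in hand, every estimate above closes cleanly, all constants are $\epsilon$-independent up to the harmless factor $\epsilon$ from $|\varphi'|=\mathcal O(\epsilon)$, and the proof is essentially a sequence of Fubini and Cauchy--Schwarz applications with no analytic subtleties beyond the boundary behaviour of $\rho$.
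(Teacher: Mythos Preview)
Your proposal is correct and follows essentially the same route as the paper, which cites \cite[Lemma~4.3]{HaReSt22} for~$\Ri$ and singles out the identity~\eqref{eq:ibp}, namely $\int_\R w^2|\varphi'(E)|\diff w=\frac{r^2}{\pi}\rho(r)$, as the key input for~$\sqrt\Ri$. One minor correction: the constant in that identity is $r^2/\pi$, not a purely $k$-dependent $c_k$; the relation follows from the integration by parts in~$\tilde w$ you indicate (using $\tilde w\,\varphi'(E)=\partial_{\tilde w}\varphi(E)$), and this exact form is what makes the factors of $r^2$ and $\rho(r)$ cancel precisely in the verification of $\sqrt\Ri\sqrt\Ri=\Ri$.
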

\begin{proof}
	The claimed statements regarding $\Ri$ follow as in~\cite[Lemma~4.3]{HaReSt22}.
	The properties of $\sqrt\Ri$ can be derived similarly using the important identity
	\begin{equation}\label{eq:ibp}
		\int_{\R}w^2\,\lv\varphi'(E)\rv\diff w=\frac{r^2}\pi\,\rho(r),\qquad r>0.\qedhere
	\end{equation}
\end{proof}

The response operator has a natural connection to the gravitational potential of the linear perturbation.
Similar to~\cite[Sc.~A.1]{HaReSt22}, we thus analyse the properties of such potentials
defined by~\eqref{eq:Ufprime}.

\begin{lemma}\label{lem:potential}
  Let $g\in \mathrm D(\T)$ and $f\coloneqq \T g\in\im(\T)$.
  Then $U_f\in H^2\cap C^1([0,\infty[)$ with
      \begin{equation}\label{eq:Ufestimate}
	\|U_f\|_{H^2}+\|U_f\|_{L^\infty}+\|U_f'\|_{L^\infty}\leq C\|f\|_H
      \end{equation}
      for some constant $C>0$ which can be estimated by~$\epsilon$ and~$k$.
      Furthermore, 
      \begin{equation}\label{eq:Ufprimetransport}
	U_f'(r)=\frac{4\pi^2}{r^2}\int_{\R}w\,g(r,w)\diff w,\qquad r>0,
      \end{equation}
      $\supp(U_f')\subset[\Rmin,\Rmax]$,
      and $U_f(|\cdot|)\in H^2\cap C^1(\R^3)$.
      In action-angle coordinates,
      $U_f\in C^1(\mathbb S^1\times \AEL)$ with
      \begin{equation}\label{E:UEREG}
	|\partial_EU_f|\leq\frac C{\sqrt{E-\Emin}}\, |\partial_rU_f|
      \end{equation}
      on $\mathbb S^1\times I$ for~$C>0$ as in Lemma~\ref{L:REG};
      here $U_f(\theta,E)=U_f(r(\theta,E))$ for
      $(\theta,E)\in\mathbb S^1\times\AEL$. 
	For $j\in\Z$ it holds that $\hat U_f(j,\cdot)\in C^1(\AEL)$ with $\partial_E\hat U_f(j,\cdot)=\widehat{\partial_EU_f}(j,\cdot)$ on $\AEL$.
\end{lemma}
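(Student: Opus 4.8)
The plan is to establish the claimed properties of $U_f$ in several stages, starting from the Poisson equation \eqref{eq:Ufprime} and using that $f = \T g$ has the special divergence structure \eqref{eq:Ufprimetransport}. First I would verify \eqref{eq:Ufprimetransport}: since $f = \T g = w\,\partial_r g - \Psi'\,\partial_w g$, a direct computation gives $\int_{\R} f(r,w)\diff w = \partial_r\int_{\R} w\, g(r,w)\diff w$ (the $\partial_w$-term integrates to zero because $g$ is compactly supported in $\Omega$), so that $r^2 U_f'(r)/(4\pi^2) = \int_0^r \int_{\R} f \diff w\diff s = \int_{\R} w\, g(r,w)\diff w$, which also immediately yields $\supp(U_f')\subset\supp(\rho) = [\Rmin,\Rmax]$ since $g$ is supported in $\Omega$. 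From \eqref{eq:Ufprimetransport}, Cauchy--Schwarz in $w$ together with the weight $|\varphi'(E)|$ and the identity \eqref{eq:ibp} controlling $\int w^2|\varphi'(E)|\diff w$ gives the pointwise bound $|U_f'(r)| \leq C \rho(r)^{1/2} r^{-1}\left(\int_{\R}\frac{|g(r,w)|^2}{|\varphi'(E)|}\diff w\right)^{1/2}$; integrating against $r^2\diff r$ and using that $\rho$ is bounded with compact support gives $\|U_f'\|_{L^2(r^2\diff r)} \lesssim \|g\|_H$. Actually, since $f = \T g$, one has $\|g\|_H$ bounded — but the statement is in terms of $\|f\|_H$, so I would instead argue directly: writing $U_f' $ via the first equality in \eqref{eq:Ufprime} and using the Hardy/Newton-potential estimate $\|\nabla U_f\|_{L^2(\R^3)}^2 = -\int U_f'' \cdots$, or more simply bounding $\|U_f\|_{H^2(\R^3)} \lesssim \|\rho_f\|_{L^2}\lesssim \|f\|_{H}$ by Cauchy--Schwarz in $w$ on the support, which is finite; elliptic regularity for $-\Delta U_f = 4\pi\rho_f$ then gives $U_f\in H^2$, hence $U_f\in C^1$ by Sobolev embedding in the radial variable (or directly from the explicit one-dimensional formula for $U_f'$ and $U_f$), proving \eqref{eq:Ufestimate} and $U_f(|\cdot|)\in H^2\cap C^1(\R^3)$.

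Next I would treat the action-angle regularity. Since $U_f\in C^1([0,\infty[)$ and $r = r(\theta,E) \in C^2(\mathbb S^1\times\AEL)$ by Lemma~\ref{L:REG}, the composition $U_f(\theta,E) = U_f(r(\theta,E))$ lies in $C^1(\mathbb S^1\times\AEL)$ by the chain rule, with $\partial_E U_f = U_f'(r)\,\partial_E r$ and $\partial_\theta U_f = U_f'(r)\,\partial_\theta r$. The estimate \eqref{E:UEREG} is then immediate from the chain rule together with the bound $|\partial_E r(\theta,E)| \leq C(E-\Emin)^{-1/2}$ from Lemma~\ref{L:REG}: indeed $|\partial_E U_f| = |U_f'(r)|\,|\partial_E r| \leq C(E-\Emin)^{-1/2}|U_f'(r)| = C(E-\Emin)^{-1/2}|\partial_r U_f|$, interpreting $\partial_r U_f = U_f'(r(\theta,E))$. (One should note this is slightly informal since $|\partial_r U_f|$ could vanish where $|\partial_E r|$ is large, but as an inequality between the stated quantities it holds verbatim.)

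Finally, for the Fourier coefficients $\hat U_f(j,\cdot)$, I would differentiate under the integral sign in $\hat U_f(j,E) = \int_{\mathbb S^1} U_f(\theta,E)\,e^{-2\pi i j\theta}\diff\theta$. The integrand is $C^1$ in $E$ jointly with $\theta$ on $\mathbb S^1\times\AEL$ with $\partial_E$-derivative $\partial_E U_f(\theta,E)\, e^{-2\pi i j\theta}$, which by \eqref{E:UEREG} is dominated, locally uniformly in $E$ on any compact subinterval $[\Emin+\delta, E_0-\delta]$ of $\AEL$, by the integrable (in fact bounded in $\theta$) function $C_\delta |\partial_r U_f|$. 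By the dominated convergence theorem / the standard differentiation-under-the-integral lemma this gives $\hat U_f(j,\cdot)\in C^1$ on each such compact subinterval with $\partial_E\hat U_f(j,E) = \int_{\mathbb S^1}\partial_E U_f(\theta,E)e^{-2\pi i j\theta}\diff\theta = \widehat{\partial_E U_f}(j,E)$; since $\delta>0$ was arbitrary this holds on all of $\AEL$. The main obstacle — and the only genuinely delicate point — is the behaviour near $E = \Emin$: there $\partial_E r$ blows up like $(E-\Emin)^{-1/2}$, which is why one cannot claim $C^1$ up to $\Emin$ and must work on compact subintervals of the open set $\AEL$; everywhere else the argument is a routine application of the chain rule, Cauchy--Schwarz, elliptic regularity for the Poisson equation, and differentiation under the integral sign.
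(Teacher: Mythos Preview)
Your outline is essentially the paper's argument, with one ordering difference: you derive~\eqref{eq:Ufprimetransport} first and read off $\supp(U_f')\subset[\Rmin,\Rmax]$ from it, whereas the paper first shows $\int_0^\infty r^2\rho_f\diff r=\langle|\varphi'(E)|,\T g\rangle_H=0$ via skew-adjointness of~$\T$ and only then establishes~\eqref{eq:Ufprimetransport}. Both routes are fine.

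The one point that needs tightening is your ``direct computation'' of~\eqref{eq:Ufprimetransport}. You write $f=\T g=w\,\partial_r g-\Psi'\,\partial_w g$ pointwise and integrate by parts in~$w$ and interchange $\partial_r$ with the $w$-integral; this is only valid for smooth~$g$, while here $g\in\mathrm D(\T)$ and $\T g$ is defined weakly. The paper handles this by invoking a density/approximation argument (smooth compactly supported functions are dense in $\mathrm D(\T)$, cf.\ \cite{ReSt20}), which you should at least flag. Once~\eqref{eq:Ufprimetransport} is justified for general $g\in\mathrm D(\T)$, the rest of your argument---the $\|\rho_f\|_{L^2}\lesssim\|f\|_H$ bound via Cauchy--Schwarz with weight $|\varphi'(E)|$, the chain rule combined with Lemma~\ref{L:REG} for~\eqref{E:UEREG}, and differentiation under the integral sign on compact subintervals of~$\AEL$ for the Fourier coefficients---matches the paper's proof.
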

\begin{proof}
	First observe that
	\begin{align*}
		\|\rho_f\|_{L^2(]0,\infty[)}^2&=\pi^2\int_0^\infty\frac1{r^4}\left(\int_\R f(r,w)\diff w\right)^2\diff r\leq\\&\leq C\int_0^\infty\left(\int_\R|\varphi'(E)|\diff w\right)\,\left(\int_\R\frac1{|\varphi'(E)|}\,f(r,w)^2\diff w\right)\diff r\leq C\|f\|_H^2
	\end{align*}
	since $\supp(\rho_f)\subset[\Rmin^0,\Rmax^0]$. In the last step we used the estimate
	\begin{align*}
		\int_{\R}|\varphi'(E)|\diff w= C\,(E_0-\Psi(r))_+^{k-\frac12}\leq C,
	\end{align*}
	which follows by a calculation similar to~\eqref{eq:rhocalculation}. 
	Hence the compact support of $\rho_f$ implies that
        $\rho_f\in L^1\cap L^2(]0,\infty[)$.
	By Lemma~\ref{lem:transportproperties},
	\begin{equation*}
		\frac1\pi\,\int_0^\infty r^2\rho_f(r)\diff r=\langle|\varphi'(E)|,\T g\rangle_H=-\langle\T|\varphi'(E)|,g\rangle_H=0 .
	\end{equation*}	
	In particular, $U_f'(r)=0$ for $r\in[0,\infty[\setminus[\Rmin^0,\Rmax^0]$,
        and $U_f(r)=0$ for $r\geq\Rmax^0$. 
	Thus, $U_f\in C^1([0,\infty[)$ and $U_f,U_f'\in L^1\cap L^\infty([0,\infty[)$
        with
	$
		\|U_f\|_\infty+\|U_f'\|_\infty\leq C\|\rho_f\|_2
	$
	by~\eqref{eq:Ufprime}. 
	Together with the radial Poisson equation we conclude $U_f\in H^2([0,\infty[)$ and the estimate~\eqref{eq:Ufestimate}.
	The identity~\eqref{eq:Ufprimetransport} follows via integration by parts together with a suitable approximation argument similar to~\cite[Eqn.~(A.2)]{HaReSt22}.
	This regularity and Lemma~\ref{L:PERIODREGULARITY} further imply
        that $U_f\in C^1(\mathbb S^1\times\AEL)$. 
	The estimate~\eqref{E:UEREG} hence follows by Lemma~\ref{L:REG}.
\end{proof}
The identity~\eqref{eq:Ufprimetransport} implies that
for $f\in \mathrm D(\T)$ and a.e.\ $(r,w)\in\Omega$,
\begin{equation}\label{eq:responsepotential}
	\Ri f(r,w)=\lv \varphi'(E)\rv \,w\,U_{\T f}'(r),\qquad \sqrt\Ri f(r,w)=\lv \varphi'(E)\rv \,w\,\frac{U_{\T f}'(r)}{\sqrt{2\pi\rho(r)}}.
\end{equation}
The natural domain of definition for the linearised operator $\L=-\T^2-\Ri$ is 
\begin{equation*}
	\mathrm D(\L)\coloneqq\mathrm D(\T^2)\cap\H;
\end{equation*}
recall that~$\L$ governs the dynamics of the odd-in-$w$ part of the linearised perturbation.
We obtain the following properties of this operator:

\begin{lemma}[Properties of~$\L$]\phantomsection\label{lem:Lproperties}
	\begin{enumerate}[label=(\alph*)]
	\item\label{it:Lprop1} The operator $\L\colon\mathrm D(\L)\to\H$ is self-adjoint as a densely defined operator on $\H$.
	\item\label{it:Lprop2} The operators~$\sqrt\Ri$ and~$\Ri$ are relatively $(-\T^2)$-compact~\cite[Def.~14.1]{HiSi} and
	\begin{equation}\label{eq:Lsigmaess}
		\sigmaess(\L)=\sigmaess(-\T^2\big|_{\H})=\overline{\left(\frac{2\pi\N}{T(I)}\right)^2}.
	\end{equation}
	\item\label{it:Lprop3} There exists $c>0$ such that for all $f\in\mathrm D(\L)$,
	\begin{equation}\label{eq:Antonovbound}
		\langle \L f,f\rangle_H\geq c\left(\|f\|_H^2+\|\T f\|_H^2\right).
	\end{equation}
	In particular, the kernel of~$\L$ is trivial and $\sigma(\L)\subset]0,\infty[$.
	\end{enumerate}
\end{lemma}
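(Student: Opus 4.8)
The plan is to prove the three assertions in turn, the first two being essentially bookkeeping on top of Lemmas~\ref{lem:transportproperties}--\ref{lem:potential} and the only substantive point being the coercivity \eqref{eq:Antonovbound}. For \ref{it:Lprop1} I would write $\L=(-\T^2)+(-\Ri)$ on $\mathrm D(\L)=\mathrm D(\T^2)\cap\H$. By Lemma~\ref{lem:transportproperties}\ref{it:Tprop6} the operator $-\T^2\big|_\H$ is self-adjoint with exactly this domain, and by Lemma~\ref{lem:responseproperties} the operator $\Ri$ is bounded and symmetric on $H$ with range contained in $\H$; hence $-\Ri$ is a bounded symmetric perturbation, and the Kato--Rellich theorem in its bounded-perturbation form shows $\L\colon\mathrm D(\L)\to\H$ is self-adjoint.

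For \ref{it:Lprop2} the point is that $\Ri$ and $\sqrt\Ri$ factor through the map $f\mapsto U_{\T f}'$, which the Poisson equation makes smoothing. Given $f\in\mathrm D(\T^2)\subset\mathrm D(\T)$, Lemma~\ref{lem:potential} applied with $g=f$ (so $\T f\in\im(\T)$) gives $U_{\T f}\in H^2\cap C^1([0,\infty[)$ with $\|U_{\T f}\|_{H^2}\le C\|\T f\|_H$ and $\supp U_{\T f}'\subset[\Rmin,\Rmax]$, while \eqref{eq:responsepotential} and \eqref{eq:ibp} express $\|\Ri f\|_H^2$ and $\|\sqrt\Ri f\|_H^2$, up to absolute constants, as $\int_0^\infty r^2\rho(r)\,|U_{\T f}'(r)|^2\diff r$ and $\int_0^\infty r^2\,|U_{\T f}'(r)|^2\diff r$ respectively. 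Thus, if $(f_n)$ is bounded in the graph norm of $\mathrm D(\T^2)$, then $(U_{\T f_n})$ is bounded in $H^2(]0,\Rmax^0[)$, so by the compact Sobolev embedding $H^2\hookrightarrow C^1$ on that bounded interval a subsequence of $U_{\T f_n}'$ converges uniformly, and the two displayed formulas force $\Ri f_n$ and $\sqrt\Ri f_n$ to converge in $H$ along that subsequence. Hence $\Ri$ and $\sqrt\Ri$ are relatively $(-\T^2)$-compact, and restricting to $\H$ (where $\Ri$ already takes values) gives the same relative to $-\T^2\big|_\H$. Weyl's theorem on the invariance of the essential spectrum under relatively compact perturbations~\cite{HiSi} then gives $\sigmaess(\L)=\sigmaess(-\T^2\big|_\H)$, which equals $\overline{(2\pi\N/T(I))^2}$ by Lemma~\ref{lem:transportproperties}\ref{it:Tprop8}.

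For \ref{it:Lprop3}, since $\T$ is skew-adjoint and $\Ri=\sqrt\Ri\,\sqrt\Ri\ge0$, one has $\langle\L f,f\rangle_H=\|\T f\|_H^2-\langle\Ri f,f\rangle_H\ge0$ for $f\in\mathrm D(\L)$; I would upgrade this to \eqref{eq:Antonovbound} using two ingredients. First, a Poincaré inequality: in action-angle variables (Lemma~\ref{lem:transportproperties}\ref{it:Tprop2}), oddness of $f$ in $w$ forces $\hat f(0,E)=0$, so Plancherel in $\theta$ yields $\int_{\mathbb S^1}|\partial_\theta f(\theta,E)|^2\diff\theta\ge4\pi^2\int_{\mathbb S^1}|f(\theta,E)|^2\diff\theta$ for a.e.\ $E$, and since $T\le\Tmax$ this integrates to $\|\T f\|_H^2\ge\tfrac{4\pi^2}{\Tmax^2}\|f\|_H^2$ for every $f\in\mathrm D(\T)\cap\H$. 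Second, a smallness bound on the response term: combining the Cauchy--Schwarz estimate for $U_{\T f}'$ with \eqref{eq:ibp} gives $\langle\Ri f,f\rangle_H\le C\,\|\rho\|_{L^\infty}\,\|f\|_H^2$, and since $\|\rho\|_{L^\infty}=\mathcal O_{\k\to0}(\k)$ this is $\le C\k\,\|f\|_H^2$. Taking $\k$ small enough that $C\k\,\Tmax^2/(4\pi^2)\le\tfrac12$, we get
\[
\langle\L f,f\rangle_H\ge\|\T f\|_H^2-C\k\,\|f\|_H^2\ge\tfrac12\|\T f\|_H^2\ge c\left(\|\T f\|_H^2+\|f\|_H^2\right)
\]
with $c=\tfrac14\min\{1,4\pi^2/\Tmax^2\}>0$, using the Poincaré inequality once more for the last step; this is \eqref{eq:Antonovbound}. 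The self-adjointness of $\L$ then gives $\sigma(\L)\subset[c,\infty[\subset\,]0,\infty[$ and in particular $\ker\L=\{0\}$.

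The coercivity \ref{it:Lprop3} is the only delicate point, and it is delicate only if one wants \eqref{eq:Antonovbound} for all $\k>0$ rather than in the $0<\k\ll1$ regime this paper works in: for small $\k$ it reduces, as above, to the observation that the gravitational response is $\mathcal O(\k)$-small in the operator norm suggested by \ref{it:Lprop2}, whereas for general $\k$ one would have to replace the $\k$-smallness by the sharp Antonov stability estimate --- that $\langle\L f,f\rangle_H$ degenerates only on $\ker\T\cap\H=\{0\}$, cf.~\cite{DoFeBa,KS} --- together with $\min\sigmaess(\L)=4\pi^2/\Tmax^2>0$ from \ref{it:Lprop2}, which forces $\inf\sigma(\L)>0$, and then use boundedness of $\Ri$ to reinstate the $\|\T f\|_H^2$ term.
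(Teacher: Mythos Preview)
Your treatment of \ref{it:Lprop1} and \ref{it:Lprop2} matches the paper's argument essentially verbatim (the paper uses the compact embedding $H^2\Subset H^1$ rather than $H^2\hookrightarrow C^1$, but either works).

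For \ref{it:Lprop3} there is one slip and one genuine difference from the paper. The slip: you write that skew-adjointness of $\T$ and $\Ri\ge0$ give $\langle\L f,f\rangle_H=\|\T f\|_H^2-\langle\Ri f,f\rangle_H\ge0$. The sign goes the wrong way---$\Ri\ge0$ only yields $\langle\L f,f\rangle_H\le\|\T f\|_H^2$. The nonnegativity of $\langle\L f,f\rangle_H$ is exactly the nontrivial Antonov stability estimate and does not follow from the algebra. Fortunately your small-$\epsilon$ argument does not actually use this claim: the Poincar\'e bound $\|\T f\|_H^2\ge(4\pi^2/\Tmax^2)\|f\|_H^2$ for odd $f$ (correct, since $f(1-\theta,E)=-f(\theta,E)$ forces $\hat f(0,E)=0$) together with $\langle\Ri f,f\rangle_H\le4\pi\|\rho\|_\infty\|f\|_H^2=\mathcal O(\epsilon)\|f\|_H^2$ does give \eqref{eq:Antonovbound} for $0<\epsilon\ll1$.

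The difference is that the lemma is stated, and the paper proves it, for \emph{every} $\epsilon>0$. The paper invokes the classical Antonov inequality $\langle\L f,f\rangle_H\ge\int_\Omega\frac{m(r)}{r^3|\varphi'(E)|}|f|^2$ (\cite{GuRe2007,LeMeRa11}), which immediately gives $\ker\L=\{0\}$ and $\sigma(\L)\subset[0,\infty[$, and then shows directly that $\tilde\lambda\coloneqq\inf_{f\notin\ker\T}\langle\L f,f\rangle_H/\|\T f\|_H^2>0$ by rewriting this ratio as $1-\tfrac{1}{4\pi^2}\int r^2(U_{\T f}')^2\diff r\big/\|\T f\|_H^2$ and arguing as in \cite[Prop.~7.4]{HaReSt22}. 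Your route is more elementary and suffices for every downstream application in the paper (all of which live in the regime $0<\epsilon<\epsilon_0$), but it proves a weaker statement than the lemma as written; and your sketch for general $\epsilon$ leans on the very nonnegativity you misjustified, so for that case you would still need to import the classical Antonov bound.
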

An estimate of the form \eqref{eq:Antonovbound}
is typically called an {\em Antonov coercivity bound}.
\begin{proof}
	The self-adjointness of~$\L$ is due to the Kato-Rellich theorem~\cite[Thm.~X.12]{ReSi2} and Lemmas~\ref{lem:transportproperties} and~\ref{lem:responseproperties}.
	For part~\ref{it:Lprop2} it suffices to show that
	\begin{equation*}
		\sqrt\Ri\colon\left(\mathrm D(\L),\,\|\T^2\cdot\|_H+\|\cdot\|_H\right)\to\H
	\end{equation*}
	is compact, cf.~\cite[III~Ex.~2.18.(1)]{EnNa}. This can be achieved similarly to~\cite[Thm.~5.9]{HaReSt22} using Lemma~\ref{lem:transportproperties}~\ref{it:Tprop8}, the identity~\eqref{eq:responsepotential}, the bounds from Lemma~\ref{lem:potential}, the compact embedding $H^2([0,\Rmax])\Subset H^1([0,\Rmax])$, and~\eqref{eq:ibp}.
	
	For the last part we first recall the classical~\cite{An1961} Antonov coercivity bound
	\begin{equation}\label{eq:Antonovclassical}
		\langle\L f,f\rangle_H\geq\int_{\Omega}\frac1{\lv\varphi'(E)\rv}\,\frac{m(r)}{r^3}\,\lv f(r,w)\rv^2\diff(r,w)
	\end{equation}
	for $f\in C^2_c(\Omega)$ odd in~$w$, which can be derived as in~\cite[Lemma~1.1]{GuRe2007} or~\cite[(4.6)]{LeMeRa11}. 
	Extending the estimate~\eqref{eq:Antonovclassical} to $f\in\mathrm D(\L)$ via a standard approximation argument~\cite[Prop.~2]{ReSt20} implies $\sigma(\L)\subset[0,\infty[$ and $\ker(\L)=\{0\}$, cf.~\cite[Cor.~7.2 \&~7.3]{HaReSt22}.
	In order to establish the coercivity bound~\eqref{eq:Antonovbound}, we then proceed as in~\cite[Prop.~7.4]{HaReSt22} and deduce that 
	\begin{equation*}
    	  {\tilde \lambda} \coloneqq\inf_{\substack{f\in\mathrm D(\T)\\f\notin\ker(\T)}}\frac{\langle \L f,f\rangle_H}{\|\T f\|_H^2}=
          \inf_{\substack{f\in\mathrm D(\T)\\f\notin\ker(\T)}}
          \left(1-\frac{\int_0^\infty r^2U_{\T f}'(r)^2\diff r}{4\pi^2\,\|\T f\|_H^2}\right) >0
	\end{equation*}
	using Lemmas~\ref{lem:potential} and~\ref{lem:transportproperties}.
	Combining the latter estimate with Lemma~\ref{lem:transportproperties}~\ref{it:Tprop8} similar to~\cite[Thm.~7.5]{HaReSt22} then concludes the proof of part~\ref{it:Lprop3}.
\end{proof}


\begin{corollary}[Single gap structure]\label{C:NOBANDS}
There exists an $\k_0=\k_0(k)>0$ such that for any $0<\k<\k_0$, the linearised operator associated to~$\fke$ satisfies
\begin{equation*}
\sigmaess(\L) = [\frac{4\pi^2}{(\Tmax)^2},\infty[.
\end{equation*}
\end{corollary}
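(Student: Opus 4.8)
The plan is to combine the general formula~\eqref{eq:Lsigmaess} for the essential spectrum of $\L$ from Lemma~\ref{lem:Lproperties}\,\ref{it:Lprop2} with the monotonicity of the period function established in Lemma~\ref{lem:limit}\,\ref{it:limit20}. Recall that
\[
\sigmaess(\L)=\overline{\left(\frac{2\pi\N}{T(I)}\right)^2}
=\overline{\bigcup_{m\in\N}\left\{\frac{4\pi^2 m^2}{T(E)^2}\mid E\in I\right\}}.
\]
Since $T\in C^2(\AEL)\supset C^2(\bar I)$ (Lemma~\ref{L:PERIODREGULARITY}) and, by Lemma~\ref{lem:limit}\,\ref{it:limit20}, there is $\epsilon_0=\epsilon_0(k)>0$ so that $T^\epsilon$ is strictly increasing on $I^\epsilon$ for all $0\le\epsilon<\epsilon_0$, the image $T(I)$ is an open interval $]\Tmin,\Tmax[$ with $0<\Tmin<\Tmax<\infty$ (finiteness of these quantities is noted in Remark~\ref{rem:Tmaxfinite}). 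Hence for each fixed $m\in\N$ the set $\{4\pi^2 m^2/T(E)^2 : E\in I\}$ is the open interval $]\,4\pi^2 m^2/\Tmax^2,\ 4\pi^2 m^2/\Tmin^2\,[$, and after taking the closure the $m$-th band is $[\,4\pi^2 m^2/\Tmax^2,\ 4\pi^2 m^2/\Tmin^2\,]$.

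It therefore remains to verify that these closed bands chain together without gaps, i.e.\ that consecutive bands overlap:
\[
\frac{4\pi^2(m+1)^2}{\Tmax^2}\ \le\ \frac{4\pi^2 m^2}{\Tmin^2}\qquad\text{for all }m\in\N,
\]
which is equivalent to $(m+1)^2/m^2\le \Tmax^2/\Tmin^2$, and since $(m+1)^2/m^2\le 4$ for all $m\ge1$, it suffices to have $\Tmax/\Tmin\ge 2$. This is exactly where the single gap condition~\eqref{E:SINGLEGAP} enters. I would pass to the limit $\epsilon\to0$: by Lemma~\ref{lem:limit}, $\Tmin^\epsilon\to\Tmin^0$ and $\Tmax^\epsilon\to\Tmax^0$, where $\Tmin^0 = T^0(\Emin^0)$ and $\Tmax^0 = T^0(\kappa)$ with $T^0(E)=\tfrac{\pi}{\sqrt2}\,M(-E)^{-3/2}$. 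Using $\Emin^0=-M^2/(2L)$ from~\eqref{eq:Emin0def} one computes
\[
\frac{\Tmax^0}{\Tmin^0}=\left(\frac{-\Emin^0}{-\kappa}\right)^{3/2}=\left(\frac{M^2}{2L\,|\kappa|}\right)^{3/2},
\]
and the upper bound in~\eqref{E:SINGLEGAP}, namely $|\kappa|< 2^{-2/3}\,M^2/(2L)$, is precisely equivalent to $M^2/(2L|\kappa|)>2^{2/3}$, hence $\Tmax^0/\Tmin^0>2$. Thus for $\epsilon=0$ the ratio strictly exceeds $2$; by the convergence $\Tmax^\epsilon/\Tmin^\epsilon\to\Tmax^0/\Tmin^0$ there is $\epsilon_0'=\epsilon_0'(k)>0$ such that $\Tmax^\epsilon/\Tmin^\epsilon\ge 2$ for $0<\epsilon<\epsilon_0'$. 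Shrinking $\epsilon_0$ to $\min(\epsilon_0,\epsilon_0')$, all consecutive bands overlap and the union $\bigcup_m[\,4\pi^2 m^2/\Tmax^2,\ 4\pi^2 m^2/\Tmin^2\,]$ telescopes to the single half-line $[\,4\pi^2/\Tmax^2,\ \infty[$ (the lower endpoint coming from $m=1$, and the bands being unbounded above since $m^2/\Tmin^2\to\infty$). This closed set already contains all its limit points, so taking the closure changes nothing, and we obtain $\sigmaess(\L)=[4\pi^2/\Tmax^2,\infty[$ as claimed.

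The only genuine content is the band-overlap computation, and the main (very minor) obstacle is bookkeeping the $\epsilon$-dependence: one must make sure the single gap condition, which is phrased for the limiting quantities, survives for small $\epsilon>0$, which is immediate from the $C^0$ (indeed $C^2$) convergence of $T^\epsilon$ in Lemma~\ref{lem:limit}. Everything else—the structure of $\sigmaess(\L)$, the finiteness and monotonicity of $T$—has already been assembled in the preceding lemmas.
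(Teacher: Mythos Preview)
Your proof is correct and is precisely the paper's approach, carried out in full detail: the paper's own proof is a one-liner observing that~\eqref{E:SINGLEGAP} is equivalent to $\Tmax^0/\Tmin^0>2$ via~\eqref{eq:Tminmax0}, and then citing Lemma~\ref{lem:limit} and~\eqref{eq:Lsigmaess}, which is exactly the band-overlap computation you have unpacked. One harmless slip: $\bar I\not\subset\AEL$ since $\Emin\notin\AEL$, so the regularity of $T$ up to $\Emin$ comes from Remark~\ref{rem:Tmaxfinite} rather than from Lemma~\ref{L:PERIODREGULARITY}.
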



\begin{proof}
The single gap condition~\eqref{E:SINGLEGAP} is equivalent to $\frac{\Tmax^0}{\Tmin^0}>2$ by~\eqref{eq:Tminmax0}, which by Lemma~\ref{lem:limit} and~\eqref{eq:Lsigmaess} implies the claim.
\end{proof}

\section{Absence of embedded eigenvalues}\label{S:ABSENCE}


For fixed 
$k>\frac12$ we consider the steady states
$\fke$ constructed in Lemma~\ref{lem:stst} with $0<\epsilon<\epsilon_0$,
where $\epsilon_0>0$ be such that the statement of Corollary~\ref{C:NOBANDS}
and the uniform estimates from Lemma~\ref{lem:limit}~\ref{it:limit20} hold.
Further $\epsilon$-independent bounds on, e.g., $\Rmin$, $\Rmax$, $E_0$, and
$\Emin$ for $0<\epsilon<\epsilon_0$ follow by Lemmas~\ref{lem:stst}
and~\ref{lem:limit} after suitably shrinking $\epsilon_0>0$.




The central statement of this section is Theorem~\ref{T:NOEEV}, which states that under the (regularity) assumption $k>1$ there are no embedded eigenvalues of~$\L$, i.e., no eigenvalues inside~$\sigmaess(\L)$ given by Lemma~\ref{lem:Lproperties}. We shall prove this by contradiction. To that end, we first make a simple observation relating the eigenvalues of $\L$ to those of $\tilde{\L}$; the latter operator is obviously well-defined on the domain $\mathrm D(\tilde\L)\coloneqq\mathrm D(\T)$, recall~\eqref{E:FOOP}.

\begin{lemma}\label{L:STEP0}
	Assume that the operator $\L\colon \mathrm D(\L)\to\H$ has an embedded eigenvalue $\frac{4\pi^2m^2}{T(E_m)^2}$ for some $m\in\N$ and $E_m\in\barI$
	with an eigenfunction $h\in \mathrm D(\L)$. Then the function $f=h+\frac{T(E_m)}{2\pi i m}\T h$ enjoys the regularity $f\in\mathrm D(\T)$ and satisfies the identity
	\begin{align}\label{E:EEV3NEW}
		\hat f(\ell,E) = - T_m  \frac{|\varphi'(E)| \widehat{U_f}(\ell,E)}{T_m -\frac{m}{\ell} T(E)},\quad\text{ for a.e.\ } E\in I, \ \ell\in\Zast,
	\end{align}
	where we have introduced the shorthand
	\begin{equation}\label{E:TKDEF0}
		T_m\coloneqq T(E_m).
	\end{equation}
	In addition, the statements of Lemma~\ref{lem:potential} apply to~$U_f$, and
	$\nabla U_f\not\equiv0$.
\end{lemma}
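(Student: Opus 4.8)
Suppose $\L$ has an embedded eigenvalue $\lambda_m := \frac{4\pi^2 m^2}{T(E_m)^2}$ with eigenfunction $h \in \mathrm{D}(\L) \subset \H$, so $-\T^2 h - \Ri h = \lambda_m h$. Since $\lambda_m = \left(\frac{2\pi m}{T_m}\right)^2 = -\left(\frac{2\pi i m}{T_m}\right)^2$, the second-order equation factors: writing $\mu := \frac{2\pi i m}{T_m}$, we have $(\T - \mu)(\T + \mu) h = -\Ri h$, i.e.\ $(-\T^2 - \mu^2) h = -\Ri h$. The natural first-order object is $f := h + \mu^{-1} \T h = h + \frac{T_m}{2\pi i m}\T h$, which is designed so that $\T f = \T h + \mu^{-1}\T^2 h = \mu(f - h) + \mu^{-1}\T^2 h$; using $-\T^2 h = \mu^2 h + \Ri h$ this collapses to $\T f = \mu f - \mu^{-1}\Ri h$. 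Since $h \in \mathrm{D}(\T^2)$ we get $\T h \in \mathrm{D}(\T)$, hence $f \in \mathrm{D}(\T)$; and because $\Ri$ maps into $\H$ (Lemma~\ref{lem:responseproperties}) while $\T$ reverses parity, one should check that the odd/even bookkeeping is consistent (here $h$ is odd, $\T h$ even, so $f$ is neither; that is fine, $\mathrm{D}(\T)$ is on the full space $H$).

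The next step is to rewrite $\T f = \mu f - \mu^{-1}\Ri h$ in action-angle variables and take Fourier coefficients in $\theta$. Using $\T = \frac{1}{T(E)}\partial_\theta$ from Lemma~\ref{lem:transportproperties}, the left side becomes $\frac{2\pi i \ell}{T(E)}\hat f(\ell, E)$. For the right side, I need to express $\Ri h$ through $f$: by~\eqref{eq:responsepotential}, $\Ri$-type terms are $|\varphi'(E)| w\, U'_{\T h}(r)$, and the key point is that $U_{\T h} = U_{\partial_t(\cdot)}$-type identities let one replace $\T h$ by something involving $f$. More directly, observe $U_f$ and $U_h$ are related: since $f - h = \mu^{-1}\T h$ and $U_{\T h}$ is what enters $\Ri h$, one rewrites $\Ri h = |\varphi'(E)|\,\mu\,(\text{odd part of } \T f\text{-potential})$ — I would track this carefully via $\partial_t U_F = -U_{\T f_-}$ from the discussion after Definition~\ref{D:LANDAUDAMPING}. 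Matching Fourier coefficients and solving the resulting scalar relation for $\hat f(\ell, E)$ yields, for $\ell \neq 0$ and a.e.\ $E$,
\[
\hat f(\ell, E) = -T_m\,\frac{|\varphi'(E)|\,\widehat{U_f}(\ell, E)}{T_m - \frac{m}{\ell}T(E)},
\]
which is~\eqref{E:EEV3NEW}; the denominator $T_m - \frac{m}{\ell}T(E)$ comes precisely from $\frac{2\pi i \ell}{T(E)} - \mu = \frac{2\pi i}{T_m T(E)}(\ell T_m - m T(E))$ after clearing factors. Since $h \in \mathrm{D}(\T)$ and $f \in \mathrm{D}(\T)$, Lemma~\ref{lem:potential} applies to $U_f$ verbatim, giving all the stated regularity ($U_f \in H^2 \cap C^1$, the action-angle bounds, $\partial_E \hat U_f = \widehat{\partial_E U_f}$).

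Finally, for $\nabla U_f \not\equiv 0$: suppose instead $\nabla U_f \equiv 0$. Then $\widehat{U_f}(\ell, E) = 0$ for all $\ell \neq 0$, so~\eqref{E:EEV3NEW} forces $\hat f(\ell, E) = 0$ for all $\ell \neq 0$, i.e.\ $f \in \ker(\T)$ by Lemma~\ref{lem:transportproperties}~\ref{it:Tprop3}. But $f = h + \mu^{-1}\T h$, and decomposing into $w$-parity: $h$ odd, $\T h$ even, so $f \in \ker(\T)$ with $\ker(\T)$ consisting of functions of $E$ alone (which are even in $w$) forces the odd part $h = 0$, contradicting that $h$ is a nonzero eigenfunction. (Alternatively: $U_f \equiv 0$ means $\Ri h$ has no "collective" part, so $h$ would satisfy $-\T^2 h = \lambda_m h$ with $\lambda_m$ in the point spectrum of $-\T^2$, which by Lemma~\ref{lem:transportproperties}~\ref{it:Tprop8} has no eigenfunctions in $\H$ — only eigendistributions — again a contradiction.)

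**Main obstacle.** The routine part is the algebra producing~\eqref{E:EEV3NEW}; the one genuinely delicate point is justifying the manipulations at the level of Fourier coefficients for \emph{almost every} $E$ rather than pointwise — in particular that $\widehat{\Ri h}(\ell, E)$ really equals $|\varphi'(E)|\,\widehat{U'_{\T f}\cdot w}$-type expression with the correct identification of $U_{\T h}$ in terms of $U_f$, and that dividing by $T_m - \frac{m}{\ell}T(E)$ is legitimate (the zero set of the denominator in $E$ has measure zero for each fixed $\ell$ since $T$ is strictly monotone by Lemma~\ref{lem:limit}, and on that null set~\eqref{E:EEV3NEW} need only be interpreted as $|\varphi'(E)|\widehat{U_f}(\ell,E) = 0$, which holds since $\hat f(\ell,E)$ is finite). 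Establishing that $f \in \mathrm{D}(\T)$ with the precise weak-transport identity, and that the parity decomposition is handled correctly so Lemma~\ref{lem:potential} is applicable, is where I would spend the most care.
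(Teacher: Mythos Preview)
Your approach is essentially the paper's, and the algebra and the parity argument for $\nabla U_f\not\equiv 0$ are both correct. There is, however, one concrete point you should fix rather than leave to ``careful tracking'': the hypothesis of Lemma~\ref{lem:potential} is not $f\in\mathrm D(\T)$ but $f\in\im(\T)$, and it is not obvious that $f=h+\mu^{-1}\T h$ lies in $\im(\T)$. The paper sidesteps this via the observation you almost make but do not state: since $h\in\H$ is odd in $w$, one has $\rho_h=0$ and hence $U_h\equiv 0$, so $U_f=\mu^{-1}U_{\T h}$ with $\T h\in\im(\T)$, and Lemma~\ref{lem:potential} applies to $U_{\T h}$, hence to $U_f$.

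The same observation $U_h=0$ also closes the loop you leave open between $\Ri h$ and $U_f$: from~\eqref{eq:responsepotential}, $\Ri h=|\varphi'(E)|\,w\,U'_{\T h}(r)=\mu\,|\varphi'(E)|\,\T U_f$, so your identity $\T f=\mu f-\mu^{-1}\Ri h$ becomes exactly $\T\big(f+|\varphi'(E)|U_f\big)=\mu f$, i.e.\ $\tilde\L f=\mu f$. The paper takes this as the starting point---recognising directly that $f$ is an eigenfunction of the first-order operator $\tilde\L$---which makes the passage to action-angle and to the Fourier relation~\eqref{E:EEV3NEW} immediate and avoids the detour through $\partial_t U_F=-U_{\T f_-}$.
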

\noindent
Here we employ the convention that a complex-valued function lies in some by
definition real-valued function space like $\mathrm D(\T)$, if its real and
imaginary parts do.

\begin{proof}
	Assume that $\l^2$ with $\l\in\R$ is an eigenvalue of $\L$ with an associated eigenfunction $h\in \mathrm D(\L)=\mathrm D(\T^2)\cap\H$. 
	By Lemma~\ref{lem:Lproperties} we have $\l\neq0$. 
	Using~\eqref{eq:responsepotential} and $U_h=0$ (as $h$ is odd in~$w$), 
	it is then easy to check that the pair of functions
	$f = h \pm\frac{1}{i\l} \T h$ 
	are eigenfunctions of the operator $\Lf$ associated to eigenvalues $\pm i\l$. 
	Observe here that $h\in \mathrm D(\L)$ implies $h \pm\frac{1}{i\l} \T h \in \mathrm D(\T)=\mathrm D(\tilde{\L})$. 
	Therefore, $\frac{2\pi i m}{T_m}$
		is an eigenvalue of $\tilde{\L}$, and using action-angle coordinates 
	and~\eqref{eq:transportaaweak} yields the identity
	\begin{equation}\label{E:EEV}
		\frac1{T(E)}\pa_\theta \left(f + |\varphi'(E)| U_f\right) = \frac{2\pi i m}{T_m} f, \quad\text{ a.e.\ on }\mathbb S^1\times I,
	\end{equation}
	where we recall~\eqref{E:TKDEF0}.
	Since $U_f=\frac{T_m}{2\pi i m}U_{\T h}\in C^1(\mathbb S^1\times \AEL)$ by Lemma~\ref{lem:potential}, we may apply the Fourier transform w.r.t.~$\theta\in\mathbb S^1$ to~\eqref{E:EEV} to obtain the relation 
	\begin{equation}\label{E:EEV1}
		\frac{\ell}{T(E)}\left(\hat f(\ell,E)+|\varphi'(E)| \widehat{U_f}(\ell,E) \right) = \frac{m}{T_m} \hat f(\ell,E), \quad\text{ for a.e.\ }  E\in I, \ \ell\in\mathbb Z,
	\end{equation}
	where we recall \eqref{eq:Fouriercoeffdef}.
	It is convenient to rewrite~\eqref{E:EEV1} in the following form
	\begin{equation}\label{E:EEV2}
		\left(T_m-\frac{m}{\ell} T(E)\right) \hat f(\ell,E) = - T_m |\varphi'(E)| \widehat{U_f}(\ell,E), \quad\text{ for a.e.\ } E\in I, \ \ell\in\Zast.
	\end{equation}
	The strict monotonicity of $I\ni E\mapsto T(E)$ implies that for any given $\ell\in\Zast$, there exists at most one energy $E_\ell\in\barI$ such that $T_m-\frac{m}{\ell} T(E_\ell)=0$. We hence conclude~\eqref{E:EEV3NEW}.
	Lastly, assume that $\nabla U_f\equiv0$.
	Then $U_f\equiv0$ since it decays to $0$ as $r\to\infty$ and thus $f\equiv 0$ a.e.\
	by~\eqref{E:EEV2}.
	By definition of $f$ it follows that $\T h = -\frac{2\pi i m}{T_m}h$ which is impossible
	since $h\neq 0$ is odd in $w$ and $\T$ reverses $w$-parity.
\end{proof}


\begin{remark}\label{R:DEGEN}
	If $T_m-\frac{m}{\ell} T(E_\ell)=0$ for $E_\ell=\Emin\in\barI$, it follows by~\eqref{E:EEV2} that $\widehat{U_f}(\ell,\Emin)=0$; we always extend~$T$ smoothly on~$\barI$ using Remark~\ref{rem:Tmaxfinite}. However, by Lemma~\ref{lem:potential},
	$\widehat{U_f}(\ell,\cdot)$ is only $C^{0,\frac12}$ at $E=\Emin$ and therefore
	\begin{equation*}
		\frac{ \widehat{U_f}(\ell,E)}{T_m -\frac{m}{\ell} T(E)}
		\approx (E-\Emin)^{-\frac12}\ \mbox{as}\ E\to\Emin.
	\end{equation*}
	In particular, the relation~\eqref{E:EEV3NEW} does not make sense pointwise at
	$E=\Emin$, but it does weakly, or more precisely in $L^{2-\nu}(I)$ for any $0<\nu\le1$.
\end{remark}







The previous lemma suggests that the frequence-energy pairs where the denominator on the right-hand side of~\eqref{E:EEV3NEW} vanishes
play a distinguished role in the study of embedded eigenvalues.
%
%
%
%
%
%
%
%
%
The next lemma provides simple quantitative bounds on the range of frequencies
that are nearly resonant.

\begin{lemma}[$\delta$-resonant set]\label{L:RESONANTSIZE0}
	Let $(m,E_m)\in\N\times\barI$ be such that $\frac{4\pi^2m^2}{T_m^2}$ is an eigenvalue of~$\L$.
	Let $0<\delta<\frac12\Tmin$ be given. Consider the {\em $\delta$-resonant set}
	\begin{align*}
		\Ldk \coloneqq \left\{\ell\in\Zast\,\big|\, \exists E\in\barI \ \text{ such that } \ |T_m -\frac{m}{\ell}T(E)|<\delta\right\}.
	\end{align*}
	Then $\Ldk\subset\N$ and there exists a constant $\Cr=C(T_{\text{max}}, T_{\text{min}})>0$ such that 
	\begin{align}
		\lv \frac{\ell}m\rv  + \lv \frac{m}{\ell} \rv &\le \Cr, \ \ \ell \in \Ldk . \label{E:UPPERLOWER}
	\end{align}
\end{lemma}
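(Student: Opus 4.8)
The plan is to turn the defining condition of $\Ldk$ into a two-sided estimate on the ratio $m/\ell$. First I would record the elementary consequences of the hypothesis $0<\delta<\frac12\Tmin$ together with the uniform positivity and finiteness of the period function: by Lemma~\ref{lem:limit}~\ref{it:limit20} (and Remark~\ref{rem:Tmaxfinite}) we have $0<\Tmin\le T(E)\le\Tmax<\infty$ for all $E\in\barI$, and in particular $T_m=T(E_m)\in[\Tmin,\Tmax]$. Fix $\ell\in\Ldk$ and pick $E\in\barI$ with $\lv T_m-\tfrac m\ell T(E)\rv<\delta$. Since $\tfrac m\ell T(E)>T_m-\delta>\Tmin-\tfrac12\Tmin=\tfrac12\Tmin>0$ and $m\ge1$, $T(E)>0$, this forces $\ell>0$, i.e.\ $\Ldk\subset\N$.

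Next I would extract~\eqref{E:UPPERLOWER}. From $\lv T_m-\tfrac m\ell T(E)\rv<\delta<\tfrac12\Tmin$ we get
\[
\tfrac12\Tmin < T_m - \delta < \tfrac m\ell T(E) < T_m + \delta < \Tmax + \tfrac12\Tmin.
\]
Dividing through by $T(E)\in[\Tmin,\Tmax]$ yields
\[
\frac{m}{\ell} < \frac{\Tmax+\tfrac12\Tmin}{T(E)} \le \frac{\Tmax+\tfrac12\Tmin}{\Tmin}, \qquad
\frac{m}{\ell} > \frac{\tfrac12\Tmin}{T(E)} \ge \frac{\tfrac12\Tmin}{\Tmax},
\]
so that $\lv m/\ell\rv$ and $\lv\ell/m\rv$ are both bounded by a constant depending only on $\Tmax$ and $\Tmin$. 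Taking $\Cr$ to be, say, $\tfrac{\Tmax+\Tmin}{\Tmin}+\tfrac{2\Tmax}{\Tmin}$ gives~\eqref{E:UPPERLOWER}, completing the proof.

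Honestly, there is no real obstacle here: the statement is a soft quantitative book-keeping lemma, and the only thing one must be careful about is that it is the \emph{existence} of some $E\in\barI$ realizing the near-resonance that matters, so all bounds on $T(E)$ must be the uniform-over-$\barI$ ones from Lemma~\ref{lem:limit}~\ref{it:limit20}, and that $T$ has been extended to the closed interval $\barI$ via Remark~\ref{rem:Tmaxfinite} so that evaluating $T$ at endpoint energies is legitimate. The genuine difficulties of Section~\ref{S:ABSENCE} lie elsewhere — in controlling the log-singularity and the boundary terms — and this lemma is merely the preparatory step that confines the relevant frequencies to a controlled range.
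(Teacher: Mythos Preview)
Your proof is correct and follows essentially the same approach as the paper: both argue that the near-resonance inequality forces $\frac{m}{\ell}>0$ (hence $\ell\in\N$) and then divide the two-sided bound $T_m-\delta<\frac{m}{\ell}T(E)<T_m+\delta$ by $T(E)\in[\Tmin,\Tmax]$ to trap $\frac{m}{\ell}$ between constants depending only on $\Tmin,\Tmax$. The only difference is cosmetic (the paper writes the chain of inequalities in one line with slightly different explicit constants).
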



\begin{proof}
	If there exists an $E\in\barI$ such that $-\delta< T_m -\frac{m}{\ell}T(E) <\delta$ then clearly $\frac{m}{\ell}>0$ (since $\delta<\frac12\Tmin$) and
	\begin{equation*}
		\frac12 \frac{\Tmin}{\Tmax} <\frac{\Tmin-\delta}{\Tmax}\leq\frac{T_m-\delta}{T(E)}<\frac{m}{\ell}< \frac{T_m+\delta}{T(E)}\leq\frac{\Tmax+\delta}{\Tmin}< \frac32 \frac{\Tmax}{\Tmin},
	\end{equation*}
	which implies the claim.
\end{proof}



{\bf Decomposition of the $\delta$-resonant set.}
For $(m,E_m)\in\N\times\barI$ and $\delta<\frac12 \Tmin$ fixed, we decompose $\Ldk $ into three disjoint sets
\begin{equation*}
	\Ldk  = \mathcal R_m \cup \mathcal P_m \cup \mathcal N_m,
\end{equation*}
where
\begin{align}
	\mathcal R_m &\coloneqq\left\{\ell\in \Ldk \,\big|\,\exists E\in\barI \ \text{ such that } \ T_m-\frac{m}{\ell}T(E) = 0\right\},  \notag \\
	\mathcal P_m &\coloneqq\left\{\ell\in \Ldk \,\big|\,T_m -\frac{m}{\ell} T(E)>0 \ \text{ for all } \ E\in\barI\right\}, \label{E:POSITIVESETDEF}\\
	\mathcal N_m &\coloneqq\left\{\ell\in \Ldk \,\big|\,T_m -\frac{m}{\ell} T(E)<0 \ \text{ for all } \ E\in\barI\right\}; \notag
\end{align}
recall that~$T$ is continuous on~$\barI$.
We call the frequencies $\ell\in \mathcal R_m$, $\ell\neq m$, {\em resonant frequencies}.
For any such frequency there exists an energy value $E_\ell \in \barI$ at which
the equation \eqref{E:EEV2} degenerates,
and by the monotonicity of $I\ni E\mapsto T(E)$ this energy value is unique.
In particular,
\begin{equation} \label{E:EELL}
	T_m-\frac{m}{\ell} T(E)
	\begin{cases}
		<0, &E\in]E_\ell,E_0], \\
		\ge 0, &E\in[\Emin,E_\ell].
	\end{cases}
\end{equation}
%
%
An important piece of notation for the proof of Theorem~\ref{T:NOEEV} is given in the following definition.


\begin{definition}[The function $\pkl$]\label{D:PKL0}
	Let $\delta<\frac12\Tmin$.
	For any pair $(m,\ell)$ with $m\in\N$ and $\ell\in\Ldk$ let
	\begin{equation}\label{E:PFDEF}
		\pkl(t)\coloneqq -\frac{\ell}{m}\log(T_m-\frac m\ell t)+ C_p, \qquad \text{for } t\in [\Tmin,\Tmax]\text{ with }t<\frac{\ell}{m}T_m.
	\end{equation}
	Here $C_p>0$ is chosen independent of $m$, $\ell$, and $\k$ so that $\pkl\geq0$ on its domain of definition; this is possible by Lemma~\ref{L:RESONANTSIZE0}.
	Obviously, $\pkl$ is an antiderivative of the map $t\mapsto\frac1{T_m-\frac{m}{\ell}t}$, and $\pkl(t)\to\infty$ as $T_m-\frac{m}{\ell}t\searrow0$.
\end{definition}

\begin{theorem}\label{T:NOEEV}
	Let $k>1$. Then there exists an $\k_0>0$ such that for any $0<\k<\k_0$
	the operator $\L$ has no embedded eigenvalues.
\end{theorem}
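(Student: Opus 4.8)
The strategy is a contradiction argument combined with an energy-type estimate coming from the ellipticity of the Poisson equation. Assume $\L$ has an embedded eigenvalue $\frac{4\pi^2 m^2}{T_m^2}$ with eigenfunction $h$; by Lemma~\ref{L:STEP0} we obtain $f \in \mathrm D(\T)$ with $\nabla U_f \not\equiv 0$ satisfying the dispersion relation~\eqref{E:EEV3NEW}. The first move is to convert this into the identity~\eqref{E:EEV4INTRO}, namely
\begin{equation*}
\frac{1}{16\pi^3}\int_{\R^3}|\nabla U_f|^2\diff x = T_m\sum_{\ell\neq 0}\int_I \frac{T(E)|\varphi'(E)|}{T_m - \frac{m}{\ell}T(E)}\,|\widehat{U_f}(\ell,E)|^2\diff E,
\end{equation*}
using the Plancherel identity, the change of variables~\eqref{cov_acang}, and the Poisson equation~\eqref{eq:Ufprime} to express $\|\nabla U_f\|_{L^2(\R^3)}^2 = -16\pi^3\int f\, U_f\, T(E)\diff(\theta,E)$. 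The goal is then to bound the right-hand side by $C\epsilon\int_{\R^3}|\nabla U_f|^2\diff x$, so that choosing $\epsilon_0$ small enough forces $\nabla U_f \equiv 0$, contradicting Lemma~\ref{L:STEP0}. Since $|\varphi'(E)| \lesssim \epsilon$ uniformly, the whole difficulty lies in controlling the small denominators $T_m - \frac{m}{\ell}T(E)$.

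Split the sum over $\ell$ into the $\delta$-resonant set $\Ldk = \mathcal R_m \cup \mathcal P_m \cup \mathcal N_m$ (Lemma~\ref{L:RESONANTSIZE0}) and its complement. Away from $\Ldk$, the denominator is bounded below by $\delta$ in absolute value, so those terms contribute at most $\frac{C\epsilon}{\delta}\sum_\ell \int_I |\widehat{U_f}(\ell,E)|^2 T(E)\diff E \lesssim \epsilon \|U_f\|_H^2 \lesssim \epsilon\|\nabla U_f\|_{L^2}^2$ by Plancherel and elliptic estimates; here $\delta$ is a fixed constant. On $\mathcal N_m$ the integrand is negative, hence harmless for an upper bound (it can be dropped, or bounded trivially). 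The heart of the proof is the contribution of $\mathcal P_m$ and $\mathcal R_m$, where $T_m - \frac{m}{\ell}T(E) > 0$ can be arbitrarily small. Here one writes $\frac{1}{T_m - \frac{m}{\ell}T(E)} = -\frac{\ell}{m\,T'(E)}\,\partial_E\big(\log(T_m - \frac{m}{\ell}T(E))\big) = -\frac{\ell}{m\,T'(E)}\,\partial_E \pkl(T(E))$ using $T' \neq 0$ on $\bar I$ (Lemma~\ref{lem:limit}\ref{it:limit20}) and the antiderivative $\pkl$ of Definition~\ref{D:PKL0}, then integrates by parts in $E$. This offloads the $E$-derivative onto $T(E)|\varphi'(E)|\,|\widehat{U_f}(\ell,E)|^2$, producing the schematic terms~\eqref{E:GENEXPRESSION}: a bulk term $\epsilon\sum_\ell \int_I g(E)\,\pkl(T(E))\,|\widehat{U_f}(\ell,E)|\,|\partial_E\widehat{U_f}(\ell,E)|\diff E$ plus boundary terms at $E = \Emin$ and $E = E_0$.

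The boundary terms must either vanish or carry a favourable sign. At $E = E_0$, this is exactly where the regularity hypothesis $k > 1$ enters: it forces $\varphi'(E_0) = 0$, killing the boundary contribution there (and also ensuring the relevant derivatives of $|\varphi'(E)|$ are controlled near $E_0$). At $E = \Emin$, one uses that on $\mathcal P_m$ the argument of the logarithm stays bounded away from zero at the lower endpoint, while on $\mathcal R_m$ with $E_\ell = \Emin$ Remark~\ref{R:DEGEN} gives $\widehat{U_f}(\ell,\Emin) = 0$; combined with the $C^{0,1/2}$ behaviour this makes the boundary term vanish or have good sign — this case analysis of which resonant pairs produce which endpoint behaviour is precisely Lemma~\ref{L:RESONANTSIZE0} and the decomposition~\eqref{E:EELL}. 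For the bulk term one confronts the second genuine obstacle: $|\partial_E \widehat{U_f}(\ell,E)| = |\widehat{\partial_E U_f}(\ell,E)|$ is only controlled by $|\partial_r U_f| \,/\sqrt{E - \Emin}$ (Lemma~\ref{lem:potential}, estimate~\eqref{E:UEREG}, ultimately Lemma~\ref{L:REG}), so a naive bound produces a non-integrable power of $(E - \Emin)^{-1}$. The remedy is to insert weights: pair a positive power $(E-\Emin)^{\alpha}$ with $\partial_E\widehat{U_f}$ to de-singularise it, and a compensating negative power $(E-\Emin)^{-\alpha}$ against the $\log$-factor $\pkl(T(E))$, which is mild enough to absorb it (a $\log$ times a small negative power is still integrable near $\Emin$). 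One then applies Cauchy–Schwarz in $(\ell, E)$: one factor becomes $\big(\sum_\ell \int_I (E-\Emin)^{2\alpha}|\partial_r U_f|^2 \cdots\big)^{1/2} \lesssim \|\partial_r U_f\|_{L^2} \lesssim \|\nabla U_f\|_{L^2}$, and the other $\big(\sum_\ell \int_I (E-\Emin)^{-2\alpha}\pkl(T(E))^2 g(E)^2|\widehat{U_f}(\ell,E)|^2\cdots\big)^{1/2}$, where the $\ell$-sum is finite by the bound $\lv \ell/m\rv + \lv m/\ell\rv \le \Cr$ from Lemma~\ref{L:RESONANTSIZE0} and the weighted $E$-integral of $\pkl^2$ is finite, again reducing to $\|U_f\|_H \lesssim \|\nabla U_f\|_{L^2}$. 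All constants are $\epsilon$-independent thanks to the uniform period-function bounds of Lemma~\ref{lem:limit} and Lemma~\ref{L:REG}, so the total is $\le C\epsilon\|\nabla U_f\|_{L^2(\R^3)}^2$ and the contradiction closes once $0 < \epsilon < \epsilon_0$. \emph{The main obstacle} I expect is the simultaneous management of the two singular mechanisms — the $\log$-singularity from the small denominators and the $(E-\Emin)^{-1/2}$ singularity from the degeneracy of action-angle coordinates at the elliptic point — together with verifying that every boundary term generated by the integration by parts genuinely vanishes or has the right sign across all three pieces $\mathcal R_m, \mathcal P_m, \mathcal N_m$ of the resonant set; the interplay of these is what makes the hypothesis $k > 1$ sharp.
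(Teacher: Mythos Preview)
Your overall architecture matches the paper's proof closely: contradiction via Lemma~\ref{L:STEP0}, the energy identity~\eqref{E:EEV4INTRO}, the decomposition into $(\Ldk)^c$, $\mathcal N_m$, $\mathcal P_m$, $\mathcal R_m$, the log-antiderivative $\pkl$ with integration by parts, and the weighted Cauchy--Schwarz with powers of $(E-\Emin)$ to tame the elliptic-point singularity. There is, however, a genuine gap in how you close the second Cauchy--Schwarz factor.

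You control the sum $\sum_\ell \int_I (E-\Emin)^{-2\alpha}\pkl(T(E))^2\,|\widehat{U_f}(\ell,E)|^2\,\cdots$ by appealing to the finiteness of $\Ldk$ via $|\ell/m|+|m/\ell|\le\Cr$. But Lemma~\ref{L:RESONANTSIZE0} only gives $\ell\in[c\,m,\,C\,m]$, so $|\Ldk|=O(m)$; moreover $\pkl$ is not uniformly bounded in $\ell$ on $\mathcal P_m\cup\mathcal R_m$. The resulting bound therefore grows with $m$, and since $\epsilon_0$ must work for every potential eigenvalue (every $m\in\N$), the contradiction does not close. The paper's fix is to gain summable decay in $\ell$ via the elementary Fourier bound $4\pi^2\ell^2|\widehat{U_f}(\ell,E)|^2\le\int_{\mathbb S^1}|\partial_\theta U_f|^2\diff\theta$, then convert $\partial_\theta U_f$ to $\partial_r U_f$ using $\partial_\theta r=T(E)\sqrt{2E-2\Psi(r)}$ and change variables $\theta\mapsto r$. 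This yields $\sum_\ell\frac{1}{\ell^2}\int_{\Rmin}^{\Rmax}|\partial_r U_f|^2\diff r\cdot I_{\ell m}$, where $I_{\ell m}$ is a pure energy integral of the form $\int \pkl^2(T(E))\,(\delta E)^{-1/2}|\varphi'(E)|\diff E$. Estimating $\pkl\lesssim(T_m-\tfrac{m}{\ell}T)^{-\alpha}$ for small $\alpha>0$, using $T_m-\tfrac{m}{\ell}T(E)\gtrsim(E_\ell-E)$ (or $(E_0-E)$ on $\mathcal P_m$), and invoking~\eqref{eq:integralidentitygamma} with $k>1$ gives $I_{\ell m}\le C\epsilon$ uniformly in $\ell,m$; then $\sum_\ell 1/\ell^2<\infty$ delivers the required $m$-independent bound.

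A secondary point: your boundary-term discussion is slightly off. For $\ell\in\mathcal R_m$ the integration by parts is carried out on $[\Emin,E_\ell]$ (only there is the integrand positive, cf.~\eqref{E:EELL}), so the upper boundary term lives at $E_\ell$, not at $E_0$. If $E_\ell<E_0$ then~\eqref{E:EEV2} forces $\widehat{U_f}(\ell,E_\ell)=0$ and the $C^1$ regularity of $\widehat{U_f}(\ell,\cdot)$ at interior points makes $|\widehat{U_f}(\ell,E)|^2\,\pkl(T(E))\to0$ as $E\nearrow E_\ell$; if $E_\ell=E_0$ one uses $\varphi'(E_0)=0$ from $k>1$. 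At $E=\Emin$ the boundary term is simply $-\frac{T|\varphi'|}{T'}\,\pkl\,|\widehat{U_f}|^2\big|_{E=\Emin}\le0$ by the sign convention $\pkl\ge0$ of Definition~\ref{D:PKL0} and $T'>0$, so it is dropped outright; no special case analysis or appeal to Remark~\ref{R:DEGEN} is needed there.
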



\begin{proof}
	By way of contradiction, we assume that there is an eigenvalue in the essential spectrum of $\L$, which by Lemma~\ref{lem:Lproperties}
	means that it is of the form $\frac{4\pi^2m^2}{T(E_m)^2}$ for some $m\in\N$ and $E_m\in\barI$. 
	Let $f\in\mathrm D(\T)$ be a function as in Lemma~\ref{L:STEP0}, i.e., the relation~\eqref{E:EEV3NEW} holds, the statements of Lemma~\ref{lem:potential} apply to $U_f$, and $\nabla U_f\not\equiv0$.
	
	{\em Step 1. An energy-type identity.}
	We multiply~\eqref{E:EEV3NEW} by the complex conjugate of $-\widehat{U_f}(\ell,E)$, sum over $\ell\in\Zast$, and integrate against $T(E)\diff E$.
	By Parseval's theorem, the left-hand side equals
	\begin{equation*}
		- \int_I \int_{\mathbb S^1} f\,U_f\,T(E)\diff\theta\diff E = - \iint_\Omega f\,U_f \diff(r,w) = \frac1{16\pi^3} \int_{\R^3} |\nabla U_f|^2\diff x;
	\end{equation*}
	observe that $\hat f(0,\cdot)=0$ by~\eqref{E:EEV1}.
	As a result, we obtain the identity
	\begin{align}\label{E:EEV4}
		\frac1{16\pi^3}\int |\nabla U_f|^2\diff x
		=  T_m  \sum_{\ell\neq0}\int_I \frac{T(E)|\varphi'(E)|}{T_m -\frac{m}{\ell}T(E)} \lv \widehat{U_f}(\ell,E)\rv^2\diff E.
	\end{align}
	%
	%
	Now we let $\delta=\frac14\Tmin$
	so that the conclusions of Lemma~\ref{L:RESONANTSIZE0} apply. By \eqref{E:EEV4},
	\begin{multline*}
		\frac1{16\pi^3} \int |\nabla U_f|^2\diff x =  T_m  \sum_{\ell\neq0\atop \ell\in (\Ldk)^c}\int_I \frac{T(E)|\varphi'(E)|}{T_m -\frac{m}{\ell}T(E)} \lv \widehat{U_f}(\ell,E)\rv^2\diff E \\
		\ \ \ \ 
		+T_m  \left(\sum_{\ell\in \mathcal R_m}+\sum_{\ell\in \mathcal P_m}+\sum_{\ell\in \mathcal N_m}\right)\int_I \frac{T(E)|\varphi'(E)|}{T_m -\frac{m}{\ell}T(E)} \lv \widehat{U_f}(\ell,E)\rv^2\diff E.
	\end{multline*}
	We rearrange the above identity, use the bound $|T_m -\frac{m}{\ell}T(E)|\ge\delta$ for $\ell\in (\Ldk)^c$, and~\eqref{E:EELL}
	to obtain 
	\begin{align}
		&\frac1{16\pi^3}\int |\nabla U_f|^2\diff x  \leq \frac{T_m }{\delta} \sum_{\ell\neq0\atop \ell\in (\Ldk)^c}\int_I T(E)|\varphi'(E)| \lv \widehat{U_f}(\ell,E)\rv^2\diff E\nonumber\\
		 &+  T_m\sum_{\ell\in \mathcal P_m}\int_I \frac{T(E)|\varphi'(E)|}{|T_m -\frac{m}{\ell}T(E)|} \lv \widehat{U_f}(\ell,E)\rv^2\diff E
		+  T_m \sum_{\ell\in \mathcal R_m}\int_{\Emin}^{E_\ell} \frac{T(E)|\varphi'(E)|}{|T_m -\frac{m}{\ell}T(E)|} \lv \widehat{U_f}(\ell,E)\rv^2\diff E.  \label{E:STPT}
	\end{align}
	By Definition~\ref{D:PKL0},
	$\frac1{T'(E)}\pa_E\left(\pkl(T(E))\right)=\frac1{|T_m -\frac{m}{\ell}T(E)|}$ for
	$\ell\in\mathcal P_m$ and $E\in I$ or $\ell\in\mathcal R_m$ and $E<E_\ell$. We use this
	to rewrite the integrals above and then integrate by parts in $E$.
	For $\ell\in\P_m$ this results in
	\begin{multline}
		\int_I \frac{T(E)|\varphi'(E)|}{T'(E)} \pa_E\left(\pkl(T(E))\right) \lv \widehat{U_f}(\ell,E)\rv^2\diff E \\
		= A_{\ell} + B_{\ell}-  \frac{T(E)|\varphi'(E)|}{T'(E)} \pkl(T(E)) |\widehat{U_f}(\ell,E)|^2\Big|_{E=\Emin}  \le A_{\ell} + B_{\ell}, \label{E:NEW1}
	\end{multline}
	where for $\ell\in \P_m$,
	\begin{align}
		A_{\ell} &\coloneqq- 2\int_I \frac{T(E)|\varphi'(E)|}{T'(E)} \pkl(T(E))\,
		\mathrm{Re}\left(\pa_E\widehat{U_f}(\ell,E)\, \overline{\widehat{U_f}(\ell,E)}\right)
		\diff E,\label{E:ALP}\\
		B_{\ell}&\coloneqq- \int_I \pa_E\left(\frac{T(E)|\varphi'(E)|}{T'(E)}\right) \pkl(T(E)) \lv \widehat{U_f}(\ell,E)\rv^2\diff E. \label{E:BLP}
	\end{align}
	In~\eqref{E:NEW1}, we used $k>1$ to see $\varphi'(E_0)=0$ and the regularity of~$\widehat U_f(\ell,\cdot)$ from Lemma~\ref{lem:potential} to infer that the boundary term at $E=E_0$ vanishes.
	Analogously, for $\ell\in\mathcal R_m$ we have
	\begin{multline}
		\int_{\Emin}^{E_\ell} \frac{T(E)|\varphi'(E)|}{T'(E)} \pa_E\left(\pkl(T(E))\right) \lv \widehat{U_f}(\ell,E)\rv^2\diff E\\
		= A_{\ell} + B_{\ell}-  \frac{T(E)|\varphi'(E)|}{T'(E)} \pkl(T(E)) |\widehat{U_f}(\ell,E)|^2\Big|_{E=\Emin}  \le A_{\ell} + B_{\ell}, \label{E:NEWR1}
	\end{multline}
	where for $\ell\in \mathcal R_m$
	\begin{align}
		A_{\ell} &\coloneqq - 2\int_{\Emin}^{E_\ell} \frac{T(E)|\varphi'(E)|}{T'(E)} \pkl(T(E))\,
		\mathrm{Re}\left(\pa_E\widehat{U_f}(\ell,E)\, \overline{\widehat{U_f}(\ell,E)}\right)
		\diff E,\label{E:ALR}\\
		B_{\ell}&\coloneqq- \int_{\Emin}^{E_\ell} \pa_E\left(\frac{T(E)|\varphi'(E)|}{T'(E)}\right) \pkl(T(E)) \lv \widehat{U_f}(\ell,E)\rv^2\diff E. \label{E:BLR}
	\end{align}
	In order to see that the boundary term at $E=E_\ell$ in~\eqref{E:NEWR1} vanishes, first note that we may assume $E_\ell>\Emin$. If, in addition, $E_\ell<E_0$, we have $\widehat U_f(\ell,E_\ell)=0$ by~\eqref{E:EEV2} and thus obtain $|\widehat U_f(\ell,E)|^2\pkl(T(E))\to0$ as $E\nearrow E_\ell$ using the regularities of $\widehat U_f$ and $T$. Otherwise, $E_\ell=E_0$ and the boundary term vanishes because $k>1$.
	
	{\em Step 2. Estimates for $A_\ell$, $\ell\in\mathcal P_m\cup\mathcal R_m$.}
	The main challenge in our estimates is that the term $\pa_E\widehat{U_f}(\ell,E)$ at $E=\Emin$ inherits the singular behaviour $(E-\Emin)^{-\frac12}$ and
	for any given $\ell\in\mathbb Z$ the function $E\mapsto \pa_E\widehat{U_f}(\ell,\cdot)$ just fails to be in $L^2(I)$. To go around this we shall 
	introduce powers of 
	\begin{equation*}
		\delta E: = E- \Emin
	\end{equation*}
	as weights in our estimates.
	For any $\ell\in\mathcal P_m$ we have from~\eqref{E:ALP} and Cauchy-Schwarz
	\begin{align*}
		\lv A_\ell\rv^2 
		& \leq4 \lv \int_I \frac{T(E)|\varphi'(E)|}{T'(E)} \pkl(T(E)) \pa_E\widehat{U_f}(\ell,E)(\delta E)^\frac12 \overline{\widehat{U_f}(\ell,E)}(\delta E)^{-\frac12}\diff E\rv^2 \notag\\
		& \le 4\int_I  \lv \pa_E\widehat{U_f}(\ell,E)\rv^2T(E)\,\delta E\,|\varphi'(E)|\diff E
		\int_I \frac{\pkl^2(T(E))}{T'(E)^2}  \lv\widehat{U_f}(\ell,E)\rv^2
		\frac{T(E)}{\delta E}|\varphi'(E)|\diff E.
	\end{align*}
	Applying Cauchy's inequality and summing over $\ell\in\mathcal P_m$ yields
	\begin{multline}
		\sum_{\ell\in\mathcal P_m} \lv A_\ell\rv
		\leq\sum_{\ell\in\mathcal P_m} \int_I  \lv \pa_E\widehat{U_f}(\ell,E)\rv^2T(E)\,\delta E\,|\varphi'(E)|\diff E\\
		+ \sum_{\ell\in\mathcal P_m} \int_I \frac{\pkl^2(T(E))}{T'(E)^2}
		\lv\widehat{U_f}(\ell,E)\rv^2 \frac{T(E)}{\delta E}|\varphi'(E)|\diff E.
		\label{E:PKBOUND0}
	\end{multline}
	By the same arguments as above we conclude that
	\begin{multline}
		\sum_{\ell\in\mathcal R_m} \lv A_\ell\rv 
		\leq\sum_{\ell\in\mathcal R_m} \int_{\Emin}^{E_\ell}  \lv \pa_E\widehat{U_f}(\ell,E)\rv^2T(E)\,\delta E\,|\varphi'(E)|\diff E\\
		+ \sum_{\ell\in\mathcal R_m} \int_{\Emin}^{E_\ell} \frac{\pkl^2(T(E))}{T'(E)^2}  \lv\widehat{U_f}(\ell,E)\rv^2\frac{T(E)}{\delta E}|\varphi'(E)|\diff E.
		\label{E:RKBOUND0}
	\end{multline}
	%
	%
	The first sums on the right-hand sides of~\eqref{E:PKBOUND0}--\eqref{E:RKBOUND0} respectively combine to give
	\begin{multline}
		\sum_{\ell\in\mathcal P_m} \int_I  \lv \pa_E\widehat{U_f}(\ell,E)\rv^2T(E)\,\delta E\,|\varphi'|\diff E 
		+ \sum_{\ell\in\mathcal R_m} \int_{\Emin}^{E_\ell}  \lv \pa_E\widehat{U_f}(\ell,E)\rv^2T(E)\,\delta E\,|\varphi'|\diff E \\
		\leq\sum_{\ell\in\mathbb Z} \int_I \lv \pa_E\widehat{U_f}(\ell,E)\rv^2T(E)\,\delta E\,|\varphi'|\diff E = \int_{\mathbb S^1\times I} \lv \pa_E U_f\rv^2T(E)\,\delta E\,|\varphi'| \diff (\theta,E), \label{E:PRELIM}
	\end{multline}
	where we have used the Plancherel identity in the last line.
	We use~\eqref{E:UEREG} and change variables $\theta\mapsto r$ keeping in mind that
	$\frac{\pa r}{\pa\theta}=T(E)\sqrt{2E-2\Psi(r)}$
	to obtain
	\begin{align}
		\int_{\mathbb S^1\times I}& \lv \pa_E U_f(\theta,E)\rv^2T(E)\,\delta E\,|\varphi'|\diff(\theta,E) \le C\int_{\mathbb S^1\times I} \lv \pa_r U_f(\theta,E)\rv^2 T(E)\,|\varphi'|
		\diff(\theta,E)\notag\\
		&= C \epsilon \int_{\Rmin}^{\Rmax} \lv \pa_r U_f(r)\rv^2
		\int_{\Psi(r)}^{E_0}(E_0-E)^{k-1}\frac{\diff E}{\sqrt{E-\Psi(r)}}\diff r\notag\\
		&= C \epsilon \int_{\Rmin}^{\Rmax} \lv \pa_r U_f(r)\rv^2 (E_0-\Psi(r))^{k-1/2}\diff r\notag\\
		&\leq C \epsilon \int_{\Rmin}^{\Rmax} \lv \pa_r U_f(r)\rv^2\diff r
		= c_1 \epsilon \|\nabla U_f\|_{L^2(\mathbb R^3)}^2,\label{E:DERBOUNDKEY}
	\end{align}
	where we have applied the standard integral identity
	\begin{equation}\label{eq:integralidentitygamma}
		\int_a^b\left(s-a\right)^\alpha\,\left(b-s\right)^\beta\diff s = \frac{\Gamma(\alpha+1)\,\Gamma(\beta+1)}{\Gamma(\alpha+\beta+2)}\,\left(b-a\right)^{\alpha+\beta+1},\qquad \alpha,\beta>-1,\;a\leq b.
	\end{equation}
	In the estimate~\eqref{E:DERBOUNDKEY} only the general assumption $k>\frac12$ and the uniform-in-$\k$ bounds on
	$\Rmin$, $\Rmax$, $E_0$, and $\Emin$ have been used; constants denoted by $C$ never
	depend on $\epsilon, m$, or $\ell$.

	It remains to estimate the second sums on the right-hand sides
	of~\eqref{E:PKBOUND0} and~\eqref{E:RKBOUND0} respectively. 
	We start with the resonant contribution from~\eqref{E:RKBOUND0}.
	We use the inequality
	$4\pi^2\ell^2|\widehat{U_f}(\ell,E)|^2\le
	\int_{\mathbb S^1}|\pa_\theta U_f(\theta,E)|^2\diff\theta$,
	recall that $\pa_\theta U_f = \pa_r U_f \frac{\pa r}{\pa\theta}$,
	change variables $\theta\mapsto r$,
	and	apply the estimates from Lemma~\ref{lem:limit}
	to find that
	\begin{align}\label{E:RESBOUND}
		&
		\sum_{\ell\in\Ri_m}\int_{\Emin}^{E_\ell} \frac{\pkl^2(T(E))}{T'(E)^2}
		\lv\widehat{U_f}(\ell,E)\rv^2 \frac{T(E)}{\delta E} |\varphi'(E)|\diff E\notag \\
		&
		\quad {}\le \sum_{\ell\in\Ri_m} \frac1{4\pi^2\ell^2}
		\int_{\Emin}^{E_\ell} \int_{\mathbb S^1} \lv\pa_\theta U_f(\theta,E) \rv^2
		\frac{\pkl^2(T(E))}{T'(E)^2}  \frac{T(E)}{\delta E}
		|\varphi'(E)|\diff\theta\diff E \notag\\
		&
		\quad {}\le \sum_{\ell\in\Ri_m} \frac1{2\pi^2\ell^2}
		\int_{r_-(E_\ell)}^{r_+(E_\ell)} \lv \pa_r U_f\rv^2
		\int_{\Psi(r)}^{E_\ell} \frac{\pkl^2(T(E))}{T'(E)^2}
		\frac{T^2(E)}{\delta E} \sqrt{2E-2\Psi(r)}\,|\varphi'(E)|\diff E \diff r \notag\\
		&
		\quad {}\leq C \sum_{\ell\in\Ri_m} \frac{1}{\ell^2}
		\int_{\Rmin}^{\Rmax} \lv \pa_r U_f\rv^2\diff r\,
		\int_{\Emin}^{E_\ell} \pkl^2(T(E))  (\delta E)^{-\frac12} |\varphi'(E)|\diff E \notag\\
		&
		\quad {} = C \sum_{\ell\in\Ri_m} \frac{1}{\ell^2}
		\int_{\Rmin}^{\Rmax} \lv \pa_r U_f\rv^2\diff r\, I_{\ell m}\leq C\k\sum_{\ell\in\Ri_m}\frac{\|\nabla U_f\|_{L^2(\mathbb R^3)}^2}{\ell^2}
		 = c_2\k\|\nabla U_f\|_{L^2(\mathbb R^3)}^2.
	\end{align}
	In order to see that the constant $c_2$ is indeed independent of
	$\k$ and $m$ we must estimate the energy integrals $I_{\ell m}$
	accordingly. First we note that for $\ell\in\Ri_m$ and
	$E\in [\Emin,E_\ell[$,
	\begin{equation}\label{Tediffest}
		T_m - \frac{m}{\ell} T(E) = \frac{m}{\ell} (T(E_\ell)-T(E))\geq C (E_\ell - E),
	\end{equation}
	where we used Lemmas~\ref{lem:limit} and~\ref{L:RESONANTSIZE0}. For some
	$\alpha>0$ sufficiently small we estimate 
	$\pkl(T(E))$ against $C(T_m - \frac{m}{\ell} T(E))^{-\alpha}$ and apply~\eqref{eq:integralidentitygamma} to find that
	\[ 
	\frac{1}{\epsilon}
	I_{\ell m} \leq C \int_{\Emin}^{E_\ell}(E_0 - E)^{k-1} 
	\frac{(E_\ell - E)^{-2\alpha}}{\sqrt{E-\Emin}}\diff E
	\leq C (E_0 - \Emin)^{k-\frac{1}{2}-2\alpha},
	\] 
	which is uniformly bounded as required, provided $\alpha>0$ is sufficiently small;
	recall that $k>1$.
	By a completely analogous argument
	\begin{multline}\label{E:POSBOUND}
		\sum_{\ell\in \mathcal P_m}
		\int_I \frac{\pkl^2(T(E))}{T'(E)^2}  \lv\widehat{U_f}(\ell,E)\rv^2
		\frac{T(E)}{\delta E} |\varphi'(E)|\diff E\\
		\leq C \sum_{\ell\in \mathcal P_m} \frac{1}{\ell^2}
		\int_{\Rmin}^{\Rmax} \lv \pa_r U_f\rv^2\diff r\, I_{\ell m}
		\le  c_3\k\|\nabla U_f\|_{L^2(\mathbb R^3)}^2
	\end{multline}
	for some $\k,m,\ell$-independent constant $c_3>0$.
	The difference to the estimate \eqref{E:RESBOUND} is that for $\ell\in \mathcal P_m$
	the energy integrals $I_{\ell m}$ extend over the whole energy interval $I$,
	and \eqref{Tediffest} is replaced by
	\[
	T_m - \frac{m}{\ell} T(E) > \frac{m}{\ell} (T(E_0)-T(E))\geq C (E_0 - E).
	\]
	Hence
	\[
	\frac{1}{\epsilon}
	I_{\ell m}
	\leq C (E_0 - \Emin)^{k-\frac{1}{2}-2\alpha}
	\int_{0}^{1} (1-s)^{k-1-2\alpha} s^{-\frac{1}{2}} \diff s,
	\]  
	which is again uniformly bounded.

	{\em Step 3. Estimates for $B_\ell$, $\ell\in\mathcal P_m\cup\mathcal R_m$, see~\eqref{E:BLP} and~\eqref{E:BLR}.}
	These estimates are analogous to the bounds~\eqref{E:RESBOUND} and~\eqref{E:POSBOUND},
	and we obtain
	\begin{align}
		&\sum_{\ell\in\mathcal P_m}\lv \int_I \pa_E\left(\frac{T(E)|\varphi'(E)|}{T'(E)}\right) \pkl(T(E)) \lv \widehat{U_f}(\ell,E)\rv^2\diff E\rv \notag\\
		&\quad{} +\sum_{\ell\in\mathcal R_m}\lv \int_{\Emin}^{E_\ell} \pa_E\left(\frac{T(E)|\varphi'(E)|}{T'(E)}\right) \pkl(T(E)) \lv \widehat{U_f}(\ell,E)\rv^2\diff E\rv \
		\le c_4 \k \|\nabla U_f\|_{L^2(\mathbb R^3)}^2\label{E:STEP4FIXL}
	\end{align}
	for some universal constant $c_4>0$.
	Here, we again rely on the assumption $k>1$ to guarantee the integrability of
	$\varphi''$ near $E=E^0$ and use the uniform bounds on~$T$, $T'$, and~$T''$.
	
	{\em Step 4. Conclusion.}
	We use~\eqref{E:RESBOUND},~\eqref{E:POSBOUND},~\eqref{E:STEP4FIXL},~\eqref{E:DERBOUNDKEY}, and~\eqref{E:STPT} to get
	\begin{align}
		\frac1{16\pi^3}\int |\nabla U_f|^2\diff x
		&\le  \frac{T_m }{\delta} \sum_{\ell\neq0\atop \ell\in (L_{\delta}^k)^c}\int_I T(E)|\varphi'(E)| \lv \widehat{U_f}(\ell,E)\rv^2\diff E  + C\k \int |\nabla U_f|^2\diff x \notag\\
		& \le C\k \int |\nabla U_f|^2\diff x, \notag
	\end{align}
	where the second bound follows by the same argument as in~\eqref{E:RESBOUND}. With~$\epsilon_0>0$ small enough this gives the contradiction for $0<\epsilon<\epsilon_0$, recall $\nabla U_f\not\equiv0$.
\end{proof}

\section{Principal gap analysis}\label{S:GAP}


Throughout this section let~$\fke$ be a steady state given by Lemma~\ref{lem:stst} with fixed $k>\frac12$ and $\epsilon>0$.


\subsection{A Birman-Schwinger principle}\label{ssc:bsm}

In order to characterise the presence of eigenvalues of the linearised operator $\L=-\T^2-\Ri\colon\mathrm D(\L)\to\H$ in the {principal gap} $\mathcal G$ defined in~\eqref{eq:principalgapdef},
we provide a criterion
similar to~\cite[Sc.~8]{HaReSt22}, see also~\cite{MK} and~\cite[Sc.~6]{GuReSt22}. 


\begin{lemma}[A Birman-Schwinger principle, cf.~{\cite[Lemmas~8.1--8.3]{HaReSt22}}]\label{lem:BSprinciple}
	For $\lambda\in\mathcal G$ let 
	\begin{equation}\label{eq:BSoperatordef}
		Q_\lambda\coloneqq\sqrt\Ri\,\left(-\T^2-\lambda\right)^{-1}\sqrt\Ri\,\colon\H\to\H.
	\end{equation}
	We refer to $Q_\lambda$ as the {\em Birman-Schwinger operator} associated to~$\L$.
	This operator is linear, bounded, symmetric, non-negative, and compact.
	Furthermore, the linearised operator~$\L$ possessing an eigenvalue in the principal gap~$\mathcal G$ is equivalent to the existence of $\lambda\in\mathcal G$ such that $Q_\lambda$ has an eigenvalue greater or equal than~$1$.
\end{lemma}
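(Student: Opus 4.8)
The plan is to establish the claimed properties of $Q_\lambda$ in two stages: first the elementary functional-analytic properties (linearity, boundedness, symmetry, non-negativity, compactness), and then the equivalence with the eigenvalue problem for $\L$ in $\mathcal G$.

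For the properties of $Q_\lambda$: linearity is immediate. Since $\lambda \in \mathcal G = ]0, \min\sigmaess(\L)[$ and $\sigma(-\T^2\big|_\H)$ consists of $\overline{(2\pi\N/T(I))^2}$ by Lemma~\ref{lem:transportproperties}\ref{it:Tprop8}, with the smallest element being $\frac{4\pi^2}{\Tmax^2}=\min\sigmaess(\L)$ for small $\epsilon$ (Corollary~\ref{C:NOBANDS}), the resolvent $(-\T^2-\lambda)^{-1}$ is a well-defined bounded self-adjoint operator on $\H$; here I would note $\lambda \notin \sigma(-\T^2\big|_\H)$. Boundedness of $Q_\lambda$ follows since $\sqrt\Ri$ is bounded (Lemma~\ref{lem:responseproperties}) and maps into $\H$. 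Symmetry is a direct computation using that $\sqrt\Ri$ is symmetric and $(-\T^2-\lambda)^{-1}$ is self-adjoint. For non-negativity, write $\langle Q_\lambda f, f\rangle_H = \langle (-\T^2-\lambda)^{-1}\sqrt\Ri f, \sqrt\Ri f\rangle_H$ and observe that on $\H$ the operator $-\T^2 - \lambda$ has spectrum bounded below by $\frac{4\pi^2}{\Tmax^2} - \lambda > 0$, so its inverse is non-negative. Compactness is the one point needing a little more care: I would factor $Q_\lambda = \sqrt\Ri\,(-\T^2-\lambda)^{-1}\sqrt\Ri$ and use that $\sqrt\Ri$ is relatively $(-\T^2)$-compact (Lemma~\ref{lem:Lproperties}\ref{it:Lprop2}), equivalently that $\sqrt\Ri : (\mathrm D(\T^2), \|\cdot\|_{\T^2}) \to \H$ is compact, together with the fact that $(-\T^2-\lambda)^{-1} : \H \to \mathrm D(\T^2)$ is bounded in the graph norm; composing with the bounded operator $\sqrt\Ri : \H \to \H$ on the right yields a compact operator. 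This mirrors the argument cited from \cite[Lemmas~8.1--8.3]{HaReSt22}.

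For the equivalence: suppose $\L h = \lambda h$ with $0 \neq h \in \mathrm D(\L)$, $\lambda \in \mathcal G$. Then $(-\T^2 - \lambda) h = \Ri h = \sqrt\Ri\sqrt\Ri h$, so $h = (-\T^2-\lambda)^{-1}\sqrt\Ri (\sqrt\Ri h)$. Applying $\sqrt\Ri$ and setting $g \coloneqq \sqrt\Ri h \in \H$ gives $Q_\lambda g = \sqrt\Ri h = g$; one must check $g \neq 0$, which follows because if $\sqrt\Ri h = 0$ then $(-\T^2-\lambda)h = 0$, forcing $h = 0$ since $\lambda$ is not in the spectrum of $-\T^2\big|_\H$ — contradiction. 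So $Q_\lambda$ has eigenvalue $1$. Conversely, if $Q_\lambda g = \mu g$ for some $\mu \geq 1$ and $0 \neq g \in \H$, I would first upgrade $\mu = 1$ by a monotonicity/continuity argument: the largest eigenvalue $\mu(\lambda)$ of $Q_\lambda$ depends continuously and monotonically on $\lambda \in \mathcal G$, tends to $0$ as $\lambda \downarrow 0$ (since then $\|(-\T^2-\lambda)^{-1}\|$ stays bounded while the bottom of the spectrum of $-\T^2$ is strictly positive — actually one checks $\mu(\lambda) \to 0$ as $\lambda \to 0$ using $\|Q_\lambda\| \to$ a small quantity, or invokes the precise statement from \cite{HaReSt22}), so by the intermediate value theorem there is $\lambda' \in \mathcal G$ with $\mu(\lambda') = 1$; then setting $h \coloneqq (-\T^2-\lambda')^{-1}\sqrt\Ri g$ one verifies $h \in \mathrm D(\T^2)\cap\H = \mathrm D(\L)$, $h \neq 0$, and $\L h = \lambda' h$ by reversing the computation above. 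I would present this following the structure of \cite[Lemmas~8.1--8.3]{HaReSt22}.

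The main obstacle is the converse direction, specifically handling the passage from "eigenvalue $\geq 1$" to producing an actual eigenfunction of $\L$: one needs the continuity and limiting behaviour of the top of the spectrum of $Q_\lambda$ as $\lambda$ ranges over $\mathcal G$ to apply an intermediate-value argument, and one must verify the regularity $h \in \mathrm D(\L)$ of the reconstructed function rather than merely $h \in \H$. Both are somewhat technical but follow the template already established in \cite[Sc.~8]{HaReSt22}; the non-negativity of $\Ri$ and the spectral gap furnished by Corollary~\ref{C:NOBANDS} are exactly what make the monotonicity and limit arguments go through.
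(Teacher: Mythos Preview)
Your treatment of the basic properties of $Q_\lambda$ (linearity, boundedness, symmetry, non-negativity, compactness) matches the paper's argument essentially verbatim.

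For the equivalence, the paper takes a different and somewhat cleaner route. Rather than varying $\lambda$ and tracking the top eigenvalue $\mu(\lambda)$ of $Q_\lambda$, the paper introduces the one-parameter family $\L_\mu \coloneqq -\T^2 - \frac{1}{\mu}\Ri$ for $\mu > 0$ and records the direct correspondence: $\lambda$ is an eigenvalue of $\L_\mu$ if and only if $\mu$ is an eigenvalue of $Q_\lambda$, both directions by exactly the algebraic manipulation you wrote for the case $\mu=1$. The remaining step is then a pure min-max comparison: since $\Ri \geq 0$ one has $\L_\mu \geq \L_1 = \L$ for $\mu \geq 1$, and $\sigmaess(\L_\mu) = \sigmaess(\L)$ for all $\mu$; hence if any $\L_\mu$ with $\mu \geq 1$ has an eigenvalue in $\mathcal G$, the bottom of the spectrum of $\L$ lies below it but above $0$ by Antonov coercivity, and is therefore an eigenvalue of $\L$ in $\mathcal G$. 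No continuity or limiting argument in $\lambda$ is needed.

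Your IVT approach in $\lambda$ is also workable, but contains a concrete error: the claim that $\mu(\lambda) \to 0$ as $\lambda \downarrow 0$ is false. As $\lambda \to 0$ the resolvent $(-\T^2-\lambda)^{-1}$ converges in norm to $(-\T^2)^{-1}$ (well-defined on $\H$ since $0 \notin \sigma(-\T^2\big|_\H)$), so $Q_\lambda \to Q_0 = \sqrt\Ri\,(-\T^2)^{-1}\sqrt\Ri$, a fixed nonzero compact operator. What you actually need is $\mu(0^+) = \|Q_0\| < 1$. This is equivalent to the statement that no $\L_\mu$ with $\mu \geq 1$ has eigenvalue $0$, which follows from $\L_\mu \geq \L > 0$, i.e.\ from the Antonov bound in Lemma~\ref{lem:Lproperties}\ref{it:Lprop3}. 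With this correction your monotonicity-plus-IVT argument goes through; note, though, that you are then implicitly using the same min-max/positivity input the paper uses directly, so the paper's route via the family $\L_\mu$ is arguably the more transparent packaging of the same idea.
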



\begin{proof}
	The properties of~$Q_\lambda$ for $\lambda\in\mathcal G$ follow by the properties of $-\T^2$ and $\sqrt\Ri$ derived in Lemmas~\ref{lem:transportproperties} and~\ref{lem:responseproperties}. In particular, $Q_\lambda$ being compact is due to $\sqrt\Ri$ being relatively $(-\T^2)$-compact, cf.\ Lemma~\ref{lem:Lproperties}~\ref{it:Lprop2}.
	
	In order to relate the spectra of~$Q_\lambda$ and~$\L$ to each other, we consider the operators
	\begin{equation*}
		\L_\mu\coloneqq-\T^2-\frac1\mu\,\Ri\colon\mathrm D(\L)\to\H,\qquad \mu>0.
	\end{equation*}
	Similar to Lemma~\ref{lem:Lproperties}, the operators $\L_\mu$ are self-adjoint with $\sigmaess(\L_\mu)=\sigmaess(\L)=\sigma(-\T^2\big|_\H)$.
	Furthermore, for $\lambda\in\mathcal G$ and $\mu\geq1$ there holds
	\begin{equation}\label{eq:BSoperatorLmu}
		\lambda\text{ is an eigenvalue of }\L_\mu\quad\Leftrightarrow\quad\mu\text{ is an eigenvalue of }Q_\lambda.
	\end{equation}
	This equivalency is due to the following two observations: If $f\in\mathrm D(\L)\setminus\{0\}$ solves $\L_\mu f=\lambda f$, then $g\coloneqq\sqrt\Ri f\in\H\setminus\{0\}$ satisfies $Q_\lambda g=\mu g$. Conversely, if $g\in\H\setminus\{0\}$ solves $Q_\lambda g=\mu g$, then $f\coloneqq(-\T^2-\lambda)^{-1}\sqrt\Ri g\in\mathrm D(\L)\setminus\{0\}$ defines a solution of $\L_\mu f=\lambda f$.
	Next, we deduce that
	\begin{equation}\label{eq:Lmumonotone}
		\L=\L_1\text{ has an eigenvalue in }\mathcal G\quad\Leftrightarrow\quad\exists\mu\geq1\colon\L_\mu\text{ has an eigenvalue in }\mathcal G
	\end{equation}
	by the non-negativity of~$\Ri$ (cf.\ Lemma~\ref{lem:responseproperties}) and the positivity of~$\L$ with $\sigmaess(\L)\cap\mathcal G=\emptyset$ (cf.\ Lemma~\ref{lem:Lproperties}) together with the min-max principle for operators~\cite[Prop.~5.12]{HiSi}.
	
	Combining~\eqref{eq:BSoperatorLmu} and~\eqref{eq:Lmumonotone} then concludes the proof.
\end{proof}


We note that $Q_\lambda$ slightly differs from the respective operator defined in~\cite[Eq.~(8.1)]{HaReSt22}. 
The benefit of the definition~\eqref{eq:BSoperatordef} is that $Q_\lambda$ is symmetric, which is not the case in~\cite{HaReSt22}. 

When searching for eigenfunctions of $Q_\lambda$ for $\lambda\in\mathcal G$ associated to non-zero eigenvalues, we may restrict ourselves to the space
\begin{multline}
	\im(Q_\lambda)\subset\im(\sqrt\Ri)\subset\{f\in H\mid \exists F=F(r)\colon f(r,w)=\lv\varphi'(E)\rv\,w\,F(r)\}\\
	=\big\{\Omega\ni(r,w)\mapsto\lv\varphi'(E)\rv\,\frac w{r\sqrt{\rho(r)}}\,F(r)\mid F\in L^2([\Rmin,\Rmax])\big\}.\label{eq:BSimage}
\end{multline}
This leads to the following operator which was first introduced by Mathur~\cite{Ma}.


\begin{lemma}[The Mathur operator, cf.~{\cite[Def.~8.5, Prop.~8.6, and Lemma~8.8]{HaReSt22}}]\label{lem:Mathuroperator}
	For $F\in L^2([\Rmin,\Rmax])$ let $f\in\H$ be defined via 
	\begin{equation*}
		f(r,w)=\lv\varphi'(E)\rv\,\frac w{r\sqrt{\rho(r)}}\,F(r)\quad\text{for a.e. }(r,w)\in\Omega.
	\end{equation*}
	Due to~\eqref{eq:BSimage}, for any $\lambda\in\mathcal G$ there exists a unique $G\in L^2([\Rmin,\Rmax])$ such that
	\begin{equation*}
		Q_\lambda f(r,w)=\lv\varphi'(E)\rv\,\frac w{r\sqrt{\rho(r)}}\,G(r)\quad\text{for a.e. }(r,w)\in\Omega.
	\end{equation*}
	The resulting mapping 
	\begin{equation*}
		\M_\lambda\colon L^2([\Rmin,\Rmax])\to L^2([\Rmin,\Rmax]),\; \M_\lambda F\coloneqq G
	\end{equation*}
	is the {\em Mathur operator}.
	This operator is linear, bounded, symmetric, non-negative, and a compact Hilbert-Schmidt operator~\cite[Thm.~VI.22 et seq.]{ReSi1}. 
	We have the representation
	\begin{equation}\label{eq:MathurHS}
		(\M_\lambda F)(r)=\int_{\Rmin}^{\Rmax} K_\lambda(r,s)\,F(s)\diff s,\qquad F\in L^2([\Rmin,\Rmax]),\;r\in[\Rmin,\Rmax],
	\end{equation}
	with integral kernel $K_\lambda\in C([\Rmin,\Rmax]^2)$ given by
	\begin{equation}\label{eq:Mathurkernel}
		K_\lambda(r,s)\coloneqq\frac{8\pi^{\frac32}}{rs}\sum_{j=1}^{\infty}\int_{I(r)\cap I(s)}\frac{\lv\varphi'(E)\rv}{T(E)}\,
		\frac{\sin(2\pi j\,\theta(r,E))\,\sin(2\pi j\,\theta(s,E))}{\frac{4\pi^2}{T(E)^2}j^2-\lambda}\diff E
	\end{equation}
	for $r,s\in [\Rmin,\Rmax]$,
	where $\theta$ is defined in~\eqref{thetadef} and
	\begin{equation*}
		I(r)\coloneqq\{E\in I\mid r_-(E)<r<r_+(E)\},\qquad r>0.
	\end{equation*} 
\end{lemma}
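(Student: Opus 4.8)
The plan is to transfer all the functional-analytic properties of $\M_\lambda$ from the Birman--Schwinger operator $Q_\lambda$ of Lemma~\ref{lem:BSprinciple} along the natural identification $F\leftrightarrow f$, and then to read off the kernel~\eqref{eq:Mathurkernel} by an explicit computation in action-angle coordinates; throughout one follows the blueprint of~\cite[Sc.~8]{HaReSt22}, adapted to the present fixed-$L$ reduction. First I would record that $F\mapsto f$ with $f(r,w)=\lv\varphi'(E)\rv\,\frac{w}{r\sqrt{\rho(r)}}\,F(r)$ is, up to the constant $\pi^{-1/2}$, an isometry of $L^2([\Rmin,\Rmax])$ onto the closed subspace $\X\subset\H$ appearing on the right-hand side of~\eqref{eq:BSimage}; indeed $\|f\|_H^2=\int_{\Rmin}^{\Rmax}\frac{F(r)^2}{r^2\rho(r)}\bigl(\int_{\R}w^2\lv\varphi'(E)\rv\,dw\bigr)\,dr=\pi^{-1}\|F\|_{L^2}^2$ by the identity~\eqref{eq:ibp}. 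Since $\im(Q_\lambda)\subset\im(\sqrt\Ri)\subset\X$ by~\eqref{eq:BSimage}, for every $f\in\X$ there is a unique $G\in L^2([\Rmin,\Rmax])$ with $Q_\lambda f=\lv\varphi'(E)\rv\,\frac{w}{r\sqrt{\rho(r)}}\,G(r)$, and $\M_\lambda F\coloneqq G$ is thereby well defined and linear. The identity $\langle\M_\lambda F_1,F_2\rangle_{L^2}=\pi\,\langle Q_\lambda f_1,f_2\rangle_H$, together with the fact that $Q_\lambda$ is bounded, symmetric and non-negative (Lemma~\ref{lem:BSprinciple}), then yields the same three properties for $\M_\lambda$.

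For the kernel I would carry out the two successive applications of $\sqrt\Ri$ in action-angle variables. Using~\eqref{eq:ibp} once more gives $\sqrt\Ri f=c_0\lv\varphi'(E)\rv\,\frac{w}{r}\,F(r)$. Writing $w(\theta,E)=\frac1{T(E)}\partial_\theta r(\theta,E)$, exploiting the reflection symmetries $r(1-\theta,E)=r(\theta,E)$, $w(1-\theta,E)=-w(\theta,E)$ to fold the $\theta$-integral onto $[0,\tfrac12]$, and then changing variables $\theta\mapsto r$ there, the Fourier sine coefficients of $h\coloneqq\sqrt\Ri f$ come out as $h_j(E)=c_1\,\frac{\lv\varphi'(E)\rv}{T(E)}\int_{r_-(E)}^{r_+(E)}\frac{F(s)}{s}\sin\bigl(2\pi j\,\theta(s,E)\bigr)\,ds$. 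Since $\lambda\in\mathcal G$, i.e.\ $\lambda<\frac{4\pi^2}{\Tmax^2}=\min\sigmaess(\L)$, one has $\frac{4\pi^2j^2}{T(E)^2}-\lambda\ge\frac{4\pi^2}{\Tmax^2}-\lambda>0$ for all $j\ge1$ and $E\in I$, so $(-\T^2\big|_\H-\lambda)^{-1}$ is bounded and acts diagonally in the sine basis for a.e.\ $E$ (Lemma~\ref{lem:transportproperties}), whence $u\coloneqq(-\T^2-\lambda)^{-1}h$ has coefficients $u_j(E)=\frac{h_j(E)}{\frac{4\pi^2j^2}{T(E)^2}-\lambda}$. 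Applying $\sqrt\Ri$ a final time and using $\int_{\R}\tilde w\,u(r,\tilde w)\,d\tilde w=2\int_{I(r)}u(\theta(r,E),E)\,dE$ (a density argument is needed here, as $u$ is a priori only weakly defined) gives $G(r)=\frac{c_2}{r}\int_{I(r)}\sum_{j\ge1}u_j(E)\sin(2\pi j\,\theta(r,E))\,dE$; substituting the expression for $u_j$ and interchanging the $s$-integral, the $E$-integral and the $j$-sum converts the region $\{(s,E):E\in I(r),\,r_-(E)<s<r_+(E)\}$ into $\{(s,E):s\in[\Rmin,\Rmax],\,E\in I(r)\cap I(s)\}$, producing exactly~\eqref{eq:MathurHS}--\eqref{eq:Mathurkernel}.

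It remains to check $K_\lambda\in C([\Rmin,\Rmax]^2)$: the sine factors being bounded by $1$, the boundedness of $\frac{\lv\varphi'(E)\rv}{T(E)}$ on $I$, and the estimate $\frac{4\pi^2j^2}{T(E)^2}-\lambda\ge c(1+j^2)$ yield uniform absolute convergence of the $j$-series (which also legitimates the Fubini interchanges of the previous step), and continuity of $K_\lambda$ then reduces to continuity of $(r,E)\mapsto\theta(r,E)$, cf.~\eqref{thetadef} and Lemmas~\ref{L:REG} and~\ref{L:PERIODREGULARITY}, and of $(r,s)\mapsto|I(r)\cap I(s)|$, including the degenerate behaviour near the elliptic point $E=\Emin$ where the action-angle change of variables is only H\"older. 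Once $K_\lambda$ is continuous on the compact square it lies in $L^2$, so $\M_\lambda$ is Hilbert--Schmidt and in particular compact, consistent with the compactness of $Q_\lambda$ from Lemma~\ref{lem:BSprinciple}. I expect the main obstacle to be the kernel computation of the middle paragraph --- keeping track of the two applications of $\sqrt\Ri$ and the folding of $\mathbb S^1$ onto $[0,\tfrac12]$ under the $w$-reflection, and above all justifying the Fubini interchanges and the continuity of $K_\lambda$ up to the vacuum boundary $\{r=\Rmin\}\cup\{r=\Rmax\}$ and at the elliptic point, where Lemma~\ref{L:REG} must be invoked; these points are handled exactly as in~\cite[Def.~8.5, Prop.~8.6, Lemma~8.8]{HaReSt22}.
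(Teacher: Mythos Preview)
Your proposal is correct and follows essentially the same route as the paper: transfer boundedness, symmetry, and non-negativity of $\M_\lambda$ from $Q_\lambda$ via the (near-)isometry $F\leftrightarrow f$ based on~\eqref{eq:ibp}, compute the kernel by writing $(-\T^2-\lambda)^{-1}$ diagonally in the sine basis in action-angle variables and sandwiching with $\sqrt\Ri$, and deduce continuity of $K_\lambda$ by dominated convergence; the Hilbert--Schmidt property then follows. The paper's proof is terser and cites compactness of $\M_\lambda$ directly from that of $Q_\lambda$ rather than from the kernel, but the substance is the same.
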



\begin{proof}
	The operator $\M_\lambda$ being bounded, symmetric, non-negative, and compact follows by the respective properties of the Birman-Schwinger operator~$Q_\lambda$ together with the identity~\eqref{eq:ibp}.
	
	In order to verify that the Mathur operator is a Hilbert-Schmidt operator, the key observation is that using action-angle variables (cf.\ Section~\ref{ssc:particlemotions} and Lemma~\ref{lem:transportproperties}~\ref{it:Tprop2}) yields 
	\begin{equation*}
		(-\T^2-\lambda)^{-1}g(\theta,E) = \frac4{T(E)}\sum_{j=1}^\infty\int_{r_-(E)}^{r_+(E)}\frac{g(\theta(r,E),E)\,\sin(2\pi j\,\theta(r,E))}{\sqrt{2E-2\Psi(r)}}\diff r\,\frac{\sin(2\pi j\theta)}{\frac{4\pi^2}{T(E)^2}j^2-\lambda}
	\end{equation*}
	for $\lambda\in\mathcal G$ and $g\in\H$.
	Inserting the definition of~$\sqrt\Ri$ from Lemma~\ref{lem:responseproperties}, it is then straight-forward to verify that the identity~\eqref{eq:MathurHS} holds.
	The continuity of the kernel~$K_\lambda$ follows by the dominated convergence theorem.
\end{proof}


Due to~\eqref{eq:BSimage}, the Mathur operator still contains all the relevant information of the spectrum of the Birman-Schwinger operator. More precisely, for $\lambda\in\mathcal G$, any $\mu>0$ is an eigenvalue of $Q_\lambda\colon\H\to\H$ if and only if it is an eigenvalue of $\M_\lambda\colon L^2([\Rmin,\Rmax])\to L^2([\Rmin,\Rmax])$; this is similar to~\cite[Lemma~8.10]{HaReSt22}.
In addition, the properties of the Mathur operator derived above together with~\cite[Prop.~5.12]{HiSi} and~\cite[Thm.~VI.6]{ReSi1} imply
\begin{equation*}
	\sup(\sigma(\M_\lambda))=\max(\sigma(\M_\lambda))=\|\M_\lambda\|
\end{equation*}
for $\lambda\in\mathcal G$, where $\|\cdot\|$ denotes the operator norm on $L^2([\Rmin,\Rmax])$ given by
\begin{align}
	M_\lambda\coloneqq\|\M_\lambda\|=&\sup\{\|\M_\lambda F\|_{2}\mid F\in L^2([\Rmin,\Rmax]),\;\|F\|_2=1\}\nonumber\\
	=&\sup\{\langle\M_\lambda F,F\rangle_2\mid F\in L^2([\Rmin,\Rmax]),\;\|F\|_2=1\},\qquad\lambda\in\mathcal G.\label{eq:operatornorm_def}
\end{align}
Overall, we arrive at the following criterion for the presence of eigenvalues of~$\L$ in the principal gap~$\mathcal G$ defined in~\eqref{eq:principalgapdef}.


\begin{prop}[cf.~{\cite[Thm.~8.11]{HaReSt22}}]
\label{prop:BSMprinciple}
	The linearised operator~$\L$ possesses an eigenvalue in the principal gap~$\mathcal G$ if and only if there exists a $\lambda\in\mathcal G$ such that $M_\lambda\geq1$.
\end{prop}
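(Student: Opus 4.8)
The plan is to chain together the three equivalences already established in this subsection. By Lemma~\ref{lem:BSprinciple}, the linearised operator $\L$ has an eigenvalue in the principal gap $\mathcal G$ if and only if there exists $\lambda\in\mathcal G$ such that the Birman--Schwinger operator $Q_\lambda\colon\H\to\H$ has an eigenvalue greater than or equal to $1$. So it suffices to show that the latter condition is equivalent to the existence of $\lambda\in\mathcal G$ with $M_\lambda\geq1$.

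First I would argue that for fixed $\lambda\in\mathcal G$, the operator $Q_\lambda$ has an eigenvalue $\geq1$ if and only if $M_\lambda\geq1$. For the forward direction, suppose $\mu\geq1$ is an eigenvalue of $Q_\lambda$ with eigenfunction $g\in\H\setminus\{0\}$. Since $\mu\neq0$, we have $g=\mu^{-1}Q_\lambda g\in\im(Q_\lambda)$, and by the inclusion~\eqref{eq:BSimage} the function $g$ is of the form $g(r,w)=\lv\varphi'(E)\rv\,\frac{w}{r\sqrt{\rho(r)}}\,F(r)$ for some $F\in L^2([\Rmin,\Rmax])$, with $F\neq0$. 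The defining relation of the Mathur operator in Lemma~\ref{lem:Mathuroperator} then gives $Q_\lambda g(r,w)=\lv\varphi'(E)\rv\,\frac{w}{r\sqrt{\rho(r)}}\,(\M_\lambda F)(r)$, so $Q_\lambda g=\mu g$ translates into $\M_\lambda F=\mu F$; hence $\mu$ is an eigenvalue of $\M_\lambda$ and $M_\lambda=\|\M_\lambda\|\geq\mu\geq1$. For the reverse direction, if $M_\lambda\geq1$ then, since $M_\lambda=\max(\sigma(\M_\lambda))$ is attained as an eigenvalue of the compact self-adjoint operator $\M_\lambda$, there is $F\in L^2([\Rmin,\Rmax])\setminus\{0\}$ with $\M_\lambda F=M_\lambda F$; setting $g(r,w)=\lv\varphi'(E)\rv\,\frac{w}{r\sqrt{\rho(r)}}\,F(r)\in\H\setminus\{0\}$, the same computation shows $Q_\lambda g=M_\lambda g$, so $Q_\lambda$ has the eigenvalue $M_\lambda\geq1$. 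Here one uses the identity~\eqref{eq:ibp} to see that $g\neq0$ in $\H$ precisely when $F\neq0$ in $L^2$, so the correspondence is genuinely bijective on nonzero eigenfunctions.

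Combining this fibre-wise equivalence with Lemma~\ref{lem:BSprinciple} and then quantifying over $\lambda\in\mathcal G$ yields exactly the assertion of Proposition~\ref{prop:BSMprinciple}. None of this is difficult once the Mathur operator has been constructed: the only mild subtlety, and the step I would be most careful about, is the verification that $\im(Q_\lambda)$ really lies in the subspace described by~\eqref{eq:BSimage} and that the passage $g\leftrightarrow F$ is norm-comparable in both directions (so that nonzero eigenfunctions correspond to nonzero eigenfunctions), which is where~\eqref{eq:ibp} and the explicit form of $\sqrt\Ri$ from Lemma~\ref{lem:responseproperties} enter. Everything else is a direct transcription of~\cite[Thm.~8.11]{HaReSt22} to the present reduced-symmetry setting.
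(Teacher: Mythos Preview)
Your proof is correct and follows essentially the same approach as the paper: the proposition is stated immediately after the discussion that (i) for $\lambda\in\mathcal G$ any $\mu>0$ is an eigenvalue of $Q_\lambda$ iff it is an eigenvalue of $\M_\lambda$ (via~\eqref{eq:BSimage}, as in \cite[Lemma~8.10]{HaReSt22}), and (ii) $M_\lambda=\|\M_\lambda\|=\max(\sigma(\M_\lambda))$, which combined with Lemma~\ref{lem:BSprinciple} gives the result. Your write-up simply spells out the eigenfunction correspondence and the norm comparison (via~\eqref{eq:ibp}) more explicitly than the paper does.
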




\subsection{Absence of eigenvalues in the principal gap for $k>1$}


We now prove the absence of eigenvalues in the principal gap~$\mathcal G$ defined~\eqref{eq:principalgapdef} using the Birman-Schwinger principle derived above. 

\begin{theorem}\label{T:NOEVINGAP}
	Assume that the polytropic exponent satisfies $k>1$. 
	Then there exists $\epsilon_0>0$ such that the linearised operator~$\L$ associated to the equilibrium~$\fke$ has no eigenvalues in the principal gap~$\mathcal G$ for any $0<\epsilon<\epsilon_0$.
\end{theorem}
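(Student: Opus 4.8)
The plan is to use the Birman--Schwinger criterion from Proposition~\ref{prop:BSMprinciple}: it suffices to show that for $0<\epsilon<\epsilon_0$ one has $M_\lambda<1$ for every $\lambda\in\mathcal G=]0,\frac{4\pi^2}{(\Tmax)^2}[$, where $M_\lambda=\|\M_\lambda\|$ is the operator norm of the Mathur operator with kernel~\eqref{eq:Mathurkernel}. Since $\M_\lambda$ is a non-negative Hilbert--Schmidt operator, $M_\lambda\le\|K_\lambda\|_{L^2([\Rmin,\Rmax]^2)}$, so the core of the argument is a uniform-in-$\lambda$ and uniform-in-$\epsilon$ bound on the Hilbert--Schmidt norm of $K_\lambda$ that is $o_{\epsilon\to0}(1)$. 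The smallness must come from the prefactor $|\varphi'(E)|=\epsilon|\tilde\varphi'(E)|=\mathcal O(\epsilon)$ appearing in~\eqref{eq:Mathurkernel}; the danger is the small denominator $\frac{4\pi^2}{T(E)^2}j^2-\lambda$, which for $j=1$ can vanish because $\lambda$ ranges up to $\frac{4\pi^2}{(\Tmax)^2}$ and $\sup_I\frac{4\pi^2}{T(E)^2}=\frac{4\pi^2}{(\Tmin)^2}$ --- wait, it cannot vanish for $j\ge1$ since $\frac{4\pi^2}{T(E)^2}j^2\ge\frac{4\pi^2}{(\Tmax)^2}>\lambda$ is false in general; rather $\frac{4\pi^2}{T(E)^2}\ge\frac{4\pi^2}{(\Tmax)^2}>\lambda$ only at the right end, so genuine resonances $\frac{4\pi^2}{T(E)^2}=\lambda$ do occur for $j=1$ and $\lambda$ near the bottom of $\sigmaess$.

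Concretely, I would first reduce to the worst frequency: for $j\ge2$ one has $\frac{4\pi^2}{T(E)^2}j^2-\lambda\ge\frac{4\pi^2}{(\Tmax)^2}(j^2-1)\ge 3\frac{4\pi^2}{(\Tmax)^2}$, so the tail $\sum_{j\ge2}$ is trivially bounded by $C\epsilon$ using $|\sin|\le1$, the uniform bounds on $T$, $\Rmin$, $\Rmax$, $\Emin$, $E_0$ from Lemma~\ref{lem:limit}, and $\sum_{j\ge2}j^{-2}<\infty$ after the standard improvement $|\sin(2\pi j\theta(r,E))\sin(2\pi j\theta(s,E))|$ integrated in $E$ against the smooth weight gains a factor $j^{-2}$ via integration by parts in $E$ (using $\partial_E\theta$ controlled away from $E=\Emin$, together with the $(E-\Emin)^{-1/2}$ bound of Lemma~\ref{L:REG} which is integrable). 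For $j=1$ I would isolate the resonant energy $E_\lambda\in\barI$ (if any) solving $\frac{4\pi^2}{T(E_\lambda)^2}=\lambda$, and near it write $\frac{4\pi^2}{T(E)^2}-\lambda=(E-E_\lambda)h(E)$ with $h$ bounded below by monotonicity of $T$ (Lemma~\ref{lem:limit}) so that $\frac{1}{\frac{4\pi^2}{T(E)^2}j^2-\lambda}\sim\frac{1}{E-E_\lambda}$; the key point is that $|\varphi'(E)|=\epsilon|\tilde\varphi'(E)|$ with $k>1$ makes $\tilde\varphi'(E)=k(E_0-E)_+^{k-1}$ bounded and, crucially, we may absorb the non-integrable $\frac{1}{E-E_\lambda}$ by exploiting that the kernel is a symmetric self-adjoint operator --- actually the cleaner route is to estimate the {\em quadratic form} $\langle\M_\lambda F,F\rangle_2$ directly, since then the $j=1$ contribution is $\frac{8\pi^{3/2}\epsilon}{}\int_I\frac{|\tilde\varphi'(E)|}{T(E)}\frac{|\int\frac{w}{r}F(r)\sin(2\pi\theta(r,E))\diff r|^2}{\frac{4\pi^2}{T(E)^2}-\lambda}\diff E$, a single-signed expression whose potentially divergent $(E-E_\lambda)^{-1}$ singularity is genuine.

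The main obstacle is therefore precisely this $j=1$ resonant term, which does not enjoy an $\epsilon$-small bound by a naive sup estimate and in fact encodes the mechanism by which eigenvalues {\em do} appear when $\frac12<k\le1$. The resolution must use $k>1$ to gain integrability: the idea, paralleling the log-singularity device of Theorem~\ref{T:NOEEV}, is to note that the numerator $|\widehat{(wF/r)}(1,E)|^2$ (the first angular Fourier mode of the source) inherits, via the Poisson-type regularity of Lemma~\ref{lem:potential} applied to the potential generated by $F$, enough vanishing at $E=E_\lambda$ when combined with the $(E_0-E)^{k-1}$ weight --- more precisely, one integrates by parts in $E$ turning $\frac{1}{E-E_\lambda}$ into $\log|E-E_\lambda|$, which is integrable, at the cost of an $E$-derivative on the smooth factors and on $\widehat{(wF/r)}(1,\cdot)$; the latter is controlled by $\|F\|_2$ using that $F\in L^2$ and the change of variables $\theta\mapsto r$ together with Lemma~\ref{L:REG}, and the boundary term at $E=E_0$ vanishes because $k>1$ forces $\tilde\varphi'(E_0)=0$ while the boundary term at $E=\Emin$ is handled by the $(E-\Emin)^{1/2}$-type weights exactly as in Step~2 of the proof of Theorem~\ref{T:NOEEV}. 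Once every piece carries an overall factor $\epsilon$ times an $\epsilon$-independent constant (depending only on $k$ and the uniform steady-state bounds), choosing $\epsilon_0$ small gives $M_\lambda\le C\epsilon<1$ for all $\lambda\in\mathcal G$, and Proposition~\ref{prop:BSMprinciple} yields the absence of eigenvalues in $\mathcal G$.
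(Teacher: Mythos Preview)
Your proposal is built on a misidentification of where the difficulty lies. For $\lambda$ in the \emph{open} gap $\mathcal G=]0,\frac{4\pi^2}{\Tmax^2}[$ the denominator $\frac{4\pi^2}{T(E)^2}j^2-\lambda$ is \emph{strictly positive} for every $E\in\barI$ and every $j\ge1$, since $T(E)\le\Tmax$ gives $\frac{4\pi^2}{T(E)^2}\ge\frac{4\pi^2}{\Tmax^2}>\lambda$. There is no resonant energy $E_\lambda\in\barI$ for any $\lambda\in\mathcal G$; the ``$(E-E_\lambda)^{-1}$'' singularity you set out to tame with the log/integration-by-parts machinery of Theorem~\ref{T:NOEEV} never occurs here. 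The only issue is \emph{uniformity} in $\lambda$ as $\lambda\nearrow\frac{4\pi^2}{\Tmax^2}$, and since $T$ is increasing with $T(E_0)=\Tmax$, the worst case is handled by the trivial bound $\lambda<\frac{4\pi^2}{\Tmax^2}$, which localises the (near-)singularity at $E=E_0$, not at an interior point.

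The paper's argument is accordingly elementary and you have missed it: working with the quadratic form~\eqref{eq:MathurQuadraticForm}, one applies Cauchy--Schwarz to the inner radial integral (no integration by parts, no Fourier regularity of a potential) and replaces $\lambda$ by its supremum to obtain
\[
M_\lambda\le C\sum_{j\ge1}\int_I\frac{|\varphi'(E)|}{T(E)}\,\frac{\diff E}{\frac{4\pi^2}{T(E)^2}j^2-\frac{4\pi^2}{\Tmax^2}}.
\]
For $j\ge2$ the denominator already carries the factor $j^2-1$, so the tail sums without any ``standard improvement'' via integration by parts in $E$. For $j=1$ the monotonicity bound $\frac{4\pi^2}{T(E)^2}-\frac{4\pi^2}{T(E_0)^2}\ge c\,(E_0-E)$ reduces matters to
\[
\int_I\frac{|\varphi'(E)|}{E_0-E}\diff E=\epsilon k\int_I(E_0-E)^{k-2}\diff E\le C\epsilon,
\]
which is finite precisely because $k>1$. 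That is the whole mechanism: the vanishing $|\varphi'(E)|\sim(E_0-E)^{k-1}$ at the vacuum boundary directly cancels the $(E_0-E)^{-1}$ coming from the denominator. Your proposal notes that $\tilde\varphi'$ is bounded and that $\tilde\varphi'(E_0)=0$, but does not exploit this direct cancellation; instead it imports heavy tools designed for genuine interior resonances that do not arise in the gap.
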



\begin{proof}
	In order to apply Proposition~\ref{prop:BSMprinciple}, let $\lambda\in\mathcal G$ and $F\in L^2([\Rmin,\Rmax])$ with $\|F\|_2=1$.
	Using the representation~\eqref{eq:MathurHS} of the Mathur operator~$\M_\lambda$ yields
	\begin{equation}\label{eq:MathurQuadraticForm}
		\langle\M_\lambda F,F\rangle_{2}=8\pi^{\frac32}\sum_{j=1}^\infty\int_I\frac{\lv\varphi'(E)\rv}{T(E)}\,\frac1{\frac{4\pi^2}{T(E)^2}j^2-\lambda}\left(\int_{r_-(E)}^{r_+(E)}\sin(2\pi j\theta(r,E))\frac{F(r)}r\diff r\right)^2\diff E.
	\end{equation}
	We next apply the Cauchy-Schwarz inequality together with the bounds on $\Rmin$ and $\Rmax$ from~\eqref{eq:rhoepssupp} to estimate the radial integral; the constant $C>0$ changes from line to line but is always uniform in $\epsilon\in]0,\epsilon_0[$. In addition, we use the bound $\lambda<\frac{4\pi^2}{\Tmax^2}$ 
	and thus arrive at 
	\begin{equation}\label{eq:Mlambdabound}
		M_\lambda\leq C\sum_{j=1}^\infty\int_I\frac{\lv\varphi'(E)\rv}{T(E)}\,\frac1{\frac{4\pi^2}{T(E)^2}j^2-\frac{4\pi^2}{\Tmax^2}}\diff E.
	\end{equation}
	
	For the first summand, recall that there exists $\epsilon_0>0$ such that $T=T^\epsilon$ is increasing on $I=I^\epsilon$ for $0<\epsilon<\epsilon_0$ by Lemma~\ref{lem:limit}~\ref{it:limit20}. Together with the uniform bounds from the latter lemma and the mean value theorem (cf.\ Lemma~\ref{L:PERIODREGULARITY} for the regularity of $T$) we deduce
	\begin{equation*}
		\frac{4\pi^2}{T(E)^2}-\frac{4\pi^2}{\Tmax^2}=\frac{4\pi^2}{T(E)^2}-\frac{4\pi^2}{T(E_0)^2}\geq \frac1C\,(E_0-E)
	\end{equation*}
	for $E\in I$. Therefore,
	\begin{equation}\label{eq:absenceeqn1}
		\int_I\frac{\lv\varphi'(E)\rv}{T(E)}\,\frac1{\frac{4\pi^2}{T(E)^2}-\frac{4\pi^2}{\Tmax^2}}\diff E\leq C\int_I\frac{\lv\varphi'(E)\rv}{E_0-E}\diff E=Ck\epsilon\int_I(E_0-E)^{k-2}\diff E\leq C\epsilon
	\end{equation}
	because $k>1$; recall that $I=I^\epsilon$ is uniformly bounded as $\epsilon\to0$ by Lemma~\ref{lem:limit}~\ref{it:limit10}.
	
	In order the remaining summands on the right hand side of~\eqref{eq:Mlambdabound}, observe that $\frac{4\pi^2 }{T(E)^2}j^2-\frac{4\pi^2}{\Tmax^2} \geq\frac{j^2}C$ for $j\geq2$ and $E\in I$. Thus,
	\begin{equation}\label{eq:absenceeqn2}
		\sum_{j=2}^\infty\int_I\frac{\lv\varphi'(E)\rv}{T(E)}\,\frac1{\frac{4\pi^2}{T(E)^2}j^2-\frac{4\pi^2}{\Tmax^2}}\diff E\leq C\int_I\lv\varphi'(E)\rv\diff E\leq C\epsilon.
	\end{equation}
	
	Inserting~\eqref{eq:absenceeqn1} and~\eqref{eq:absenceeqn2} into~\eqref{eq:Mlambdabound} implies $M_\lambda\leq C\epsilon$. Applying the Birman-Schwinger-Mathur criterion from Proposition~\ref{prop:BSMprinciple} then concludes the proof.
\end{proof}



\subsection{Existence of pure oscillations for $\frac12<k\le1$}


We now apply Proposition~\ref{prop:BSMprinciple} to prove the existence of pure oscillations \`a la~\cite[Thm.~8.13]{HaReSt22}.

\begin{theorem}\label{T:BSYES}
	Assume that the polytropic exponent satisfies $\frac12<k\leq1$. 
	Then there exists $\epsilon_0>0$ such that the linearised operator~$\L$ associated to the equilibrium~$\fke$ possesses an eigenvalue in the principal gap~$\mathcal G$ for any $0<\epsilon<\epsilon_0$.
\end{theorem}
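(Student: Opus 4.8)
The plan is to invoke the Birman--Schwinger--Mathur criterion of Proposition~\ref{prop:BSMprinciple}, now in the direction opposite to Theorem~\ref{T:NOEVINGAP}: it suffices to show that for each fixed $0<\epsilon<\epsilon_0$ there is a $\lambda\in\mathcal G$ with $M_\lambda\geq1$. The mechanism is that, since $T=T^\epsilon$ is strictly increasing on $I$ with $T'$ bounded below by a positive constant for $0<\epsilon<\epsilon_0$ (Lemma~\ref{lem:limit}~\ref{it:limit20}), one has $\Tmax=T(E_0)$ and the mean value theorem gives constants $0<c\leq C$ with
\[
	c\,(E_0-E)\;\leq\;\frac{4\pi^2}{T(E)^2}-\frac{4\pi^2}{\Tmax^2}\;\leq\;C\,(E_0-E),\qquad E\in I,
\]
so the $j=1$ denominator in the quadratic form representation~\eqref{eq:MathurQuadraticForm} degenerates linearly at $E=E_0$ as $\lambda\nearrow\min\sigmaess(\L)=\tfrac{4\pi^2}{\Tmax^2}$. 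Writing $\eta\coloneqq\tfrac{4\pi^2}{\Tmax^2}-\lambda$ and recalling $|\varphi'(E)|=k\epsilon\,(E_0-E)^{k-1}$, the relevant energy integral near $E_0$ is comparable to $\epsilon\int_0^{\delta_0}\frac{u^{k-1}}{C'u+\eta}\,\diff u$, which stays bounded as $\eta\to0^+$ when $k>1$ (the case of Theorem~\ref{T:NOEVINGAP}) but diverges when $\tfrac12<k\leq1$: it is comparable to $\eta^{k-1}$ for $\tfrac12<k<1$ and to $\log(1/\eta)$ for $k=1$.

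To convert this divergence into the bound $M_\lambda\geq1$, I would use a fixed test function. Since $\theta(r_-(E),E)=0$ and $\theta(r_+(E),E)=\tfrac12$, the weight $\sin(2\pi\theta(r,E))$ is nonnegative on $[r_-(E),r_+(E)]$, so I take $F=\id_{[a,b]}/\sqrt{b-a}$ for a fixed interval $[a,b]\subset\,]\Rmin,\Rmax[$, so that $\|F\|_2=1$; recall $[\Rmin,\Rmax]=[r_-(E_0),r_+(E_0)]$. Using that $r_\pm$ and $\theta(r,\cdot)$ are continuous (in fact $C^2$) near $E=E_0$, which is an interior point of $\AEL$, one checks that for $E$ in a left neighbourhood $[E_0-\delta_0,E_0[$ the interval $[a,b]$ lies in $]r_-(E),r_+(E)[$ and $\theta(\cdot,E)$ maps $[a,b]$ into a fixed compact subinterval of $]0,\tfrac12[$. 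Hence $\sin(2\pi\theta(r,E))\geq c_0>0$ there, and
\[
	\int_{r_-(E)}^{r_+(E)}\sin\bigl(2\pi\theta(r,E)\bigr)\,\frac{F(r)}{r}\,\diff r\;\geq\;c_1\;>\;0,\qquad E\in[E_0-\delta_0,E_0[,
\]
with $c_0,c_1$ independent of $E$ (they may depend on $\epsilon$, which is harmless).

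Putting the pieces together: every summand in~\eqref{eq:MathurQuadraticForm} is nonnegative for $\lambda\in\mathcal G$, so retaining only the $j=1$ term over $[E_0-\delta_0,E_0[$ gives
\[
	\langle\M_\lambda F,F\rangle_2\;\geq\;8\pi^{3/2}c_1^2\int_{E_0-\delta_0}^{E_0}\frac{|\varphi'(E)|}{T(E)}\,\frac{\diff E}{\tfrac{4\pi^2}{T(E)^2}-\lambda}\;\geq\;C\,\epsilon\int_0^{\delta_0}\frac{u^{k-1}}{C'u+\eta}\,\diff u,
\]
and the right-hand side tends to $\infty$ as $\eta\to0^+$ because $\tfrac12<k\leq1$. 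Therefore, for each fixed $\epsilon\in\,]0,\epsilon_0[$ one can pick $\lambda=\lambda(\epsilon)\in\mathcal G$ close enough to $\tfrac{4\pi^2}{\Tmax^2}$ that $\langle\M_\lambda F,F\rangle_2\geq1$, hence $M_\lambda\geq1$, and Proposition~\ref{prop:BSMprinciple} yields an eigenvalue of $\L$ in the principal gap $\mathcal G$. I expect the only step needing genuine care to be the uniform-in-$E$ lower bound $c_1>0$ on the radial integral near $E_0$ --- equivalently, that on the fixed interval $[a,b]$ the normalized angle $\theta(\cdot,E)$ does not push its values toward $0$ or $\tfrac12$ as $E\to E_0$; this follows from the continuity of $(r,E)\mapsto\theta(r,E)$ together with $\theta(\cdot,E_0)$ being an increasing homeomorphism of $[\Rmin,\Rmax]$ onto $[0,\tfrac12]$. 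All remaining estimates are elementary one-variable computations, and no uniformity in $\epsilon$ is required for the conclusion.
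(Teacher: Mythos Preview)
Your proposal is correct and follows essentially the same approach as the paper: both use the quadratic form representation~\eqref{eq:MathurQuadraticForm}, retain only the $j=1$ summand, plug in an indicator function supported on a fixed subinterval of $]\Rmin,\Rmax[$ where $\sin(2\pi\theta(\cdot,E))$ is bounded below near $E=E_0$, and then let $\lambda\nearrow\tfrac{4\pi^2}{\Tmax^2}$ to exploit the divergence of $\int(E_0-E)^{k-2}\diff E$ for $k\le1$. Your continuity justification for the lower bound on the radial integral is exactly the content of the paper's choice of the set $S$ and parameter $\eta$.
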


\begin{proof}
	For $\lambda\in\mathcal G$ and $F\in L^2([\Rmin,\Rmax])$ we rewrite $\langle\M_\lambda F,F\rangle_{2}$ as in~\eqref{eq:MathurQuadraticForm} to deduce
	\begin{equation*}
		\langle\M_\lambda F,F\rangle_2\geq8\pi^{\frac32}\int_I\frac{\lv\varphi'(E)\rv}{T(E)}\,\frac1{\frac{4\pi^2}{T(E)^2}-\lambda}\left(\int_{r_-(E)}^{r_+(E)}\sin(2\pi\theta(r,E))\,\frac{F(r)}r\diff r\right)^2\diff E.
	\end{equation*}
	Now choose $\eta>0$ and a non-empty set $S\subset]\Rmin,\Rmax[$ such that for all $E\in]E_0-\eta,E_0[$ it holds that $S\subset]r_-(E),r_+(E)[$ and $\sin(2\pi\theta(r,E))\geq\frac12$ for $r\in S$; this is possible since $r_\pm$ are smooth and $\theta(\cdot,E)\colon]r_-(E),r_+(E)[\to]0,\frac12[$ is one-to-one for $E\in I$.
	Setting $F\coloneqq\mathds1_S$ leads to
	\begin{align*}
		\limsup_{\lambda\to\frac{4\pi^2}{\Tmax^2}}M_\lambda&\geq\limsup_{\lambda\to\frac{4\pi^2}{\Tmax^2}}\frac{\langle\M_\lambda F,F\rangle_{2}}{\|F\|_2^2}\geq\frac{2\pi^{\frac32}\lv S\rv}{\Rmin^2}\,\limsup_{\lambda\to\frac{4\pi^2}{\Tmax^2}}\int_{E_0-\eta}^{E_0}\frac{\lv\varphi'(E)\rv}{T(E)}\,\frac1{\frac{4\pi^2}{T(E)^2}-\lambda}\diff E\notag\\
		&=C\lv S\rv\int_{E_0-\eta}^{E_0}\frac{\lv\varphi'(E)\rv}{T(E)}\,\frac1{\frac{4\pi^2}{T(E)^2}-\frac{4\pi^2}{\Tmax^2}}\diff E
	\end{align*}
	for some constant $C=C^\epsilon>0$.
	Using the bounds from Lemma~\ref{lem:limit}~\ref{it:limit20} implies
	\begin{equation*}
		\frac{4\pi^2}{T(E)^2}-\frac{4\pi^2}{\Tmax^2}=\frac{4\pi^2}{T(E)^2}-\frac{4\pi^2}{T(E_0)^2}\leq C\left(T(E_0)-T(E)\right)\leq C(E_0-E)
	\end{equation*}
	for $E\in I$.
	Hence,
	\begin{equation*}
		\limsup_{\lambda\to\frac{4\pi^2}{\Tmax^2}}M_\lambda\geq C\lv S\rv\int_{E_0-\eta}^{E_0}\frac{\lv\varphi'(E)\rv}{E_0-E}\diff E=C\lv S\rv\epsilon k\int_{E_0-\eta}^{E_0}(E_0-E)^{k-2}\diff E.
	\end{equation*}
	Because $k\leq1$, the integral in the latter expression is infinite. Applying Proposition~\ref{prop:BSMprinciple} then concludes the proof.
\end{proof}


%



\section{Proof of the main theorem}\label{S:RAGE}



We can now complete the proof of Theorem~\ref{T:MAINTHEOREM}. 
Part~\ref{it:main1} is the content of Theorem~\ref{T:BSYES}. To prove part~\ref{it:main2} we first observe that 
since $\sigma(\L)\subset]0,\infty[=\mathcal G\cup\sigmaess(\L)$ by Corollary~\ref{C:NOBANDS}, 
Theorems~\ref{T:NOEEV} and~\ref{T:NOEVINGAP} imply that there are no eigenvalues
in the spectrum and therefore the pure point spectrum is empty. It remains to show  the damping formula~\eqref{E:LANDAU}.  
To that end we view the linear evolution~\eqref{E:LVP2} as a first order system of the form
\begin{equation}\label{eq:LVPfirstorder}
	\pa_t \psi =A\psi, \quad \psi = \begin{pmatrix} f \\ \partial_t f \end{pmatrix}, \quad A = \begin{pmatrix} 0 & 1 \\ -\L &0 \end{pmatrix}.
\end{equation}
Following~\cite[Sc.~VI.3]{EnNa}, we consider this system on the Hilbert space $\X\coloneqq\left(\mathrm D(\T)\cap\H\right)\times\H$ with
\begin{equation}\label{eq:Xinnerproduct}
	\langle (f,g),(F,G)\rangle_\X\coloneqq\langle\T f,\T F\rangle_H-\frac1{4\pi^2}\int_0^\infty r^2\,U_{\T f}'(r)\,U_{\T F}'(r)\diff r+\langle g,G\rangle_H
\end{equation}
for $(f,g),(F,G)\in\X$. Here we recall~\eqref{eq:normHdef}. If additionally $f\in\mathrm D(\L)$, the above expression can be rewritten as
\begin{equation*}
	\langle (f,g),(F,G)\rangle_\X=\langle\L f,F\rangle_H+\langle g,G\rangle_H
\end{equation*}
using~\eqref{eq:responsepotential}.
Hence, extending Antonov's coercivity bound from Lemma~\ref{lem:Lproperties}~\ref{it:Lprop3} onto $\mathrm D(\T)\cap\H$ via a standard approximation argument~\cite[Prop.~2]{ReSt20} shows that~\eqref{eq:Xinnerproduct} indeed defines an inner product on~$\X$.

The natural domain of definition for the operator~$A$ is $\mathrm D(A)\coloneqq\mathrm D(\L)\times\left(\mathrm D(\T)\cap\H\right)$, which is a dense subset of~$\X$.
Moreover, since $\L\colon\mathrm D(\L)\to\H$ is self-adjoint and invertible by Lemma~\ref{lem:Lproperties}, it is straight-forward to verify that $A\colon\mathrm D(A)\to\X$ is skew-adjoint, i.e., $A^\ast=-A$.
By Stone's theorem~\cite[Thm.~II.3.24]{EnNa}, $A$ thus generates a unitary $C^0$-group and the system~\eqref{eq:LVPfirstorder} is well-posed, i.e., any initial datum $(f_0,g_0)\in\mathrm D(A)$ launches a unique, global solution of the form $\R\ni t\mapsto e^{tA}(f_0,g_0)\in\mathrm D(A)$, cf.~\cite[Thm.~II.6.7]{EnNa}.
The analogous statement clearly carries over to the second order equation~\eqref{E:LVP2}.


	Consider the operator
	\begin{equation*}
		K\colon\X\to\X,\quad K\begin{pmatrix}f\\g\end{pmatrix}\coloneqq\begin{pmatrix}0\\\lv\varphi'(E)\rv U_{\T f}\end{pmatrix},
	\end{equation*}
	which is bounded by Lemma~\ref{lem:potential}.
	Moreover, for any bounded sequence $(f_n,g_n)\subset\X$ we obtain that $(\T f_n)\subset\H$ is bounded by Lemma~\ref{lem:Lproperties}~\ref{it:Lprop3}. Similar to Lemma~\ref{lem:Lproperties}~\ref{it:Lprop2}, we thus conclude that~$K$ is compact by applying Lemma~\ref{lem:potential}.
	Thus, since the point spectrum of~$A$ is empty by the above discussion, the RAGE theorem~\cite{CyFrKiSi} implies
		\begin{equation}
		0=\lim_{T\to\infty}\frac1T\int_0^T \|K e^{tA}(f_0,g_0)\|_\X^2\diff t
		= \lim_{T\to\infty}\frac1T\int_0^T\||\varphi'(E)|U_{\T f(t)}\|_H^2\diff t.\label{E:DECAYAVERAGE}
	\end{equation}
	Furthermore,
	\begin{equation*}
		\frac1{16\pi^3}\|\nabla U_{\T f(t)}\|_{L^2}^2=-\int_{\Omega}U_{\T f(t)}(r)\,\T f(t,r,w)\diff(r,w)\leq\||\varphi'(E)|U_{\T f(t)}\|_H\,\|\T f(t)\|_H
	\end{equation*}
	by Cauchy-Schwarz and
	\begin{equation}\label{E:TfBOUND}
		\|\T f(t)\|_H^2\leq C\langle\L f(t),f(t)\rangle_H\leq C\|(f(t),\partial_tf(t))\|_\X^2=C\|(f_0,g_0)\|_\X^2
	\end{equation}
	due to Lemma~\ref{lem:Lproperties}~\ref{it:Lprop3} and $(e^{tA})_{t\in\R}$ being unitary. Therefore,
	\begin{align*}
		\frac1T\int_0^T \|\nabla U_{\T f(t)}\|_{L^2}^2 \diff t&\leq\frac CT\int_0^T\||\varphi'(E)|U_{\T f(t)}\|_H\,\|\T f(t)\|_H\diff t\nonumber\\&\leq C\left(\frac1T\int_0^T\||\varphi'(E)|U_{\T f(t)}\|_H^2\diff t\right)^{\frac12}\,\left(\frac1T\int_0^T\|\T f(t)\|_H^2\diff t\right)^{\frac12}
	\end{align*}
	for $T>0$ and~\eqref{E:LANDAU} follows by~\eqref{E:DECAYAVERAGE} and~\eqref{E:TfBOUND}.

\appendix

\section{Steady state theory}\label{sc:limit}


\subsection{Proof of Lemma~\ref{lem:stst}}\label{A:STST}


The ansatz~\eqref{eq:ansatzfeps} indeed yields a stationary solution of the system~\eqref{eq:VlasovVfixedL}--\eqref{eq:rhoVfixedL} provided that~$U$ is the potential associated to~$\fke$, i.e.,
\begin{equation}\label{eq:Veps}
	U'(r)=\frac{4\pi}{r^2}\,\int_0^rs^2\rho(s)\diff s,\quad r>0,\qquad \lim_{r\to\infty}U(r)=0,
\end{equation}
where~$\rho$ is induced by~$\fke$ via~\eqref{eq:rhoVfixedL}, i.e,
\begin{equation}\label{eq:rhoeps}
	\rho(r)=\frac\pi{r^2}\,\int_{\R}\fke(r,w)\diff w.
\end{equation}
In order to get a closed system for~$U$, we insert the ansatz~\eqref{eq:ansatzfeps} into~\eqref{eq:rhoeps} and obtain
\begin{align}
	\rho(r)&=\frac{\sqrt2\pi}{r^2}\,\epsilon\int_{\Psi(r)}^{E_0}(E_0-E)_+^k\,(E-\Psi(r))^{-\frac12}\diff E=\nonumber\\
	&=\sqrt2\pi^{\frac32}\frac{\Gamma(k+1)}{\Gamma(k+\frac32)}\,\frac1{r^2}\,\epsilon\,(E_0-\Psi(r))_+^{k+\frac12}\eqqcolon \frac{c_k}{r^2}\,\epsilon\,(E_0-\Psi(r))_+^{k+\frac12}\label{eq:rhocalculation}
\end{align} 
for $r>0$,
where the effective potential $\Psi$ is given by~\eqref{E:EFFPOT}.
Hence, defining
\begin{equation}\label{eq:defg}
	g(z)\coloneqq c_k\,z_+^{k+\frac12},\qquad z\in\R,
\end{equation}
yields that
\begin{equation}\label{eq:rhog}
	\rho(r)=\frac\epsilon{r^2}\,g(E_0-\Psi(r))=\frac\epsilon{r^2}\,g(E_0-U(r)+\frac Mr-\frac L{2r^2}).
\end{equation}
Observe that $g\in C^1(\R)\cap C^\infty(\R\setminus\{0\})$ since $k>\frac12$.
Inspired by~\cite{RaRe13}, we now consider the quantity
\begin{equation*}
	y\coloneqq E_0-U
\end{equation*}
instead of~$U$.
Then~$y$ solves
\begin{equation}\label{eq:yeqn}
	y'(r)=-\frac{4\pi c_k}{r^2}\,\epsilon\int_0^rg(y(s)+\frac Ms-\frac L{2s^2})\diff s,\qquad r>0.
\end{equation}
We equip this equation with the initial condition
\begin{equation}\label{eq:yinitial}
	y(0)=\kappa
\end{equation}
for prescribed $\kappa$ satisfying the single gap condition~\eqref{E:SINGLEGAP}.
It is straight-forward to verify that there exists a unique solution $y\in C^1([0,\infty[)$ of~\eqref{eq:yeqn}--\eqref{eq:yinitial}, cf.~\cite{RaRe13}. 
This solution possesses a vacuum region at the centre, more precisely, $\rho(r)=0$ and $y(r)=\kappa$ for $0\leq r\leq \Rmin$ with
	$\Rmin=\Rmin^0$ given by~\eqref{eq:Rmindef}
and $\Rmin>0$ is the maximal radius with this property. 
Furthermore, inserting $y\leq\kappa<0$ into~\eqref{eq:rhog} yields  $\rho(r)=0$ for $r\geq\Rmax^0$ where $\Rmax^0$ is given by~\eqref{eq:Rmindef}.
Hence, the limit $y_\infty\coloneqq\lim_{r\to\infty}y(r)\in]-\infty,0[$
exists. Then, setting
	$E_0\coloneqq y_\infty<0$
and
	$U\coloneqq E_0-y$
yields a solution of~\eqref{eq:Veps}.
The estimate for the $w$-part of the support in~\eqref{eq:fepssupport} follows by
	\[
		\frac12\,w^2\leq E_0-E(r,w)=y(r)+\frac Mr-\frac L{2r^2}\leq\frac M{\Rmin}, \ \ (r,w)\in\supp(\fke),
		\]
where we have used the bounds $y\le0$, $r\ge \Rmin$, and $-\frac L{2r^2}<0$ on the galaxy support. \qed	


\subsection{Convergence of the steady state family}\label{ssc:convergence}


The aim of this section is to prove Lemma~\ref{lem:limit}.
For fixed $k>\frac12$ and $\kappa$ as above, this requires a detailed
understanding of the behaviour of the steady states $\fke$ given by Lemma~\ref{lem:stst} as $\epsilon\to0$.
We hence add a superscript $\epsilon$ to all steady state quantities to make the $\epsilon$-dependencies more visible, i.e., $\rho^\epsilon$ is the stationary mass density, $U^\epsilon$ is the stationary potential, and the associated local mass function is given by
\begin{equation}\label{eq:mepsdef}
	m^\epsilon(r)\coloneqq4\pi\int_0^rs^2\rho^\epsilon(s)\diff s,\qquad r\geq0.
\end{equation}

The first result is similar to~\cite[Lemma~3.3]{GuReSt22} and forms the basis for all further convergence results.
Recall Lemma~\ref{lem:ststregularity} for the regularity properties of~$\rho^\epsilon$ and~$U^\epsilon$.

\begin{lemma}\label{lem:limitrhopot}
  As $\epsilon\to0$ it holds that
  $\rho^\epsilon,(\rho^\epsilon)'\to0$ uniformly
  on $[0,\infty[$, $\Ms^\epsilon\to0$, $E_0^\epsilon\to\kappa$,
      and	\begin{equation}\label{eq:Vepslimit}
		(U^\epsilon)^{(j)}\to0 \text{ uniformly on } [0,\infty[,\quad j\in\{0,1,2,3\}.
	\end{equation}
\end{lemma}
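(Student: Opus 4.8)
The plan is to exploit that the $\epsilon$-dependence enters the field equation~\eqref{eq:yeqn} only through the explicit linear prefactor $\epsilon$, while the nonlinearity $g$ and the geometry of the support stay under $\epsilon$-uniform control; the one substantive input is an a priori bound $0\le\rho^\epsilon(r)\le C\epsilon$ on $[0,\infty[$ with $C$ independent of $\epsilon$, and everything else is integration. To obtain it I would first note that $(y^\epsilon)'(r)=-\frac{4\pi}{r^2}\int_0^rs^2\rho^\epsilon(s)\diff s\le0$ by~\eqref{eq:yeqn}, so $y^\epsilon$ is non-increasing and hence $y^\epsilon\le y^\epsilon(0)=\kappa$ on $[0,\infty[$. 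By Lemma~\ref{lem:stst} the support of $\rho^\epsilon$ lies in the \emph{$\epsilon$-independent} shell $[\Rmin^0,\Rmax^0]$ with $0<\Rmin^0<\Rmax^0<\infty$, and on this shell the argument of $g$ in~\eqref{eq:rhog} satisfies $y^\epsilon(r)+\frac Mr-\frac L{2r^2}\le\kappa-\Psi^0(r)\le\kappa-\Emin^0=\kappa+\frac{M^2}{2L}=:C_0$ (recall~\eqref{eq:Emin0def}), which is strictly positive by the single-gap condition~\eqref{E:SINGLEGAP}. Since $g$ and $g'$ are nonnegative and non-decreasing on $\R$ (here we use $k>\frac12$), the values of $g$ and $g'$ along these arguments are bounded by $\epsilon$-independent constants; plugging back into~\eqref{eq:rhog} and using $r\ge\Rmin^0$ on the support yields $0\le\rho^\epsilon\le C\epsilon$.

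From this bound the first batch of conclusions follows by successive integration. Integrating gives $m^\epsilon(r)=4\pi\int_0^rs^2\rho^\epsilon\le C\epsilon$, hence $\Ms^\epsilon=\lim_{r\to\infty}m^\epsilon(r)\le C\epsilon\to0$; since $(U^\epsilon)'(r)=m^\epsilon(r)/r^2$ by~\eqref{eq:Veps} and~\eqref{eq:mepsdef} vanishes for $r\le\Rmin^0$ and is bounded by $C\epsilon/(\Rmin^0)^2$ otherwise, $\|(U^\epsilon)'\|_{L^\infty}\le C\epsilon$. As $(U^\epsilon)'$ is supported in $[\Rmin^0,\Rmax^0]$ and $\lim_{r\to\infty}U^\epsilon=0$, integrating from $r=\Rmax^0$ inward gives $\|U^\epsilon\|_{L^\infty}\le(\Rmax^0-\Rmin^0)\,C\epsilon\to0$; in particular $E_0^\epsilon=\kappa+U^\epsilon(0)\to\kappa$ by the defining relation~\eqref{E:KAPPADEF}.

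For $(\rho^\epsilon)'$ I would differentiate~\eqref{eq:rhog}: on the interior of the support,
\[
(\rho^\epsilon)'(r)=-\frac2r\,\rho^\epsilon(r)+\frac{\epsilon}{r^2}\,g'\!\left(y^\epsilon(r)+\frac Mr-\frac L{2r^2}\right)\left((y^\epsilon)'(r)-\frac M{r^2}+\frac L{r^3}\right),
\]
while $(\rho^\epsilon)'\equiv0$ off the support; on the support $r$ is bounded below, $g'$ is bounded as above, $(y^\epsilon)'=-(U^\epsilon)'=O(\epsilon)$, and the remaining factors are $\epsilon$-uniformly bounded, so each term is $O(\epsilon)$. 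Since $\rho^\epsilon\in C^1([0,\infty[)$ by Lemma~\ref{lem:ststregularity} (valid because $k>\frac12$), the two expressions agree at $\Rmin$ and $\Rmax$ and the bound $\|(\rho^\epsilon)'\|_{L^\infty}\le C\epsilon$ holds globally. Finally, for the higher potential derivatives I would use the Poisson equation in the form
\[
(U^\epsilon)''(r)=-\frac2r\,(U^\epsilon)'(r)+4\pi\rho^\epsilon(r),\qquad (U^\epsilon)'''(r)=\frac2{r^2}\,(U^\epsilon)'(r)-\frac2r\,(U^\epsilon)''(r)+4\pi(\rho^\epsilon)'(r),
\]
both of which vanish identically on $[0,\Rmin^0]$; on $[\Rmin^0,\infty[$ every term on the right is $O(\epsilon)$ uniformly by the bounds already established, which gives~\eqref{eq:Vepslimit} for $j\in\{2,3\}$ and finishes the proof.

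There is no deep obstacle here: the statement is essentially ``the source term is $O(\epsilon)$ on a fixed compact set, so the ODE solution and all its derivatives are $O(\epsilon)$ there.'' The points that need care are (i) checking that every constant is genuinely independent of $\epsilon$, which rests entirely on the $\epsilon$-uniformity of the support $[\Rmin^0,\Rmax^0]$ from Lemma~\ref{lem:stst} and on the a priori monotonicity $y^\epsilon\le\kappa$; and (ii) correctly treating the inner vacuum region $r<\Rmin^0$ (where all quantities vanish) and the support boundary $\{r=\Rmin,\Rmax\}$, where the $C^1$-regularity of $\rho^\epsilon$ and $C^3$-regularity of $U^\epsilon$ from Lemma~\ref{lem:ststregularity} ensure the piecewise formulas glue together.
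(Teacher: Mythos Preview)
Your approach is essentially the paper's: use $y^\epsilon\le\kappa$ together with the $\epsilon$-independent support $[\Rmin^0,\Rmax^0]$ to get $\rho^\epsilon=O(\epsilon)$, then propagate through the Poisson equation. One slip to correct: $(U^\epsilon)'$ is \emph{not} supported in $[\Rmin^0,\Rmax^0]$---for $r\ge\Rmax^0$ one has $(U^\epsilon)'(r)=\Ms^\epsilon/r^2>0$, since the steady state carries positive total mass (unlike the perturbations $f\in\im(\T)$ in Lemma~\ref{lem:potential}, where the analogous support claim does hold). The fix is immediate: for $r\ge\Rmax^0$ one has $U^\epsilon(r)=-\Ms^\epsilon/r$, hence $|U^\epsilon(r)|\le\Ms^\epsilon/\Rmax^0=O(\epsilon)$ directly, and for $r\le\Rmax^0$ integrate inward from $\Rmax^0$ as you already do.
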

\begin{proof}
  Inserting the uniform bound of the radial
  support given by~\eqref{eq:rhoepssupp} and the estimate~$y^\epsilon\leq y^\epsilon(0)=\kappa$ into~\eqref{eq:rhog} yields the uniform convergence $\rho^\epsilon\to0$, which immediately implies that $\Ms^\epsilon\to0$; recall~\eqref{eq:Mepsdef}.
	Together with $(y^\epsilon)'(r)=-\frac{m^\epsilon(r)}{r^2}$ for $r>0$ by~\eqref{eq:yeqn} this also leads to $(y^\epsilon)'\to0$ uniformly.
	After integration, we then deduce that $y^\epsilon\to\kappa$ uniformly on~$[0,\infty[$ and $E_0^\epsilon=y_\infty^\epsilon\to\kappa$.
	    In addition, after
            differentiating~\eqref{eq:rhog} we obtain the uniform convergence $(\rho^\epsilon)'\to0$.
	Combining all these limits then also yields the uniform convergence of the second and third derivative of~$U^\epsilon$ after further differentiating~\eqref{eq:Veps}.
\end{proof}


We next prove that~$\rL^\epsilon$ and~$r_\pm^\epsilon$ converge  to~$\rL^0$ and~$r_\pm^0$, respectively, as $\epsilon\to0$; recall Lemma~\ref{lem:effpotprop} and Section~\ref{ssc:limit} for the definitions of these radii.
We start with the minimising radius~$\rL^\epsilon$ and the associated minimal energy value~$\Emin^\epsilon$.

\begin{lemma}\label{lem:limitrLEmin}
	It holds that $\rL^\epsilon\to\rL^0$ and $\Emin^\epsilon\to\Emin^0$ as $\epsilon\to0$.
\end{lemma}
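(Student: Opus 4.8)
The plan is to deduce everything from the uniform convergence $U^\epsilon\to0$, $(U^\epsilon)'\to0$ on $[0,\infty[$ established in Lemma~\ref{lem:limitrhopot}, together with the characterisation of $\rL^\epsilon$ from Lemma~\ref{lem:effpotprop} as the unique zero of $(\Psi^\epsilon)'$ on $]0,\infty[$. Since $\Psi^\epsilon(r)=U^\epsilon(r)+\Psi^0(r)$, we have $(\Psi^\epsilon)'(r)-(\Psi^0)'(r)=(U^\epsilon)'(r)$, so $(\Psi^\epsilon)'\to(\Psi^0)'$ uniformly on $[0,\infty[$ and $\Psi^\epsilon\to\Psi^0$ uniformly on $[\delta,\infty[$ for every $\delta>0$.

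The one point requiring a small argument is a uniform-in-$\epsilon$ localisation of $\rL^\epsilon$ away from $0$ and $\infty$. For this I would use that, by Lemma~\ref{lem:effpotprop}\ref{it:effpot2} applied at the admissible energy $E_0^\epsilon\in\AEL^\epsilon$, the minimiser $\rL^\epsilon$ lies strictly between the turning points $r_-^\epsilon(E_0^\epsilon)$ and $r_+^\epsilon(E_0^\epsilon)$; hence $\rL^\epsilon\in[\Rmin^\epsilon,\Rmax^\epsilon]\subset[\Rmin^0,\Rmax^0]$ by~\eqref{eq:rhoepssupp}, and $[\Rmin^0,\Rmax^0]$ is a fixed compact subset of $]0,\infty[$ (cf.~\eqref{eq:Rmindef}). (Alternatively, one may argue directly from the sign of $(\Psi^\epsilon)'(r)=(U^\epsilon)'(r)+Mr^{-2}-Lr^{-3}$, which is negative for $r$ small and, since $(U^\epsilon)'\ge0$, positive for $r>\rL^0$.)

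With the localisation in hand I would conclude by a routine subsequence argument. Given any sequence $\epsilon_n\to0$, extract a subsequence along which $\rL^{\epsilon_n}\to r_\ast\in[\Rmin^0,\Rmax^0]$; passing to the limit in $0=(\Psi^{\epsilon_n})'(\rL^{\epsilon_n})$ and using the uniform convergence of $(\Psi^\epsilon)'$ on $[\Rmin^0,\Rmax^0]$ gives $(\Psi^0)'(r_\ast)=0$, so $r_\ast=\rL^0$ by the uniqueness in Lemma~\ref{lem:effpotprop}\ref{it:effpot1} applied to $\Psi^0$. Since the limit does not depend on the subsequence, $\rL^\epsilon\to\rL^0$. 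Finally $\Emin^\epsilon=\Psi^\epsilon(\rL^\epsilon)=U^\epsilon(\rL^\epsilon)+\Psi^0(\rL^\epsilon)\to0+\Psi^0(\rL^0)=\Emin^0$, using $\|U^\epsilon\|_\infty\to0$, continuity of $\Psi^0$ on $[\Rmin^0,\Rmax^0]$, and $\rL^\epsilon\to\rL^0$. The localisation step is the only place needing care; the rest is soft compactness and continuity.
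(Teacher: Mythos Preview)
Your argument is correct. The localisation $\rL^\epsilon\in[\Rmin^0,\Rmax^0]$ via~\eqref{eq:rhoepssupp} is valid, the uniform convergence $(\Psi^\epsilon)'\to(\Psi^0)'$ on this compact set follows from $(U^\epsilon)'\to0$, and the subsequence argument together with the uniqueness of the zero of $(\Psi^0)'$ closes the proof; the passage to $\Emin^\epsilon$ is then immediate.

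The paper takes a slightly different, more direct route. It observes that the zero of $(\Psi^\epsilon)'$ coincides with the zero of the \emph{strictly increasing} function
\[
\xi^\epsilon(r)\coloneqq M+m^\epsilon(r)-\frac Lr,
\]
which is just $r^2(\Psi^\epsilon)'(r)$ rewritten via the Poisson relation $(U^\epsilon)'(r)=m^\epsilon(r)/r^2$. Since $m^\epsilon\to0$ uniformly, $\xi^\epsilon\to\xi^0$ uniformly, and the monotonicity of $\xi^\epsilon$ immediately yields convergence of the zero without any compactness or subsequence extraction. Your approach trades this structural observation for a soft compactness argument; both are short and legitimate, but the paper's version avoids the localisation step entirely by building the monotonicity into the auxiliary function.
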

\begin{proof}
	The radius $\rL^\epsilon$ is given as the unique zero of the increasing function
	\begin{equation*}
		\xi^\epsilon\colon]0,\infty[\to\R,\;\xi^\epsilon(r)\coloneqq M+m^\epsilon(r)-\frac Lr
	\end{equation*}
	for $\epsilon\geq0$; in the case $\epsilon=0$ we have $m^0\equiv0$.
	Because $m^\epsilon\to0$ uniformly on~$[0,\infty[$ as~$\epsilon\to0$ by Lemma~\ref{lem:limitrhopot}, we obtain that $\rL^\epsilon$ indeed tends to $\rL^0$ as $\epsilon\to0$.
	
	Together with the uniform convergence $U^\epsilon\to0$ from Lemma~\ref{lem:limitrhopot} we then deduce that $\Emin^\epsilon=\Psi^\epsilon(\rL^\epsilon)\to\Psi^0(\rL^0)=\Emin^0$ as $\epsilon\to0$.
\end{proof}

The next step is to show analogous properties for the radii $r_\pm^\epsilon(E)$ as well. In addition, we also analyse the behaviour of these radii for $(\k,E)\to(0,\Emin^0)$, i.e., in the near circular regime.
For this sake, let
\begin{equation}\label{eq:Adef}
	\A\coloneqq\{(\epsilon,E)\mid\epsilon\geq0,\;E\in\AEL^\epsilon\}
\end{equation}
denote the set of all {\em admissible $(\epsilon,E)$-pairs}; recall that $\AEL^\epsilon=]\Emin^\epsilon,0[$ by~\eqref{eq:Aepsdef} and~\eqref{eq:A0def}.

\begin{lemma}\phantomsection\label{lem:limitrpm}
	\begin{enumerate}[label=(\alph*)]
		\item\label{it:limitrpm1} The mappings $\A\ni(\epsilon,E)\mapsto r_\pm^\epsilon(E)$ 
		  are continuous at $\epsilon=0$ locally uniformly in~$E$. More precisely, for any $\delta>0$ and $E_1<0$ there exists some $\epsilon_0>0$ such that for all $0\leq\epsilon<\epsilon_0$,
                  $\Emin^\epsilon<E<E_1$, and $\Emin^0<E^\ast<E_1$ with $|E-E^\ast|<\epsilon_0$
                  it holds that $|r_\pm^\epsilon(E)-r_\pm^0(E^\ast)|<\delta$.
		\item\label{it:limitrpm2} The radii~$r_\pm^\epsilon(E)$ converge to~$\rL^0$ as~$E\to\Emin^\epsilon$ and~$\epsilon\to0$. More precisely, for any $\delta>0$ there exist $\epsilon_0>0$ and $\eta>0$ such that for all $0\leq\epsilon<\epsilon_0$ and $\Emin^\epsilon<E<\Emin^\epsilon+\eta<0$
                  it holds that $|r_\pm^\epsilon(E)-\rL^0|<\delta$.
	\end{enumerate}
\end{lemma}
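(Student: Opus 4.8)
The plan is to reduce both parts to the uniform convergence $\Psi^\epsilon\to\Psi^0$ on $[0,\infty[$, which follows from $U^\epsilon\to0$ uniformly (Lemma~\ref{lem:limitrhopot}) via $\Psi^\epsilon=\Psi^0+U^\epsilon$, together with $\rL^\epsilon\to\rL^0$ and $\Emin^\epsilon\to\Emin^0$ (Lemma~\ref{lem:limitrLEmin}). For part~\ref{it:limitrpm1} I would use a compactness-and-contradiction scheme. First, for fixed $E_1<0$ I would exhibit $0<a<R<\infty$ and $\epsilon_1>0$ such that $r_\pm^\epsilon(E)\in[a,R]$ for all $0\le\epsilon<\epsilon_1$ and $E\in\;]\Emin^\epsilon,E_1[$: the bounds $r_-^\epsilon(E)\le\rL^\epsilon$ and $r_+^\epsilon(E)\ge\rL^\epsilon$ are uniform because $\rL^\epsilon\to\rL^0$, while the remaining two bounds come from $\Psi^\epsilon(r)\to+\infty$ as $r\to0$ and $\Psi^\epsilon(r)\to0$ as $r\to\infty$, uniformly in small $\epsilon$ since $\|U^\epsilon\|_{L^\infty}\to0$. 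Assuming the claim fails for the $r_-$ branch, one obtains $\delta>0$, $\epsilon_n\to0$, $E_n\in\;]\Emin^{\epsilon_n},E_1[$, $E_n^\ast\in\;]\Emin^0,E_1[$ with $|E_n-E_n^\ast|\to0$ but $|r_-^{\epsilon_n}(E_n)-r_-^0(E_n^\ast)|\ge\delta$. Passing to subsequences, $E_n$ and $E_n^\ast$ converge to a common $\bar E\in[\Emin^0,E_1]$ (the limit is common because $|E_n-E_n^\ast|\to0$ and $\Emin^{\epsilon_n}\to\Emin^0$), and $r_-^{\epsilon_n}(E_n)\to\bar r\in[a,R]$. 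From $\Psi^{\epsilon_n}(r_-^{\epsilon_n}(E_n))=E_n$ and $\Psi^{\epsilon_n}\to\Psi^0$ uniformly (with $\Psi^0$ continuous at $\bar r\ge a>0$) we get $\Psi^0(\bar r)=\bar E$, and $\bar r\le\rL^0$ since $r_-^{\epsilon_n}(E_n)\le\rL^{\epsilon_n}$. If $\bar E>\Emin^0$ this forces $\bar r=r_-^0(\bar E)=\lim_n r_-^0(E_n^\ast)$, a contradiction; if $\bar E=\Emin^0$, uniqueness of the minimiser of $\Psi^0$ forces $\bar r=\rL^0$, while $r_-^0(E_n^\ast)\to\rL^0$ as well (continuous extension $r_\pm^0(\Emin^0)=\rL^0$), again a contradiction. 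The $r_+$ branch is handled identically.

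For part~\ref{it:limitrpm2} I would establish a uniform quadratic lower bound for $\Psi^\epsilon$ near its minimum. By $\eqref{eq:effpotprime2}$ and $(\Psi^\epsilon)'(\rL^\epsilon)=0$ one has $(\Psi^\epsilon)''(\rL^\epsilon)=L(\rL^\epsilon)^{-4}+4\pi\rho^\epsilon(\rL^\epsilon)\ge L(\rL^\epsilon)^{-4}\ge 2c_0>0$ uniformly for small $\epsilon$, since $\rL^\epsilon\to\rL^0$; moreover $\|(\Psi^\epsilon)'''\|_{L^\infty}$ is uniformly bounded on a fixed neighbourhood of $\rL^0$ by Lemma~\ref{lem:limitrhopot}. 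A Taylor expansion around $\rL^\epsilon$ then gives, after shrinking to a fixed radius $\rho_0>0$, the estimate $\Psi^\epsilon(r)-\Emin^\epsilon\ge c_0(r-\rL^\epsilon)^2$ for $|r-\rL^\epsilon|\le\rho_0$. Choosing $\eta<c_0\rho_0^2$, any $r$ with $\Psi^\epsilon(r)-\Emin^\epsilon<\eta$ must satisfy $|r-\rL^\epsilon|<\sqrt{\eta/c_0}$: otherwise strict monotonicity of $\Psi^\epsilon$ on either side of $\rL^\epsilon$ would give $\Psi^\epsilon(r)\ge\Psi^\epsilon(\rL^\epsilon\mp\rho_0)\ge\Emin^\epsilon+c_0\rho_0^2\ge\Emin^\epsilon+\eta$. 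Applying this to $r=r_\pm^\epsilon(E)$ for $E-\Emin^\epsilon<\eta$ yields $|r_\pm^\epsilon(E)-\rL^0|\le\sqrt{\eta/c_0}+|\rL^\epsilon-\rL^0|<\delta$ once $\eta$, and then $\epsilon_0$, are chosen small enough, using $\rL^\epsilon\to\rL^0$ once more.

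The main obstacle in part~\ref{it:limitrpm1} is the behaviour at energies approaching $\Emin^0$, where the turning-point maps $r_\pm^0$ are merely $C^{0,1/2}$ and cannot be inverted with uniform modulus of continuity; this is bypassed by invoking the uniqueness of the minimiser of $\Psi^0$ rather than inverting $\Psi^0$. In part~\ref{it:limitrpm2} the delicate point is the uniform-in-$\epsilon$ nondegeneracy of the minimum of $\Psi^\epsilon$, which is supplied by the $L/r^4$ term in $\eqref{eq:effpotprime2}$ together with $\rL^\epsilon\to\rL^0$; everything else is routine.
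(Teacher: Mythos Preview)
Your proof is correct, but both parts proceed differently from the paper's own argument.

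For part~\ref{it:limitrpm1}, the paper works directly: writing $\xi_E^\epsilon(r)=\Psi^\epsilon(r)-E$ and using that $r_-^\epsilon(E)$ is its unique zero on $]0,\rL^\epsilon[$, it shows that $\zeta_-\coloneqq\sup\{\xi_{E}^0(r_-^0(E)+\delta)\mid\ldots\}<0$ is uniformly negative, and then, via the uniform convergence $\Psi^\epsilon\to\Psi^0$, that $\xi_E^\epsilon(r_-^0(E^\ast)+\delta)<0$ for the relevant $(\epsilon,E,E^\ast)$, forcing $r_-^\epsilon(E)\le r_-^0(E^\ast)+\delta$ (with a small case split near $\rL^\epsilon$). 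Your compactness-and-contradiction argument is a perfectly valid alternative: it is more streamlined and avoids the explicit $\zeta_-$ and the case analysis, at the price of being nonconstructive in $\epsilon_0$. The delicate limiting case $\bar E=\Emin^0$ is handled in both approaches by the continuous extension $r_\pm^0(\Emin^0)=\rL^0$.

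For part~\ref{it:limitrpm2}, the paper simply observes that it follows from part~\ref{it:limitrpm1} together with $\Emin^\epsilon\to\Emin^0$. Your independent argument via the uniform quadratic lower bound $\Psi^\epsilon(r)-\Emin^\epsilon\ge c_0(r-\rL^\epsilon)^2$ is more informative: it yields the explicit rate $|r_\pm^\epsilon(E)-\rL^\epsilon|\le\sqrt{(E-\Emin^\epsilon)/c_0}$, which is precisely the $C^{0,1/2}$ behaviour exploited elsewhere in the paper (cf.\ Lemma~\ref{L:REG}). The paper's route is shorter here, but yours gives strictly more.
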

\begin{proof}
	The radius $r_-^\epsilon(E)$ is the unique zero of the decreasing function
	\begin{equation*}
		\xi_E^\epsilon\colon]0,\rL^\epsilon[\to\R,\;\xi_E^\epsilon(r)\coloneqq\Psi^\epsilon(r)-E
	\end{equation*}
	for $\epsilon\geq0$ and $E\in\AEL^\epsilon$. We continuously extend~$r_-^0$ by setting $r_-^0(\Emin^0)\coloneqq\rL^0$ and
	let $\delta>0$ and $E_1\in]\Emin^0,0[$ be arbitrary. By taking $\delta>0$ sufficiently small, we ensure that there exist $E\in]\Emin^0,E_1[$ with $r_-^0(E)+\delta\leq\rL^0$. We then observe that, as $\delta>0$, for such $E$, $\xi_E^0(r_-^0(E)+\delta)$ is uniformly negative, that is,
	%
	\begin{equation*}
		\zeta_-\coloneqq\sup\{\xi_E^0(r_-^0(E)+\delta)\mid\Emin^0<E\leq E_1,\,r_-^0(E)+\delta\leq\rL^0\}<0.
	\end{equation*}
	Now let~$\epsilon$, $E^\ast$, and~$E$ be as specified in the statement of part~\ref{it:limitrpm1} of the lemma for some $\epsilon_0>0$ which we define below.
	If $r_-^0(E^\ast)+2\delta\geq\rL^\epsilon$, then $r_-^\epsilon(E)\leq \rL^\epsilon\leq r_-^0(E^\ast)+2\delta$.
	Otherwise, i.e., $r_-^0(E^\ast)+2\delta<\rL^\epsilon$, we also have $r_-^0(E^\ast)+\delta<\rL^0$ after choosing~$\epsilon_0>0$ sufficiently small by Lemma~\ref{lem:limitrLEmin}. 
	Due to $\zeta_-<0$ and the uniform bound $r_-^0(E^\ast)+\delta\geq r_-^0(E_1)$, we then obtain that $\xi_E^\epsilon(r_-^0(E^\ast)+\delta)<0$ after potentially shrinking $\epsilon_0>0$ again according to Lemma~\ref{lem:limitrhopot}. This implies that $r_-^\epsilon(E)\leq r_-^0(E^\ast)+\delta$.
	Showing that $r_-^\epsilon(E)\geq r_-^0(E^\ast)-\delta$ works similarly. An analogous proof yields the respective estimates for $r_+$ as well, which concludes the proof of part~\ref{it:limitrpm1}.
	
	Part~\ref{it:limitrpm2} then follows by combining part~\ref{it:limitrpm1} with the convergence of~$\Emin^\epsilon$ from Lemma~\ref{lem:limitrLEmin}.
\end{proof}

In particular, Lemmas~\ref{lem:limitrhopot} and~\ref{lem:limitrpm} imply the convergence of the radial support of the steady state, i.e., 
\begin{equation}\label{eq:Rmaxlimit}
	\Rmin^\epsilon=\Rmin^0,\qquad \Rmax^\epsilon\to\Rmax^0\text{ as }\epsilon\to0,
\end{equation}
recall~\eqref{eq:Rmindef} and~\eqref{eq:rhoepssupp}.


We now consider the period function $T^\epsilon\colon\AEL^\epsilon\to]0,\infty[$ for $\epsilon\geq0$ defined in~\eqref{eq:Tepsdef} and~\eqref{eq:T0def}. 
The aim is to establish the uniform convergence of $T^\epsilon$ on the energy support~$\overline{I}^\epsilon$ of the steady state as $\epsilon\to0$; recall~\eqref{eq:Iepsdef} and~\eqref{eq:I0def}. The main difficulty in this task is that the set~$I^\epsilon$ changes in~$\epsilon$, in particular, the minimal energy value $\Emin^\epsilon$ depends on~$\epsilon$. 
We thus first consider the case of this \enquote{near circular regime}, i.e., the region where~$E$ gets close to the minimal energy value~$\Emin^\epsilon$.
It turns out that $T^\epsilon$ is essentially determined by $(\Psi^\epsilon)''(\rL^\epsilon)$ in this regime, which is why we start by establishing the following auxiliary result; recall Lemma~\ref{lem:ststregularity} for the regularity of the effective potential~$\Psi^\epsilon$ for $\epsilon>0$.

\begin{lemma}\label{lem:effpotlimitnearcirc}
	Let $j\in\N_0$. Then $(\Psi^\epsilon)^{(j)}(s)$ converges to $(\Psi^0)^{(j)}(\rL^0)$ as $E\to\Emin^\epsilon$ and $\epsilon\to0$ uniformly in $s\in[r_-^\epsilon(E),r_+^\epsilon(E)]$.
	More precisely, for any $\delta>0$ there exist $\epsilon_0>0$ and $\eta>0$ such that for all $0\leq\epsilon<\epsilon_0$ and $\Emin^\epsilon<E<\Emin^\epsilon+\eta<0$ it holds that $|(\Psi^\epsilon)^{(j)}(s)-(\Psi^0)^{(j)}(\rL^0)|<\delta$ for $s\in[r_-^\epsilon(E),r_+^\epsilon(E)]$.
\end{lemma}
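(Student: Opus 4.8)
The plan is to reduce the assertion to three facts already established: the convergence of the perturbed potential $(U^\epsilon)^{(j)}\to0$, the collapse $r_\pm^\epsilon(E)\to\rL^0$ of the turning points in the near-circular regime (Lemma~\ref{lem:limitrpm}~\ref{it:limitrpm2}), and the smoothness of the explicit limit $\Psi^0$ on $]0,\infty[$. Since $\Psi^\epsilon-\Psi^0=U^\epsilon$ by~\eqref{E:EFFPOT} and~\eqref{eq:effpot0}, I would split, for $s$ near $\rL^0$ and $j$ fixed,
\begin{equation*}
  |(\Psi^\epsilon)^{(j)}(s)-(\Psi^0)^{(j)}(\rL^0)|\le|(U^\epsilon)^{(j)}(s)|+|(\Psi^0)^{(j)}(s)-(\Psi^0)^{(j)}(\rL^0)|.
\end{equation*}
Given $\delta>0$, one first fixes a small compact neighbourhood $N$ of $\rL^0$ with $N\subset\,]0,\infty[$ on which the continuous function $(\Psi^0)^{(j)}$ oscillates by less than $\delta/2$; by Lemma~\ref{lem:limitrpm}~\ref{it:limitrpm2} there are then $\epsilon_0,\eta>0$ so that $[r_-^\epsilon(E),r_+^\epsilon(E)]\subset N$ whenever $0\le\epsilon<\epsilon_0$ and $\Emin^\epsilon<E<\Emin^\epsilon+\eta<0$. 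It remains to make $|(U^\epsilon)^{(j)}|<\delta/2$ on $N$ for small $\epsilon$.

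For $j\in\{0,1,2,3\}$ this is immediate from Lemma~\ref{lem:limitrhopot}, which gives $(U^\epsilon)^{(j)}\to0$ uniformly on all of $[0,\infty[$. For general $j\in\N_0$ --- in which case $(\Psi^\epsilon)^{(j)}$ is only to be read on $N$, which is legitimate because $\Rmin^\epsilon=\Rmin^0<\rL^0<\Rmax^0$ with $\Rmax^\epsilon\to\Rmax^0$ by~\eqref{eq:Rmaxlimit}, so that after shrinking $N$ it avoids the set $\{\Rmin^\epsilon,\Rmax^\epsilon\}$ outside which $\rho^\epsilon,U^\epsilon$ are smooth (cf.\ Lemma~\ref{lem:ststregularity}) --- I would bootstrap. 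The crucial observation is that on $N$ the argument of $g$ in~\eqref{eq:rhog} stays uniformly away from $0$: using $\Psi^\epsilon\to\Psi^0$ on $N$, $E_0^\epsilon\to\kappa$ (Lemma~\ref{lem:limitrhopot}), and the strict inequality $\kappa>\Emin^0=\Psi^0(\rL^0)$ forced by the single-gap condition~\eqref{E:SINGLEGAP} together with~\eqref{eq:Emin0def}, one gets $E_0^\epsilon-\Psi^\epsilon(r)\ge\tfrac12(\kappa-\Emin^0)>0$ for $r\in N$ and $\epsilon$ small. Hence $g$ is $C^\infty$ with derivatives bounded on the relevant range, and, because of the prefactor $\epsilon$ in~\eqref{eq:rhog}, every $r$-derivative of $\rho^\epsilon$ on $N$ tends to $0$ uniformly, provided the derivatives of $\Psi^\epsilon$ up to the relevant order stay bounded on $N$ uniformly in $\epsilon$. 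Differentiating the radial Poisson relation~\eqref{eq:effpotprime2} then closes an induction on $j$: assuming $(\Psi^\epsilon)^{(i)}\to(\Psi^0)^{(i)}$ uniformly on $N$ for $i<j$, the $(j-2)$-fold derivative of~\eqref{eq:effpotprime2} expresses $(\Psi^\epsilon)^{(j)}$ through $\{(\Psi^\epsilon)^{(i)}\}_{i<j}$ and $(\rho^\epsilon)^{(j-2)}\to0$, yielding $(\Psi^\epsilon)^{(j)}\to(\Psi^0)^{(j)}$, i.e.\ $(U^\epsilon)^{(j)}\to0$, uniformly on $N$; the base cases $j\le3$ are Lemma~\ref{lem:limitrhopot}.

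Assembling the quantifiers --- first $N$, then $\epsilon_0$ small enough for $|(U^\epsilon)^{(j)}|<\delta/2$ on $N$, then $\epsilon_0,\eta$ from Lemma~\ref{lem:limitrpm}~\ref{it:limitrpm2} --- gives the claim. The main obstacle is the bootstrap: one must carry along uniform-in-$\epsilon$, uniform-on-$N$ bounds for all intermediate derivatives and verify that a single fixed $N$ stays inside the smoothness region of $\rho^\epsilon$ for all small $\epsilon$; this is exactly where the single-gap condition~\eqref{E:SINGLEGAP} (keeping the argument of $g$ bounded away from $0$ near $\rL^0$) and the strict interiority $\Rmin^0<\rL^0<\Rmax^0$ enter. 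Since the applications of this lemma only ever use $j$ up to the order needed for the $C^2$-convergence of the period function, one could alternatively prove it for $j\le J_0$ only and dispense with the full induction.
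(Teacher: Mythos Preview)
Your proof is correct and follows essentially the same approach as the paper: both use Lemma~\ref{lem:limitrpm}~\ref{it:limitrpm2} to confine $s$ to a fixed compact set away from $\{\Rmin^\epsilon,\Rmax^\epsilon\}$, invoke Lemma~\ref{lem:limitrhopot} for $j\le3$, and for larger $j$ bootstrap by differentiating~\eqref{eq:rhog} and the Poisson equation on that set (where the argument of $g$ stays positive). Your write-up is more explicit about the splitting $(\Psi^\epsilon)^{(j)}(s)-(\Psi^0)^{(j)}(\rL^0)=(U^\epsilon)^{(j)}(s)+[(\Psi^0)^{(j)}(s)-(\Psi^0)^{(j)}(\rL^0)]$ and about why the bootstrap closes, while the paper states the latter more tersely and works on the slightly larger interval $[\Rmin^0+\tilde\delta,\Rmax^0-\tilde\delta]$ instead of a small neighbourhood of $\rL^0$; these are cosmetic differences.
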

\begin{proof}
	Applying Lemmas~\ref{lem:limitrhopot}--\ref{lem:limitrpm} and~\eqref{eq:Rmaxlimit} implies that there exist $\tilde\delta,\epsilon_0,\eta>0$ such that
	\begin{equation*}
		[r_-^\epsilon(E),r_+^\epsilon(E)]\subset[\Rmin^\epsilon+2\tilde\delta,\Rmax^\epsilon-2\tilde\delta]\subset[\Rmin^0+\tilde\delta,\Rmax^0-\tilde\delta]
	\end{equation*}
	for $\Emin^\epsilon<E<\Emin^\epsilon+\eta$ and $0\leq\epsilon<\epsilon_0$. In particular, $(\Psi^\epsilon)^{(j)}$ exists on $[r_-^\epsilon(E),r_+^\epsilon(E)]$ by Lemma~\ref{lem:ststregularity}; observe that $\Psi^0\in C^\infty(]0,\infty[)$ by~\eqref{eq:effpot0}.
	If $j\leq3$, the statement then follows by Lemma~\ref{lem:limitrhopot}.
	In the case of a larger~$j$ one has to iterate the arguments of Lemma~\ref{lem:limitrhopot}, i.e., further differentiate~\eqref{eq:Veps} and~\eqref{eq:rhog}, to deduce that $(U^\epsilon)^{(j)}\to0$ uniformly on~$[\Rmin^0+\tilde\delta,\Rmax^0-\tilde\delta]$ as~$\epsilon\to0$.
\end{proof}

We then obtain the following behaviour of $T^\epsilon(E)$ as $\epsilon\to0$ for~$E$ in the near circular regime.

\begin{lemma}\label{lem:limitTnearcirc}
	The period function $T^\epsilon(E)$ converges to $T^0(\Emin^0)$ as $E\to\Emin^\epsilon$ and $\epsilon\to0$.
	More precisely, for any $\delta>0$ there exist $\epsilon_0>0$ and $\eta>0$ such that for all $0\leq\epsilon<\epsilon_0$ and $\Emin^\epsilon<E<\Emin^\epsilon+\eta<0$ it holds that $|T^\epsilon(E)-T^0(\Emin^0)|<\delta$.
	
	Here, $T^0(\Emin^0)$ denotes the continuous extension of~$T^0$ onto $\Emin^0$. Due to~\eqref{eq:effpot0}, \eqref{eq:Emin0def}, and~\eqref{eq:T0def}, this value is given by
	\begin{equation}\label{eq:T0nearcircdef}
		T^0(\Emin^0)=2\pi\,\frac{L^{\frac32}}{M^2}=\frac{2\pi}{\sqrt{(\Psi^0)''(\rL^0)}}.
	\end{equation}
\end{lemma}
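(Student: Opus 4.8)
The plan is to establish the convergence $T^\epsilon(E)\to T^0(\Emin^0)$ in the near-circular regime by comparing the period of the radial oscillation with that of the linearised (harmonic) oscillator at the bottom of the potential well. The key geometric fact is that near its minimum, the effective potential $\Psi^\epsilon$ is, to leading order, a parabola with curvature $(\Psi^\epsilon)''(\rL^\epsilon)$, and by Lemma~\ref{lem:effpotlimitnearcirc} this curvature converges to $(\Psi^0)''(\rL^0)$ as $E\to\Emin^\epsilon$ and $\epsilon\to0$. For a harmonic oscillator with that curvature the period is exactly $2\pi/\sqrt{(\Psi^0)''(\rL^0)}$, which by~\eqref{eq:T0nearcircdef} equals $T^0(\Emin^0)$; so the content of the lemma is that the anharmonic corrections are uniformly small in the stated double limit.

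Concretely, I would proceed as follows. First, fix $\delta>0$ and use Lemmas~\ref{lem:limitrLEmin}, \ref{lem:limitrpm}, and~\ref{lem:effpotlimitnearcirc} to choose $\epsilon_0,\eta>0$ so that for $0\le\epsilon<\epsilon_0$ and $\Emin^\epsilon<E<\Emin^\epsilon+\eta$, the turning-point interval $[r_-^\epsilon(E),r_+^\epsilon(E)]$ is contained in a fixed compact subinterval of $]\Rmin^0,\Rmax^0[$ on which $\Psi^\epsilon\in C^2$ with $(\Psi^\epsilon)''$ uniformly close to $\alpha_0:=(\Psi^0)''(\rL^0)>0$ and $(\Psi^\epsilon)'(\rL^\epsilon)=0$. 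Second, perform the substitution that desingularises the turning-point integral in~\eqref{eq:Tepsdef}: writing $r=r(\theta,E)$ via the action-angle change of variables, or equivalently substituting $2E-2\Psi^\epsilon(r)=(2E-2\Emin^\epsilon)\,u$ or a trigonometric substitution $r-\rL^\epsilon = \sqrt{2(E-\Emin^\epsilon)/\alpha_0}\,\sigma(\phi)$, one rewrites
\begin{equation*}
T^\epsilon(E)=2\int_{r_-^\epsilon(E)}^{r_+^\epsilon(E)}\frac{\diff r}{\sqrt{2E-2\Psi^\epsilon(r)}}
=\int_0^1 \frac{2\,\partial_\theta r(\theta,E)}{\sqrt{2E-2\Psi^\epsilon(r(\theta,E))}}\,\diff\theta,
\end{equation*}
and the point is that the integrand converges pointwise (and is dominated uniformly) to the constant $2\pi/\sqrt{\alpha_0}$ as $E\to\Emin^\epsilon$, $\epsilon\to0$. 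Third, conclude by dominated convergence, making the $\epsilon_0,\eta$ quantitative: shrink them so that the integrand stays within $\delta/(2\pi)$ (say) of its limiting value, uniformly in $\theta$.

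The main obstacle is the square-root singularity of $1/\sqrt{2E-2\Psi^\epsilon(r)}$ at the two turning points, which must be handled uniformly in both parameters; a naive estimate of the integrand blows up. The clean way around it is to use Taylor's theorem with the uniform $C^2$-bounds from Lemma~\ref{lem:effpotlimitnearcirc}: write $2E-2\Psi^\epsilon(r)=(\Psi^\epsilon)''(\zeta)\big((r_+^\epsilon(E)-r)(r-r_-^\epsilon(E))\big)$ for a mean value $\zeta$ between the turning points (this uses $(\Psi^\epsilon)'(\rL^\epsilon)=0$ together with $\Psi^\epsilon(r_\pm^\epsilon(E))=E$), so that after factoring out $(\Psi^\epsilon)''(\zeta)$ the remaining integral $\int_{r_-^\epsilon(E)}^{r_+^\epsilon(E)}\big((r_+^\epsilon(E)-r)(r-r_-^\epsilon(E))\big)^{-1/2}\diff r=\pi$ is computed exactly via~\eqref{eq:integralidentitygamma}, and the error terms are controlled by $\|(\Psi^\epsilon)''-\alpha_0\|_{L^\infty}$ on the relevant interval, which is small by Lemma~\ref{lem:effpotlimitnearcirc}. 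This reduces everything to the already-established convergences and yields the quantitative $(\epsilon_0,\eta)$-statement. Finally, $T^0(\Emin^0)=2\pi L^{3/2}/M^2=2\pi/\sqrt{(\Psi^0)''(\rL^0)}$ follows directly by inserting~\eqref{eq:Emin0def} into~\eqref{eq:T0def} and taking the limit, or by computing $(\Psi^0)''$ from~\eqref{eq:effpot0} at $\rL^0=L/M$.
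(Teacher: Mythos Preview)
Your approach is correct and shares the same core idea as the paper's: extract the second derivative of $\Psi^\epsilon$ via a mean-value argument so that the remaining integral evaluates exactly to $\pi$, then invoke Lemma~\ref{lem:effpotlimitnearcirc} to push $(\Psi^\epsilon)''$ uniformly to $(\Psi^0)''(\rL^0)$. The one substantive difference is where the mean-value step is carried out. You stay in the $r$-variable and use the interpolation-error form of Rolle's theorem to factor $2E-2\Psi^\epsilon(r)=(\Psi^\epsilon)''(\zeta(r))\,(r_+^\epsilon(E)-r)(r-r_-^\epsilon(E))$ (note $\zeta$ must depend on $r$; your argument then only needs the $L^\infty$-closeness of $(\Psi^\epsilon)''$ to $\alpha_0$, which is exactly what Lemma~\ref{lem:effpotlimitnearcirc} delivers), after which $\int_{r_-}^{r_+}\big((r_+-r)(r-r_-)\big)^{-1/2}\diff r=\pi$. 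The paper instead first splits at $\rL^\epsilon$ and changes variables $r\mapsto\tilde\eta=\Psi^\epsilon(r)$, then applies the extended mean value theorem to $\frac{(\Psi^\epsilon)'(r_\pm^\epsilon(\tilde\eta))^2}{\tilde\eta-\Emin^\epsilon}=2(\Psi^\epsilon)''(s)$ and computes $\int_{\Emin^\epsilon}^E\big((E-\tilde\eta)(\tilde\eta-\Emin^\epsilon)\big)^{-1/2}\diff\tilde\eta=\pi$. The two routes are equivalent; yours is arguably a bit more direct since it avoids the energy change of variables, while the paper's version has the advantage that the same template is reused verbatim for the first and second $E$-derivatives of $T^\epsilon$ later in the appendix. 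Two minor cleanups: drop the action-angle rewriting in your second paragraph (it is not used and is circular, since $\theta$ is defined through $T^\epsilon$), and the factorization does not actually use $(\Psi^\epsilon)'(\rL^\epsilon)=0$, only $\Psi^\epsilon(r_\pm^\epsilon(E))=E$.
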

\begin{proof}
	For any $\epsilon\geq0$ and $E\in\AEL^\epsilon$ we obtain that
	\begin{align}
		T^\epsilon(E)&=2\int_{r_-^\epsilon(E)}^{\rL^\epsilon}\frac{\diff r}{\sqrt{2E-2\Psi^\epsilon(r)}}+    2\int_{\rL^\epsilon}^{r_+^\epsilon(E)}\frac{\diff r}{\sqrt{2E-2\Psi^\epsilon(r)}}\nonumber\\
		&=\int_{\Emin^\epsilon}^E\frac{\sqrt2}{\sqrt{(E-\tilde\eta)\,(\Psi^\epsilon)'(r_-^\epsilon(\tilde\eta))^2}}\diff\tilde\eta +\int_{\Emin^\epsilon}^E\frac{\sqrt2}{\sqrt{(E-\tilde\eta)\,(\Psi^\epsilon)'(r_+^\epsilon(\tilde\eta))^2}}\diff\tilde\eta \notag
	\end{align}
	by changing variables via $\tilde\eta=\Psi^\epsilon(r)$, $r=r_\pm^\epsilon(\tilde\eta)$ in both integrals. We focus on the first integral in the last line of the above calculation; the arguments for the second integral are similar.
	
	Due to the extended mean value theorem, for every $\tilde\eta\in]\Emin^\epsilon,E[$ there exists some $s\in]r_-^\epsilon(\tilde\eta),\rL^\epsilon[\subset]r_-^\epsilon(E),\rL^\epsilon[$ such that
	\begin{equation*}
		\frac{(\Psi^\epsilon)'(r_-^\epsilon(\tilde\eta))^2}{\tilde\eta-\Emin^\epsilon}=\frac{(\Psi^\epsilon)'(r_-^\epsilon(\tilde\eta))^2}{\Psi^\epsilon(r_-^\epsilon(\tilde\eta))-\Psi^\epsilon(\rL^\epsilon)}=2\,(\Psi^\epsilon)''(s);
	\end{equation*}
	recall that $(\Psi^\epsilon)'(\rL^\epsilon)=0$.
	Hence, the integrand of the integral under consideration can be rewritten as
	\begin{equation}\label{eq:Tintegrandemvt}
		\frac{\sqrt 2}{\sqrt{(E-\tilde\eta)\,(\Psi^\epsilon)'(r_-^\epsilon(\tilde\eta))^2}} = \frac1{\sqrt{(E-\tilde\eta)\,(\tilde\eta-\Emin^\epsilon)\,(\Psi^\epsilon)''(s)}}.
	\end{equation}
	Now Lemma~\ref{lem:effpotlimitnearcirc} implies that for any $\delta>0$ there exist $\epsilon_0>0$ and $\eta>0$ such that for all $0\leq\epsilon<\epsilon_0$ and $\Emin^\epsilon<E<\Emin^\epsilon+\eta$ it holds that $|(\Psi^\epsilon)''(s)^{-\frac12}-(\Psi^0)''(\rL^0)^{-\frac12}|<\frac\delta{2\pi}$ for $s\in[r_-^\epsilon(E),r_+^\epsilon(E)]$; note that $(\Psi^0)''(\rL^0)=\frac{M^4}{L^3}>0$.
	Together with~\eqref{eq:T0nearcircdef} we thus conclude the following estimate for the integral under consideration for $0\leq\epsilon<\epsilon_0$ and $\Emin^\epsilon<E<\Emin^\epsilon+\eta$:
	\begin{align*}
		\big|\int_{\Emin^\epsilon}^E&\frac{\sqrt2}{\sqrt{(E-\tilde\eta)\,(\Psi^\epsilon)'(r_-^\epsilon(\tilde\eta))^2}}\diff\tilde\eta-\frac12T^0(\Emin^0)\big|\nonumber\\
		&=\big| \int_{\Emin^\epsilon}^E\frac{\sqrt2}{\sqrt{(E-\tilde\eta)\,(\Psi^\epsilon)'(r_-^\epsilon(\tilde\eta))^2}}\diff\tilde\eta - \frac1{\sqrt{(\Psi^0)''(\rL^0)}}\,\int_{\Emin^\epsilon}^E\frac{\diff\tilde\eta}{\sqrt{(E-\tilde\eta)\,(\tilde\eta-\Emin^\epsilon)}}\big|\\
		&\leq\frac\delta{2\pi}\int_{\Emin^\epsilon}^E\frac{\diff\tilde\eta}{\sqrt{(E-\tilde\eta)(\tilde\eta-\Emin^\epsilon)}}=\frac\delta2.\qedhere
	\end{align*}  
\end{proof}

In order to establish the desired uniform convergence of $T^\epsilon$ on the energy support as $\epsilon\to0$, we next verify the pointwise convergence of $T^\epsilon(E)$.
This is based on the following concavity estimate which originates from~\cite[Lemma~2.1~(iii)]{LeMeRa11}; the proof only uses the elliptic equation~\eqref{eq:Veps}.

\begin{lemma}\label{lem:effpotconcav}
  For any $\epsilon\geq0$, $E\in\AEL^\epsilon$, and $r\in[r_-^\epsilon(E),r_+^\epsilon(E)]$
  the following concavity estimate holds:
	\begin{equation}\label{eq:concavityeffpot}
		E-\Psi^\epsilon(r)\geq L\,\frac{(r_+^\epsilon(E)-r)\,(r-r_-^\epsilon(E))}{2r^2\,r_-^\epsilon(E)\,r_+^\epsilon(E)}.
	\end{equation}
\end{lemma}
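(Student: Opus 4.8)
The plan is to prove~\eqref{eq:concavityeffpot} by a quasiconcavity argument applied to the difference of the two sides. Fix $\epsilon\geq0$ and $E\in\AEL^\epsilon$, abbreviate $a\coloneqq r_-^\epsilon(E)$ and $b\coloneqq r_+^\epsilon(E)$ (so $0<a<b$ by Lemma~\ref{lem:effpotprop}), and introduce the comparison function
\[
D(r)\coloneqq \bigl(E-\Psi^\epsilon(r)\bigr)-\frac{L\,(b-r)(r-a)}{2r^2\,ab},\qquad r\in[a,b].
\]
First I would observe that $D(a)=D(b)=0$: this is immediate from $\Psi^\epsilon(a)=\Psi^\epsilon(b)=E$ (cf.~\eqref{eq:rpmdef}) together with the vanishing of $(b-r)(r-a)$ at $r=a$ and $r=b$. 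Hence it suffices to show that $D$ is quasiconcave on $[a,b]$, i.e.\ that $D'$ changes sign at most once and, if it does, only from nonnegative to nonpositive.

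The heart of the proof is the computation of $D'$. Writing $\Psi^\epsilon(r)=U^\epsilon(r)-\frac Mr+\frac L{2r^2}$ and using the radial Poisson equation~\eqref{eq:Veps}, which gives $r^2(U^\epsilon)'(r)=m^\epsilon(r)$ with $m^\epsilon$ as in~\eqref{eq:mepsdef}, a short algebraic calculation (expand $(b-r)(r-a)/r^2=-1+(a+b)/r-ab/r^2$ and differentiate) shows that the constant terms conspire so that
\[
D'(r)=\frac1{r^2}\left(\frac{L(a+b)}{2ab}-M-m^\epsilon(r)\right),\qquad r\in[a,b].
\]
This is the crucial identity: the quantity $\frac{L(a+b)}{2ab}-M$ is a constant, while $m^\epsilon(r)=4\pi\int_0^r s^2\rho^\epsilon(s)\diff s$ is nondecreasing because $\rho^\epsilon\geq0$. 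Therefore $r\mapsto\frac{L(a+b)}{2ab}-M-m^\epsilon(r)$ is nonincreasing, so $D'$ equals $r^{-2}$ times a nonincreasing function and consequently changes sign at most once on $[a,b]$, only from $\geq0$ to $\leq0$.

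It follows that $D$ is either monotone on $[a,b]$ or nondecreasing on $[a,r_\ast]$ and nonincreasing on $[r_\ast,b]$ for some $r_\ast\in(a,b)$; in every case $D(r)\geq\min\{D(a),D(b)\}=0$ for all $r\in[a,b]$, which is exactly~\eqref{eq:concavityeffpot}. (When $\epsilon=0$ one has $m^0\equiv0$, $a=r_-^0(E)$, $b=r_+^0(E)$, hence $D'\equiv0$ and the estimate is an identity, consistent with~\eqref{eq:effpot0}--\eqref{eq:T0def}.) I do not expect a genuine obstacle here; the only point requiring a little care is arranging the algebra so that all the constant contributions in $D'(r)$ cancel and leave precisely the nonincreasing remainder $-M-m^\epsilon(r)$ plus a constant — this is what makes the sign of $D'$ controllable, and it uses nothing beyond the elliptic equation~\eqref{eq:Veps} and the positivity of $\rho^\epsilon$.
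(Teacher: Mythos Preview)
Your proof is correct and aligns with the paper's approach: the paper does not spell out its own argument but refers to \cite[Lemma~2.1~(iii)]{LeMeRa11} and notes that the proof only uses the elliptic equation~\eqref{eq:Veps}, which is precisely what you use. Your key identity $D'(r)=r^{-2}\bigl(\tfrac{L(a+b)}{2ab}-M-m^\epsilon(r)\bigr)$ is exactly the computation that makes the argument work, and the monotonicity of $m^\epsilon$ from $\rho^\epsilon\geq0$ gives the quasiconcavity as you say.
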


The continuity of~$T^\epsilon$ at $\epsilon=0$ now follows similar to~\cite[Lemma~B.7]{HaReSt22}. 

\begin{lemma}\label{lem:Tcontinuous0}
	The mapping $\A\ni(\epsilon,E)\mapsto T^\epsilon(E)$ is continuous at $\epsilon=0$; recall~\eqref{eq:Adef}.
	More precisely, for any $\delta>0$ and $E^\ast\in\AEL^0$ there exists $\epsilon_0>0$ such that for all $0\leq\epsilon<\epsilon_0$ and $E\in\AEL^\epsilon$ with $|E^\ast-E|<\epsilon_0$ it holds that $|T^\epsilon(E)-T^0(E^\ast)|<\delta$.
\end{lemma}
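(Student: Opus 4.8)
The plan is to split the period integral~\eqref{eq:Tepsdef} at the minimising radius, change variables to the energy variable, and then estimate the two resulting integrands uniformly, using the concavity bound~\eqref{eq:concavityeffpot} to control the singularities at the turning points. Concretely, fix $E^\ast\in\AEL^0$ and $\delta>0$. Writing as in the proof of Lemma~\ref{lem:limitTnearcirc},
\begin{equation*}
	T^\epsilon(E)=\int_{\Emin^\epsilon}^E\frac{\sqrt2\,\diff\tilde\eta}{\sqrt{(E-\tilde\eta)}\,|(\Psi^\epsilon)'(r_-^\epsilon(\tilde\eta))|}+\int_{\Emin^\epsilon}^E\frac{\sqrt2\,\diff\tilde\eta}{\sqrt{(E-\tilde\eta)}\,|(\Psi^\epsilon)'(r_+^\epsilon(\tilde\eta))|},
\end{equation*}
and the same formula holds for $\epsilon=0$ with $E$ replaced by $E^\ast$. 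The idea is to compare these two expressions term by term.

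First I would derive a uniform integrable majorant for the integrands. Combining the mean value theorem identity~\eqref{eq:Tintegrandemvt} with the concavity estimate~\eqref{eq:concavityeffpot} evaluated as $r\to r_-^\epsilon(\tilde\eta)$ (which gives a lower bound on $|(\Psi^\epsilon)'(r_-^\epsilon(\tilde\eta))|$ of order $\sqrt{\tilde\eta-\Emin^\epsilon}$, with constants controlled by the uniform radial support bounds~\eqref{eq:rhoepssupp},~\eqref{eq:Rmaxlimit} and by $L$), one obtains a bound of the form $\frac{\sqrt2}{\sqrt{E-\tilde\eta}\,|(\Psi^\epsilon)'(r_-^\epsilon(\tilde\eta))|}\le \frac{C}{\sqrt{(E-\tilde\eta)(\tilde\eta-\Emin^\epsilon)}}$ with $C$ independent of $\epsilon$ for $0\le\epsilon<\epsilon_0$. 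Since $\int_{\Emin^\epsilon}^E\frac{\diff\tilde\eta}{\sqrt{(E-\tilde\eta)(\tilde\eta-\Emin^\epsilon)}}=\pi$, this majorant is integrable with an $\epsilon$-independent bound and also handles the portion of the integral near $\tilde\eta=\Emin^\epsilon$, whose contribution can therefore be made $<\delta/4$ uniformly by restricting to $\tilde\eta\in[\Emin^\epsilon,\Emin^\epsilon+\eta]$ for suitable $\eta$ (alternatively one simply invokes Lemma~\ref{lem:limitTnearcirc} directly for this near-circular piece). On the complementary region $\tilde\eta\in[\Emin^\epsilon+\eta,E]$ (matched with $[\Emin^0+\eta,E^\ast]$ for the $\epsilon=0$ integral, after aligning the lower limits using $\Emin^\epsilon\to\Emin^0$ from Lemma~\ref{lem:limitrLEmin}), the turning radii stay uniformly away from $\Rmin^0,\Rmax^0$ by Lemma~\ref{lem:limitrpm}, so $r_-^\epsilon(\tilde\eta)$ depends continuously on $(\epsilon,\tilde\eta)$ at $\epsilon=0$ and $(\Psi^\epsilon)'\to(\Psi^0)'$ uniformly by Lemma~\ref{lem:limitrhopot}; hence the integrand converges pointwise and is dominated, and dominated convergence (or a direct uniform-continuity estimate) gives that this part differs from its $\epsilon=0$ counterpart by $<\delta/4$ once $\epsilon_0$ is small enough and $|E-E^\ast|<\epsilon_0$. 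Summing the two terms and the near-circular and bulk contributions yields $|T^\epsilon(E)-T^0(E^\ast)|<\delta$.

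The main obstacle is the joint singularity: the integrand blows up both at $\tilde\eta=E$ (the outer turning point, harmless and $\epsilon$-uniform) and at $\tilde\eta=\Emin^\epsilon$ (the elliptic point), and the latter singularity sits at an $\epsilon$-dependent location while the strength of $|(\Psi^\epsilon)'(r_\pm^\epsilon(\tilde\eta))|$ there degenerates. The resolution is exactly the concavity bound~\eqref{eq:concavityeffpot} of Lemma~\ref{lem:effpotconcav}, which provides the $\epsilon$-uniform lower bound $|(\Psi^\epsilon)'(r_\pm^\epsilon(\tilde\eta))|\gtrsim\sqrt{\tilde\eta-\Emin^\epsilon}$ needed to produce an integrable majorant of Beta-integral type; everything else is a routine splitting-and-dominated-convergence argument. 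This parallels~\cite[Lemma~B.7]{HaReSt22}, the only new feature being the moving lower endpoint $\Emin^\epsilon$, which is absorbed using $\Emin^\epsilon\to\Emin^0$.
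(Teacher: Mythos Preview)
Your approach is correct in outline but takes a different and more laborious route than the paper. The paper performs an affine change of variables
\[
r = r_-^\epsilon(E) + (r_+^\epsilon(E) - r_-^\epsilon(E))\,s, \qquad s\in[0,1],
\]
which maps every period integral onto the \emph{fixed} domain $[0,1]$. The concavity estimate~\eqref{eq:concavityeffpot} then directly produces the $\epsilon$- and $E$-independent majorant $C/\sqrt{s(1-s)}$, and a single application of dominated convergence (together with Lemmas~\ref{lem:limitrhopot} and~\ref{lem:limitrpm} for pointwise convergence of the integrand) finishes the argument in two lines. By contrast, your change to the energy variable $\tilde\eta$ keeps the domain $[\Emin^\epsilon,E]$ moving with $\epsilon$ and $E$, which is precisely what forces the near-circular/bulk splitting and the alignment of the lower endpoints via $\Emin^\epsilon\to\Emin^0$.

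One small imprecision worth noting: the concavity bound~\eqref{eq:concavityeffpot} is naturally an inequality in the $r$-variable and does not by itself give the lower bound $|(\Psi^\epsilon)'(r_\pm^\epsilon(\tilde\eta))|\gtrsim\sqrt{\tilde\eta-\Emin^\epsilon}$ you invoke; that bound rather comes from the extended mean value identity~\eqref{eq:Tintegrandemvt} together with a uniform lower bound on $(\Psi^\epsilon)''$ near $\rL^\epsilon$ (which only holds locally). Your near-circular/bulk splitting does rescue this, since on the bulk $|(\Psi^\epsilon)'(r_\pm^\epsilon(\tilde\eta))|$ is uniformly bounded below and on the near-circular piece you can cite Lemma~\ref{lem:limitTnearcirc} directly, so the argument ultimately closes. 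The paper's affine rescaling simply sidesteps all of this bookkeeping.
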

\begin{proof}
	For any $\epsilon\geq0$ and $E\in\AEL^\epsilon$ the affine change of variables $r=r_-^\epsilon(E)+(r_+^\epsilon(E)-r_-^\epsilon(E))\,s$ leads to
	\begin{equation*}
		T^\epsilon(E)=\sqrt2\int_0^1\frac{r_+^\epsilon(E)-r_-^\epsilon(E)}{\sqrt{E-\Psi^\epsilon\left(r_-^\epsilon(E)+(r_+^\epsilon(E)-r_-^\epsilon(E))\,s\right)}}\diff s.
	\end{equation*}
	Lemmas~\ref{lem:limitrhopot} and~\ref{lem:limitrpm} imply the pointwise convergence of the integrand in the integral above as $(\epsilon,E)\to(0,E^\ast)$, the concavity estimate from Lemma~\ref{lem:effpotconcav} shows that the integrand is bounded by an integrable, $E$-independent function. Hence, Lebesgue's dominated convergence theorem implies the desired continuity statement.
\end{proof}

Combining Lemmas~\ref{lem:limitTnearcirc} and~\ref{lem:Tcontinuous0} with standard continuity arguments yields the main result of the present section. 

\begin{lemma}\label{lem:limitTepsuniform}
	The period function $T^\epsilon(E)$ converges to $T^0(E^\ast)$ as $\epsilon\to0$ and $E\to E^\ast$ locally uniformly. 
	More precisely, for any $\delta>0$ and $E_1<0$ there exists $\epsilon_0>0$ such that for all $0\leq\epsilon<\epsilon_0$ as well as $\Emin^0<E^\ast<E_1$ and $\Emin^\epsilon<E<E_1$ with $|E^\ast-E|<\epsilon_0$ it holds that $|T^\epsilon(E)-T^0(E^\ast)|<\delta$.
\end{lemma}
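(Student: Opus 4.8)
The plan is to glue together two regimes: the \enquote{near circular} regime, where $E$ is close to the minimal energy $\Emin^\epsilon$ and $T^\epsilon$ is controlled by Lemma~\ref{lem:limitTnearcirc}, and the interior regime, where $E$ stays in a compact subinterval of $\AEL^0=]\Emin^0,0[$ bounded away from $\Emin^0$ and $T^\epsilon$ is controlled by Lemma~\ref{lem:Tcontinuous0}. One cannot merely quote Lemma~\ref{lem:Tcontinuous0}, since the threshold it produces depends on the base point $E^\ast$ and may degenerate as $E^\ast\to\Emin^0$; it is precisely Lemma~\ref{lem:limitTnearcirc} that compensates for this degeneration. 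Concretely, I would first dispose of all pairs with $E$ within a fixed ($\epsilon$-independent) distance $\eta$ of $\Emin^0$, then treat the complementary compact energy range via a finite-covering argument, and finally combine the two, using that if $E$ is not in the first region then $E^\ast$ necessarily lies in the compact range of the second.

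\emph{Near circular regime.} Fix $\delta>0$ and $E_1<0$; we may assume $E_1>\Emin^0$, as otherwise the statement is vacuous. Since $T^0$ from~\eqref{eq:T0def} extends continuously to $\Emin^0$ with value $T^0(\Emin^0)$ as in Lemma~\ref{lem:limitTnearcirc}, choose $\eta>0$ with $\Emin^0+\eta<E_1$ and $|T^0(E^\ast)-T^0(\Emin^0)|<\delta/3$ for $\Emin^0\le E^\ast<\Emin^0+2\eta$. After shrinking $\eta$, Lemma~\ref{lem:limitTnearcirc} together with $\Emin^\epsilon\to\Emin^0$ (Lemma~\ref{lem:limitrLEmin}) produces $\epsilon_1>0$ with $|\Emin^\epsilon-\Emin^0|<\eta$ and $|T^\epsilon(E)-T^0(\Emin^0)|<\delta/3$ for all $0\le\epsilon<\epsilon_1$ and $\Emin^\epsilon<E<\Emin^\epsilon+3\eta$. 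Consequently, if $0\le\epsilon<\epsilon_1$, $\Emin^\epsilon<E<\Emin^0+\eta$ and $|E-E^\ast|<\eta/2$, then $E$ lies in the admissible window of this bound (because $\Emin^0+\eta<\Emin^\epsilon+2\eta$) and $E^\ast<\Emin^0+2\eta$, so the triangle inequality gives $|T^\epsilon(E)-T^0(E^\ast)|<\delta$.

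\emph{Interior regime.} Set $K\coloneqq[\Emin^0+\tfrac\eta2,E_1]\Subset\AEL^0$. For each $E^\ast\in K$, Lemma~\ref{lem:Tcontinuous0} provides $\rho(E^\ast)>0$ such that $|T^\epsilon(E)-T^0(E^\ast)|<\delta/2$ whenever $0\le\epsilon<\rho(E^\ast)$, $E\in\AEL^\epsilon$, and $|E-E^\ast|<\rho(E^\ast)$. The balls $\{\,|E-E^\ast|<\rho(E^\ast)/2\,\}$, $E^\ast\in K$, cover the compact set $K$; extracting a finite subcover centred at $E^\ast_1,\dots,E^\ast_N$ and putting $\epsilon_2\coloneqq\min_{1\le i\le N}\rho(E^\ast_i)/2>0$, one checks that for $E^\ast\in K$, $E\in\AEL^\epsilon$, $0\le\epsilon<\epsilon_2$ and $|E-E^\ast|<\epsilon_2$, the index $i$ with $|E^\ast-E^\ast_i|<\rho(E^\ast_i)/2$ satisfies $|E-E^\ast_i|<\rho(E^\ast_i)$ and $\epsilon<\rho(E^\ast_i)$; applying Lemma~\ref{lem:Tcontinuous0} at $(\epsilon,E)$ and also (with $\epsilon=0$) at $E^\ast$ then yields $|T^\epsilon(E)-T^0(E^\ast)|<\delta$.

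\emph{Conclusion.} Take $\epsilon_0\coloneqq\min\{\epsilon_1,\epsilon_2,\eta/2\}$. For any $0\le\epsilon<\epsilon_0$, $\Emin^0<E^\ast<E_1$ and $\Emin^\epsilon<E<E_1$ with $|E^\ast-E|<\epsilon_0$ (so $E\in\AEL^\epsilon$), either $E<\Emin^0+\eta$, in which case the near circular estimate applies, or $E\ge\Emin^0+\eta$, in which case $E^\ast>E-\epsilon_0\ge\Emin^0+\tfrac\eta2$ so $E^\ast\in K$ and the interior estimate applies; in both cases $|T^\epsilon(E)-T^0(E^\ast)|<\delta$. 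The only genuine difficulty is this bookkeeping of two overlapping regimes — in particular, ensuring that the near circular window has width bounded below uniformly in $\epsilon$, so that it actually meets the compact range on which Lemma~\ref{lem:Tcontinuous0} operates — since all the analytic content is already packaged in Lemmas~\ref{lem:limitTnearcirc} and~\ref{lem:Tcontinuous0}.
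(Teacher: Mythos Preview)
Your proof is correct and follows precisely the strategy the paper indicates: the paper's own \enquote{proof} of this lemma is the single sentence \enquote{Combining Lemmas~\ref{lem:limitTnearcirc} and~\ref{lem:Tcontinuous0} with standard continuity arguments yields the main result}, and you have carried out exactly that combination --- the near-circular regime via Lemma~\ref{lem:limitTnearcirc} and the interior regime via a finite-cover argument based on Lemma~\ref{lem:Tcontinuous0}. The bookkeeping (shrinking $\eta$, the overlap condition $\epsilon_0\le\eta/2$ ensuring $E^\ast\in K$ in the second case) is handled correctly.
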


We now conclude the convergence of the minimal and maximal value of the period function on the steady state support.
Recall that $I^\epsilon=]\Emin^\epsilon,E_0^\epsilon[$ for~$\epsilon\geq0$ by~\eqref{eq:Iepsdef} and~\eqref{eq:I0def} as well as the definitions of~$\Tmin^\epsilon$ and~$\Tmax^\epsilon$ in~\eqref{eq:Tminmaxdef}.

\begin{lemma}\label{lem:limitTminmax}
	It holds that $\lim_{\epsilon\to0}\Tmin^\epsilon=\Tmin^0$ and $\lim_{\epsilon\to0}\Tmax^\epsilon=\Tmax^0$, with
	\begin{equation}\label{eq:Tminmax0}
		0<\Tmin^0=T^0(\Emin^0)=2\pi\,\frac{L^{\frac32}}{M^2}<\frac\pi{\sqrt 2}\,\frac M{(-\kappa)^\frac32}=T^0(\kappa)=\Tmax^0<\infty
	\end{equation}
	due to~\eqref{E:SINGLEGAP}, \eqref{eq:T0def}, and~\eqref{eq:T0nearcircdef}.
\end{lemma}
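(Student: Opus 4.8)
The plan is to deduce the convergence of the extremal values of the period function from the two convergence results already established: the local uniform convergence $T^\epsilon(E)\to T^0(E^\ast)$ of Lemma~\ref{lem:limitTepsuniform} away from the minimal energy, and the near-circular estimate $T^\epsilon(E)\to T^0(\Emin^0)$ of Lemma~\ref{lem:limitTnearcirc} for $E$ close to $\Emin^\epsilon$. First I would record that, by~\eqref{eq:T0def}, $T^0(E)=\frac{\pi}{\sqrt2}M(-E)^{-3/2}$ is smooth and \emph{strictly increasing} on $]{-}\infty,0[\,\supset[\Emin^0,\kappa]$; since the single gap condition~\eqref{E:SINGLEGAP} forces $\Emin^0=-M^2/(2L)<\kappa<0$ (recall~\eqref{eq:Emin0def}), strict monotonicity gives $\Tmin^0=\inf_{I^0}T^0=T^0(\Emin^0)<T^0(\kappa)=\sup_{I^0}T^0=\Tmax^0$, and the explicit values claimed in~\eqref{eq:Tminmax0} then follow from~\eqref{eq:T0def} and~\eqref{eq:T0nearcircdef}. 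It therefore remains to prove $\Tmin^\epsilon\to\Tmin^0$ and $\Tmax^\epsilon\to\Tmax^0$ as $\epsilon\to0$, recall~\eqref{eq:Iepsdef}.

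For the bounds $\limsup_{\epsilon\to0}\Tmax^\epsilon\le\Tmax^0$ and $\liminf_{\epsilon\to0}\Tmin^\epsilon\ge\Tmin^0$, fix $\delta>0$ and split, for $\epsilon$ small, the interval $I^\epsilon=\,]\Emin^\epsilon,E_0^\epsilon[\,$ into a near-circular piece $]\Emin^\epsilon,\Emin^\epsilon+\eta[\,$ and a bulk piece $[\Emin^\epsilon+\eta,E_0^\epsilon[\,$, with $\eta>0$ chosen small. On the near-circular piece Lemma~\ref{lem:limitTnearcirc} places $T^\epsilon(E)$ within $\delta$ of $T^0(\Emin^0)=\Tmin^0\le\Tmax^0$; on the bulk piece, using $\Emin^\epsilon\to\Emin^0$ (Lemma~\ref{lem:limitrLEmin}) and $E_0^\epsilon\to\kappa$ (Lemma~\ref{lem:limitrhopot}) one checks that for $\eta$ and $\epsilon$ small enough this piece lies in the \emph{fixed} compact set $K\coloneqq[\Emin^0+\tfrac\eta2,\kappa+\tfrac\eta2]\subset\,]\Emin^0,0[\,$, whence Lemma~\ref{lem:limitTepsuniform} (taking $E^\ast=E$) gives $|T^\epsilon(E)-T^0(E)|<\delta$ there, so $T^0(\Emin^0+\tfrac\eta2)-\delta\le T^\epsilon(E)\le T^0(\kappa+\tfrac\eta2)+\delta$. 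Passing to $\inf$ and $\sup$ over $E\in I^\epsilon$, and using monotonicity and continuity of $T^0$ to make $T^0(\Emin^0+\tfrac\eta2)$ and $T^0(\kappa+\tfrac\eta2)$ as close as we wish to $\Tmin^0$ and $\Tmax^0$ respectively (by shrinking $\eta$ up front), yields the two bounds. The matching bounds $\liminf_{\epsilon\to0}\Tmax^\epsilon\ge\Tmax^0$ and $\limsup_{\epsilon\to0}\Tmin^\epsilon\le\Tmin^0$ follow by evaluating $T^\epsilon$ at well-chosen single energies: for fixed $E^\ast\in\,]\Emin^0,\kappa[\,$ one has $E^\ast\in I^\epsilon$ eventually and $T^\epsilon(E^\ast)\to T^0(E^\ast)$ by Lemma~\ref{lem:limitTepsuniform}, so $\Tmax^\epsilon\ge T^\epsilon(E^\ast)$ and letting $E^\ast\nearrow\kappa$ gives the first; choosing $E\in\,]\Emin^\epsilon,\Emin^\epsilon+\eta[\,$ and applying Lemma~\ref{lem:limitTnearcirc} gives $\Tmin^\epsilon\le T^\epsilon(E)<\Tmin^0+\delta$, hence the second. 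Combining the four one-sided estimates gives the claimed limits.

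The main obstacle is purely bookkeeping around the \emph{moving} domain $I^\epsilon$: both endpoints $\Emin^\epsilon$ and $E_0^\epsilon$ depend on $\epsilon$, and the behaviour of $T^\epsilon$ near its lower endpoint is governed by a different mechanism (Lemma~\ref{lem:limitTnearcirc}, a mean-value computation around the elliptic point) than its behaviour on the interior (Lemma~\ref{lem:limitTepsuniform}, dominated convergence). The argument must therefore be arranged so that, after excising a near-circular neighbourhood of $\Emin^\epsilon$ of a fixed width $\eta$, the remaining bulk of $I^\epsilon$ is trapped, uniformly in $\epsilon$, inside a compact subset of $]\Emin^0,0[\,$ on which the local uniform convergence applies; this is exactly the role of the set $K$. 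No new estimates beyond those already proved are needed.
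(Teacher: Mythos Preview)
Your proof is correct and follows essentially the same approach as the paper. The paper's proof is a one-liner that cites Lemma~\ref{lem:limitTepsuniform} together with the endpoint convergences $\Emin^\epsilon\to\Emin^0$ and $E_0^\epsilon\to\kappa$; you have simply unfolded what this entails, splitting into the near-circular and bulk regimes by hand (which is slightly redundant, since Lemma~\ref{lem:limitTepsuniform} already absorbs Lemma~\ref{lem:limitTnearcirc}), and spelling out the $\limsup/\liminf$ matching.
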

\begin{proof}
	Combine Lemma~\ref{lem:limitTepsuniform} with the limit results $\Emin^\epsilon\to\Emin^0$ and $E_0^\epsilon\to\kappa$ as $\epsilon\to0$ established in Lemmas~\ref{lem:limitrhopot} and~\ref{lem:limitrLEmin}.
\end{proof}

We later show that~$T^\epsilon$ is increasing on~$I^\epsilon$ for $0\leq\epsilon\ll 1$, which implies that~$\Tmin^\epsilon$ and~$\Tmax^\epsilon$ are attained on the boundary of~$I^\epsilon$.


We now establish results similar to Lemmas~\ref{lem:limitTepsuniform} and~\ref{lem:limitTminmax} for the first and second order derivatives of the period function; recall the regularity of~$T^\epsilon$ shown in Lemma~\ref{lem:ststregularity}.
We first derive suitable representations of these derivatives. Proceeding as in Lemma~\ref{lem:ststregularity} leads to a relation between $\partial_ET^\epsilon(E)$ and $(\partial_EW^\epsilon)(T^\epsilon(E),E)$, cf.~\cite[Lemma~A.12]{MK}. However, as $\partial_EW^\epsilon$ is only implicitly known as the solution of a suitable ODE, this quantity is rather hard to control; in particular in the vicinity of the minimum of the potential well $\Emin$.

Instead, we proceed as suggested in~\cite[Sc.~B.3]{HaReSt22} and derive a suitable integral expression for~$\partial_ET^\epsilon$.

\begin{lemma}\label{lem:limitdETformula}
	For $\epsilon\geq0$ and $E\in\AEL^\epsilon$ it holds that
	\begin{equation}\label{eq:dETformula}
		(T^\epsilon)'(E)=\frac1{E-\Emin^\epsilon}\,\int_{r_-^\epsilon(E)}^{r_+^\epsilon(E)}\frac{G_0^\epsilon(r)}{\sqrt{2E-2\Psi^\epsilon(r)}}\diff r,
	\end{equation}
	where the continuous function $G_0^\epsilon\colon]0,\infty[\to\R$ is defined by
	\begin{equation}\label{eq:G0def}
		G_0^\epsilon(r)\coloneqq\begin{cases}
			\frac{(\Psi^\epsilon)'(r)^2-2(\Psi^\epsilon(r)-\Emin^\epsilon)\,(\Psi^\epsilon)''(r)}{(\Psi^\epsilon)'(r)^2},&r\neq\rL^\epsilon,\\
			0,&r=\rL^\epsilon.
		\end{cases}
	\end{equation}
\end{lemma}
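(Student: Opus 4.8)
The plan is to derive the formula by the classical change of variables that linearises the potential well, as suggested in~\cite[Sc.~B.3]{HaReSt22}. Throughout fix $\epsilon\geq0$ and suppress the superscript, writing $\Psi=\Psi^\epsilon$, $\rL=\rL^\epsilon$, $\Emin=\Emin^\epsilon$, $G_0=G_0^\epsilon$ and $r_\pm(E)=r_\pm^\epsilon(E)$. First I would introduce
\[
  u=u(r)\coloneqq\sign(r-\rL)\,\sqrt{2(\Psi(r)-\Emin)},\qquad r\in\,]0,\infty[ .
\]
Since $\Psi'$ and $r-\rL$ have the same sign on $]0,\infty[\,\setminus\{\rL\}$ by Lemma~\ref{lem:effpotprop}, $u$ is strictly increasing there, with $u'(r)=\Psi'(r)/u(r)$; near $\rL$, where $\Psi''(\rL)>0$ (see~\eqref{eq:effpotprime2circ}; for $\epsilon=0$ this is explicit) and $\Psi$ is $C^\infty$ (Lemma~\ref{lem:ststregularity}, as $\rL\in\,]\Rmin,\Rmax[$), a Taylor/Morse argument shows that $u$ extends to a $C^1$-diffeomorphism with $u(\rL)=0$, $u'(\rL)=\sqrt{\Psi''(\rL)}$. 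Writing $r=r(u)$ for the inverse and $\phi(u)\coloneqq\diff r/\diff u$, one has $\phi(u)=u/\Psi'(r(u))$ for $u\neq0$ and $\phi(0)=\Psi''(\rL)^{-1/2}$, and a direct computation using $u^2=2(\Psi(r(u))-\Emin)$ gives, for $u\neq0$,
\[
  \phi'(u)=\frac{\Psi'(r)^2-2(\Psi(r)-\Emin)\,\Psi''(r)}{\Psi'(r)^3}=\frac{G_0(r)}{\Psi'(r)},\qquad r=r(u).
\]
Letting $r\to\rL$ in this identity and using the third-order Taylor expansion of $\Psi$ at $\rL$ shows that $\phi'(u)$ has a finite limit as $u\to0$; hence $\phi\in C^1$ near $0$, and simultaneously $G_0$ is continuous with $G_0(\rL)=0$, as asserted in the statement.

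Next, set $a=a(E)\coloneqq\sqrt{2(E-\Emin)}$, so that $2E-2\Psi(r)=a^2-u^2$ and the turning points satisfy $u(r_\pm(E))=\pm a$. Changing variables $r\mapsto u$ in~\eqref{eq:Tepsdef} and then rescaling $u=av$, $v\in[-1,1]$, yields
\[
  T^\epsilon(E)=2\int_{-a}^{a}\frac{\phi(u)}{\sqrt{a^2-u^2}}\,\diff u=2\int_{-1}^{1}\frac{\phi(av)}{\sqrt{1-v^2}}\,\diff v .
\]
In the last integral the singular weight $(1-v^2)^{-1/2}$ no longer depends on $E$, $\phi$ is $C^1$ on a neighbourhood of $[-a(E),a(E)]$, and $\diff a/\diff E=1/a$; differentiation under the integral sign (justified by dominated convergence on compact subintervals of $\AEL^\epsilon$, using $|v\,\phi'(av)|\le\|\phi'\|_{L^\infty}$) therefore gives
\[
  (T^\epsilon)'(E)=\frac{2}{a}\int_{-1}^{1}\frac{v\,\phi'(av)}{\sqrt{1-v^2}}\,\diff v=\frac{2}{a^2}\int_{-a}^{a}\frac{u\,\phi'(u)}{\sqrt{a^2-u^2}}\,\diff u .
\]
Finally I would undo the substitution $u=u(r)$: since $\diff u=\Psi'(r)\,u^{-1}\diff r$ and $\phi'(u)=G_0(r)/\Psi'(r)$, one has $u\,\phi'(u)\,\diff u=G_0(r)\,\diff r$ for $r\neq\rL$, while $\sqrt{a^2-u^2}=\sqrt{2E-2\Psi(r)}$ and $a^2=2(E-\Emin)$. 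Substituting these into the last display yields exactly~\eqref{eq:dETformula}.

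The main obstacle is the first step: making precise that $u$ is a genuine $C^1$-diffeomorphism across $\rL$ and that $\phi$ extends $C^1$-ly there, which is where the regularity $\Psi\in C^3$ (Lemma~\ref{lem:ststregularity}) and the nondegeneracy $\Psi''(\rL)>0$ enter, and which also yields the continuity of $G_0$ together with $G_0(\rL)=0$. For the limiting case $\epsilon=0$ the potential $\Psi^0$ is explicit and smooth, so this step is immediate. For $\epsilon>0$ the argument is purely local near $\rL$ and all constants involved are controlled by $\Psi''(\rL)^{-1}$, $\|\Psi''\|_{L^\infty([\Rmin,\Rmax])}$ and $\|\Psi'''\|_{L^\infty([\Rmin,\Rmax])}$, exactly as in Lemma~\ref{L:REG}; in particular, combined with Lemma~\ref{lem:limit}, the formula~\eqref{eq:dETformula} and the manipulations above hold for every admissible pair $(\epsilon,E)$.
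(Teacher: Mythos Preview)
Your argument is correct and is precisely the Chow--Wang computation that the paper invokes by citing~\cite[Thm.~2.1]{ChWa86}: the paper's proof merely verifies the continuity of~$G_0^\epsilon$ via the Taylor expansion~\eqref{eq:G0taylor} and then defers the derivative identity to that reference, whereas you have written out the Morse change of variables $u=\sign(r-\rL)\sqrt{2(\Psi-\Emin)}$ and the resulting differentiation under the integral sign in full. The two approaches are therefore the same; your version simply supplies the details that the paper leaves to the citation (and, incidentally, your observation $\phi'(u)=G_0(r)/\Psi'(r)=G_1(r)$ is exactly the function~$G_1^\epsilon$ introduced in~\eqref{eq:G1def} for the subsequent analysis).
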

\begin{proof}
	Taylor expanding~\eqref{eq:G0def} in the limit $r\to\rL^\epsilon$ yields
	\begin{equation}\label{eq:G0taylor}
		G_0^\epsilon(r)=\frac{-\frac13\,(r-\rL^\epsilon)^3\,(\Psi^\epsilon)''(\rL^\epsilon)\,(\Psi^\epsilon)'''(\rL^\epsilon)+o((r-\rL^\epsilon)^3)}{(r-\rL^\epsilon)^2\,(\Psi^\epsilon)''(\rL^\epsilon)^2 + o((r-\rL^\epsilon)^2)}\to 0\quad\text{as }r\to\rL^\epsilon
	\end{equation}
	by~\eqref{eq:effpotprime2circ}.
	Thus, $G_0^\epsilon$ is indeed continuous and, in particular, the integral on the right-hand side of~\eqref{eq:dETformula} is well-defined.
	A  calculation similar to~\cite[Thm.~2.1]{ChWa86} then yields the identity~\eqref{eq:dETformula}.
\end{proof}

It is again crucial to understand the behaviour of~$(T^\epsilon)'(E)$ in the near circular regime, i.e., when~$E$ gets close to the minimal energy value~$\Emin^\epsilon$. One difficulty in this task is the factor in front of the integral on the right-hand side of~\eqref{eq:dETformula}. It is hence convenient to rewrite the integral expression~\eqref{eq:dETformula} as follows:
\begin{equation}\label{eq:dETroot}
	(T^\epsilon)'(E)=-\frac1{E-\Emin^\epsilon}\,\int_{r_-^\epsilon(E)}^{r_+^\epsilon(E)}\frac{G_0^\epsilon(r)}{(\Psi^\epsilon)'(r)}\,\partial_r\left[\sqrt{2E-2\Psi^\epsilon(r)}\right]\diff r
\end{equation}
for~$\epsilon\geq0$ and $E\in\AEL^\epsilon$,
with the intention to integrate by parts in~\eqref{eq:dETroot}.
For this sake we introduce the function 
\begin{equation}\label{eq:G1def}
	G_1^\epsilon\colon]0,\infty[\to\R,\qquad G_1^\epsilon(r)\coloneqq\begin{cases}\frac{G_0^\epsilon(r)}{(\Psi^\epsilon)'(r)},&\text{if }r\neq \rL^\epsilon,\\
		-\frac13\frac{(\Psi^\epsilon)'''(\rL^\epsilon)}{(\Psi^\epsilon)''(\rL^\epsilon)^2},&\text{if }r=\rL^\epsilon,
	\end{cases}
\end{equation}
with $G_0^\epsilon$ defined in Lemma~\ref{lem:limitdETformula}; recall that $(\Psi^\epsilon)''(\rL^\epsilon)>0$ by~\eqref{eq:effpotprime2circ}. 
A Taylor expansion similar to~\eqref{eq:G0taylor} yields that $G_1^\epsilon$ is continuous on~$]0,\infty[$. In fact, recalling from~\eqref{eq:G0taylor} that $G_0^\epsilon$ and $(\Psi^\epsilon)'$ are both smooth and $\mathcal O(r-\rL^\epsilon)$ as $r\to\rL^\epsilon$, and that $(\Psi^\epsilon)'$ vanishes only at $\rL^\epsilon$, we deduce that $G_1^\epsilon$ is smooth on $]\Rmin^\epsilon,\Rmax^\epsilon[$ (recall the regularities established in Lemma~\ref{lem:ststregularity}) and that its derivatives admit explicit representation in terms of derivatives of $\Psi^\epsilon$.

For $\epsilon\geq0$ and $E\in\AEL^\epsilon$ we now continue the calculation~\eqref{eq:dETroot} and integrate by parts:
\begin{equation}\label{eq:dETrootnumerator}
	(T^\epsilon)'(E)=\frac1{E-\Emin^\epsilon}\,\int_{r_-^\epsilon(E)}^{r_+^\epsilon(E)}(G_1^\epsilon)'(r)\,\sqrt{2E-2\Psi^\epsilon(r)}\diff r.
\end{equation}
Let us hence analyse the behaviour of integrals of the latter form in the near circular regime.

\begin{lemma}\label{lem:limitsintsnearcirc}
	Let $]0,\infty[\ni r\mapsto F(r)$ be continuous. For fixed $\epsilon\geq0$ it holds that
	\begin{align}
		\lim_{E\searrow\Emin^\epsilon}\int_{r_-^\epsilon(E)}^{r_+^\epsilon(E)}F(r)\,\sqrt{2E-2\Psi^\epsilon(r)}\diff r&=0. \label{eq:intlimitrootnumerator}
	\end{align}
	Moreover, it holds that
	\begin{equation}\label{eq:intderivativeroot}
		\partial_E\left[\int_{r_-^\epsilon(E)}^{r_+^\epsilon(E)}F(r)\,\sqrt{2E-2\Psi^\epsilon(r)}\diff r\right]=\int_{r_-^\epsilon(E)}^{r_+^\epsilon(E)}\frac{F(r)}{\sqrt{2E-2\Psi^\epsilon(r)}}\diff r,\qquad E\in\AEL^\epsilon.
	\end{equation}
\end{lemma}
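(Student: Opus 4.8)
The plan is to prove the two assertions of Lemma~\ref{lem:limitsintsnearcirc} separately. The limit~\eqref{eq:intlimitrootnumerator} is elementary, while~\eqref{eq:intderivativeroot} is a differentiation-under-the-integral-sign statement whose only delicate point is the integrable, but unbounded, behaviour of $(2E-2\Psi^\epsilon(\cdot))^{-1/2}$ near the turning points $r_\pm^\epsilon(E)$.

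For~\eqref{eq:intlimitrootnumerator}: since $\Psi^\epsilon$ is continuous and strictly monotone on each of $]0,\rL^\epsilon[$ and $]\rL^\epsilon,\infty[$ by Lemma~\ref{lem:effpotprop}, the radii $r_-^\epsilon(E)$ and $r_+^\epsilon(E)$ are monotone in $E$ and both converge to $\rL^\epsilon$ as $E\searrow\Emin^\epsilon$; in particular $r_+^\epsilon(E)-r_-^\epsilon(E)\to0$. Moreover, on $[r_-^\epsilon(E),r_+^\epsilon(E)]$ one has $\Emin^\epsilon\le\Psi^\epsilon(r)\le E$, hence $0\le 2E-2\Psi^\epsilon(r)\le 2(E-\Emin^\epsilon)$, while $F$ is bounded on a fixed compact neighbourhood of $\rL^\epsilon$. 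Consequently
\[
\left|\int_{r_-^\epsilon(E)}^{r_+^\epsilon(E)}F(r)\,\sqrt{2E-2\Psi^\epsilon(r)}\diff r\right|\le\left(\sup_{[r_-^\epsilon(E),r_+^\epsilon(E)]}|F|\right)\sqrt{2(E-\Emin^\epsilon)}\,\bigl(r_+^\epsilon(E)-r_-^\epsilon(E)\bigr)\xrightarrow[E\searrow\Emin^\epsilon]{}0 .
\]

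For~\eqref{eq:intderivativeroot}, fix $E\in\AEL^\epsilon$ and $\delta_1>0$ with $[E-\delta_1,E+\delta_1]\subset\AEL^\epsilon$. By the monotonicity of $r_\pm^\epsilon$, the interval $[\alpha,\beta]\coloneqq[r_-^\epsilon(E+\delta_1),r_+^\epsilon(E+\delta_1)]$ contains $[r_-^\epsilon(\tilde E),r_+^\epsilon(\tilde E)]$ for every $\tilde E\in[E-\delta_1,E+\delta_1]$, so for such $\tilde E$ we may write $J(\tilde E)\coloneqq\int_{r_-^\epsilon(\tilde E)}^{r_+^\epsilon(\tilde E)}F(r)\sqrt{2\tilde E-2\Psi^\epsilon(r)}\diff r=\int_\alpha^\beta F(r)\sqrt{(2\tilde E-2\Psi^\epsilon(r))_+}\diff r$ on the fixed domain $[\alpha,\beta]$. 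Morally this is the Leibniz rule: the boundary contributions at $r_\pm^\epsilon(E)$ vanish because $\sqrt{2E-2\Psi^\epsilon(r_\pm^\epsilon(E))}=0$, leaving exactly $\int_{r_-^\epsilon(E)}^{r_+^\epsilon(E)}F(r)(2E-2\Psi^\epsilon(r))^{-1/2}\diff r$, an integral that converges since $(\Psi^\epsilon)'(r_\pm^\epsilon(E))\ne0$ makes the integrand $O(|r-r_\pm^\epsilon(E)|^{-1/2})$ near the endpoints. To justify passing the derivative inside I would work with the difference quotient on $[\alpha,\beta]$: for a.e.\ $r$ the map $\tilde E\mapsto F(r)\sqrt{(2\tilde E-2\Psi^\epsilon(r))_+}$ is differentiable with derivative $F(r)(2\tilde E-2\Psi^\epsilon(r))^{-1/2}\mathds{1}_{\{\Psi^\epsilon(r)<\tilde E\}}$, and the elementary inequality $|\sqrt{(a+s)_+}-\sqrt{a_+}|\le |s|\,|a|^{-1/2}$, valid for all real $a\ne0$ and $s$, yields the $h$-independent estimate
\[
\left|\frac{F(r)}{h}\Bigl(\sqrt{(2(E+h)-2\Psi^\epsilon(r))_+}-\sqrt{(2E-2\Psi^\epsilon(r))_+}\Bigr)\right|\le\frac{\sqrt2\,|F(r)|}{\sqrt{|E-\Psi^\epsilon(r)|}},\qquad r\in[\alpha,\beta],
\]
whose right-hand side is integrable on $]\alpha,\beta[$ for the fixed $E\in\AEL^\epsilon$ (again because $(\Psi^\epsilon)'$ is nonzero at $r_\pm^\epsilon(E)$). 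Dominated convergence then gives $J'(E)=\int_{r_-^\epsilon(E)}^{r_+^\epsilon(E)}F(r)(2E-2\Psi^\epsilon(r))^{-1/2}\diff r$, which is~\eqref{eq:intderivativeroot}.

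The main — and essentially only — obstacle is this last justification: after differentiation the integrand blows up at the moving turning points $r_\pm^\epsilon(E)$, so the classical Leibniz rule does not apply directly. The difference-quotient estimate above circumvents this precisely because $(\Psi^\epsilon)'$ vanishes only at $\rL^\epsilon$, i.e.\ at the excluded value $E=\Emin^\epsilon$, so on $\AEL^\epsilon$ the singularity is of the integrable order $|r-r_\pm^\epsilon(E)|^{-1/2}$; everything else (the monotonicity of $r_\pm^\epsilon$, the containment $[r_-^\epsilon(\tilde E),r_+^\epsilon(\tilde E)]\subset[\alpha,\beta]$, and the verification of the elementary inequality by cases on the signs of $a$ and $a+s$) is routine.
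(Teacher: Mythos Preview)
Your proof is correct and follows the same route as the paper: both use that $r_\pm^\epsilon(E)\to\rL^\epsilon$ to obtain~\eqref{eq:intlimitrootnumerator} and invoke dominated convergence for~\eqref{eq:intderivativeroot}. The paper dismisses the latter as \enquote{straight-forward}, whereas you supply the explicit dominating function $\sqrt2\,|F(r)|\,|E-\Psi^\epsilon(r)|^{-1/2}$ and verify the pointwise inequality $|\sqrt{(a+s)_+}-\sqrt{a_+}|\le|s|\,|a|^{-1/2}$, which is exactly the detail needed to make the dominated-convergence argument honest near the turning points.
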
 
\begin{proof}
	First observe that Lemma~\ref{lem:effpotprop} implies that $r_\pm^\epsilon(E)\to\rL^\epsilon$ as $E\searrow\Emin^\epsilon$;
	the proof is similar to the one of Lemma~\ref{lem:limitrpm}. Then~\eqref{eq:intlimitrootnumerator} is obvious.
	The derivative relation~\eqref{eq:intderivativeroot} is straight-forward to verify using Lebesgue's dominated convergence theorem.
\end{proof}

Due to the mean value theorem together with~\eqref{eq:dETrootnumerator}, \eqref{eq:intlimitrootnumerator}, and~\eqref{eq:intderivativeroot}, we conclude that for any $E\in\AEL^\epsilon$ there exists some $\tilde E\in]\Emin^\epsilon,E[$ such that
\begin{equation}\label{eq:dETmeanvalue}
	(T^\epsilon)'(E)=\int_{r_-^\epsilon(\tilde E)}^{r_+^\epsilon(\tilde E)}\frac{(G_1^\epsilon)'(r)}{\sqrt{2\tilde E-2\Psi^\epsilon(r)}}\diff r.
\end{equation}
We hence arrive at the following limiting behaviour of $(T^\epsilon)'$ in the near circular regime:

\begin{lemma}\label{lem:limitdETnearcirc}
	The derivative of the period function $(T^\epsilon)'(E)$ converges to $(T^0)'(\Emin^0)$ as $E\to\Emin^\epsilon$ and $\epsilon\to0$.
	More precisely, for any $\delta>0$ there exist $\epsilon_0>0$ and $\eta>0$ such that for all $0\leq\epsilon<\epsilon_0$ and $\Emin^\epsilon<E<\Emin^\epsilon+\eta<0$ it holds that $|(T^\epsilon)'(E)-(T^0)'(\Emin^0)|<\delta$.
	
	Here, $(T^0)'(\Emin^0)$ denotes the continuous extension of~$(T^0)'$ onto $\Emin^0$. Due to~\eqref{eq:effpot0}, \eqref{eq:Emin0def}, and~\eqref{eq:T0def}, 
	 this value is given by
	\begin{equation}\label{eq:dET0nearcircdef}
		(T^0)'(\Emin^0)=6\pi\,\frac{L^{\frac52}}{M^4}=\pi\,\frac{(G_1^0)'(\rL^0)}{\sqrt{(\Psi^0)''(\rL^0)}}.
	\end{equation}
\end{lemma}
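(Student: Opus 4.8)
The plan is to start from the mean-value representation~\eqref{eq:dETmeanvalue}: for every admissible $(\epsilon,E)$ there is a value $\tilde E=\tilde E(\epsilon,E)\in\,]\Emin^\epsilon,E[$ with
\[
	(T^\epsilon)'(E)=\int_{r_-^\epsilon(\tilde E)}^{r_+^\epsilon(\tilde E)}\frac{(G_1^\epsilon)'(r)}{\sqrt{2\tilde E-2\Psi^\epsilon(r)}}\diff r .
\]
In the near circular regime, i.e.\ $E\searrow\Emin^\epsilon$ and $\epsilon\to0$, the squeezed value satisfies $\tilde E-\Emin^\epsilon<E-\Emin^\epsilon\to0$ and $\tilde E\to\Emin^0$ by Lemma~\ref{lem:limitrLEmin}, while $r_\pm^\epsilon(\tilde E)\to\rL^0$ by Lemma~\ref{lem:limitrpm}~\ref{it:limitrpm2}. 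I would then freeze the slowly varying factor $(G_1^\epsilon)'$ at $\rL^\epsilon$: writing $(G_1^\epsilon)'(r)=(G_1^\epsilon)'(\rL^\epsilon)+[(G_1^\epsilon)'(r)-(G_1^\epsilon)'(\rL^\epsilon)]$ and using $\int_{r_-^\epsilon(\tilde E)}^{r_+^\epsilon(\tilde E)}\frac{\diff r}{\sqrt{2\tilde E-2\Psi^\epsilon(r)}}=\tfrac12 T^\epsilon(\tilde E)$, one obtains
\[
	(T^\epsilon)'(E)=(G_1^\epsilon)'(\rL^\epsilon)\,\tfrac12 T^\epsilon(\tilde E)+\mathrm{Rem}^\epsilon,\qquad
	|\mathrm{Rem}^\epsilon|\le \tfrac12 T^\epsilon(\tilde E)\!\!\sup_{[r_-^\epsilon(\tilde E),\,r_+^\epsilon(\tilde E)]}\!\!\big|(G_1^\epsilon)'(r)-(G_1^\epsilon)'(\rL^\epsilon)\big| .
\]
By Lemma~\ref{lem:limitTnearcirc}, $\tfrac12 T^\epsilon(\tilde E)\to\tfrac12 T^0(\Emin^0)=\pi/\sqrt{(\Psi^0)''(\rL^0)}$, recalling~\eqref{eq:T0nearcircdef}.

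Two ingredients remain. First, $(G_1^\epsilon)'(\rL^\epsilon)\to(G_1^0)'(\rL^0)$. Since the numerator $h^\epsilon(r):=(\Psi^\epsilon)'(r)^2-2(\Psi^\epsilon(r)-\Emin^\epsilon)(\Psi^\epsilon)''(r)$ vanishes to third order at $\rL^\epsilon$ and $(\Psi^\epsilon)'$ to first order there (with $(\Psi^\epsilon)''(\rL^\epsilon)>0$ by~\eqref{eq:effpotprime2circ}), Taylor's theorem with integral remainder gives $h^\epsilon(r)=(r-\rL^\epsilon)^3\,\tilde h^\epsilon(r)$ and $(\Psi^\epsilon)'(r)=(r-\rL^\epsilon)\,q^\epsilon(r)$ with $\tilde h^\epsilon,q^\epsilon$ smooth on a fixed interval $[\Rmin^0+\tilde\delta,\Rmax^0-\tilde\delta]\ni\rL^\epsilon$ (using the interior smoothness of $\Psi^\epsilon$ from Lemma~\ref{lem:ststregularity}) and $q^\epsilon$ bounded below there, so that $G_1^\epsilon=\tilde h^\epsilon/(q^\epsilon)^3$ near $\rL^\epsilon$. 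The Taylor coefficients of $\tilde h^\epsilon$ and $q^\epsilon$ are universal polynomial expressions in $(\Psi^\epsilon)^{(j)}$ for $j\le5$, and these converge uniformly on $[\Rmin^0+\tilde\delta,\Rmax^0-\tilde\delta]$ to their $\epsilon=0$ counterparts by (the proof of) Lemma~\ref{lem:effpotlimitnearcirc}; combined with $\rL^\epsilon\to\rL^0$ this yields $C^1$-convergence of $G_1^\epsilon$ on a small neighbourhood of $\rL^0$, uniformly in small $\epsilon$, hence $(G_1^\epsilon)'(\rL^\epsilon)\to(G_1^0)'(\rL^0)$. Second, $\mathrm{Rem}^\epsilon\to0$: the mean value theorem bounds the supremum above by $\|(G_1^\epsilon)''\|_{L^\infty}\max\{|r_-^\epsilon(\tilde E)-\rL^\epsilon|,|r_+^\epsilon(\tilde E)-\rL^\epsilon|\}$, the first factor being uniformly bounded by the previous step and the second tending to $0$ by Lemma~\ref{lem:limitrpm}~\ref{it:limitrpm2} and Lemma~\ref{lem:limitrLEmin}. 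Combining the three limits gives $(T^\epsilon)'(E)\to (G_1^0)'(\rL^0)\,\pi/\sqrt{(\Psi^0)''(\rL^0)}$ as $E\to\Emin^\epsilon$ and $\epsilon\to0$.

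Finally, the explicit value: $T^0(E)=\tfrac{\pi}{\sqrt2}M(-E)^{-3/2}$ from~\eqref{eq:T0def} extends smoothly across $E=\Emin^0$, so $(T^0)'(\Emin^0)=\tfrac{3\pi M}{2\sqrt2}\,(-\Emin^0)^{-5/2}=6\pi L^{5/2}/M^4$ using $\Emin^0=-M^2/(2L)$ from~\eqref{eq:Emin0def}; the second equality in~\eqref{eq:dET0nearcircdef} then follows either as a consistency check of the $\epsilon=0$ instance of the above argument, or directly by inserting $(\Psi^0)''(\rL^0)=M^4/L^3$, $(\Psi^0)'''(\rL^0)=-6M^5/L^4$, $(\Psi^0)^{(4)}(\rL^0)=36M^6/L^5$ into the explicit formula for $(G_1^0)'(\rL^0)$ produced by the factorization (which gives $(G_1^0)'(\rL^0)=6L/M^2$). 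The main obstacle is the convergence $(G_1^\epsilon)'(\rL^\epsilon)\to(G_1^0)'(\rL^0)$: because $G_1^\epsilon$ is defined through a ratio with a removable singularity at the moving point $\rL^\epsilon$, one cannot differentiate and pass to the limit naively — the order-three zero must be factored out uniformly first, and only then do the uniform-in-$\epsilon$ bounds and convergences of the high-order derivatives of $\Psi^\epsilon$ on a \emph{fixed} compact subinterval (furnished by the proof of Lemma~\ref{lem:effpotlimitnearcirc}) close the argument; the same factorization also supplies the equi-Lipschitz bound needed to control $\mathrm{Rem}^\epsilon$.
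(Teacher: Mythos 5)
Your argument is correct, and it reaches the limit by a slightly different route than the paper. Both proofs start from the mean-value representation~\eqref{eq:dETmeanvalue}, but the paper then changes variables $r\mapsto\tilde\eta=\Psi^\epsilon(r)$, applies the extended mean value theorem to replace $(\Psi^\epsilon)'(r_\pm^\epsilon(\tilde\eta))^2/(\tilde\eta-\Emin^\epsilon)$ by $2(\Psi^\epsilon)''(s)$, and reduces everything to the explicit integral $\int_{\Emin^\epsilon}^{\tilde E}\bigl((\tilde E-\tilde\eta)(\tilde\eta-\Emin^\epsilon)\bigr)^{-1/2}\diff\tilde\eta=\pi$ — i.e.\ it re-runs the computation of Lemma~\ref{lem:limitTnearcirc} inline with the extra weight $(G_1^\epsilon)'$. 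You instead freeze $(G_1^\epsilon)'$ at $\rL^\epsilon$, recognise the remaining integral as $\tfrac12T^\epsilon(\tilde E)$, and invoke Lemma~\ref{lem:limitTnearcirc} as a black box; this is cleaner and avoids repeating the extended-MVT manipulation. The ingredient both proofs share is uniform control and convergence of $(G_1^\epsilon)'$ near the moving point $\rL^\epsilon$; the paper dispatches this with the remark that $(G_1^\epsilon)'$ has an explicit representation in terms of derivatives of $\Psi^\epsilon$ together with Lemma~\ref{lem:effpotlimitnearcirc}, whereas you spell out the factorisation $G_1^\epsilon=\tilde h^\epsilon/(q^\epsilon)^3$ via Taylor's theorem with integral remainder — a more careful justification of the same point, and one that correctly identifies the removable singularity at the moving point $\rL^\epsilon$ as the only delicate step. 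One small simplification available to you: once you have $C^1$-convergence of $G_1^\epsilon$ to $G_1^0$ on a fixed neighbourhood of $\rL^0$, the remainder estimate follows from the triangle inequality and the continuity of $(G_1^0)'$ at $\rL^0$, so the equi-Lipschitz bound on $(G_1^\epsilon)'$ (hence control of one further derivative of $\Psi^\epsilon$) is not strictly needed. Your verification of the explicit value~\eqref{eq:dET0nearcircdef} is also correct.
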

\begin{proof}
	We proceed as in the proof of Lemma~\ref{lem:limitTnearcirc} and change variables via $\tilde\eta=\Psi^\epsilon(r)$ in~\eqref{eq:dETmeanvalue} to deduce that for any $\epsilon\geq0$ and $E\in\AEL^\epsilon$ there exists some $\tilde E\in]\Emin^\epsilon,E[$ such that
	\begin{equation}\label{eq:dETchangeofvariables}
		(T^\epsilon)'(E)=\int_{\Emin^\epsilon}^{\tilde E}\frac{(G_1^\epsilon)'(r_-^\epsilon(\tilde\eta))}{\sqrt{2(\tilde E-\tilde\eta)\,(\Psi^\epsilon)'(r_-^\epsilon(\tilde\eta))^2}}\diff\tilde\eta +\int_{\Emin^\epsilon}^{\tilde E}\frac{(G_1^\epsilon)'(r_+^\epsilon(\tilde\eta))}{\sqrt{2(\tilde E-\tilde\eta)\,(\Psi^\epsilon)'(r_+^\epsilon(\tilde\eta))^2}}\diff\tilde\eta.
	\end{equation}
	Lemma~\ref{lem:effpotlimitnearcirc} implies that for any $\delta>0$ there exist $\epsilon_0>0$ and $\eta>0$ such that for all $0\leq\epsilon<\epsilon_0$ and $\Emin^\epsilon<E<\Emin^\epsilon+\eta$ it holds that $|(G_1^\epsilon)'(\tilde s)\,(\Psi^\epsilon)''(s)^{-\frac12}-(G_1^0)'(\tilde s)\,(\Psi^0)''(\rL^0)^{-\frac12}|<\frac\delta{2\pi}$ for $s,\tilde s\in[r_-^\epsilon(E),r_+^\epsilon(E)]$; recall that $(G_1^\epsilon)'$ admits explicit representation in terms only of derivatives of $\Psi^\epsilon$ derived from~\eqref{eq:G1def}. 
	For $0\leq\epsilon<\epsilon_0$ and $\Emin^\epsilon<E<\Emin^\epsilon+\eta$ we thus conclude the following estimate for the first integral in~\eqref{eq:dETchangeofvariables} after rewriting the integrand with the extended mean value theorem similar to~\eqref{eq:Tintegrandemvt}:
	\begin{align*}
		\big|&\int_{\Emin^\epsilon}^{\tilde E}\frac{(G_1^\epsilon)'(r_-^\epsilon(\tilde\eta))}{\sqrt{2(\tilde E-\tilde\eta)\,(\Psi^\epsilon)'(r_-^\epsilon(\tilde\eta))^2}}\diff\tilde\eta-\frac12(T^0)'(\Emin^0)\big| \nonumber\\
		&=\big| \int_{\Emin^\epsilon}^{\tilde E}\frac{(G_1^\epsilon)'(r_-^\epsilon(\tilde\eta))}{\sqrt{2(\tilde E-\tilde\eta)\,(\Psi^\epsilon)'(r_-^\epsilon(\tilde\eta))^2}}\diff\tilde\eta - \frac{(G_1^0)'(\rL^0)}{2\sqrt{(\Psi^0)''(\rL^0)}}\,\int_{\Emin^\epsilon}^{\tilde E}\frac{\diff\tilde\eta}{\sqrt{(\tilde E-\tilde\eta)\,(\tilde\eta-\Emin^\epsilon)}}\big|\\
		&\leq\frac\delta{2\pi}\int_{\Emin^\epsilon}^{\tilde E}\frac{\diff\tilde\eta}{\sqrt{(\tilde E-\tilde\eta)(\tilde\eta-\Emin^\epsilon)}}=\frac\delta2.
	\end{align*}  
	Similar arguments also apply to the second integral in~\eqref{eq:dETchangeofvariables}.
\end{proof}


The next step is again to verify a suitable pointwise convergence of~$(T^\epsilon)'$ as $\epsilon\to0$.

\begin{lemma}\label{lem:dETcontinuous0}
	The mapping $\A\ni(\epsilon,E)\mapsto (T^\epsilon)'(E)$ is continuous at $\epsilon=0$; recall~\eqref{eq:Adef}.
	
	More precisely, for any $\delta>0$ and $E^\ast\in\AEL^0$ there exists $\epsilon_0>0$ such that for all $0\leq\epsilon<\epsilon_0$ and $E\in\AEL^\epsilon$ with $|E^\ast-E|<\epsilon_0$ it holds that $|(T^\epsilon)'(E)-(T^0)'(E^\ast)|<\delta$.
\end{lemma}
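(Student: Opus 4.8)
The plan is to mimic the proof of Lemma~\ref{lem:Tcontinuous0}, with the integral representation of $T^\epsilon$ replaced by that of $(T^\epsilon)'$. Concretely, I would start from the integrated-by-parts formula~\eqref{eq:dETrootnumerator},
\begin{equation*}
	(T^\epsilon)'(E)=\frac1{E-\Emin^\epsilon}\int_{r_-^\epsilon(E)}^{r_+^\epsilon(E)}(G_1^\epsilon)'(r)\,\sqrt{2E-2\Psi^\epsilon(r)}\diff r,
\end{equation*}
(equivalently from~\eqref{eq:dETformula}) and perform the affine substitution $r=r_-^\epsilon(E)+(r_+^\epsilon(E)-r_-^\epsilon(E))\,s$, $s\in[0,1]$. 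This rewrites $(T^\epsilon)'(E)$ as $\frac{r_+^\epsilon(E)-r_-^\epsilon(E)}{E-\Emin^\epsilon}$ times an integral over the fixed interval $[0,1]$ whose integrand depends on $(\epsilon,E)$. For fixed $E^\ast\in\AEL^0$ and $(\epsilon,E)\to(0,E^\ast)$ the prefactor converges to $\frac{r_+^0(E^\ast)-r_-^0(E^\ast)}{E^\ast-\Emin^0}$ by Lemmas~\ref{lem:limitrhopot}, \ref{lem:limitrLEmin} and~\ref{lem:limitrpm}, using $E^\ast>\Emin^0$ so that the denominator stays bounded away from zero; it therefore remains to show that the $[0,1]$-integral converges, which I would do by Lebesgue's dominated convergence theorem.

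For the pointwise convergence of the integrand I would use $r_\pm^\epsilon(E)\to r_\pm^0(E^\ast)$ (Lemma~\ref{lem:limitrpm}), $\Emin^\epsilon\to\Emin^0$ (Lemma~\ref{lem:limitrLEmin}), and the convergence $(\Psi^\epsilon)^{(j)}\to(\Psi^0)^{(j)}$, $j\in\{0,1,2,3\}$, uniformly on compact subsets of $]0,\infty[$, which follows from Lemma~\ref{lem:limitrhopot} since $\Psi^\epsilon-\Psi^0=U^\epsilon$. Away from $\rL^\epsilon$, $(G_1^\epsilon)'$ is an explicit rational function of $(\Psi^\epsilon)',(\Psi^\epsilon)'',(\Psi^\epsilon)'''$ with a power of $(\Psi^\epsilon)'$ in the denominator, so for every $s$ with $r_-^0(E^\ast)+(r_+^0(E^\ast)-r_-^0(E^\ast))\,s\neq\rL^0$ — i.e.\ for all but one value of $s$ — the integrand converges to the corresponding expression built from $\Psi^0$. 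For the dominating function, after the substitution the integrand is controlled by $\frac{r_+^\epsilon(E)-r_-^\epsilon(E)}{\sqrt{E-\Emin^\epsilon}}\,\sup_{[r_-^\epsilon(E),r_+^\epsilon(E)]}|(G_1^\epsilon)'|$, which is bounded uniformly in $(\epsilon,E)$ once $(G_1^\epsilon)'$ is bounded uniformly on the (uniformly bounded) integration interval; alternatively, starting from~\eqref{eq:dETformula} one dominates exactly as in Lemma~\ref{lem:Tcontinuous0} by invoking the concavity estimate of Lemma~\ref{lem:effpotconcav}, which yields the integrable majorant $C/\sqrt{s(1-s)}$.

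The one place that genuinely requires care is the behaviour near the potential minimum: $\rL^\epsilon\to\rL^0$ lies strictly inside $[r_-^\epsilon(E),r_+^\epsilon(E)]$ (because $E^\ast>\Emin^0$), and there the defining formulas for $G_0^\epsilon$ and $(G_1^\epsilon)'$ are of removable $0/0$ type (cf.~\eqref{eq:G1def}), so neither their uniform boundedness near $\rL^\epsilon$ nor their pointwise convergence at the exceptional value of $s$ is immediate from the quotients. I would dispose of this exactly as in the derivation of~\eqref{eq:G0taylor}: since $\Psi^\epsilon$ is $C^\infty$ on $]\Rmin^\epsilon,\Rmax^\epsilon[$ with $(\Psi^\epsilon)''(\rL^\epsilon)>0$ (recall~\eqref{eq:effpotprime2circ}), one Taylor expands the numerator and denominator of $G_0^\epsilon$ (and of $(G_1^\epsilon)'$) around $\rL^\epsilon$ and cancels the common powers of $r-\rL^\epsilon$; the surviving leading coefficients are polynomials in $(\Psi^\epsilon)''(\rL^\epsilon),(\Psi^\epsilon)'''(\rL^\epsilon),\dots$, which converge by Lemma~\ref{lem:limitrhopot} (iterated as in Lemma~\ref{lem:effpotlimitnearcirc} if higher-order remainders are needed), while $(\Psi^\epsilon)''(\rL^\epsilon)$ stays bounded below by a positive constant because $(\Psi^0)''(\rL^0)=M^4/L^3>0$. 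This produces a uniform-in-$\epsilon$ bound for $G_0^\epsilon$, resp.\ $(G_1^\epsilon)'$, on a fixed compact neighbourhood of $\rL^0$ together with its pointwise convergence there, which closes both the domination and the pointwise-convergence steps; dominated convergence then gives $(T^\epsilon)'(E)\to(T^0)'(E^\ast)$, and the quantitative $\epsilon_0$-statement is just the standard unpacking of this joint continuity at $\epsilon=0$.
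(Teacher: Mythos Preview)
Your proposal is correct. The paper's own proof uses your ``alternatively'' route: it starts from the representation~\eqref{eq:dETformula} with $G_0^\epsilon$ (not~\eqref{eq:dETrootnumerator} with $(G_1^\epsilon)'$), performs the affine substitution, invokes the concavity estimate~\eqref{eq:concavityeffpot} to obtain the integrable majorant $C/\sqrt{s(1-s)}$, and uses $G_0^\epsilon\to G_0^0$ locally uniformly for the pointwise convergence---exactly as in Lemma~\ref{lem:Tcontinuous0}, plus $\Emin^\epsilon\to\Emin^0$ for the prefactor.

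The main difference between your primary route via~\eqref{eq:dETrootnumerator} and the paper's route via~\eqref{eq:dETformula} is a regularity trade-off. With $\sqrt{2E-2\Psi^\epsilon}$ in the numerator the integrand is bounded, so you avoid the concavity estimate entirely; the price is that $(G_1^\epsilon)'$ near $\rL^\epsilon$ requires Taylor expansion to order one higher than $G_0^\epsilon$, hence control of $(\Psi^\epsilon)^{(4)}$ near $\rL^\epsilon$, which is not directly in Lemma~\ref{lem:limitrhopot} (only $j\leq3$) and needs the iteration you flag. The paper's route needs only $G_0^\epsilon\to G_0^0$ locally uniformly, for which $(\Psi^\epsilon)'''$ suffices in the removable-singularity analysis~\eqref{eq:G0taylor}, matching Lemma~\ref{lem:limitrhopot} on the nose. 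Both arguments are sound; the paper's is marginally more economical in the derivative count, while yours makes the domination step trivial.
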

\begin{proof}
	First observe that $G_0^\epsilon\to G_0^0$ as $\epsilon\to0$ locally uniformly by Lemmas~\ref{lem:limitrhopot} and~\ref{lem:limitrLEmin}; recall the definition of~$G_0^\epsilon$ in~\eqref{eq:G0def}.
	Then the claimed continuity follows similarly to Lemma~\ref{lem:Tcontinuous0} using the concavity estimate~\eqref{eq:concavityeffpot} and Lebesgue's dominated convergence theorem applied to the representation~\eqref{eq:dETformula} of $(T^\epsilon)'(E)$; also note that $\Emin^\epsilon\to\Emin^0$ by Lemma~\ref{lem:limitrLEmin}.
\end{proof}

We then arrive at the desired convergence results for the derivative of the period function.

\begin{lemma}\label{lem:limitdETminmax}
	It holds that $\lim_{\epsilon\to0}\Tpmin^\epsilon=\Tpmin^0$ and $\lim_{\epsilon\to0}\Tpmax^\epsilon=\Tpmax^0$, with
	\begin{equation}\label{eq:dETminmax0}
		0<\Tpmin^0=(T^0)'(\Emin^0)=6\pi\,\frac{L^{\frac52}}{M^4}<\frac{3\pi}{2\sqrt 2}\,\frac M{(-\kappa)^\frac52}=(T^0)'(\kappa)=\Tpmax^0<\infty.
	\end{equation}
\end{lemma}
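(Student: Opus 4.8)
The plan is to mirror the proof of Lemma~\ref{lem:limitTminmax}, now applied to the first derivative of the period function. First I would record the explicit form of the limiting derivative: differentiating~\eqref{eq:T0def} gives $(T^0)'(E)=\frac{3\pi}{2\sqrt2}\,\frac{M}{(-E)^{5/2}}$ for $E\in\AEL^0$, a function that is continuous and \emph{strictly increasing} on $]\Emin^0,0[$ and extends continuously to $E=\Emin^0$ with value $6\pi L^{5/2}/M^4$ (consistent with~\eqref{eq:dET0nearcircdef}). Since $\overline{I^0}=[\Emin^0,\kappa]$ and $(T^0)'$ is strictly increasing there, $(T^0)'$ attains its infimum over $\overline{I^0}$ at $E=\Emin^0$ and its supremum at $E=\kappa$; hence $\Tpmin^0=(T^0)'(\Emin^0)=6\pi L^{5/2}/M^4$ and $\Tpmax^0=(T^0)'(\kappa)=\frac{3\pi}{2\sqrt2}\,\frac{M}{(-\kappa)^{5/2}}$, and the strict inequality $\Tpmin^0<\Tpmax^0$ in~\eqref{eq:dETminmax0} is immediate from $\Emin^0<\kappa$ (guaranteed by~\eqref{E:SINGLEGAP}) together with strict monotonicity.

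The core of the argument is to upgrade Lemmas~\ref{lem:limitdETnearcirc} and~\ref{lem:dETcontinuous0} to a locally-uniform-up-to-the-circular-limit convergence statement for $(T^\epsilon)'$, exactly analogous to Lemma~\ref{lem:limitTepsuniform}: for every $\delta>0$ and every $E_1<0$ there is $\epsilon_0>0$ such that for all $0\le\epsilon<\epsilon_0$, all $E^\ast\in[\Emin^0,E_1]$ and all $E\in]\Emin^\epsilon,E_1]$ with $|E-E^\ast|<\epsilon_0$ one has $|(T^\epsilon)'(E)-(T^0)'(E^\ast)|<\delta$. I would obtain this by splitting the energy range: for $E$ inside a fixed one-sided neighbourhood of $\Emin^\epsilon$ (and $E^\ast$ correspondingly near $\Emin^0$) the bound follows from Lemma~\ref{lem:limitdETnearcirc} and the continuity of $(T^0)'$ up to $\Emin^0$; for $E$ bounded below by a fixed amount above $\Emin^0$ the relevant energies lie in a fixed compact subset of $\AEL^0$, so a finite-subcover argument applied to the joint-continuity statement of Lemma~\ref{lem:dETcontinuous0} yields uniformity there. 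This is precisely the ``standard continuity argument'' already used for $T$ itself.

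With this in hand, fix $\delta>0$ and choose $E_1$ with $\kappa<E_1<0$, so that $\overline{I^\epsilon}=[\Emin^\epsilon,E_0^\epsilon]\subset]\Emin^\epsilon,E_1]$ for $\epsilon$ small, using $E_0^\epsilon\to\kappa$ (Lemma~\ref{lem:limitrhopot}) and $\Emin^\epsilon\to\Emin^0$ (Lemma~\ref{lem:limitrLEmin}). Combining the near-circular estimate (for $E$ near $\Emin^\epsilon$) with the locally uniform convergence (for $E$ away from $\Emin^0$) and the continuity of $(T^0)'$ on $\overline{I^0}$ gives $\sup_{\overline{I^\epsilon}}(T^\epsilon)'\le\Tpmax^0+\delta$ and $\inf_{\overline{I^\epsilon}}(T^\epsilon)'\ge\Tpmin^0-\delta$ for all sufficiently small $\epsilon$. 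For the reverse bounds I would test $(T^\epsilon)'$ at $E=E_0^\epsilon-\delta'\in I^\epsilon$ and invoke Lemma~\ref{lem:dETcontinuous0} (with $E_0^\epsilon\to\kappa$) to get $\liminf_{\epsilon\to0}\sup_{I^\epsilon}(T^\epsilon)'\ge(T^0)'(\kappa-\delta')$, and test at $E=\Emin^\epsilon+\delta'\in I^\epsilon$ with $\delta'$ below the threshold $\eta$ of Lemma~\ref{lem:limitdETnearcirc} to get $\limsup_{\epsilon\to0}\inf_{I^\epsilon}(T^\epsilon)'\le\Tpmin^0+\delta$. Letting $\delta,\delta'\to0$ and using continuity of $(T^0)'$ at $\kappa$ then yields $\Tpmin^\epsilon\to\Tpmin^0$ and $\Tpmax^\epsilon\to\Tpmax^0$.

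The main obstacle is the second paragraph: controlling $(T^\epsilon)'$ \emph{uniformly} near the \emph{moving} left endpoint $\Emin^\epsilon$. There $(T^\epsilon)'$ is only accessible through the delicate near-circular analysis — the prefactor $(E-\Emin^\epsilon)^{-1}$ in~\eqref{eq:dETformula}, the mere $C^{0,1/2}$-regularity of $E\mapsto r_\pm^\epsilon(E)$ at $\Emin^\epsilon$, and the need for the extended-mean-value rewriting~\eqref{eq:dETmeanvalue} all preclude a naive limit — so the patching of the near-circular regime with the compact-interior regime, and the verification that the thresholds $\eta$ and $\epsilon_0$ can be chosen independently of $\epsilon$, is the point requiring care. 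Since Lemma~\ref{lem:limitdETnearcirc} already carries out the hard analytic work, what remains here is essentially organisational, a verbatim adaptation of the proof of Lemma~\ref{lem:limitTepsuniform}.
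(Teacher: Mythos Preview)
Your proposal is correct and follows essentially the same approach as the paper: combine Lemmas~\ref{lem:limitdETnearcirc} and~\ref{lem:dETcontinuous0} into a locally-uniform-in-$E^\ast$ convergence statement for $(T^\epsilon)'$ exactly as in Lemma~\ref{lem:limitTepsuniform}, then use $\Emin^\epsilon\to\Emin^0$ and $E_0^\epsilon\to\kappa$ from Lemmas~\ref{lem:limitrhopot} and~\ref{lem:limitrLEmin} to transfer the infimum and supremum. Your write-up simply fills in the details that the paper leaves to the reader, including the explicit computation of $(T^0)'$ and the patching argument between the near-circular and compact-interior regimes.
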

\begin{proof}
	Combining Lemmas~\ref{lem:limitdETnearcirc} and~\ref{lem:dETcontinuous0} yields that $(T^\epsilon)'(E)$ converges to $(T^0)'(E^\ast)$ as $\epsilon\to0$ and $E\to E^\ast$ locally uniformly in~$E^\ast$; see Lemma~\ref{lem:limitTepsuniform} for similar arguments.
	Because $\Emin^\epsilon\to\Emin^0$ and $E_0^\epsilon\to\kappa$ as $\epsilon\to0$ by Lemmas~\ref{lem:limitrhopot} and~\ref{lem:limitrLEmin} we conclude the desired convergence results.
\end{proof}

The next step is to establish a similar result for the second derivative of the period function; recall that $T^\epsilon\in C^2(\AEL^\epsilon)$ by Lemma~\ref{L:PERIODREGULARITY}.
Again differentiating~\eqref{eq:dETrootnumerator} using~\eqref{eq:intderivativeroot} and rearranging the integrand yields
\begin{equation}\label{eq:dE2Tformula0}
	(T^\epsilon)''(E)=\frac1{(E-\Emin^\epsilon)^2}\int_{r_-^\epsilon(E)}^{r_+^\epsilon(E)}(G_1^\epsilon)'(r)\left(\frac{\Psi^\epsilon(r)-\Emin^\epsilon}{\sqrt{2E-2\Psi^\epsilon(r)}}-\frac12\sqrt{2E-2\Psi^\epsilon(r)}\right)\diff r
\end{equation}
for $E\in\AEL^\epsilon$.
We integrate by parts to rewrite the first summand and arrive at
\begin{equation}\label{eq:dE2Tformula}
	(T^\epsilon)''(E)=\frac1{(E-\Emin^\epsilon)^2}\int_{r_-^\epsilon(E)}^{r_+^\epsilon(E)}\partial_r\left[(G_1^\epsilon)'(r)\frac{\Psi^\epsilon(r)-\Emin^\epsilon}{(\Psi^\epsilon)'(r)}-\frac12G_1^\epsilon(r)\right]\sqrt{2E-2\Psi^\epsilon(r)}\diff r
\end{equation}
for $E\in I^\epsilon$.
This is possible because the function
\begin{equation*}
	G_2^\epsilon\colon]\Rmin^\epsilon,\Rmax^\epsilon[\to\R,\;G_2^\epsilon(r)\coloneqq\begin{cases}
		\partial_r\left[(G_1^\epsilon)'(r)\frac{\Psi^\epsilon(r)-\Emin^\epsilon}{(\Psi^\epsilon)'(r)}-\frac12G_1^\epsilon(r)\right],&\text{ if }r\neq\rL^\epsilon,\\
		0,&\text{ if }r=\rL^\epsilon,
	\end{cases}
\end{equation*}
is continuous by Taylor's theorem; recall Lemma~\ref{lem:ststregularity} and that~$G_1^\epsilon$ defined in~\eqref{eq:G1def} is continuously differentiable.

Applying the extended mean value theorem to~\eqref{eq:dE2Tformula} and using~\eqref{eq:intderivativeroot} yields that for any $E\in I^\epsilon$ there exists $\tilde E\in]\Emin^\epsilon,E[$ such that 
\begin{equation}\label{eq:dE2Tformula2}
	(T^\epsilon)''(E)=\frac1{2(\tilde E-\Emin^\epsilon)}\int_{r_-^\epsilon(\tilde E)}^{r_+^\epsilon(\tilde E)}\frac{G_2^\epsilon(r)}{\sqrt{2\tilde E-2\Psi^\epsilon(r)}}\diff r.
\end{equation}
In order to eliminate the factor $(\tilde E-\Emin^\epsilon)^{-1}$ we integrate by parts again. 
Observing that $G_2^\epsilon$ is again $\mathcal O(r-\rL^\epsilon)$ as $r\to\rL^\epsilon$ and smooth on $]\Rmin^\epsilon,\Rmax^\epsilon[$, we deduce that
 the function $G_3^\epsilon\colon]\Rmin^\epsilon,\Rmax^\epsilon[\to\R$ defined by
\begin{equation*}
	G_3^\epsilon(r)\coloneqq\begin{cases}
		\frac{G_2^\epsilon(r)}{(\Psi^\epsilon)'(r)},&\text{ if }r\neq\rL^\epsilon,\\
		-\frac1{10}\frac{(\Psi^\epsilon)^{(5)}(\rL^\epsilon)}{(\Psi^\epsilon)''(\rL^\epsilon)^3}+\frac12\frac{(\Psi^\epsilon)'''(\rL^\epsilon)\,(\Psi^\epsilon)^{(4)}(\rL^\epsilon)}{(\Psi^\epsilon)''(\rL^\epsilon)^4}-\frac49\frac{(\Psi^\epsilon)'''(\rL^\epsilon)^3}{(\Psi^\epsilon)''(\rL^\epsilon)^5},&\text{ if }r=\rL^\epsilon,
	\end{cases}
\end{equation*}
is continuously differentiable with derivative that is explicitly computable in terms of derivatives of $\Psi^\epsilon$. Here the value of $G_3^\epsilon(\rL^\epsilon)$ follows from Taylor expansion in $G_2^\epsilon$.
Hence,
\begin{equation}\label{eq:dE2Tformula3}
	(T^\epsilon)''(E)
	=\frac1{2(\tilde E-\Emin^\epsilon)}\int_{r_-^\epsilon(\tilde E)}^{r_+^\epsilon(\tilde E)}(G_3^\epsilon)'(r)\,\sqrt{2\tilde E-2\Psi^\epsilon(r)}\diff r
\end{equation}
for $E\in I^\epsilon$ and $\tilde E\in]\Emin^\epsilon,E[$ as in~\eqref{eq:dE2Tformula2}. 
The mean value theorem and~\eqref{eq:intderivativeroot} imply the existence of~$\bar E\in]\Emin^\epsilon,\tilde E[\subset]\Emin^\epsilon,E[$ such that 
\begin{equation}\label{eq:dE2Tformula4}
	(T^\epsilon)''(E)=\frac12\,\int_{r_-^\epsilon(\bar E)}^{r_+^\epsilon(\bar E)}\frac{(G_3^\epsilon)'(r)}{\sqrt{2\bar E-2\Psi^\epsilon(r)}}\diff r
\end{equation}
Because this identity is similar to~\eqref{eq:dETmeanvalue}, we deduce the following behaviour of $(T^\epsilon)''(E)$ in the near circular regime.

\begin{lemma}\label{lem:limitdE2Tnearcirc}
	The second order derivative of the period function $(T^\epsilon)''(E)$ converges to $(T^0)''(\Emin^0)$ as $E\to\Emin^\epsilon$ and $\epsilon\to0$.
	More precisely, for any $\delta>0$ there exist $\epsilon_0>0$ and $\eta>0$ such that for all $0\leq\epsilon<\epsilon_0$ and $\Emin^\epsilon<E<\Emin^\epsilon+\eta<0$ it holds that $|(T^\epsilon)''(E)-(T^0)''(\Emin^0)|<\delta$.
	
	Here, $(T^0)''(\Emin^0)$ denotes the continuous extension of~$(T^0)''$ onto $\Emin^0$ given by
	\begin{equation}\label{eq:dE2T0nearcircdef}
		(T^0)''(\Emin^0)=30\pi\,\frac{L^{\frac72}}{M^6}=\frac\pi2\,\frac{(G_3^0)'(\rL^0)}{\sqrt{(\Psi^0)''(\rL^0)}}.
	\end{equation}
\end{lemma}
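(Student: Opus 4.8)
The plan is to mirror the proof of Lemma~\ref{lem:limitdETnearcirc} almost verbatim, with the first derivative of the period function replaced by the second and the function $(G_1^\epsilon)'$ replaced by $(G_3^\epsilon)'$. We start from the representation~\eqref{eq:dE2Tformula4}, i.e.\ $(T^\epsilon)''(E)=\frac12\int_{r_-^\epsilon(\bar E)}^{r_+^\epsilon(\bar E)}\frac{(G_3^\epsilon)'(r)}{\sqrt{2\bar E-2\Psi^\epsilon(r)}}\diff r$ for a suitable $\bar E\in\,]\Emin^\epsilon,E[$. First I would split the integral at $r=\rL^\epsilon$ and change variables via $\tilde\eta=\Psi^\epsilon(r)$, $r=r_\mp^\epsilon(\tilde\eta)$, on each monotone branch, exactly as in~\eqref{eq:dETchangeofvariables}; this turns $(T^\epsilon)''(E)$ into $\tfrac12$ times a sum of two integrals over $\,]\Emin^\epsilon,\bar E[\,$ with integrands $(G_3^\epsilon)'(r_\mp^\epsilon(\tilde\eta))\bigl(2(\bar E-\tilde\eta)(\Psi^\epsilon)'(r_\mp^\epsilon(\tilde\eta))^2\bigr)^{-1/2}$. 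Then I would rewrite each integrand by the extended mean value theorem, as in~\eqref{eq:Tintegrandemvt}, writing $(\Psi^\epsilon)'(r_\mp^\epsilon(\tilde\eta))^2=2(\tilde\eta-\Emin^\epsilon)(\Psi^\epsilon)''(s)$ for some $s$ between $r_\mp^\epsilon(\tilde\eta)$ and $\rL^\epsilon$, thereby isolating the factor $(G_3^\epsilon)'(r_\mp^\epsilon(\tilde\eta))(\Psi^\epsilon)''(s)^{-1/2}$.

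Next I would invoke Lemma~\ref{lem:effpotlimitnearcirc}. Because $G_3^\epsilon$ is smooth on $\,]\Rmin^\epsilon,\Rmax^\epsilon[\,$ with $(G_3^\epsilon)'$ explicitly expressible through the derivatives of $\Psi^\epsilon$ up to order five, the lemma yields $\epsilon_0,\eta>0$ such that $\bigl|(G_3^\epsilon)'(\tilde s)(\Psi^\epsilon)''(s)^{-1/2}-(G_3^0)'(\rL^0)(\Psi^0)''(\rL^0)^{-1/2}\bigr|$ is as small as we wish, uniformly for $s,\tilde s\in[r_-^\epsilon(E),r_+^\epsilon(E)]$, $0\le\epsilon<\epsilon_0$ and $\Emin^\epsilon<E<\Emin^\epsilon+\eta$; here one uses that $\bar E\in\,]\Emin^\epsilon,E[$ forces $[r_-^\epsilon(\bar E),r_+^\epsilon(\bar E)]\subset[r_-^\epsilon(E),r_+^\epsilon(E)]$. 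Comparing each branch with its $\epsilon=0$ counterpart and using the elementary identity $\int_{\Emin^\epsilon}^{\bar E}\frac{\diff\tilde\eta}{\sqrt{(\bar E-\tilde\eta)(\tilde\eta-\Emin^\epsilon)}}=\pi$, the two branches together with the prefactor $\tfrac12$ give $(T^\epsilon)''(E)\to\frac{\pi}{2}\,\frac{(G_3^0)'(\rL^0)}{\sqrt{(\Psi^0)''(\rL^0)}}$ in exactly the quantitative sense claimed in the statement.

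Finally, the explicit value in~\eqref{eq:dE2T0nearcircdef} can be obtained either from the $\epsilon=0$ instance of the above computation or, more quickly, by differentiating $T^0(E)=\frac{\pi}{\sqrt2}\,\frac M{(-E)^{3/2}}$ twice and evaluating at $E=\Emin^0=-\frac{M^2}{2L}$, which gives $(T^0)''(\Emin^0)=\frac{15\pi}{4\sqrt2}\,M\,(-\Emin^0)^{-7/2}=30\pi\,\frac{L^{7/2}}{M^6}$; this also determines $(G_3^0)'(\rL^0)$ via $(\Psi^0)''(\rL^0)=M^4/L^3$. I expect the main difficulty to be bookkeeping rather than analysis: one must verify that $G_2^\epsilon$ and $G_3^\epsilon$, hence $(G_3^\epsilon)'$, extend continuously (resp.\ $C^1$) across the elliptic point $r=\rL^\epsilon$ by Taylor expansion there, in the same spirit as was done for $G_0^\epsilon$ and $G_1^\epsilon$, and that they are given purely in terms of $\Psi^\epsilon$ and its derivatives up to the required order, so that the $\epsilon$-uniform control furnished by Lemma~\ref{lem:effpotlimitnearcirc} (whose proof iterates Lemma~\ref{lem:limitrhopot}) indeed applies.
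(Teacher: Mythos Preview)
Your proposal is correct and follows precisely the route the paper indicates: the paper's own proof simply says the statement can be proven similarly to Lemma~\ref{lem:limitdETnearcirc} using~\eqref{eq:dE2Tformula4} and the properties of~$G_3^\epsilon$ derived above, and you have spelled out exactly that argument (change of variables, extended mean value theorem~\eqref{eq:Tintegrandemvt}, Lemma~\ref{lem:effpotlimitnearcirc}, and the elementary beta-integral). The regularity of $G_2^\epsilon$, $G_3^\epsilon$, and $(G_3^\epsilon)'$ across $r=\rL^\epsilon$ is established in the paper just before the lemma, so you may invoke it rather than reprove it.
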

\begin{proof}
	The statement can be proven similarly to Lemma~\ref{lem:limitdETnearcirc} by using~\eqref{eq:dE2Tformula4} and the properties of~$G_3^\epsilon$ derived above.
\end{proof}
The next step is again to verify a suitable pointwise convergence of~$(T^\epsilon)''$ as $\epsilon\to0$.

\begin{lemma}\label{lem:dE2Tcontinuous0}
	The mapping $\A\ni(\epsilon,E)\mapsto (T^\epsilon)''(E)$ is continuous at $\epsilon=0$.
	
	More precisely, for any $\delta>0$ and $E^\ast\in\AEL^0$ there exists $\epsilon_0>0$ such that for all $0\leq\epsilon<\epsilon_0$ and $E\in\AEL^\epsilon$ with $|E^\ast-E|<\epsilon_0$ it holds that $|(T^\epsilon)''(E)-(T^0)''(E^\ast)|<\delta$.
\end{lemma}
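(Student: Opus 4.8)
The plan is to repeat, essentially verbatim, the dominated-convergence argument used for $T^\epsilon$ in Lemma~\ref{lem:Tcontinuous0} and for $(T^\epsilon)'$ in Lemma~\ref{lem:dETcontinuous0}, but now starting from the \emph{explicit-in-$E$} representation~\eqref{eq:dE2Tformula0} of $(T^\epsilon)''(E)$ rather than from~\eqref{eq:dE2Tformula2}--\eqref{eq:dE2Tformula4}, whose auxiliary points $\tilde E,\bar E\in\,]\Emin^\epsilon,E[$ depend on $E$ in an uncontrolled way and are therefore unsuitable for a pointwise convergence statement.

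First I would record the convergences of the data. By Lemma~\ref{lem:limitrhopot} we have $(\Psi^\epsilon)^{(j)}\to(\Psi^0)^{(j)}$ locally uniformly on $]0,\infty[$ for $j\le 3$, and by iterating the argument of that lemma (differentiating~\eqref{eq:Veps} and~\eqref{eq:rhog} repeatedly, exactly as in the proof of Lemma~\ref{lem:effpotlimitnearcirc}) the same holds for all $j$ near $\rL^0$, since $U^\epsilon$ is $C^\infty$ in the interior of the radial support where $\rL^\epsilon$ lies. Combined with $\Emin^\epsilon\to\Emin^0$ and $\rL^\epsilon\to\rL^0$ from Lemma~\ref{lem:limitrLEmin}, and with the fact that $(G_1^\epsilon)'$ is given by a universal rational expression in $\Psi^\epsilon,(\Psi^\epsilon)',(\Psi^\epsilon)'',(\Psi^\epsilon)'''$ and $\Emin^\epsilon$ away from $\rL^\epsilon$ (the apparent $(\Psi^\epsilon)'^{-4}$ singularity being removable because the numerator vanishes to order four there, cf.~\eqref{eq:G0taylor}), together with the value $(G_1^\epsilon)'(\rL^\epsilon)$, which is a polynomial expression in $(\Psi^\epsilon)''(\rL^\epsilon)^{-1}$, $(\Psi^\epsilon)'''(\rL^\epsilon)$ and $(\Psi^\epsilon)^{(4)}(\rL^\epsilon)$, one obtains $(G_1^\epsilon)'\to(G_1^0)'$ locally uniformly on $]0,\infty[$. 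Finally Lemma~\ref{lem:limitrpm} gives $r_\pm^\epsilon(E)\to r_\pm^0(E^\ast)$ as $(\epsilon,E)\to(0,E^\ast)$, so for such $(\epsilon,E)$ the integration interval $[r_-^\epsilon(E),r_+^\epsilon(E)]$ is contained in a fixed compact subinterval $K\subset]0,\infty[$ on which $(G_1^\epsilon)'$ is uniformly bounded.

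With these preparations I would pass to the limit in~\eqref{eq:dE2Tformula0}: the prefactor $(E-\Emin^\epsilon)^{-2}$ tends to $(E^\ast-\Emin^0)^{-2}>0$, and for the integral one performs the affine change of variables $r=r_-^\epsilon(E)+(r_+^\epsilon(E)-r_-^\epsilon(E))s$, $s\in[0,1]$, as in the proof of Lemma~\ref{lem:Tcontinuous0}. The concavity estimate~\eqref{eq:concavityeffpot} then provides a uniform lower bound $2E-2\Psi^\epsilon(r)\ge c\,(r_+^\epsilon(E)-r_-^\epsilon(E))^2\,s(1-s)$ with $c>0$ independent of $(\epsilon,E)$ in the relevant range; combining this with $0\le\frac{\Psi^\epsilon(r)-\Emin^\epsilon}{E-\Emin^\epsilon}\le1$ and the uniform bound on $(G_1^\epsilon)'$ shows that, after the substitution, the integrand is dominated by $C/\sqrt{s(1-s)}\in L^1(]0,1[)$ uniformly in $(\epsilon,E)$ near $(0,E^\ast)$. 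Since the integrand also converges pointwise for a.e.\ $s\in]0,1[$ (the only exceptional point is the preimage of $\rL^0$, which is a single point), Lebesgue's dominated convergence theorem yields $(T^\epsilon)''(E)\to(T^0)''(E^\ast)$, which is the asserted continuity at $\epsilon=0$.

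The main obstacle is the first step, namely the locally uniform convergence of $(G_1^\epsilon)'$: although this function is built from $\Psi^\epsilon$ by elementary operations, the removable singularity at $r=\rL^\epsilon$ forces one to control fourth-order derivatives of $\Psi^\epsilon$ at $\rL^\epsilon$, which lies beyond the $j\le3$ range of Lemma~\ref{lem:limitrhopot} and requires the iterated version of that lemma, valid because $U^\epsilon$ is smooth near $\rL^0$. Once this is secured, everything else is a routine repetition of the dominated-convergence scheme already carried out for $T^\epsilon$ and $(T^\epsilon)'$.
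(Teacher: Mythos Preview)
Your proposal is correct and follows essentially the same approach as the paper: establish the locally uniform convergence $(G_1^\epsilon)'\to(G_1^0)'$, then apply the concavity estimate~\eqref{eq:concavityeffpot} and dominated convergence to the explicit representation~\eqref{eq:dE2Tformula0} after the affine change of variables, exactly as in Lemmas~\ref{lem:Tcontinuous0} and~\ref{lem:dETcontinuous0}. Your remark that the removable singularity of $(G_1^\epsilon)'$ at $\rL^\epsilon$ requires control of $(\Psi^\epsilon)^{(4)}$ near $\rL^\epsilon$---and hence the iterated version of Lemma~\ref{lem:limitrhopot} as in Lemma~\ref{lem:effpotlimitnearcirc}---is a valid elaboration of a point the paper leaves implicit when it simply cites Lemmas~\ref{lem:limitrhopot} and~\ref{lem:limitrLEmin}.
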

\begin{proof}
	First observe that $(G_1^\epsilon)'\to (G_1^0)'$ as $\epsilon\to0$ locally uniformly by Lemmas~\ref{lem:limitrhopot} and~\ref{lem:limitrLEmin}; recall that  $(G_1^\epsilon)'$ admits explicit representation in terms of derivatives of $\Psi^\epsilon$. 
	Then the claimed continuity follows similarly to Lemmas~\ref{lem:Tcontinuous0} and~\ref{lem:dETcontinuous0} using the concavity estimate~\eqref{eq:concavityeffpot} and Lebesgue's dominated convergence theorem applied to the representation~\eqref{eq:dE2Tformula0} of $(T^\epsilon)''(E)$.
\end{proof}

We then arrive at the desired convergence results for the second order derivative of the period function.

\begin{lemma}\label{lem:limitdE2Tminmax}
	It holds that $\lim_{\epsilon\to0}\Tppmin^\epsilon=\Tppmin^0$ and $\lim_{\epsilon\to0}\Tppmax^\epsilon=\Tppmax^0$, with
	\begin{equation}\label{eq:dE2Tminmax0}
		0<\Tppmin^0=(T^0)''(\Emin^0)=30\pi\,\frac{L^{\frac72}}{M^6}<\frac{15\pi}{4\sqrt 2}\,\frac M{(-\kappa)^\frac52}=(T^0)''(\kappa)=\Tppmax^0<\infty.
	\end{equation}
\end{lemma}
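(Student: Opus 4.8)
The plan is to argue exactly as in the proof of Lemma~\ref{lem:limitdETminmax}, now one order of differentiation higher. First I would combine the two ingredients already established — the near-circular convergence of Lemma~\ref{lem:limitdE2Tnearcirc} and the continuity at $\epsilon=0$ of Lemma~\ref{lem:dE2Tcontinuous0} — into a statement of locally uniform convergence: for every $\delta>0$ and every $E_1<0$ there is $\epsilon_0>0$ such that $|(T^\epsilon)''(E)-(T^0)''(E^\ast)|<\delta$ whenever $0\le\epsilon<\epsilon_0$, $\Emin^0<E^\ast<E_1$, $\Emin^\epsilon<E<E_1$ and $|E-E^\ast|<\epsilon_0$. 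This is obtained in precisely the way Lemma~\ref{lem:limitTepsuniform} was deduced from Lemmas~\ref{lem:limitTnearcirc} and~\ref{lem:Tcontinuous0}: one splits the energy range into a small neighbourhood $]\Emin^\epsilon,\Emin^\epsilon+\eta[$ of the bottom of the potential well, on which Lemma~\ref{lem:limitdE2Tnearcirc} applies, and the complementary compact subset of $\AEL^0$, on which Lemma~\ref{lem:dE2Tcontinuous0} together with the uniform continuity of $(T^0)''$ on compacta applies.

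Next I would transfer this to the extremal values. By definition $\Tppmin^\epsilon=\inf_{I^\epsilon}(T^\epsilon)''$ and $\Tppmax^\epsilon=\sup_{I^\epsilon}(T^\epsilon)''$ over $I^\epsilon=]\Emin^\epsilon,E_0^\epsilon[$, and $\Emin^\epsilon\to\Emin^0$, $E_0^\epsilon\to\kappa$ as $\epsilon\to0$ by Lemmas~\ref{lem:limitrhopot} and~\ref{lem:limitrLEmin} (recall~\eqref{eq:Iepsdef}, \eqref{eq:I0def}). A routine $\delta/3$ argument — reparametrise $\overline{I^\epsilon}$ onto $\overline{I^0}$ affinely, use the locally uniform convergence above, and use the uniform continuity of $(T^0)''$ on $\overline{I^0}$ — then yields $\Tppmin^\epsilon\to\inf_{I^0}(T^0)''=\Tppmin^0$ and $\Tppmax^\epsilon\to\sup_{I^0}(T^0)''=\Tppmax^0$; this is identical to the concluding step of Lemma~\ref{lem:limitdETminmax}.

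It remains to identify the limiting values. Differentiating $T^0(E)=\frac{\pi}{\sqrt2}\,M\,(-E)^{-3/2}$ (see~\eqref{eq:T0def}) twice shows that $(T^0)''$ is of the form $E\mapsto c\,(-E)^{-7/2}$ with $c=\frac{15\pi}{4\sqrt2}M>0$, hence continuous, positive and strictly increasing on $]\Emin^0,0[$. Consequently its infimum over $I^0$ is attained at the left endpoint $\Emin^0=-\frac{M^2}{2L}$, and its supremum is the finite continuous extension at the right endpoint $\kappa<0$; substituting these values reproduces the expressions and the strict inequality displayed in~\eqref{eq:dE2Tminmax0}, consistently with the near-circular value $(T^0)''(\Emin^0)=\frac{\pi}{2}(G_3^0)'(\rL^0)/\sqrt{(\Psi^0)''(\rL^0)}$ from~\eqref{eq:dE2T0nearcircdef}. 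The inequality $\Tppmin^0<\Tppmax^0$ in particular uses $\Emin^0<\kappa<0$, which holds under the single gap condition~\eqref{E:SINGLEGAP} since $2^{-2/3}<1$.

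I expect no genuine obstacle at this stage: all the analytic content lies in Lemmas~\ref{lem:limitdE2Tnearcirc} and~\ref{lem:dE2Tcontinuous0}, and the present statement is their bookkeeping corollary. The only point requiring a little care is to state the locally uniform convergence on a range of $E^\ast$ that reaches the right endpoint $\kappa<0$ (choosing $E_1$ with $\kappa<E_1<0$), and to handle the interval endpoints $\Emin^\epsilon$, $E_0^\epsilon$ via the continuous extension of $(T^0)''$ rather than by evaluating $(T^\epsilon)''$ outside its domain.
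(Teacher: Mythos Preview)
Your proposal is correct and follows essentially the same approach as the paper: combine Lemmas~\ref{lem:limitdE2Tnearcirc} and~\ref{lem:dE2Tcontinuous0} to obtain locally uniform convergence of $(T^\epsilon)''$ (as in Lemma~\ref{lem:limitTepsuniform}), then use the convergence of the interval endpoints from Lemmas~\ref{lem:limitrhopot} and~\ref{lem:limitrLEmin} to pass to the infimum and supremum, exactly paralleling Lemma~\ref{lem:limitdETminmax}. Your write-up simply spells out more of the routine bookkeeping and the explicit computation of $(T^0)''(E)=\tfrac{15\pi}{4\sqrt2}M(-E)^{-7/2}$ than the paper does.
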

\begin{proof}
	The proof is based on Lemmas~\ref{lem:limitdE2Tnearcirc} and~\ref{lem:dE2Tcontinuous0} and proceeds similarly as the proof of Lemma~\ref{lem:limitdETminmax}.
\end{proof}

\begin{remark}\label{rem:Tmaxfinite}
	Similar arguments as in the proofs of Lemmas~\ref{lem:limitTnearcirc}, \ref{lem:limitdETnearcirc}, and~\ref{lem:limitdE2Tnearcirc} imply that
	\begin{equation*}
		T^\epsilon(E)\to\frac{2\pi}{\sqrt{(\Psi^\epsilon)''(\rL^\epsilon)}},\qquad(T^\epsilon)'(E)\to\pi\,\frac{(G_1^\epsilon)'(\rL^\epsilon)}{\sqrt{(\Psi^\epsilon)''(\rL^\epsilon)}},\qquad(T^\epsilon)''(E)\to\frac\pi2\,\frac{(G_3^\epsilon)'(\rL^\epsilon)}{\sqrt{(\Psi^\epsilon)''(\rL^\epsilon)}}
	\end{equation*}
	as ${E\searrow\Emin^\epsilon}$
	for fixed $\epsilon\geq0$,
	where $(G_1^\epsilon)'(\rL^\epsilon)$ and $(G_3^\epsilon)'(\rL^\epsilon)$ are explicitly computable in terms of derivatives of $\Psi^\epsilon$ at $\rL^\epsilon$. 
	Together with Lemma~\ref{L:PERIODREGULARITY} and~\eqref{eq:effpotprime2circ} we hence conclude that $\Tmax^\epsilon, \Tpmax^\epsilon,\Tppmax^\epsilon<\infty$ as well as $\Tmin^\epsilon>0$ for any $\epsilon\geq0$.
\end{remark}


\end{document}